\newtheorem{theorem}{Theorem}
\newtheorem{corollary}[theorem]{Corollary}
\newtheorem{lemma}[theorem]{Lemma}
\newtheorem{proposition}[theorem]{Proposition}
\newtheorem{definition}[theorem]{Definition}
\theoremstyle{remark}
\newtheorem{remark}[theorem]{Remark}
\theoremstyle{definition}
\newcommand{\C}{\mathbb{C}}
\numberwithin{theorem}{section}
\begin{document}
	\title[Kinetic Models for Semiflexible Polymers]{Kinetic Models for Semiflexible Polymers in a Half-plane}
	\author{Jin Woo Jang}
	\address{Department of Mathematics, Pohang University of Science and Technology (POSTECH), Pohang, South Korea (37673)}
	\email{jangjw@postech.ac.kr}
	\author{Juan J. L. Vel\'azquez}
	\address{Institute for Applied Mathematics, University of Bonn,
		Endenicher Allee 60, D-53115 Bonn, Germany}
	\email{velazquez@iam.uni-bonn.de}

	\keywords{Semi-flexible polymers, Markov process, Fokker-Planck equation, Hypoellipticity, Long-chain asymptotics}

	\let\thefootnote\relax\footnotetext{2020 \textit{Mathematics Subject Classification.} Primary: 82D60, 82C31, 46N55, 92C45.}
	\newcommand{\eqdef }{\overset{\mbox{\tiny{def}}}{=}}
	\newcommand{\rth}{{\mathbb{R}^3}}
	\newcommand{\supt}{\varphi_{sup}}
	\newcommand{\varsup}{\bar{\varphi}_{sup}}
	\newcommand{\varinf}{\bar{\varphi}_{\infty}}
	\thispagestyle{empty}

\begin{abstract}Based on a general discrete model for a semiflexible polymer chain, we introduce a formal derivation of a kinetic equation for semiflexible polymers in the half-plane via a continuum limit. It turns out that the resulting equation is the kinetic Fokker-Planck-type equation with the Laplace-Beltrami operator under a non-local \textit{trapping} boundary condition. We then study the well-posedness and the long-chain asymptotics of the solutions of the resulting equation. In particular, we prove that there exists a unique measure-valued solution for the corresponding boundary value problem. In addition, we prove that the equation is hypoelliptic and the solutions are locally H\"older continuous near the singular boundary. Finally, we provide the asymptotic behaviors of the solutions for large polymer chains.
\end{abstract}

\maketitle

\tableofcontents
\section{Introduction}
	\subsection{Polymer models in statistical physics. }In this paper, we consider a homogeneous semiflexible polymer chain in the absence of self-avoidance and torsional stress. Polymer models can be obtained by means of limits of random walks, and they have been extensively studied \cite{ doi1988theory,maggs1989unbinding,granek1997semi,werlich2014wang,yang2019study}.
	In particular, semi-flexible polymers which do not self-intersect have also been studied in probability theory \cite{caravenna2009scaling,caravenna2008pinning, duminil2012connective}. The computation of the statistical properties of the resulting polymers has been a difficult problem, in which relevant information has been obtained for some class of models.
	The Brownian motion has been extensively used as a model of polymers in the physical literature (cf. \cite{MR1236140,MR701921,le1994exponential,geman1984local,dynkin1988regularized,ADLER1997271,boal_2012}), in spite of the fact that the trajectories can self-intersect. 
	Another relevant model that has been much less studied, in particular, in the mathematical literature is the model of semi-flexible chains. The model has been developed under the assumption that the polymer consists in a chain of $N$ segments each of which has the same length $\epsilon>0$ so the total length $L$ of the polymer is $L=N\epsilon.$ Then the statistical properties of these polymer chains are given by a Gibbs measure 
	$$\mu_N=\frac{1}{Z_N}\exp \left(-\frac{\mathcal{H}_N}{k_B\mathrm{T}}\right),$$ where $Z_N$ is the normalization factor, $\mathcal{H}_N$ is the Hamiltonian that penalyzes the angle between consecutive polymer segments, $k_B$ is the Boltzmann constant, and $\mathrm{T}$ is the temperature so that $k_B\mathrm{T}$ stands for the thermal energy.  Here the energy $\mathcal{H}_N$ will be assumed to have the form of\begin{equation}\label{ham}\mathcal{H}_N=-\frac{B_s}{\epsilon}\sum_{j=1}^{N-1} (n_j\cdot n_{j+1}-1),\end{equation}
	where $n_j$ is the orientation vector of $j^{\text{th}}$ polymer segment and $B_s$ is the bending stiffness.
	Here, the bending stiffness $B_s$ can be written in terms of the persistence length $l_p$ and the thermal energy as $B_s=l_p k_B\mathrm{T},$ where 
	the persistence length $l_p$ is defined as the projection of the end-to-end vector of the total polymer chain onto the first vector, and it provides the information on the stiffness or the rigidity of the polymer chain. 
	Throughout this paper, we normalize the thermal energy $k_B\mathrm{T}=1$ and assume that the unit length is equal to the persistence length $l_p$. Then we have $B_s=1$ and hence $\frac{\epsilon}{B_S}=\epsilon\ll1$.

	The continuous limit which has been obtained by taking $N\epsilon$ of order one and $\epsilon\to 0$ was introduced in  \cite{kratky1949rontgenuntersuchung} in 1949, and it has been usually called
	the Kratky-Porod model and also the wormlike chain model (WLC).
	The polymer paths associated to the WLC (or Kratky-Porod) model can be described by means of the Ornstein-Uhlenbeck process. This has been noticed after in the physical literature \cite{van1992stochastic,doi:10.1021/ma60010a018,hoshikawa1975average}. In particular, the paths associated to the WLC model in the whole space $\mathbb{R}^d$ can be described by means of the stochastic process associated to the equation:
	\begin{equation}\label{first FP}\partial_t f(t,x,n)+n\cdot \nabla_x f(t,x,n)=\Delta_n f(t,x,n),\end{equation} for $f\in C([0,\infty);\mathcal{M}_+(\mathbb{R}^d\times \mathbb{S}^{d-1}))$  where $t$ is the polymer length parameter and $\Delta_n$ denotes the Laplace-Beltrami operator on $\mathbb{S}^{d-1}$ (cf. \cite{lipowsky1986unbinding,kierfeld2003unbundling,gompper1989unbinding,maggs1989unbinding}). The derivation of \eqref{first FP} in \cite{lipowsky1986unbinding} is formally made by replacing the Gibbs measure $\mu_N$ by the exponents of a path integral. The equation \eqref{first FP} is then derived using the so-called \textit{transfer matrix methods}. We will rederive \eqref{first FP} heuristically in Appendix \ref{sec:formalderivation} by the limit of suitable Markov chains which define the polymer distribution. 
	The rigorous mathematical theory associated to the WLC model is very limited. The stochastic process obtained for the probability measure $\mu_N=\frac{1}{Z_n}e^{-\beta \mathcal{H}_N}$ with $\mathcal{H}_N$ in \eqref{ham} in the whole space $\mathbb{R}^d$ has been studied in \cite{MR2641772}. In particular, the behavior of long polymer chains has been discussed also in the paper. 
	
	In this paper, we are concerned with the interactions between polymer chains and the boundaries of the domain containing them. This issue has been discussed in the physical literature where different types of interactions between the boundaries of the domain and the polymer chain have been introduced (cf. \cite{gompper1989unbinding, maggs1989unbinding, lipowsky1986unbinding, gomez2016hypoelliptic, MR2124366}).  Regarding the discrete semiflexible heterogeneous polymers and their long-chain behavior, we mention \cite{MR2641772}. We also introduce \cite{hryniv2009some} for the rigorous macroscopic scaling limit from the $N$-body Hamiltonian dynamics.
	In this paper, we will study one of the simplest types of interaction potentials between the polymer chain and the boundary of the domain $\Omega$. Namely, we will assume that the boundary $\partial \Omega$ is a constraint that restricts the possible geometry of the polymer chains, but it does not modify the energy of the segments of the chain in any other way. 
	The domain $\Omega$ will be assumed to be a half-plane $\mathbb{R}^2_+=\{x\in \mathbb{R}^2:x_2>0\},$ and we will see formally that the effect of the boundary of $\Omega$ yields a boundary condition for \eqref{first FP}, specifically the so-called \textit{trapping} boundary condition, at least for the polymer lengths $L=N\epsilon$ of order one. If $L\gg 1$, the polymer chains could separate from the boundary after touching it due to large deviation effects, and we will ignore this issue in this paper.

	%
	%

	\subsection{Derivation of the Fokker-Planck equation}
	At a formal level, we derive an initial-boundary value problem for a kinetic partial differential equation starting from the energy of a given discrete chain configuration as introduced in the previous section. The solutions are given by the probability density distribution of the polymers. In the continuum limit, we can formally obtain a boundary-value problem for the kinetic Fokker-Planck equation with the standard Laplacian operator replaced by the Laplace-Beltrami operator which restricts that each monomer has the velocity on $\mathbb{S}^{d}$. 
	
	In the derivation below, we use the variable $t\in[0,T]$ with $N\epsilon = t$ such that we parametrize the semiflexible polymer chain by the total contour length.

	\subsubsection{Dynamics in the whole plane $\mathbb{R}^2$}
	\label{wholeplanederiv}
	First of all, suppose that a semiflexible polymer chain lies in the whole plane $\mathbb{R}^2$ with the initial end of the first monomer segment is located at a given position $x_0$. Suppose that a semiflexible chain consists of $N$ monomers of size $\epsilon$ whose ends are denoted as $\{\xi_j\}_{j=1}^{N}$ and let $n_j\in \mathbb{S}^1$ denote the orientation of the $j^{th}$ monomer that connects $\xi_{j-1}$ and $\xi_{j}$ with $\xi_0=x_0$. In other words, we define for $j\ge 0$,
	$$\xi_j\eqdef x_0+\sum_{i=1}^{j}\epsilon n_i.$$We then introduce the Hamiltonian 
	$$\mathcal{H}_N=\frac{1}{2\epsilon}\sum_{j=1}^{N-1}(n_{j+1}-n_j)^2=-\frac{1}{\epsilon}\sum_{j=1}^{N-1} (n_j\cdot n_{j+1}-1).$$
	\begin{remark}
		We remark that a polymer chain is Markovian from the very first element of the monomers. In other words, the state $(x_{j+1}, n_{j+1})$ would depend only on the previous state $(x_j,n_j)$. 
		This is because we assume that the monomers are just point-particles that do not occupy any volume in the space. Thus, the probability for a monomer meeting one of the previous monomers is zero. Therefore, the evolution depends only on the previous step right before the state. Even in a 2-dimensional space (i.e., a plane), point-monomers yield a Markovian evolution due to the absence of collisions. Then one can ask what is the critical size of the volume in which collisions can begin taking place, and this is one of the open problems. One can also ask if there is some kind of ``kinetic limit" for some scaling of the sizes of the monomer. The list of open problems also includes the evolution of a self-avoiding random walk. In this case, we need the memory of the whole path.
	\end{remark}
	Now we consider the Gibbs probability measure $\mu_N\in \mathcal{M}_+((\mathbb{R}^2)^{N}\times (\mathbb{S}^1)^{N})$, which is given by
	$$\mu_N(x_1,...,x_N;n_1,...,n_N) =\frac{1}{Z_N}\prod_{j=1}^N\delta(x_j-\xi_j)\exp \left(\frac{1}{\epsilon}(n_{j-1}\cdot n_{j}-1)\right),$$ $n_0$ is defined as $n_1$. Here the normalization factor $Z_N$ is defined as 
	$$Z_N\eqdef \iint_{(\mathbb{R}^2)^{N}\times(\mathbb{S}^1)^{N}}\prod_{j=1}^{N}dx_jdn_j \ \mu_N(x_1,...,x_N;n_1,...,n_{N}).$$We also define the probability density distribution $f_N(x,n)$ as
	\begin{multline*}f_N(x,n)\eqdef \int_{(\mathbb{R}^2)^{N}\times(\mathbb{S}^1)^{N-1}}\prod_{j=1}^{N-1}dx_jdn_j\  \mu_N(x_1,...,x_{N-1},x;n_1,...,n_{N-1},n)\\
		=\int_{(\mathbb{S}^1)^{N-1}}\prod_{j=1}^{N-1}dn_j\ \delta\left(x-x_0-\sum_{j=1}^{N} \epsilon n_j\right) \frac{1}{Z_N}\exp \left(\frac{1}{\epsilon}\sum_{j=1}^{N-j}(n_j\cdot n_{j+1}-1)\right),\end{multline*} with $n_N=n$. The total length of the polymer chain $L$ is equal to $L=N\epsilon.$  
	Then the next iterated sequence $f_{N+1}(x,n)$ is given by
	\begin{multline*}f_{N+1}(x,n)=\int_{(\mathbb{S}^1)^{N}} \prod_{j=1}^{N}dn_j\ \mu_{N+1}(x_1,...,x_N,x;n_1,...,n_N,n)\\
		=\int_{(\mathbb{S}^1)^{N}} \prod_{j=1}^{N}dn_j\frac{Z_N}{Z_{N+1}}\mu_{N}(x_1,...,x_N;n_1,...,n_N)\\\times \exp\left(\frac{1}{\epsilon}(n_{N}\cdot n-1)\right) \delta \left(x-x_0-\sum_{j=1}^{N} \epsilon n_j-\epsilon n\right)\\
		=C_N\int_{\mathbb{S}^1}dn_N f_N(x-\epsilon n,n_N)\exp\left(\frac{1}{\epsilon}(n_N\cdot n-1)\right),
	\end{multline*}where $C_N\eqdef \frac{Z_N}{Z_{N+1}}.$
	Now define $F(t,x,n)=f_j(x,n)$ with $j=\frac{t}{\epsilon}.$ Then we have
	$$F(N\epsilon +\epsilon,x,n)
	=C_N\int_{\mathbb{S}^1}dn_N F(N\epsilon, x-\epsilon n,n_N)\exp\left(\frac{1}{\epsilon}(n_N\cdot n-1)\right).$$Together with the formal ansatz that $F$ is smooth, we can take the Taylor expansion as
	$$
	F(N\epsilon +\epsilon,x,n)=	F(N\epsilon ,x,n)+\epsilon \frac{\partial F}{\partial t}(N\epsilon, x,n),
	$$and 
	\begin{multline*}
		C_N\int_{\mathbb{S}^1}dn_N F(N\epsilon, x-\epsilon n,n_N)\exp\left(\frac{1}{\epsilon}(n_N\cdot n-1)\right)\\
		=C_N\int_{\mathbb{S}^1}dn_N \bigg(F(N\epsilon, x,n)-\epsilon n\cdot \frac{\partial F}{\partial x}+(n_N-n)\cdot \nabla_n F(N\epsilon,x,n)\\+\frac{1}{2}(n_N-n)\nabla_n^2 F(N\epsilon,x,n)(n_N-n)\bigg)\exp\left(\frac{1}{\epsilon}(n_N\cdot n-1)\right).
	\end{multline*}
	Note that \begin{equation}\label{normalizedgibbs}C_N\int_{\mathbb{S}^1}dn_N \exp (\epsilon^{-1} (n_N\cdot n))=1,\end{equation} and $$\int_{\mathbb{S}^1}dn_N \ (n_N-n)\exp (\epsilon^{-1} (n_N\cdot n))=0.$$
	Then by defining $t=N\epsilon$, we have
	\begin{multline*}
		\epsilon \frac{\partial F}{\partial t}(t, x,n)+\epsilon n\cdot \frac{\partial F}{\partial x}(t, x,n)\\
		=\Delta_n F(t,x,n)C_N\int_{\mathbb{S}^1}dn_N \bigg(\frac{1}{2}(n_N-n)\otimes (n_N-n)\bigg)\exp\left(\frac{1}{\epsilon}(n_N\cdot n-1)\right).
	\end{multline*}
	Note that 
	\begin{multline}
	\label{CnOe1}C_N\int_{\mathbb{S}^1}dn_N \bigg(\frac{1}{2}(n_N-n)\otimes (n_N-n)\bigg)\exp\left(\frac{1}{\epsilon}(n_N\cdot n-1)\right)\\\approx C_N\int_{\mathbb{S}^1}(\xi\otimes \xi) \exp(-|\xi|^2/(2\epsilon))d\xi\approx O(\epsilon),
	\end{multline}by \eqref{normalizedgibbs} where $\xi=n_N-n$.
	Thus, in the limit $\epsilon\to 0$, we obtain the Fokker-Planck equation in the whole plane $x\in \mathbb{R}^2$ and $n\in \mathbb{S}^1$ as
	$$\frac{\partial F}{\partial t}(t, x,n)+ n\cdot \frac{\partial F}{\partial x}(t, x,n)= D\Delta_n F(t,x,n),$$ with the diffusion coefficient $D$ which is defined as
	$$D= \lim_{\epsilon\to 0} \frac{C_{\frac{t}{\epsilon}}}{\epsilon} \int_{\mathbb{S}^1}(\xi\otimes \xi) \exp(-|\xi|^2/(2\epsilon))d\xi\approx O(1),$$ by \eqref{CnOe1}. 
	\begin{remark}
In the physical situation, the diffusion coefficient $D$ would depend on some physical constants appearing in the Hamiltonian; in this paper, we normalize those constants to be 1.  
	\end{remark}
	
	\subsubsection{Dynamics in the half-plane $\mathbb{R}^2_+$ with the boundary}
	In the paper, we are interested in the boundary effect on the polymer chain in the half-plane. We restrict ourselves to a 2-dimensional model in this paper. For a general 3-dimensional problem, additional geometrical difficulties as well as the effects like the diffusion in the polymer orientation on the surface can arise.
	
	We assume that the polymer that reaches the boundary of the half-plane tends to minimize the bending energy $\frac{1}{2\epsilon}(n_j-n_{j+1})^2$. Then we formally demonstrate in Appendix \ref{sec:formalderivation} that the polymer that reaches the boundary will keep moving along the boundary. We will call this boundary condition the \textit{trapping} boundary condition, as it literally stands for the situation that the polymer is being \textit{trapped} on the boundary. 
	We assume that the boundary of the container does not yield any energy to the polymer chain except for the energy-minimizing modeling assumption.
	The details for the formal derivation of the boundary-value problem will be provided in Appendix \ref{sec:formalderivation}. In particular, we will justify the \textit{trapping} boundary condition near the boundary.
	
	Then, we can characterize the properties of the measure describing the polymer distribution by means of a kinetic equation. The total length of the polymer chain $t = N\epsilon$ plays the role of the time variable of the kinetic equation.
	In this paper we restrict ourselves to the case of two spatial dimensions where a polymer chain lies in a half plane.
	
	\subsection{The 2D Fokker-Planck equation with boundaries}
	In this paper, we mainly consider the Fokker-Planck system in a 2-dimensional half-plane $\mathbb{R}^2_+=\{x\in \mathbb{R}^2:x_2>0\}.$  Throughout the paper, our phase space is then $(x,n)\in \mathbb{R}^2_+\times \mathbb{S}^1$, as we restrict the velocity of each monomer to be 1 as shown in the derivation above and in Appendix \ref{sec:formalderivation}. If we use the phase variable $(x,n)$, then we denote the probability measure as $F(t,x,n)$. On the other hand, we also use another coordinate system of $(t,x_1,x_2,\theta)$ where $t\in[0,T]$, $x_1\in\mathbb{R}$, $x_2\in (0,\infty)$, and $\theta \in [-\pi,\pi]$ such that $n=(\cos\theta,\sin\theta).$ In this case, we will use the notation for the measure as $f(t,x,\theta)$, so that $F(t,x,n)=f(t,x,\theta).$ Here we emphasize that the usual time-variable $t\in [0,T]$ means the total polymer length throughout the paper. The velocity variable $n$ is in $\mathbb{S}^1$ and it is parametrized in terms of $\theta \in [-\pi,\pi]$. The phase boundary is defined as $x_1\in \mathbb{R},\ x_2=0,\ \text{and}\ \theta \in [-\pi,\pi].$

	The Fokker-Planck equation for semiflexible polymers reads
	$$\partial_t F+n\cdot \nabla_x F = \Delta_n F, \text{ on } t\in [0,T]\times \mathbb{R}^2_+\times \mathbb{S}^1,$$ where $\Delta_n$ is the Laplace-Beltrami operator. Using the other coordinate representation, we also have
	\begin{equation}\label{FP eq}
		\partial_t f+(\cos\theta,\sin\theta)\cdot \nabla_x f =\partial_\theta^2 f,\ \text{on} \ [0,T]\times \mathbb{R}^2_+\times [-\pi,\pi],
	\end{equation}
	with the initial condition
	\begin{equation}
		\label{initial}
		f(0,x,\theta)=f_{in}(x,\theta),
	\end{equation}
	and the $2\pi$-periodic boundary condition with respect to $\theta$
	\begin{equation}
		\label{periodic}
		f(t,x,-\pi)=f(t,x,\pi)\text{ and }\partial_\theta f(t,x,-\pi)=\partial_\theta f(t,x,\pi).
	\end{equation}
	Here $f_{in}$ is any nonnegative Radon measure.
	Due to the linearity and the invariance under translations, it is enough to consider the case in which $f_{in}$ is a Dirac mass at some point $(x_1,x_2)=(0,a)$ for some $a\ge0$ with the direction $n_0\in \mathbb{S}^1$ without loss of generality.
	A particular case is when $a=0$. Then $n_0$ can be only in two directions (\textit{trapping} boundary condition), either with $\theta=0$ or $\theta =-\pi$.
	The polymer undergoes the full Brownian motion and the polymer will eventually approaches to the boundary $x_2=0$. The asymptotics in the limit $a \rightarrow \infty$ or $a\rightarrow 0$ are also interesting problems to be considered.

	\subsection{The trapping boundary condition}
	We consider the boundary condition of the 2D Fokker-Planck equation where the polymer chain aligns in the direction in which it makes the smallest angle with the  angle made by the tangent vector to the polymer arriving to the boundary.
	It is convenient to write the model in geometrical terms, using the variables $(x,n)$ and to explain how is the angle condition after the polymer reaches the boundary.
	
	The boundary conditions are obtained under the assumption that the only effect of the boundary is to impose a constraint on the directions connecting the monomers of the polymer chain. 
	The energy is defined by means of local interactions between the monomers of the polymer chain; i.e., the minimum of the energy corresponds to the polymers locally aligned. We assume that the same definition of energy is valid after the monomers reach the boundary, but this imposes constraints in the admissible directions.

	We assume that the probability measure $F(t,x,n)$ satisfies the following boundary conditions:
	\begin{equation}
		\label{boundaryconF}
		\begin{split}
			F(t,x_1,0,n)&=\ 0,\text{  if } n \neq \pm e,\\
			\partial_t F(t,x_1,0,e)&=\lim_{x_2\to 0^+}\int_{\nu\cdot n\ge 0,\ e\cdot n >0} \ F(t,x_1,x_2,n)(\nu\cdot n)\ dS_n \text{, and}\\
			\partial_t F(t,x_1,0,-e)&=\lim_{x_2\to 0^+}\int_{\nu\cdot n\ge 0,\ e\cdot n <0} \ F(t,x_1,x_2,n)(\nu\cdot n)\ dS_n,
		\end{split}
	\end{equation}where $e\eqdef (1,0)\in \mathbb{S}^1$ and $\nu=(0,-1)$ is the outward normal vector at the boundary. The weight $(\nu\cdot n)$ on the measure $dS_n$ is physical in the manner that the weight $(\nu\cdot n)$ describes the total net flux of particles in the direction $\nu$, whose velocities are in the direction of $n$. Then $(\nu\cdot n)dS_n$ describes the number of particles passing through the boundary per unit length of the boundary.
	
	In terms of $f(t,x,\theta)$ the boundary conditions are equivalent to
	\begin{equation}
		\label{boundarycon}
		\begin{split}
			f(t,x_1,0,\theta)&=\ 0,\text{  if } \theta \neq 0, \ -\pi,\\
			\partial_t f(t,x_1,0,0)&=\lim_{x_2\to 0^+}\int_{(-\pi/2,0]} \ f(t,x_1,x_2,\theta)(-\sin\theta)\ d\theta ,\\
			\partial_t f(t,x_1,0,-\pi)&=\lim_{x_2\to 0^+}\int_{[-\pi,-\pi/2)} \ f(t,x_1,x_2,\theta)(-\sin\theta)\ d\theta,
		\end{split} 
	\end{equation} and the periodic boundary condition. Physically, the first line \eqref{boundarycon}$_1$ describes that the polymer that reaches the boundary $x_2=0$ can have only two directions $\theta=0$ or $\theta=-\pi$. The second and the third lines \eqref{boundarycon}$_2$ and \eqref{boundarycon}$_3$ describe that the rate of changes in the probability distributions $f(t,x_1,0,\theta)$ with $\theta=0$ or $-\pi$ at the boundary can be expressed as the sum of the probability distribution that approaches to the boundary $x_2=0$ with either the angle $\theta \in(-\pi/2,0]$ or $\theta \in [-\pi,-\pi/2)$ with an additional multiplier $(-\sin\theta)$.
	\subsection{Reformulation of the problem}\label{sec:reformulation}
	Equivalently, each probability distribution \\$f(t,x_1,x_2,\theta)$ can further be decomposed into three parts as the following:
	$$
	f(t,x_1,x_2,\theta)=\rho_+(t,x_1)\delta(x_2)\delta(\theta)+\rho_-(t,x_1)\delta(x_2)\delta(\theta+\pi)+f_r(t,x_1,x_2,\theta),$$ where $\rho_+ (t),\ \rho_-(t) \in \mathcal{M}_+( \mathbb{R})$, and  $f_r(t)\in \mathcal{M}_+( \mathbb{R}^2_+\times \mathbb{S}^1)$ is supported on $x_2>0$ with $f_r(\{x_2=0\})\equiv 0$. Then obtaining a solution $f$ is also equivalent to obtaining the tuple $(f_r,\rho_+,\rho_-)$.
	
	By \eqref{FP eq}- \eqref{periodic} and \eqref{boundarycon}, one can check that the system which the tuple $(f_r,\rho_+,\rho_-)$ satisfies is now 
	\begin{equation}\label{FP eq tu}\begin{split}
			\partial_t f_r+(\cos\theta,\sin\theta)\cdot \nabla_x f_r &=\partial_\theta^2 f_r,\ \text{on} \ [0,T]\times \mathbb{R}\times (0,\infty)\times [-\pi,\pi],\\
			f_r(\{x_2=0\})&=0,\\
			f_r(t,x,-\pi)=f_r(t,x,\pi),\ &\partial_\theta f_r(t,x,-\pi)=\partial_\theta f_r(t,x,\pi),
		\end{split}
	\end{equation}
	and
	\begin{equation}\label{boundarycon tu}
		\begin{split}
			\partial_t \rho_++\partial_{x_1}\rho_+&=\lim_{x_2\to 0^+}\int_{(-\pi/2,0]} \ f_r(t,x_1,x_2,\theta)(-\sin\theta)\ d\theta \text{, and}\\
			\partial_t \rho_--\partial_{x_1}\rho_-&=\lim_{x_2\to 0^+}\int_{[-\pi,-\pi/2)} \ f_r(t,x_1,x_2,\theta)(-\sin\theta)\ d\theta.
		\end{split}
	\end{equation}

	In this paper, we are interested in measure valued solutions of \eqref{FP eq}, \eqref{initial}, \eqref{periodic}, and \eqref{boundarycon}. To this end, we will study a suitable adjoint problem and show that the adjoint problem has the maximum principle. The main tool for the well-posedness of the problem is the classical Hille-Yosida theorem, via which we will consider the corresponding elliptic problem associated to the adjoint problem which would also encode the information about the \textit{trapping} boundary condition \eqref{boundarycon} for polymers.
	
	Here we also introduce a system for the mass density  $\rho_1=\rho_1(t,x_2,\theta)$. Here $\rho_1$ is defined as 
	$$
	\rho_1(t,x_2,\theta)\eqdef \int_{\mathbb{R}}f_r(t,x_1,x_2,\theta)dx_1 ,
	$$ and is obtained via the integration of $f$ with respect to $x_1$ variable. It physically stands for the mass density distribution at each point $(t,x_2,\theta)$ in the set $[0,T]\times (0,\infty)\times [-\pi,\pi].$
	By integrating \eqref{FP eq tu} with respect to $x_1$ on $\mathbb{R}$, we obtain
	\begin{equation}\label{FP eq rho}\begin{split}
			\partial_t \rho_1+\sin\theta\partial_{x_2}\rho_1 &=\partial_\theta^2 \rho_1,\ \text{on} \ [0,T]\times  (0,\infty)\times [-\pi,\pi],\\
			\rho_1(\{x_2=0\})&=0,\\
			\rho_1(t,x_2,-\pi)=\rho_1(t,x_2,\pi),\ &\partial_\theta 	\rho_1(t,x_2,-\pi)=\partial_\theta \rho_1(t,x_2,\pi),
		\end{split}
	\end{equation}
	and
	\begin{equation}\label{boundarycon rho}
		\begin{split}
			\partial_t \int_{\mathbb{R}}dx_1\ \rho_+(t,x_1)&=\lim_{x_2\to 0^+}\int_{(-\pi/2,0]} \ \rho_1(t,x_2,\theta)(-\sin\theta)\ d\theta \text{, and}\\
			\partial_t \int_{\mathbb{R}}dx_1\ \rho_-(t,x_1)&=\lim_{x_2\to 0^+}\int_{[-\pi,-\pi/2)} \ \rho_1(t,x_2,\theta)(-\sin\theta)\ d\theta.
		\end{split}
	\end{equation}

	\subsection{Compactification of the phase space and a topological set \texorpdfstring{$X$}{}}In order to define the notion of weak solutions, we first define a topologically compact set $X$ for our phase space and a Banach space $C(X)$ under the uniform topology.
	
	\begin{definition}\label{setS2d}We define a set $X_0$ as $X_0\eqdef (-\infty,\infty)\times[0,\infty)\times [-\pi,\pi],  $ with the additional identifications that 
		for any $(x_1,x_2)\in (-\infty,\infty)\times [0,\infty)$, $(x_1,x_2,\pi)$ and $(x_1,x_2,-\pi)$ are identified.
		Then we define the extended space $X=X_0\cup \{\infty\}$ and endow it a natural topology inherited from $X_0$ complemented by the following set of neighborhoods of the point $\infty$: 
		$$\mathcal{O}_M=\{(x_1,x_2,\theta)\in (-\infty,\infty)\times[0,\infty)\times [-\pi,\pi]\ :\ |x_1|>M\text{ or }x_2>M\},\ M>0.$$
	\end{definition}
	Note that $X$ is topologically a compact set. 
	A $C^0$ function $\phi$ on this set can be identified with the bounded $C^0$ function $\phi$ on $\mathbb{R}\times[0,\infty)\times [-\pi,\pi]$ that satisfies 
	\begin{equation}
		\label{adjoint boundarycon3}
		\begin{split}
			\phi(x_1,x_2,-\pi)&=\phi(x_1,x_2,\pi)\ \text{for all} \ x\in \mathbb{R}\times [0,\infty),
		\end{split}
	\end{equation}and the limit of $$\lim_{|x|\rightarrow \infty} \sup_{\theta\in  [-\pi,\pi]} |\phi(x,\theta)|$$ exists.
	We denote the set of these $C^0$ functions as $C(X)$. We endow the set $C(X)$ a norm 
	$$\|\phi\|\eqdef \sup_{(x,\theta)\in X}|\phi (x,\theta)|,$$ so that the set $C(X)$ is now
	a Banach space. Also, we define the set $C^m(X)$ and $C^\alpha(X)$ for a non-negative integer $m$ and a multi-index $\alpha=(\alpha_1,\alpha_2,\alpha_3) \in \mathbb{N}^3_0$ as
	\begin{equation}\label{setc2s2d}C^m(X)\eqdef \left\{\phi\in C(X) : \|\phi\|_{C^m}\eqdef \sum_{|\alpha|\le m}\sup_{(x,\theta)\in X}|\partial^\alpha_{x,\theta}\phi (x,\theta)|<\infty
		\right\},\end{equation}
	and\begin{equation}\label{setXcalpha}C^\alpha(X)\eqdef \left\{\phi\in C(X) : \|{\phi}\|_{C^\alpha}\eqdef \sum_{\beta\le \alpha}\sup_{(x,\theta)\in X}|\partial^\beta_{x,\theta}{\phi} (x,\theta)|<\infty
		\right\},\end{equation}
	where we used the partial order notation for the multi-indices $\beta\le \alpha$ which means that $\forall i=1,2,3,\ 0\le\beta_i\le \alpha_i.$
	\subsection{Weak formulation}
	In this subsection, we define the notion of a weak solution. Motivated by the discussion in Section \ref{sec:reformulation}, we define the notion of a weak solution as follows: \begin{definition}\label{weaksoldef}
Let $X$ be the topological \textit{compatification} of $\mathbb{R}^2_+\times [-\pi,\pi]$ by means of Definition \ref{setS2d}. 		We call that a nonnegative Radon measure $f(t)\in \mathcal{M}_+(X)$ is a weak solution to the system \eqref{FP eq}, \eqref{initial}, \eqref{periodic}, and \eqref{boundarycon} if  we have\begin{equation}\label{decomposition}
			f(t,x_1,x_2,\theta)=\rho_+(t,x_1)\delta(x_2)\delta(\theta)+\rho_-(t,x_1)\delta(x_2)\delta(\theta+\pi)+f_r(t,x_1,x_2,\theta),\end{equation} for some $\rho_+ (t),\ \rho_-(t) \in \mathcal{M}_+( \mathbb{R})$, and some $f_r(t)\in \mathcal{M}_+(X)$ supported on $x_2>0$, and $(f_r,\rho_+,\rho_-)$ further solves
		\begin{multline}\label{eq.weaksol}\int_0^T dt\iiint_{\mathbb{R}^2_+\times [-\pi,\pi]} dx_1dx_2d\theta \left[\partial_t \phi +(\cos\theta,\sin\theta)\cdot \nabla_x \phi + \partial_\theta^2 \phi\right]f_r\\
			+\int_0^T dt\int_{\mathbb{R}}dx_1 \ [\partial_t\phi(t,x_1,0,0)+\partial_{x_1}\phi (t,x_1,0,0)]\rho_+(t,x_1)\\+\int_0^T dt\int_{\mathbb{R}}dx_1 \ [\partial_t\phi(t,x_1,0,-\pi)-\partial_{x_1}\phi (t,x_1,0,-\pi)]\rho_-(t,x_1)\\=\iiint_{\mathbb{R}^2_+\times [-\pi,\pi]} dx_1dx_2d\theta \ f(T)\phi(T)-\iiint_{\mathbb{R}^2_+\times [-\pi,\pi]} dx_1dx_2d\theta \ f_{in}\phi(0),\end{multline}for any $\phi\in C([0,T]\times X)$ such that $\phi \in C^{1,1,1,2}_{t,x_1,x_2,\theta}(\tilde{\Omega})$ with $\tilde{\Omega}\eqdef  (0,T)\times (-\infty,\infty)\times \{(0,\infty)\times (-\pi,\pi) \cup \{x_2=0\}\times \{(-\pi,-\pi/2)\cup(-\pi/2,0)\cup (0,\pi)\}\}\subset X$ and satisfies
		the boundary condition 
		\begin{equation}\notag
			\begin{split}
				\phi(t,x_1,0,\theta)&=\phi(t,x_1,0,\pi)\ \text{if}\ -\pi\le \theta<-\frac{\pi}{2},\ \\
				\phi(t,x_1,0,\theta)&=\phi(t,x_1,0,0)\ \text{if}\ -\frac{\pi}{2}<\theta\le 0,\ \\
				\phi(t,x_1,x_2,-\pi)&=\phi(t,x_1,x_2,\pi).
			\end{split}
		\end{equation}
	\end{definition}
	\begin{remark}
		We remark a posteriori that we recover the strong formulation from the weak formulation once we show that any weak solution is sufficiently regular. 
	\end{remark}
	\subsection{Main theorems}
	We are now ready to state the main theorems of the paper. \begin{theorem}\label{mainexistencethm}
		Suppose that $f_{in}=f_{in}(x,\theta)\in \mathcal{M}_+(X)$ is a non-negative Radon measure. Then there exists a unique measure solution $f$ to \eqref{FP eq}, \eqref{initial}, \eqref{periodic}, and \eqref{boundarycon} in the sense of Definition \ref{weaksoldef}.
	\end{theorem}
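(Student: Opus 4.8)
The plan is to establish existence and uniqueness through a semigroup/duality argument built on a well-posed adjoint problem, as already announced in the reformulation subsection. First I would identify the adjoint (backward) problem: for a test function $\phi$, the adjoint of the Fokker--Planck operator $\partial_t + (\cos\theta,\sin\theta)\cdot\nabla_x - \partial_\theta^2$ with the \emph{trapping} boundary condition \eqref{boundarycon} is the operator $-\partial_t - (\cos\theta,\sin\theta)\cdot\nabla_x - \partial_\theta^2$ acting on functions $\phi\in C(X)$ subject to the compatibility conditions appearing in Definition \ref{weaksoldef} (namely $\phi(t,x_1,0,\theta)=\phi(t,x_1,0,0)$ for $-\pi/2<\theta\le0$, $\phi(t,x_1,0,\theta)=\phi(t,x_1,0,\pi)$ for $-\pi\le\theta<-\pi/2$, plus periodicity), together with the ODE constraints $\partial_t\phi(t,x_1,0,0)+\partial_{x_1}\phi(t,x_1,0,0)=\cdots$ matching the density equations \eqref{boundarycon tu}. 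The key point is that this adjoint problem should generate a Feller semigroup on $C(X)$: the operator $-(\cos\theta,\sin\theta)\cdot\nabla_x-\partial_\theta^2$ with the trapping boundary conditions should be the generator of a strongly continuous, positivity-preserving, sub-Markovian contraction semigroup.

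The central analytic step is to verify the hypotheses of the Hille--Yosida--Lumer--Phillips theorem (or the Hille--Yosida theorem for Feller semigroups). Concretely, I would: (i) show that the adjoint operator $\mathcal{A}$ is densely defined and dissipative on $C(X)$ --- dissipativity follows from a maximum-principle argument, since at an interior maximum of $\phi$ the transport term vanishes and $-\partial_\theta^2\phi\ge0$, while at a boundary maximum the trapping/ODE conditions are arranged precisely so that the maximum cannot increase; (ii) establish the range condition, i.e. solvability of the resolvent equation $(\lambda - \mathcal{A})\phi = g$ for $\lambda>0$ and $g\in C(X)$. Step (ii) is where the real work lies: it amounts to the elliptic (more precisely, stationary degenerate-kinetic) boundary value problem
\[
\lambda\phi + (\cos\theta,\sin\theta)\cdot\nabla_x\phi - \partial_\theta^2\phi = g,
\]
with the coupled boundary ODE on $\{x_2=0, \theta\in\{0,-\pi\}\}$ and the compatibility conditions. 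I would solve this by a vanishing-viscosity or Galerkin approximation in $x$, using the hypoellipticity/regularity theory for Kolmogorov-type operators to get interior estimates, and using a barrier construction near the singular boundary points $\theta=\pm\pi/2$ (where the vector field is tangent to $\{x_2=0\}$) to control the trace; the sub-Markovian bound $\|\phi\|\le\lambda^{-1}\|g\|$ comes again from the maximum principle. Since the paper separately proves hypoellipticity and local Hölder continuity near the singular boundary, I would invoke those results here rather than reproving them.

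Once the Feller semigroup $(S(t))_{t\ge0}$ on $C(X)$ is in hand, I would \emph{define} the measure solution by duality: $f(t) \in \mathcal{M}_+(X)$ via $\langle f(t),\phi\rangle = \langle f_{in}, S(t)\phi\rangle$ for $\phi\in C(X)$. Positivity of $f(t)$ follows from positivity of $S(t)$; the total-mass (or mass-nonincreasing) bound follows from $S(t)\mathbf{1}\le\mathbf{1}$. To check that this $f(t)$ is a weak solution in the sense of Definition \ref{weaksoldef}, I would take $\phi\in C([0,T]\times X)$ in the stated regularity class, run the Duhamel/semigroup identity $\frac{d}{dt}\langle f(t),\phi(t)\rangle = \langle f(t), (\partial_t + \mathcal{A}^*)\phi(t)\rangle$ --- which is exactly the integrand of \eqref{eq.weaksol} --- and integrate in $t$ from $0$ to $T$; the decomposition \eqref{decomposition} into $\rho_\pm\delta$-parts plus a regular part $f_r$ supported in $\{x_2>0\}$ is read off from the structure of the boundary condition, since mass can accumulate on $\{x_2=0\}$ only in the two trapped directions $\theta\in\{0,-\pi\}$. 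For uniqueness, I would argue directly: if $f$ is any weak solution with $f_{in}=0$, then for each fixed $T$ and each admissible terminal data, choosing $\phi(t,\cdot)=S(T-t)\psi$ as test function in \eqref{eq.weaksol} kills the left-hand side and the $f_{in}$ term, forcing $\langle f(T),\psi\rangle=0$ for all $\psi$ in a dense set, hence $f(T)=0$; the only subtlety is justifying that $S(T-t)\psi$ has enough regularity to be an admissible test function, which I would handle by a density argument approximating $\psi$ by smooth data and using the interior hypoelliptic regularity of the semigroup.

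\textbf{Main obstacle.} I expect the hardest part to be the range condition in the Hille--Yosida step --- solving the stationary problem with the \emph{nonlocal, coupled} trapping boundary condition and controlling traces near the characteristic points $\theta=\pm\pi/2$ where the transport field is tangent to the boundary. This is genuinely a degenerate elliptic boundary value problem with a boundary ODE, and the compactness needed to pass to the limit in the approximation scheme has to come from the hypoelliptic smoothing rather than from uniform ellipticity; making the trace estimates at the singular boundary points uniform is the delicate technical core, and it is precisely the reason the paper develops its own Hölder-regularity theory near the singular boundary.
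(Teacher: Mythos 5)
The high-level strategy here — set up the adjoint (backward) problem, prove it generates a Markov/Feller semigroup on $C(X)$ via a Hille--Yosida-type argument, and then recover the measure solution by duality and prove uniqueness by testing against the semigroup — is exactly the paper's plan, so the skeleton of your argument is sound. However, there are two concrete problems with the way you fill in the range-condition step that would derail the proof as written.

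First, you have the geometry of the difficulty backwards. You call $\theta=\pm\pi/2$ ``the characteristic points where the transport field is tangent to the boundary'' and propose barriers there. But the $x$-transport field is $(\cos\theta,\sin\theta)$, and its normal component on $\{x_2=0\}$ is $\sin\theta$, which vanishes at $\theta\in\{0,\pm\pi\}$, not at $\theta=\pm\pi/2$. At $\theta=\pm\pi/2$ the field is \emph{perpendicular} to $\{x_2=0\}$, so these are not grazing points at all. The genuinely singular grazing set, where the paper's barrier/self-similar-profile machinery and H\"older regularity live, is $\{x_2=0,\ \theta\in\{0,-\pi\}\}$. The point $(x_2,\theta)=(0,-\pi/2)$ is problematic for an entirely different reason that your proposal does not address at all: the trapping boundary condition \eqref{adjoint boundarycon} forces the adjoint solution $\phi(t,x_1,0,\cdot)$ to take one constant value on $(-\pi/2,0]$ and a \emph{different} constant value on $[-\pi,-\pi/2)$, so any nontrivial solution is discontinuous at $(0,-\pi/2)$. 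Consequently the natural domain of the adjoint generator is not dense in $C(S)$ (respectively $C(X)$) and one cannot run the Liggett/Hille--Yosida machinery directly. This is precisely why the paper introduces the $\kappa$-regularization \eqref{regularized modified adjoint boundarycon}/\eqref{regularized adjoint boundarycon} of the trapping boundary condition (replacing the Heaviside-type transition by a smooth relaxation ODE), proves the pregenerator and range conditions for the regularized operator, and only afterwards passes $\kappa\to0$ to recover the original boundary condition. Your plan contains no mechanism for curing this discontinuity, so the claimed dense domain and the range condition would not go through as stated.

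Second, your proposal to invoke the paper's hypoellipticity (Section~\ref{sec:hypoellipticity}) and H\"older-continuity-near-the-singular-boundary (Section~\ref{sec:holder cont}) results inside the existence proof would be circular: those are proved \emph{after} Theorem~\ref{mainexistencethm}, as regularity statements about the unique weak solution whose existence is being established (and the H\"older theorem is about the forward solution $f$, not the adjoint). The paper instead obtains the required compactness by an elementary route that sidesteps hypoelliptic estimates: replace the Laplace--Beltrami operator $\partial_\theta^2$ by the bounded jump operator $Q^\epsilon$ of \eqref{discretized}, solve the resulting transport-plus-bounded-perturbation problem by a contraction-mapping/Duhamel argument (Lemmas~\ref{contraction1} and~\ref{contraction2d}), derive uniform-in-$\epsilon$ $C^{(1,2)}$ and $C^{(1,1,2)}$ bounds purely from maximum principles applied to the equation and its derivatives (Propositions~\ref{1d derivative estimates} and~\ref{2d derivative estimates}), and pass $\epsilon\to0$. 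Your ``vanishing viscosity in $x$ or Galerkin, with compactness from hypoelliptic smoothing'' is a genuinely different scheme and would require developing that smoothing theory from scratch rather than quoting later sections. These two points together mean the proposal, while aimed in the right direction, is missing the central technical idea of the paper's proof.
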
In addition, the solution satisfies the following properties:
	\begin{theorem}\label{maintheorem}
		The unique weak solution $f$ of Theorem \ref{mainexistencethm} satisfies the following properties:
		\begin{enumerate}
			\item \label{hypoellipticitymaintheorem}(Hypoellipticity) Define the domain $\tilde{\Omega}\eqdef  (0,T)\times (-\infty,\infty)\times \{(0,\infty)\times (-\pi,\pi) \cup \{x_2=0\}\times \{(-\pi,-\pi/2)\cup(-\pi/2,0)\cup (0,\pi)\}\}$. For any point $(t_0,x_0,\theta_0)\in \tilde{\Omega}$, there exists $r>0$ such that the weak solution $f$ in the sense of Definition \ref{weaksoldef} is $C^\infty$ on 
			$B_r(t_0,x_0,\theta_0)\cap \tilde{\Omega}$.
			\item (Local H\"older continuity in the domain including the singular boundary)\label{holdercontinuity} The weak solution $f$ further satisfies the H\"older regularity in $x_2$ and $\theta$ variables  $f(t,x_1,\cdot,\cdot)\in C^{0,\alpha}_{x_2, loc}([0,\infty) ; C^{0,3\alpha}_{\theta,loc}([-\pi,-\pi/2)\cup(-\pi/2,\pi])),$ for $t\ge 0$, $x_1\in \mathbb{R}$ for any $\alpha \in (0,1/6)$ in the domain including the singular boundary $x_2=0$ and $\theta\in \{-\pi,0\}$. \item (Accumulation of mass on the boundary) \label{mainconmass1}For any $0\le t_1<t_2<\infty$, we have
			\begin{multline*}
				\ \  \quad
			 \int_{\mathbb{R}}dx_1\int_{\{x_2>0\}}dx_2 \int_{-\pi}^\pi d\theta \ f_r(t_1,x_1,x_2,\theta)\\
				 \ge \int_{\mathbb{R}}dx_1\int_{\{x_2>0\}}dx_2 \int_{-\pi}^\pi d\theta \ f_r(t_2,x_1,x_2,\theta).\end{multline*}
			\item (Conservation of total mass)\label{mainconmass2} The total mass on the domain including the boundary is conserved; for any $t\ge 0$, 
			$$\frac{d}{dt}\int_{[-\pi,\pi]}d\theta\int_{[0,\infty)}dx_2 \int_{\mathbb{R}}dx_1 \ f(t,x,\theta)=0.$$ 
			\item (Long-chain asymptotics) \label{longtimeasymptoticmain}For all $f_{in}=f_{in}(x,\theta)\in \mathcal{M}_+(X)$, we have the convergence 
			$$\iiint_{\mathbb{R}^2_+\times [-\pi,\pi]}dxd\theta \ f_r(t,x,\theta)\rightharpoonup 0,$$ as $t\rightarrow \infty.$
		\end{enumerate}
	\end{theorem}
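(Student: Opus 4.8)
The plan is to prove that the mass of the regular part $f_r$ escapes to the boundary as the chain length $t\to\infty$, i.e. that $\iiint f_r(t,x,\theta)\,dx\,d\theta$ tends weakly to $0$. The first observation is that by Theorem 1.2(3) the quantity $m(t)\eqdef \iiint_{\mathbb{R}^2_+\times[-\pi,\pi]} f_r(t,x,\theta)\,dx\,d\theta$ is non-increasing in $t$, hence it converges to some limit $m_\infty\ge 0$; the goal is to show $m_\infty=0$. Combined with the conservation of total mass in Theorem 1.2(4), this is equivalent to showing that all the mass eventually ends up in $\rho_+ + \rho_-$, i.e. on the trapping boundary. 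I would reduce to the $x_1$-averaged system \eqref{FP eq rho}--\eqref{boundarycon rho} for $\rho_1(t,x_2,\theta)$, since the assertion only involves integrals that are invariant under the $x_1$-translation; then the claim becomes: starting from any finite nonnegative measure, the solution of the free-transport-plus-$\theta$-diffusion equation on the half-line $x_2>0$ with absorbing (trapping) condition at $x_2=0$ loses all its mass as $t\to\infty$.

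The key mechanism is the following dichotomy for the underlying stochastic process $(X_2(t),\Theta(t))$ on $[0,\infty)\times\mathbb{S}^1$: $X_2$ evolves by $\dot X_2 = \sin\Theta$ while $\Theta$ performs a Brownian motion on the circle. Since $\Theta(t)$ is an ergodic diffusion on $\mathbb{S}^1$ with uniform stationary law, the time-average of $\sin\Theta(t)$ converges to $\int_{-\pi}^{\pi}\sin\theta\,\frac{d\theta}{2\pi}=0$, but the process $\int_0^t \sin\Theta(s)\,ds$ is (after centering) a martingale-type additive functional whose fluctuations are of order $\sqrt t$ by the CLT for additive functionals of ergodic diffusions. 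Consequently $X_2(t)$ behaves diffusively and, being a one-dimensional recurrent-type process, hits $0$ almost surely; once it hits the trapping boundary the monomer is absorbed into $\rho_\pm$ forever. The plan is therefore: (i) show the survival probability $\mathbb{P}_{(x_2,\theta)}(\tau_0 > t)\to 0$ as $t\to\infty$ for every starting point, where $\tau_0$ is the hitting time of $x_2=0$; (ii) conclude by dominated convergence, integrating against $f_{in}$, that $m(t)=\int \mathbb{P}_{(x_2,\theta)}(\tau_0>t)\,d(\text{averaged }f_{in})\to 0$. For step (i), rather than a probabilistic argument one can use a Lyapunov/entropy functional: test \eqref{FP eq rho} against a suitable sub-solution $\psi(x_2,\theta)$ of the adjoint stationary problem, e.g. a function growing like $x_2$ for large $x_2$ and compatible with the boundary condition, to get a differential inequality forcing $m_\infty=0$; the hypoellipticity and Hölder regularity from Theorem 1.2(1)--(2) guarantee that $f_r$ is regular enough (away from the singular boundary, and with controlled traces on $x_2=0$ by part (2)) for these integrations by parts and boundary-flux identities to be justified rigorously.

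More concretely, I would argue by contradiction. Suppose $m_\infty>0$. Using the monotonicity of $m$ and the flux identity obtained by integrating \eqref{FP eq rho} over $x_2>0$ and $\theta$, the boundary flux $\lim_{x_2\to 0^+}\int_{\{\sin\theta<0\}} \rho_1(t,x_2,\theta)(-\sin\theta)\,d\theta = -\dot m(t)$ must be integrable in $t$ and in particular tends to $0$ along a sequence $t_k\to\infty$. I would then study the rescaled densities $\rho_1(t_k+\cdot,\cdot,\cdot)$, extract (via the a priori hypoelliptic estimates of Theorem 1.2(1) and the Hölder bounds of part (2), which give local compactness) a limit $\rho_\infty(t,x_2,\theta)$ solving the same equation on $[0,\infty)$ with \emph{zero} boundary flux and total mass $m_\infty$. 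Zero flux at $x_2=0$ together with the $2\pi$-periodicity in $\theta$ and the structure of the operator forces $\rho_\infty$ to be independent of $t$ and to be a steady state; but the only nonnegative finite steady state of $\sin\theta\,\partial_{x_2}\rho = \partial_\theta^2\rho$ on $x_2>0$ with no flux through $x_2=0$ and finite total mass is $\rho_\infty\equiv 0$ — indeed multiplying by $x_2$ and integrating (or using that $\partial_\theta^2$ has no nontrivial null space among periodic functions transverse to constants, while the transport term cannot be balanced by a finite-mass profile) yields a contradiction with $m_\infty>0$. Hence $m_\infty=0$.

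The main obstacle I anticipate is justifying the compactness/limit-extraction step and the boundary-flux bookkeeping near the \emph{singular} part of the boundary $\{x_2=0,\ \theta\in\{-\pi,0\}\}$, where the solution is only Hölder (Theorem 1.2(2)) and the trapping condition is nonlocal; in particular one must be careful that no mass is "reflected" back from the trapping set, and that the weak formulation \eqref{eq.weaksol} indeed encodes the monotone absorption used throughout. A clean way around the PDE compactness issues is to lean on the probabilistic representation: prove directly that $\mathbb{P}_{(x_2,\theta)}(\tau_0<\infty)=1$ for the process $(X_2,\Theta)$ — which follows from the oscillation/recurrence of $\int_0^t\sin\Theta\,ds$ — and that $\mathbb{P}(\tau_0>t)\downarrow \mathbb{P}(\tau_0=\infty)=0$ by monotone convergence, then transfer this to $f_r$ via the Feynman–Kac / duality with the adjoint semigroup constructed in the proof of Theorem 1.1. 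I would present the probabilistic route as the main argument and remark that the Lyapunov-function computation gives an alternative purely PDE proof.
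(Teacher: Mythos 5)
Your main (probabilistic) route is genuinely different from the paper's and is, in essence, correct — the paper proves the same fact entirely by comparison-principle / sub- and super-solution arguments applied to the adjoint problem. Concretely, the paper reduces to the $x_1$-integrated adjoint problem for $\rho_1$ and then proves (Lemma \ref{lemma.eventually zero}) that the stationary adjoint solution with zero boundary data vanishes identically, (Lemma \ref{stationaryx2infty}) that $\psi^\pm_\infty\to\frac12$ as $x_2\to\infty$ (via a symmetrization trick), and (Lemma \ref{longtimelemma1d}) that the time-dependent adjoint solutions $\psi_\pm,\psi_0$ relax to $\psi^\pm_\infty,0$; each step is carried out by constructing explicit supersolutions (exponential-in-$x_2$ and Mathieu/Kummer-type profiles) and invoking the maximum principle of Lemma \ref{maxprinciple}. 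Your recurrence argument — that $\Theta$ is ergodic on $\mathbb S^1$, so $\int_0^t\sin\Theta\,ds$ is, after solving the Poisson equation $\partial_\theta^2u=-\sin\theta$ (i.e. $u=\sin\theta$), a martingale plus a bounded correction with quadratic variation $\asymp t$, hence crosses every level, hence $\tau_0<\infty$ a.s. — is the probabilistic dual of Lemma \ref{lemma.eventually zero}, and it also encodes what the paper proves in Lemma \ref{stationaryx2infty}, namely that no mass escapes to $x_2=\infty$. This buys a much shorter argument in principle, at the cost of importing standard but nontrivial facts (CLT/LIL for additive functionals, time-change for local martingales) and leaning on the Markov process that the paper itself constructs via Hille--Yosida; you should at least state explicitly that the semigroup and process $(X_2,\Theta)$ being used are the ones built in Sections 2--5, since the trapping boundary is not the standard absorbing boundary and required the regularization there.

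Two caveats. First, in the probabilistic route you assert $\mathbb P(\tau_0>t)\downarrow0$ "by monotone convergence"; the useful version of this is dominated convergence after integrating against the $x_1$-marginal of $f_{in}$, and the duality formula $m(t)=\int\mathbb P_{(x_2,\theta)}(\tau_0>t)\,d\tilde f_{in}$ itself needs a one-line justification through the weak formulation \eqref{eq.weaksol}, since the natural terminal datum $\mathbf 1_{\{x_2>0\}}$ is not in $C(X)$ and must be approximated, exactly as in the paper's Section \ref{sec: uniqueness}. Second, your alternative PDE/Lyapunov argument has a real gap: "multiplying by $x_2$ and integrating" presupposes a finite first moment for the putative stationary density, which you cannot assume, and more fundamentally the compactness step (extracting $\rho_\infty$ from $\rho_1(t_k+\cdot)$) does not by itself exclude the mass leaking off to $x_2\to\infty$ rather than concentrating at the trapping boundary — this is precisely the issue the paper handles through Lemma \ref{finalsupsol} and Lemma \ref{stationaryx2infty}. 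If you keep the PDE alternative, it needs a supersolution of the type the paper builds (e.g. $1+\eta-e^{-\lambda x_2}(1-\lambda\sin\theta-\frac{\lambda^2}{8}\cos 2\theta)$) to control the $x_2\to\infty$ tail before the contradiction closes.
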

	We remark that the solution is very weak. We suppose that the initial distribution $f_{in}$ is a nonnegative Radon measure and the solution to the problem is also a Radon measure, so we do not expect to obtain an $L^\infty$ estimate for the measures for instance. Thus, we deal with suitable adjoint problems that have the maximum principle and are closely related to the generators of stochastic processes. Then the existence and the uniqueness of the solution to the original problem can be obtained by duality.

	\subsection{Adjoint problems}\label{adjointproblem section}In this section, we introduce corresponding dual adjoint problems to \eqref{FP eq tu}-\eqref{boundarycon tu}. We remark that the adjoint problems have the maximum principle and are closely related to the generators of stochastic processes.
	
	Motivated by the weak formulation in Definition \ref{weaksoldef}, we define a backward-in-$t$ dual adjoint problem for the system \eqref{FP eq tu}-\eqref{boundarycon tu} as
	\begin{equation}\label{backwardintimeproblem}\begin{split}
			-\partial_t\phi-(\cos\theta,\sin\theta)\cdot \nabla_x \phi &= \partial_\theta^2 \phi,\ \text{on} \ [0,T]\times X,\\
			\partial_t\phi(t,x_1,0,0)&=-\partial_{x_1}\phi (t,x_1,0,0),\\
			\partial_t\phi(t,x_1,0,-\pi)&=\partial_{x_1}\phi (t,x_1,0,-\pi),
		\end{split}
	\end{equation}
	with the initial condition
	\begin{equation}\label{backward initial}
		\phi(T,x,\theta)=\phi_T(x,\theta)\in C(X),
	\end{equation}
	and the boundary condition 
	\begin{equation}\label{backward boundary}
		\begin{split}
			\phi(t,x_1,0,\theta)&=\phi(t,x_1,0,\pi)\ \text{if}\ -\pi\le \theta<-\frac{\pi}{2},\ \text{and}\\
			\phi(t,x_1,0,\theta)&=\phi(t,x_1,0,0)\ \text{if}\ -\frac{\pi}{2}<\theta\le 0,\\
			\phi(t,x_1,x_2,-\pi)&=\phi(t,x_1,x_2,\pi).
		\end{split}
	\end{equation}In order to change the system to a forward-in-$t$ system, we make a change of variables $t\mapsto t'=T-t$ and obtain
	\begin{equation}\label{FP adjoint eq}   \begin{split}
			\partial_t\phi-(\cos\theta,\sin\theta)\cdot \nabla_x \phi &= \partial_\theta^2 \phi,\ \text{on} \ [0,T]\times X,\\
			\partial_t\phi(t,x_1,0,0)&=\partial_{x_1}\phi (t,x_1,0,0),\\
			\partial_t\phi(t,x_1,0,-\pi)&=-\partial_{x_1}\phi (t,x_1,0,-\pi),
		\end{split}
	\end{equation}
	with the initial condition
	\begin{equation}
		\label{adjoint initial}
		\phi(0,x,\theta)=\phi_{in}(x,\theta)\in C(X),
	\end{equation}
	and the boundary condition for $t\ge 0$
	\begin{equation}
		\label{adjoint boundarycon}
		\begin{split}
			\phi(t,x_1,0,\theta)&=\phi(t,x_1,0,\pi)\ \text{if}\ -\pi\le \theta<-\frac{\pi}{2},\ \text{and}\\
			\phi(t,x_1,0,\theta)&=\phi(t,x_1,0,0)\ \text{if}\ -\frac{\pi}{2}<\theta\le 0.
		\end{split}
	\end{equation}
	Also, we require the periodic boundary condition with respect to $\theta$ as
	\begin{equation}
		\begin{split}
			\label{adjoint periodic}
		\phi(t,x_1,x_2,-\pi)=\phi(t,x_1,x_2,\pi).
		\end{split}
	\end{equation}Thus we observe that the initial $\phi_{in}=\phi_{in}(x_1,x_2,\theta)\in C(X)$ is assumed to satisfy
	\begin{equation}\label{forward adjoint initial}
		\begin{split}
			\phi_{in}(x_1,0,\theta)&=\phi_{in}(x_1,0,\pi)\ \text{if}\ -\pi\le \theta<-\frac{\pi}{2},\ \\
			\phi_{in}(x_1,0,\theta)&=\phi_{in}(x_1,0,0)\ \text{if}\ -\frac{\pi}{2}<\theta\le 0\\
			\phi_{in}(x_1,x_2,-\pi)&=\phi_{in}(x_1,x_2,\pi).
		\end{split}
	\end{equation}

	Also, we introduce the dual adjoint problem for the system \eqref{FP eq rho}-\eqref{boundarycon rho} for the total mass density in $x_1$ variable.
	The (forward-in-time) dual adjoint problem of the system that a test function $\varphi=\varphi(t,x_2,\theta)$ satisfies is
	\begin{equation}\label{FP modified adjoint eq}\begin{split}
			\partial_t\varphi-\sin\theta\partial_{x_2} \varphi &= \partial_\theta^2 \varphi,\ \text{on} \ [0,T]\times (0,\infty)\times [-\pi,\pi],\\
			\partial_t\varphi(t,0,0)&=\partial_t\varphi(t,0,-\pi)=0,
		\end{split}
	\end{equation}
	with the initial condition
	\begin{equation}
		\label{modified adjoint initial}
		\varphi(0,x_2,\theta)=\varphi_{in}(x_2,\theta)\in C([0,\infty)\times [-\pi,\pi)),
	\end{equation}
	and the boundary condition for $t\ge 0$
	\begin{equation}
		\label{modified adjoint boundarycon}
		\begin{split}
			\varphi(t,0,\theta)&=\varphi(t,0,-\pi)\ \text{if}\ -\pi\le \theta<-\frac{\pi}{2},\ \text{and}\\
			\varphi(t,0,\theta)&=\varphi(t,0,0)\ \text{if}\ -\frac{\pi}{2}<\theta\le 0.
		\end{split}
	\end{equation}
	Also, we require the periodic boundary condition with respect to $\theta$ as
	\begin{equation}
		\label{modified periodic}
		\begin{split}
		\varphi(t,x_2,-\pi)=\varphi(t,x_2,\pi).
	\end{split}
	\end{equation}
	So we assume $\varphi_{in}=\phi_{in}(x_2,\theta)\in C([0,\infty)\times [-\pi,\pi))$ satisfies
	\begin{equation}\label{forward adjoint initial0}
		\begin{split}
			\varphi_{in}(0,\theta)&=\varphi_{in}(0,-\pi)\ \text{if}\ -\pi\le \theta<-\frac{\pi}{2},\ \\
			\varphi_{in}(0,\theta)&=\varphi_{in}(0,0)\ \text{if}\ -\frac{\pi}{2}<\theta\le 0\\
			\varphi_{in}(x_2,-\pi)&=\varphi_{in}(x_2,\pi).
		\end{split}
	\end{equation}on the initial condition $\varphi_{in}\in C([0,\infty)\times [-\pi,\pi))$.
	\subsection{Main novelties and strategies}\label{sec:main difficulties}In this subsection, we discuss several difficulties that the analysis of the polymer model with the boundary involves. The main difficulties and our corresponding novel approaches include the followings.
	
	\subsubsection{Semi-flexible polymers and the non-local trapping boundary conditions}In this paper, we cast a kinetic model for semi-flexible polymers and novel non-local boundary conditions for the kinetic PDE that models semi-flexible polymers. The novel non-local boundary condition, which we call as the \textit{trapping} boundary condition throughout this paper, has been derived under very careful analysis of the dynamics of semi-flexible polymers that minimizes polymer's bending energy at the boundary. Different from the standard Fokker-Planck-Kolmogorov type operator $(\partial_t-v\cdot\nabla_x-\Delta_v)$ in the whole space $v\in \mathbb{R}^{d}$, we obtain the Fokker-Planck-Kolmogorov-like operator $(\partial_t-n\cdot\nabla_x-\Delta_n)$ on the sphere $n\in \mathbb{S}^{d-1}$. In particular, this requires some modifications for the approaches for the singular boundaries that were developed in \cite{MR3237885,MR3897919,hwang2019nonuniqueness,MR3788197,MR3436235}. In this paper, we provide the first application of the kinetic equation to study semi-flexible polymers in a rigorous mathematical manner. 
	\subsubsection{Effects of the singular boundary}One of the main difficulties in our analysis arises from the presence of the singular boundary; it has been well-known that the kinetic equation with the boundaries have singular boundaries which are called the \textit{grazing} boundaries \cite{MR3897919,hwang2019nonuniqueness,MR3436235,MR3237885,MR3788197,ARMAjang,KL-2018-CPAM,MR2679358,MR3592757,MR2855537,MR3961294,MR3023388}.   In our problem, the singular boundaries occur on the boundary $x_2=0$ at $\theta=0$ and $\theta=-\pi$. Compared to the previous results on the mathematical analysis of the kinetic Fokker-Planck equation with boundaries, the velocity in this paper is not a homogeneous function, and therefore the singularities have to be studied locally.	The boundaries have a \textit{non-symmetric} behavior given that the characteristics enter into the domain in parts of the boundary, and they leave the domain in other parts of the boundary. 
	 Near the singular domain, we construct sub- and super-solutions via the self-similar profiles and derive the maximum principle to prove the H\"older regularity of solutions near the singular boundary. 
	\subsubsection{A pathological set}
	An additional difficulty arises from the analysis near the pathological set $\{x_2=0\}\times \left\{\theta=-\frac{\pi}{2}\right\}$ and this is one of the special properties that the kinetic polymer model has. This set refers to the polymers that approach the boundary $x_2=0$ in the perpendicular direction at right angles. Recall that the \textit{trapping} boundary condition \eqref{boundarycon} that we obtain in the derivation of the model in Appendix \ref{sec:formalderivation} creates the boundary conditions \eqref{adjoint boundarycon} for the adjoint problem. Then, we remark that a solution to the adjoint problem \eqref{FP adjoint eq}-\eqref{adjoint periodic} that are smooth in $x_2>0$ does not have a limit as $(x_2,\theta)\rightarrow (0,-\pi/2)$ by following the perpendicular trajectory if the two values at the boundary $\phi(t,x_1,0,0)$ and $\phi(t,x_1,0,\pi)$ are different. This makes it difficult to define a compact topological phase-space $S$. Thus, it does not guarantee that the set of continuous functions $C(S)$ is a Banach space under the uniform topology, which is crucial for the application of the classical Hille-Yosida theorem. As a remedy, we regularize the boundary condition on $x_2=0$ and $\theta\in[-\pi,0]$ such that the boundary condition \eqref{adjoint boundarycon} no longer has a jump discontinuity around the pathological set $(x_2,\theta)=(0,-\pi/2)$. Then, this allows us to define a compact phase-space $S$ and we can show that the domain $\mathcal{D}(\mathcal{L})$ of the operator $\mathcal{L}$ is dense in $C(S)$. This will be used for the proof that the operator $\mathcal{L}$ is indeed a Markov generator. It turns out that it is effective to define the regular boundary conditions as the solutions to the differential equations \eqref{regularized modified adjoint boundarycon} and \eqref{regularized adjoint boundarycon}, whose solutions are smooth, converge to the original Heaviside-type discontinuous boundaries, and have the decaying properties that naturally come from the construction of the boundary equations. 
	
	\subsubsection{Proof of the hypoellipticity}
Away from the singular boundary and the pathological set that we introduce above, we provide a proof of the hypoellipticity using the techniques developed by H\"ormander \cite{Hormander}. Indeed, the standard kinetic Fokker-Planck operator $(\partial_t-v\cdot\nabla_x-\Delta_v)$ for $v\in \mathbb{R}^d$ has been well-known to make the solution smoothing in all variables as shown in \cite{MR3237885,MR3897919,hwang2019nonuniqueness,MR3788197,MR3436235} away from the singular boundary, but the hypoellipticity for the operator $(\partial_t-n\cdot\nabla_x-\Delta_n)$ on the sphere $n\in \mathbb{S}^{d-1}$ has not been studied well. In this paper, we provide a much simpler proof of the hypoellipticity away from the singular boundary using the techniques developed by H\"ormander \cite{Hormander} and the use of the extension of the domain beyond the boundary, which can also be applied to the operator $(\partial_t-v\cdot\nabla_x-\Delta_v)$ for $v\in \mathbb{R}^d$, not just for the operator $(\partial_t-n\cdot\nabla_x-\Delta_n)$ on the sphere $n\in \mathbb{S}^{d-1}$. This will be provided in Section \ref{sec:hypoellipticity}.

	\subsubsection{Use of the generators of stochastic processes}\label{sec:difficulties Cadleg}
	We are considering the generators of stochastic processes in which the particle
	reach a point or a set and has an instantaneous jump to another point. Thus, we have
	to determine how the dynamics would be afterwards. There are several mathematical
	subtleties as well as some examples of difficulties that
	arise in some cases.
	
	In principle the main evolution of a particle and the boundary effect that we need to consider come from the following stochastic differential equation:%
	\begin{equation}\notag
		dX_2=\sin\Theta dt\ \ ,\ \ d\Theta=dW \label{A3}%
	\end{equation}
	where $X_2>0$ and $\Theta\in [-\pi,\pi]$. Here we neglect the $X_1$ variable for the moment as it is in the whole line without boundaries. Then the trajectories reach the set $X_2=0$
	with probability one in a finite time and this happens along the interval $-\pi\le \Theta\le 0.$
	We will assume that after the trajectory reaches the point $X_2\left(
	t_{0}\right)  =0,\ \ \Theta\left(  t_{0}\right)  \in (-\pi/2,0),$ it jumps instantaneously
	to $\Theta=0.$ Similarly, we assume that after the trajectory reaches the point $X_2\left(
	t_{0}\right)  =0,\ \ \Theta\left(  t_{0}\right)  \in (-\pi,-\pi/2),$ it jumps instantaneously
	to $\Theta=-\pi.$ 
	
	Then one problem that arises is the following. The usual theory of \textit{Markov} processes
	as considered in Liggett \cite{Liggett} and other classical literature assumes that the trajectories of the process are
	\textit{Cadleg} (continuous to the right and with a well defined limit by the left), and
	this is not the case for the processes that we are considering with the instantaneous jumps. Indeed, suppose that we write $\xi_{t}=\left(  X_2\left(
	t\right)  ,\Theta\left(  t\right)  \right)  $ and suppose that $X_2\left(
	t_{0}^{-}\right)  =0.$ Then we have to alternatives: (1) To impose $\Theta\left(
	t_{0}\right)  =\Theta_{0}<0.$ In this case, we have continuity by the left, but then
	the solution would not be continuous by the right, because $\Theta\left(
	t_{0}^{+}\right)  =0$ or $-\pi$. (2) Therefore, the only possible alternative in order to have a
	\textit{Cadleg} process is to define $\Theta\left(  t_{0}\right)  =0$ or $-\pi$ depending on the value of $\Theta(t_0^-)$. Then
	$\Theta\left(  t_{0}^{-}\right)  =\Theta_{0}<0$ (the limit exists), but
	$\Theta\left(  t_{0}^{+}\right)  =\Theta\left(  t_{0}\right)  =0$ or $-\pi$.
	Then the difficulty is that this process is not defined if we consider as an initial
	value the point $\left(  X_2,\Theta\right)  =\left(  0,\Theta_{0}\right)  $ with
	$\Theta_{0}<0.$ Therefore we cannot define the semigroup or the generator at
	this point. Indeed, we recall that given a function $u$ continuous in
	the space in which we consider the problem we have
	\begin{equation}\notag
		S\left(  t\right) u\left(  x_2,\theta\right)  =\mathbb{E}\left(  {u}\left(
		\xi_{t}^{x_2,\theta}\right)  \right)  \label{A2}%
	\end{equation}
	where $\xi_{t}^{x}$ is the stochastic process starting at $x$ at the time
	$t=0.$ This is not defined for the points $\left(  X_2,\Theta\right)  =\left(
	0,\Theta_{0}\right)  $ with $\Theta_{0}<0.$
	
	Seemingly this poses difficulties when it comes to applying the standard theory of
	\textit{Markov} processes. From the technical point of view, one of the possible ways of dealing
	with this problem is to define a different stochastic process in which all the
	points $\left(  X_2,\Theta\right)  =\left(  0,\Theta_{0}\right)  $ with
	$\Theta_{0}\in (-\pi/2,0]$ are just identified as a single point $(0,0)$ and all the
	points $\left(  X_2,\Theta\right)  =\left(  0,\Theta_{0}\right)  $ with
	$\Theta_{0}\in [-\pi,-\pi/2)$ are just identified as a single point $(0,-\pi)$ . In particular,
	continuous functions $g$ in that topological space take the same values in those subintervals as well as the solution. That new stochastic process does not
	allow to determine at which point the trajectories arrive to $X_2=0.$ 
	
	As discussed above, the natural topological set $S$ and the range $C(S)$ for the Markov generator $\mathcal{L}$ (cf. Definition \ref{setS} and Section \ref{sec:operator L}) which is naturally associated to the stochastic process for the polymer dynamics can be obtained via the identification of the subintervals $\Theta_{0}\in (-\pi/2,0]$ and $\Theta_{0}\in [-\pi,-\pi/2)$ as single pointes $(0,0)$ and $(0,-\pi)$, respectively. However, we observe that the set $S$ via these specific identifications is noncompact because of the point $(X_2,\Theta)=(0,-\pi/2)$; the continuous functions on the set $S$ has a jump discontinuity at $(0,-\pi/2)$. Therefore, as in Section \ref{sec:1dboundaryreg} and Section \ref{sec:2dboundaryreg}, we consider the regularization of the \textit{trapping} boundary condition and define the set $S$ and $X$ without the identifications introduced above. The new regularized boundary conditions are given by differential equations on the boundary, and it will be shown that the solutions (i.e., the boundary conditions) will converge to the original boundary condition with the jump discontinuities by passing it to the limit after we show the existence of Markov semigroups via the classical Hille-Yosida theorem.
	
	\subsubsection{Long-chain asymptotics and the control at infinity}Though the equation that we consider is a linear PDE, the analysis in the paper still involves other technical difficulties besides the regularizations of the jump-discontinuous boundary condition and the Laplace-Beltrami operator introduced above. The difficulties involve the constructions of sub- and super-solutions via self-similar profiles in the form of special functions in Section \ref{sec:holder cont} and Section \ref{sec:longtime asymptotics}. In particular, it is crucial to study the stationary equation and the steady-states in order to obtain the long-chain $(t\to \infty)$ asymptotics and to conclude that the mass are being accumulated at $x_2=0$ and the size of polymers increases linearly in length in the coordinate $x_1\in\mathbb{R}$. One needs to have the well-posedness of the stationary equation and the regularity of the steady-states. Then, one extends the analysis and have a control of $t$-dependent solutions at infinity as $t\to \infty$ and $x_2\to \infty$. This also involves the construction sub- and super-solutions under several types of boundary conditions. Then the maximum principle guarantees the boundedness of the solution.

	\subsection{Outline of the paper} 
	The paper is organized as follows. In Section \ref{sec:dual mass density}, we first study the dual adjoint problem for the total mass density $\rho_1$ in $x_1$ variable, which still shares a similar boundary-value structure on $x_2=0$ with that of the original dual problem for $f$. In Section \ref{sec: 1d wellposedness}, we prove that the dual adjoint problem for the mass density is well-posed by means of an associated elliptic problem, the Hille-Yosida theorem, and the construction of sub- and super-solutions for the comparison principle. In Section \ref{sec:2d section start} and \ref{sec:2d wellposedness}, we introduce the dual adjoint problem for the particle distribution $f(t,x_1,x_2,\theta)$ and obtain the global wellposedness of the dual problem. Finally, in Section \ref{sec: uniqueness}, we prove that a unique weak solution $f$ exists by the duality argument. In Section \ref{sec:hypoellipticity}, we then show that the weak solution that we obtained in the previous sections is indeed locally smooth in the whole domain except for the singular boundary set $x_2=0$ and $\theta=0 \text{ or }-\pi.$ In Section \ref{sec:holder cont}, we prove that the solution is indeed locally H\"older continuous even at the singular boundary. In Section \ref{sec:mass conservation}, we introduce that the total mass is conserved.  In Section \ref{sec:longtime asymptotics}, we study the long-chain asymptotics of the polymer distribution by studying the stationary equation and prove that all the monomer will eventually be trapped at either $(x_2,\theta)=(0,0)$ or $(0,-\pi).$ Lastly, we introduce in Appendix \ref{sec:formalderivation} a formal derivation of the boundary-value problem for the polymer model under the \textit{trapping} boundary condition in a half-plane.
	
	\section{The adjoint problem for the mass density \texorpdfstring{$\rho_1$}{}}\label{sec:dual mass density} In this section, we study a problem in a reduced dimension, which, however, still encodes the same major boundary effect at $x_2=0$. The reduced problem that we construct actually encodes the dynamics of the first-moment-in-$x_1$ variable, which physically means the total mass density distribution at each point $(x_2,\theta)$ with the total length parameter equal to $t.$ We denote this distribution as $\rho_1$ and define it as
	$$
	\rho_1(t,x_2,\theta)\eqdef \int_{\mathbb{R}}f_r(t,x_1,x_2,\theta)dx_1 .
	$$ 
	The corresponding system of our interest throught Section \ref{sec:dual mass density} and Section \ref{sec: 1d wellposedness} is the adjoint problem \eqref{FP modified adjoint eq}- \eqref{modified periodic}.

	\subsection{Asymptotics for large values of \texorpdfstring{$\lowercase{x}_2$}{}}\label{x2infty intro}The analysis of both the 1-dimensional reduced and the 2-dimensional original adjoint problems and their asymptotics crucially depend on the study of the stationary equation of the reduced problem below and on the full understanding of the solutions for large values of $x_2$. More precisely, we will study the stationary equation for $\bar{\varphi}=\bar{\varphi}(x_2,\theta)$
	\begin{equation}\label{stationary1d eq}\begin{split}
			-\sin\theta\partial_{x_2} \bar{\varphi} &= \partial_\theta^2 \bar{\varphi},\ \text{on} \ [0,T]\times (0,\infty)\times (-\pi,\pi),
		\end{split}
	\end{equation}
	with the boundary conditions 
	\begin{equation}
		\label{stationary1d boundary}
		\begin{split}
			\bar{\varphi}(0,\theta)&=\bar{\varphi}(0,-\pi)\ \text{if}\ -\pi\le \theta<-\frac{\pi}{2},\\
			\bar{\varphi}(0,\theta)&=\bar{\varphi}(0,0)\ \text{if}\ -\frac{\pi}{2}<\theta\le 0,
		\end{split}
	\end{equation}and
	\begin{equation}
		\label{stationary periodic}
		\bar{\varphi}(x_2,-\pi)=\bar{\varphi}(x_2,\pi)\text{ and }	\partial_\theta\bar{\varphi}(x_2,-\pi)=\partial_\theta\bar{\varphi}(x_2,\pi).
	\end{equation}
	Our main interest is to prove that the mass does not escape to the infinity $x_2=\infty$ and will eventually be concentrated on $x_2=0$. For this, we will first prove that the solution to the adjoint problem which has the boundary value $\equiv1$ at $x_2=0$ and $\theta\in[-\pi,-\pi/2)\cup(-\pi/2,0]$ will eventually converge to $\equiv1$ for any $x_2>0$ and $\theta \in [-\pi,\pi]$. The proof involves the study on the stationary equation \eqref{stationary1d eq}-\eqref{stationary periodic} and we obtain the bounds for $\varphi$ via the contruction of supersolutions and the maximum principle. This will be discussed in detail in Section \ref{x2 asymptotics section}.
	\subsection{The regularization of the boundary condition}\label{sec:1dboundaryreg} In this section, we first introduce the regularization of the \textit{trapping} boundary condition \eqref{modified adjoint boundarycon} for the analysis of the adjoint problem \eqref{FP modified adjoint eq}-\eqref{modified periodic}. As mentioned in Section \ref{sec:main difficulties}, we want to construct a topological set $S$ that is compact so that the space of continuous functions $C(S)$ on $S$ is a Banach space in the uniform topology. This will allow us to apply the classical Hille-Yosida theorem for the existence of the solutions to the adjoint problem.
	
	Recall the \textit{trapping} boundary conditions \eqref{modified adjoint boundarycon} and \eqref{FP modified adjoint eq}$_2$: 
	\begin{equation}\label{adjoint trapping boundary}\begin{split}
			\partial_t\varphi(t,0,\theta)&=0\ \text{if}\ -\pi\le \theta<-\frac{\pi}{2},\\
			\partial_t \varphi(t,0,\theta)&=0\ \text{if}\ -\frac{\pi}{2}<\theta\le 0,\\
			\varphi(t,0,\theta)&=\varphi(t,0,-\pi)\ \text{if}\ -\pi\le \theta<-\frac{\pi}{2},\ \text{and}\\
			\varphi(t,0,\theta)&=\varphi(t,0,0)\ \text{if}\ -\frac{\pi}{2}<\theta\le 0.
		\end{split}
	\end{equation}Since a solution $\varphi$ which satisfies the conditions above can have a discontinuity at $(x_2,\theta)=(0,-\pi/2)$, we regularize the boundary conditions as follows. 
	For each fixed small $\kappa>0$, we define the regularized boundary condition for $\theta \in [-\pi,0] $ as the solution $\varphi_\kappa(t,0,\theta)$ of
	\begin{equation}
		\label{regularized modified adjoint boundarycon}
		\begin{split} \partial_t\varphi_\kappa(t,0,\theta)&=\frac{1}{\kappa}\bigg(\chi_\kappa(\theta)\varphi_\kappa(t,0,0)+(1-\chi_\kappa(\theta))\varphi_\kappa(t,0,-\pi)-\varphi_\kappa(t,0,\theta)\bigg).\end{split}
	\end{equation}
	Here a smooth function $\chi_\kappa$ is defined as 
	\begin{equation}\label{smoothchi}
		\chi_{\kappa}(\theta)\eqdef \begin{cases}
			&1, \text{ if } -\frac{\pi}{2}+\kappa<\theta \le 0,\\
			&0, \text{ if }-\pi\le \theta < -\frac{\pi}{2}-\kappa,\\
			&\text{smooth and monotone, 
				if }-\frac{\pi}{2}-\kappa\le \theta \le -\frac{\pi}{2}+\kappa.
		\end{cases}
	\end{equation}Then note that if $\theta=0$ or $\theta=-\pi,$ we have
	$\partial_t \varphi_\kappa (t,0,\theta)=0.$ Therefore, $\varphi_\kappa (t,0,\theta)$ is constant for $t\ge 0$ if $\theta=0$ or $-\pi.$
	Solving the ODE \eqref{regularized modified adjoint boundarycon}, we have that for $\theta\in[-\pi,0],$
	\begin{multline}\label{boundary after solving ode}
		\varphi_\kappa(t,0,\theta)=\bigg(\chi_\kappa(\theta)\varphi_\kappa(0,0,0)+(1-\chi_\kappa(\theta))\varphi_\kappa(0,0,-\pi)\bigg)\\+ e^{-\frac{t}{\kappa}}\left(\varphi_\kappa(0,0,\theta)-(\chi_\kappa(\theta)\varphi_\kappa(0,0,0)+(1-\chi_\kappa(\theta))\varphi_\kappa(0,0,-\pi))\right).
	\end{multline}Then observe that we can formally recover the boundary condition \eqref{adjoint trapping boundary} as $\kappa\rightarrow 0$ for $t\ge 0$. In a regularized problem, we let $\varphi_\kappa(0,x_2,\theta)=\varphi_{in}(x_2,\theta)$ which is given.
	
	In Section \ref{sec:dual mass density} and \ref{sec: 1d wellposedness}, we will consider the adjoint problem \eqref{FP modified adjoint eq}, \eqref{modified adjoint initial}, and \eqref{modified periodic} with the regularized boundary condition \eqref{regularized modified adjoint boundarycon}. We will denote the solution as $\varphi_\kappa.$ After we show the existence of such a solution $\varphi_\kappa$ via the Hille-Yosida theorem, we take the limit $\kappa\rightarrow 0$ and recover the solution $\varphi$ to the original adjoint problem \eqref{FP modified adjoint eq}-\eqref{modified periodic}.

	\subsection{A topological set \texorpdfstring{$S$}{}}\label{setS}In order to consider the Hille-Yosida theorem for the existence of a generator of the semigroup, we would like to define a compact domain $S$ and the Banach space $C(S)$. We first define a topologically compact set $S$ in the uniform topology:
	\begin{definition}We define a set $S_0$ as $\{x_2\ge 0\}\times [-\pi,\pi] $ with the additional identification that we identify $(x_2,\pi)$ and $(x_2,-\pi)$ for any $x_2\ge 0$. Then we define the extended space $S=S_0\cup \{\infty\}$ and endow it a natural topology inherited from $[0,\infty)\times [-\pi,\pi]$ complemented by the following set of neighborhoods of the point $\infty$: 
		$$\mathcal{O}_M=\{(x_2,\theta)\in [0,\infty)\times [-\pi,\pi]\ :\ x_2>M\},\ M>0.$$
	\end{definition}
	Note that $S$ is topologically a compact set. A $C^0$ function ${\varphi_\kappa}$ on this set can be identified with the bounded $C^0$ function ${\varphi_\kappa}$ on $S_0$ that satisfies the periodic boundary condition
	\begin{equation}
		\begin{split}
			{\varphi_\kappa}(x_2,-\pi)&={\varphi_\kappa}(x_2,\pi)
			\text{ for } \ x_2\ge 0,
		\end{split}
	\end{equation}and the limit of $$\lim_{x_2\rightarrow \infty} \sup_{\theta\in  [-\pi,\pi]} |{\varphi_\kappa}(x_2,\theta)|$$ exists.
	We denote this set of functions as $C(S)$. We endow the set $C(S)$ a norm 
	\begin{equation}\label{uniform norm}\|{\varphi_\kappa}\|\eqdef \sup_{(x_2,\theta)\in S}|{\varphi_\kappa} (x_2,\theta)|,\end{equation} so that the set $C(S)$ is now
	a Banach space. Also, we define the sets $C^m(S)$ and $C^\alpha(S)$ for a non-negative integer $m$ and $\alpha=(\alpha_1,\alpha_2) \in\mathbb{N}_0^2$ as  
	\begin{equation}\label{setc2}C^m(S)\eqdef \left\{{\varphi_\kappa}\in C(S) : \|{\varphi_\kappa}\|_{C^m}\eqdef \sum_{|\alpha|\le m}\sup_{(x_2,\theta)\in S}|\partial^\alpha_{x_2,\theta}{\varphi_\kappa} (x_2,\theta)|<\infty
		\right\},\end{equation} and\begin{equation}\label{setcalpha}C^\alpha(S)\eqdef \left\{{\varphi_\kappa}\in C(S) : \|{\varphi_\kappa}\|_{C^\alpha}\eqdef \sum_{\beta\le \alpha }\sup_{(x_2,\theta)\in S}|\partial^\beta_{x_2,\theta}{\varphi_\kappa} (x_2,\theta)|<\infty
		\right\},\end{equation} where we used the partial order notation for the multi-indices $\beta\le \alpha$ which means that $\forall i=1,2,\ 0\le\beta_i\le \alpha_i.$
	We also define a set $U$ as $$U\eqdef \{(x_2,\theta)\in S_0 : (x_2,\theta)\neq (0,0)\}.
	$$
	
	\begin{remark}
		Seemingly it is not possible to construct a compact set $S$ that defines a good domain that contains the information about the adjoint boundary conditions \eqref{adjoint trapping boundary} due to the fact that the functions can be discontinuous at $(x_2,\theta)=(0,-\pi/2)$. Then the information about the adjoint boundary conditions \eqref{adjoint trapping boundary} is now in the generator of the semigroup.
	\end{remark}
	\subsection{Definition of the operators \texorpdfstring{$\mathcal{L} $}{} and the domain \texorpdfstring{$ \mathcal{D}(\mathcal{L})$}{}}\label{sec:operator L}
	Given the adjoint problem \eqref{FP modified adjoint eq}-\eqref{modified adjoint initial} with the regularized boundary condition \eqref{regularized modified adjoint boundarycon}, we will rewrite the equation \eqref{FP modified adjoint eq}$_1$ in the following equivalent form: 
	$$\partial_t \varphi_\kappa = \mathcal{L}\varphi_\kappa, \ t\in [0,T],\ \varphi_\kappa(t,\cdot)\in \mathcal{D}(\mathcal{L}),\text{ if } t\ge 0,\ \varphi_\kappa(0,x_2,\theta)=u_\kappa(x_2,\theta),$$ for an operator $\mathcal{L}$ and its domain $\mathcal{D}(\mathcal{L})$. 
	In Section \ref{sec: 1d wellposedness}, we will prove that we can define Markov semigroups $S(t)$ whose corresponding generator is the operator $\mathcal{L}$ via the Hille-Yosida theorem. We will first introduce the definitions of the operator $\mathcal{L}$ and its domain $\mathcal{D}(\mathcal{L})$.
	
	We first define the operator $\mathcal{L}$ as 
	\begin{equation}\label{operator L}\mathcal{L}\eqdef \sin\theta \partial_{x_2}+\partial^2_\theta.\end{equation}
	Depending on various types of the possible boundary dynamics, we can define the operator $\mathcal{L}$ for the different boundary conditions. In this paper, we discuss the case where the polymer that approaches to the boundary $x_2=0$ becomes trapped on the boundary $x_2=0$ as we observe in the derivation in the half-plane (Appendix \ref{sec:formalderivation}). We call the boundary that gives this dynamics the \textit{trapping} boundary: the condition \eqref{boundarycon} for $f$. Note that the boundary conditions for the adjoint problem that we consider in this section is \eqref{adjoint trapping boundary} and its regularized version \eqref{regularized modified adjoint boundarycon}.

	The domain $\mathcal{D}(\mathcal{L})$ of the operator $\mathcal{L}$ with the regularized adjoint boundary condition \eqref{regularized modified adjoint boundarycon} is defined as 
	\begin{multline}\label{domainD}\mathcal{D}(\mathcal{L})=\bigg\{{u_\kappa}, \mathcal{L}{u_\kappa}\in C^1(S):\text{ for }\theta \in[-\pi,0],\\ (\mathcal{L}{u_\kappa})(0,\theta)=\frac{1}{\kappa}\bigg(\chi_\kappa(\theta)u_\kappa(0,0)+(1-\chi_\kappa(\theta))u_\kappa(0,-\pi)-u_\kappa(0,\theta)\bigg)\bigg\}.\end{multline}
	In order to prove that there is a one-to-one correspondence between \textit{Markov} generators on $C(S)$ and \textit{Markov} semigroups on $C(S)$ via the Hille-Yosida theorem, we are interested in proving that the operator $\mathcal{L}$ defined as above with the \textit{trapping} boundary condition satisfies
	$$\mathcal{R}(I-\lambda \mathcal{L})=C(S),$$ for $\lambda>0$. Here, $\mathcal{R}(I-\lambda \mathcal{L})$ means the range of the operator $I-\lambda \mathcal{L}$. Therefore, we have to consider elliptic problems with the form
	\begin{equation}
		\lambda \mathcal{L}{u_\kappa}={u_\kappa}-g \label{A1}%
	\end{equation}
	where $g\in C(S)$ and
	$
	\mathcal{L}=\sin\theta \partial_{x_2}+\partial^2_\theta.
	$
	\subsection{General discussions on the operator \texorpdfstring{$\mathcal{L}$}{} and a stochastic process} In this section, we discuss the relationship between the operator $\mathcal{L}$ and a stochastic process in general. For the general discussion below, we consider the situation of the limiting system $\kappa\rightarrow 0$ without posing the regularization of the boundary in this subsection. 
	
	We basically consider the adjoint problem \eqref{FP modified adjoint eq}-\eqref{modified periodic} in the set $x_2>0$ and $\theta \in [-\pi,\pi]$. Locally near $\theta \approx 0^-$, let us consider $x_2>0,\ \theta\in\mathbb{R}$ at the moment. We need to
	impose that ${u}$ is constant in the whole half-line $x_2=0,\ \theta\leq0.$
	This is due to the fact that the points on the line should be identified as introduced in Section \ref{sec:difficulties Cadleg}, in order to have a well defined \textit{Cadleg} stochastic process.
	Then the following issues arise. The first one is that the maximum principle
	property which is a characteristic of the \textit{Markov} pregenerators fails. Indeed, the same
	type of arguments can be made for other kinds of elliptic/parabolic operators $\mathcal{L}$ including the following ones:%
	\[
	\mathcal{L}=-\frac{\partial}{\partial x}+\frac{\partial^{2}}{\partial y^{2}%
	} ,\  x>0,\  y\in\mathbb{R\ }\text{\ or }y\in\left[  -\pi,\pi\right]
	\text{ with periodic boundary conditions},
	\]%
	\[
	\mathcal{L}=\frac{\partial^{2}}{\partial x^{2}}+\frac{\partial^{2}}{\partial y^{2}%
	} ,\ x>0,\ y\in\mathbb{R\ }\text{\ or }y\in\left[  -\pi,\pi\right]
	\text{ with periodic boundary conditions}.
	\]
	The simplest example yielding the same type of difficulty is the following
	\[
	\mathcal{L}=-\frac{\partial}{\partial x}\text{ \ in }x>0.
	\]
	In this case the trajectories move at constant speed in the direction of
	decreasing $x.$ They reach the boundary of the domain $x_2=0$ in finite time. We
	remark that in order to solve (\ref{A1}) we need to impose a suitable boundary
	condition at $x_2=0,$ and we want to see if it is possible to impose a condition
	${u}={u}\left[  g\right]  $ at the boundary. In the other cases that
	have greater dimensionality we impose that ${u}$ is constant along the
	line $x_2=0.$ The constant would depend also on $g$.
	
	The simplest case corresponds to taking the constant at the boundary
	$\mathcal{L}{u}=0.$ This corresponds to the adjoint \textit{trapping} boundary condition before the regularization of the boundary condition. The
	question is to determine if this is the only possible boundary condition that
	can be imposed. In order to see how we impose the boundary condition
	$\mathcal{L}{u}=0$ we argue as follows. We use the formula of the semigroup
	(\ref{A2}) to see that in the case of \textit{trapping} boundary conditions we have
	$S\left(  t\right)  {u}\left(  x_2=0\right)  =0.$ Then, since the generator
	is the derivative of the semigroup we obtain $\mathcal{L}{u}\left(  x_2=0\right)  =0.$
	This gives the boundary condition for \textit{trapping} boundary conditions. In the
	evolution equation this would be equivalent to $\partial_{t}{u}\left(
	x_2=0\right)  =0.$ Notice that this boundary condition implies in particular
	that $\mathcal{L}\left(  1\right)  =0$ as could be expected for a \textit{Markov} pregenerator.
	Then, the boundary value problem associated to the \textit{trapping} boundary
	condition is (\ref{A1}) with the boundary condition ${u}\left(
	x_2=0\right)  =g\left(  x_2=0\right)  .$

	We can also consider other types of boundary conditions, that would not be related,
	however, to \textit{trapping} boundary conditions. For instance, if we impose that the
	particle arriving to $x_2=0$ has a probability of jumping to an arbitrary orientation $n$, we would
	obtain a boundary condition with the form:%
	\[
	\mathcal{L}{u}\left(  x_2=0\right)  =\int_{0}^{\infty}\mu\left(  y\right)  \left[
	{u}\left(  y\right)  -{u}\left(  x_2=0\right)  \right]  dy.
	\]
	Notice that this gives a different type of boundary condition than before. We
	have, as expected for a \textit{Markovian} pregenerator, the condition $\mathcal{L}\left(
	1\right)  =0.$ Notice that this shows that the constant value at the boundary
	is not uniquely determined. Using the previous boundary condition we obtain
	the boundary condition:%
	\[
	\lambda\int_{0}^{\infty}\mu\left(  y\right)  \left[  {u}\left(  y\right)
	-{u}\left(  x_2=0\right)  \right]  dy={u}\left(  x_2=0\right)  -g\left(
	x_2=0\right).
	\]
	The rationale behind this is that it is possible to have different stochastic
	processes. Notice that the property of \textit{Markov} pregenerator holds. Indeed, in
	the operators above, we observe that, at the minimum of ${u}$, we always have
	$\mathcal{L}{u}\left(  x\right)  \geq0$, and this gives the minimum property.
	
	The same interpretation can also be made using the other operators, including the one
	that appears in the case of polymers. Notice that we can define a domain for
	the operator imposing that the whole function $\mathcal{L}{u}$ is continuous in the
	space under consideration.
	Then we argue that the only \textit{Markov} process with paths having the property that the path $(X_2,\Theta)$ is continuous and that the equations \eqref{A3} hold if $x_2>0$ is the one having the \textit{trapping} boundary conditions.
	\begin{remark}
		Notice that other processes in which there is a large-angle separating the
		particle from the plane $x_2=0$ would result in the equations that
		are the adjoint of the one above.
		We should remark that we can have continuous $X_2\left(  t\right)  $ except for the angle
		$\Theta\left(  t\right)  $ switching from $0$ to $\pi$ and vice versa by keeping
		$X_2\left(  t\right)  =0.$
		There are other \textit{Markov} processes that are not continuous in $X_2$ different from
		the one with \textit{trapping} boundary conditions and they contain jumps in $X_2$.
	\end{remark}
	\subsection{The Hille-Yosida theory }In this subsection, we introduce the Hille-Yosida theory of the semigroups of linear partial differential operators on a general Banach space. We follow the approach of the Hille-Yosida theory that can be found in the book of Liggett \cite{Liggett}. For the Banach space $C(S)$, we first define a \textit{Markov pregenerator} on it as follows:
	\begin{definition}
		A linear operator $\Omega$ on $C(S)$ with the domain $\mathcal{D}(\Omega)$ is said to be a \textit{Markov pregenerator} if it satisfies the following conditions:
		\begin{enumerate}
			\item $1\in\mathcal{D}(\Omega)$ and $\Omega 1=0$.
			\item $\mathcal{D}(\Omega)$ is dense in $C(S)$. 
			\item If ${u_\kappa}\in \mathcal{D}(\Omega),$ $\lambda\ge 0$, and ${u_\kappa}-\lambda\Omega {u_\kappa}=g,$ then 
			$$\min_{\zeta\in S}{u_\kappa}(\zeta)\ge \min_{\zeta\in S}g(\zeta).$$
		\end{enumerate} 
	\end{definition}
	Then we observe that the following proposition holds:
	\begin{proposition}\label{pregenerator proposition}
		The operator $\mathcal{L}$ is a Markov pregenerator.
	\end{proposition}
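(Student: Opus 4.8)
The plan is to verify the three defining properties from the definition of a Markov pregenerator directly, since the main structural work (the construction of the compact set $S$, the space $C(S)$, and the domain $\mathcal{D}(\mathcal{L})$ encoding the regularized trapping boundary condition) has already been done above. First I would check property (1): the constant function $1$ belongs to $\mathcal{D}(\mathcal{L})$ because $\mathcal{L}1=\sin\theta\,\partial_{x_2}1+\partial_\theta^2 1=0\in C^1(S)$, and the boundary relation in \eqref{domainD} is satisfied trivially at $\theta\in[-\pi,0]$ since $\chi_\kappa(\theta)\cdot 1+(1-\chi_\kappa(\theta))\cdot 1-1=0=(\mathcal{L}1)(0,\theta)$. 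Hence $\mathcal{L}1=0$.

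Next I would address property (2), density of $\mathcal{D}(\mathcal{L})$ in $C(S)$. The idea is to exhibit a sufficiently rich subclass of $\mathcal{D}(\mathcal{L})$ — for instance functions that are smooth, compactly supported in $x_2$ (so the behavior near $\infty$ is irrelevant), satisfy the periodic condition in $\theta$, and near $x_2=0$ have $\partial_\theta^2 u$ matching the prescribed value $\tfrac{1}{\kappa}(\chi_\kappa(\theta)u(0,0)+(1-\chi_\kappa(\theta))u(0,-\pi)-u(0,\theta))$ on $\theta\in[-\pi,0]$. Since the right-hand side of the boundary relation is an affine functional of the boundary trace $u(0,\cdot)$, one can prescribe $u$ and its $x_2$-derivatives at $x_2=0$ freely enough (a Borel/Whitney-type extension in the $x_2$ direction, or explicitly adding a corrector term supported near $x_2=0$ of the form $x_2\,\psi_\kappa(\theta)$ to absorb the mismatch) so that the compatibility condition holds while the $C(S)$-distance to an arbitrary target function is made arbitrarily small. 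I would then invoke Stone–Weierstrass, or simply a mollification argument, to conclude that such functions are dense in $C(S)$.

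The heart of the matter, and the step I expect to be the main obstacle, is property (3): the maximum-principle (dissipativity) estimate. Given $u_\kappa\in\mathcal{D}(\mathcal{L})$, $\lambda\geq 0$, and $u_\kappa-\lambda\mathcal{L}u_\kappa=g$, I must show $\min_S u_\kappa\geq\min_S g$. The plan is to let $\zeta_*=(x_2^*,\theta^*)$ be a point where $u_\kappa$ attains its minimum over the compact set $S$ (possibly the point at infinity, handled separately by the existence of the limit defining $C(S)$), and to examine the three cases. In the interior $x_2^*>0$: there $u_\kappa$ is $C^2$ near $\zeta_*$ with $\partial_{x_2}u_\kappa(\zeta_*)=0$, $\partial_\theta u_\kappa(\zeta_*)=0$, and $\partial_\theta^2 u_\kappa(\zeta_*)\geq 0$, so $\mathcal{L}u_\kappa(\zeta_*)=\sin\theta^*\cdot 0+\partial_\theta^2 u_\kappa(\zeta_*)\geq 0$, whence $g(\zeta_*)=u_\kappa(\zeta_*)-\lambda\mathcal{L}u_\kappa(\zeta_*)\leq u_\kappa(\zeta_*)=\min_S u_\kappa$. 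On the boundary $x_2^*=0$ with $\theta^*\in[-\pi,0]$: here the domain condition \eqref{domainD} forces $\mathcal{L}u_\kappa(0,\theta^*)=\tfrac{1}{\kappa}\big(\chi_\kappa(\theta^*)u_\kappa(0,0)+(1-\chi_\kappa(\theta^*))u_\kappa(0,-\pi)-u_\kappa(0,\theta^*)\big)$; since $\chi_\kappa(\theta^*)\in[0,1]$ the bracket is a convex combination of $u_\kappa(0,0)$ and $u_\kappa(0,-\pi)$ minus $u_\kappa(0,\theta^*)$, and because $u_\kappa(0,\theta^*)$ is the global minimum this is $\geq 0$, so again $\mathcal{L}u_\kappa(\zeta_*)\geq 0$ and the same conclusion follows. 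The remaining delicate case is the boundary $x_2^*=0$ with $\theta^*\in(0,\pi)$ (the portion of $\{x_2=0\}$ where characteristics point into the domain, $\sin\theta^*>0$); here I would argue that $u_\kappa$ cannot have an \emph{interior-in-}$\theta$ minimum on this arc without it also being an interior minimum of the one-sided problem, using that $\partial_\theta u_\kappa(\zeta_*)=0$, $\partial_\theta^2 u_\kappa(\zeta_*)\geq 0$, and that the inward-pointing characteristic direction $-\partial_{x_2}$ combined with $\sin\theta^*>0$ gives $\partial_{x_2}u_\kappa(\zeta_*)\geq 0$ by Hopf-type considerations, so $\mathcal{L}u_\kappa(\zeta_*)=\sin\theta^*\,\partial_{x_2}u_\kappa(\zeta_*)+\partial_\theta^2 u_\kappa(\zeta_*)\geq 0$. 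Finally, if the minimum is attained at the point $\infty$, one uses the defining property of $C(S)$ that $\lim_{x_2\to\infty}\sup_\theta|u_\kappa|$ exists together with $u_\kappa-\lambda\mathcal{L}u_\kappa=g$ to reduce to a limit of the previous cases. In all cases $g(\zeta_*)\leq u_\kappa(\zeta_*)$, i.e. $\min_S g\leq\min_S u_\kappa$, which is property (3), and this completes the verification that $\mathcal{L}$ is a Markov pregenerator.
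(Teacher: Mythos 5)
Your plan has the same overall shape as the paper's proof: you verify the three pregenerator conditions, and for the minimum-principle property (3) your case analysis --- interior minimum, boundary $x_2=0$ with $\theta\in[-\pi,0]$ handled via the convex-combination form of the domain condition, boundary $x_2=0$ with $\theta\in(0,\pi)$ via the sign of $\sin\theta$, and the point at infinity --- is exactly the one the paper runs. Two small remarks there: your appeal to ``Hopf-type considerations'' for $\theta^*\in(0,\pi)$ is more than is needed, since $\partial_{x_2}u_\kappa(0,\theta^*)\ge 0$ follows immediately from $\zeta_*$ being a minimum over $S$ along the inward $x_2$-direction (which is the paper's one-line argument); and your disposal of the minimum at $\infty$ by ``reducing to a limit of the previous cases'' is vaguer than the paper's contradiction argument, which explicitly manufactures a finite near-minimizer and derives a sign contradiction from $\sin\psi\,\partial_{x_2}u_\kappa\ge 0$. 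Property (1) you handle identically to the paper.

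Where the two genuinely diverge is the density argument, property (2), and there your proposal has a gap. The domain condition constrains $\mathcal{L}u_\kappa(0,\theta)=\sin\theta\,\partial_{x_2}u_\kappa(0,\theta)+\partial_\theta^2u_\kappa(0,\theta)$, not $\partial_\theta^2 u_\kappa$ alone (as you write at one point). The $x_2$-derivative at $x_2=0$ is free Whitney data, but $\partial_\theta^2 u_\kappa(0,\theta)$ is determined by the boundary trace $u_\kappa(0,\cdot)$, so once that trace is chosen, the constraint is an equation for $\partial_{x_2}u_\kappa(0,\theta)$ with coefficient $\sin\theta$. Your corrector $x_2\,\psi_\kappa(\theta)$ shifts only the $\sin\theta\,\partial_{x_2}$ term; at $\theta=0$ and $\theta=-\pi$, where $\sin\theta=0$, it contributes nothing, and the constraint degenerates to $\partial_\theta^2 u_\kappa(0,0)=0=\partial_\theta^2 u_\kappa(0,-\pi)$, which an arbitrary smooth target $\xi$ need not satisfy. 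The forced choice $\psi_\kappa(\theta)\sim(\mathrm{RHS}-\partial_\theta^2\xi(0,\theta))/\sin\theta$ blows up near $\theta\in\{0,-\pi\}$ unless the boundary trace is also perturbed near those two points. The paper sidesteps exactly this: it cuts off to a $\theta$-range $[-\pi+2\delta,-2\delta]$ on which $\sin\theta$ is bounded away from zero, solves a uniformly parabolic problem for the corrector there, and extends the corrector by appropriate constants on $[-\pi,-\pi+2\delta]$ and $[-2\delta,0]$ so that the boundary relation holds trivially near the endpoints. To close your argument you would need the analogous repair: a small $C^0$-perturbation of the trace to kill $\partial_\theta^2\xi(0,\cdot)$ at $\theta=0,-\pi$ to the right order, followed by the choice of $\partial_{x_2}$-data with the degenerate zeros of $\sin\theta$ cancelled so that $\mathcal{L}u_\kappa$ remains $C^1$ up to the corner.
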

	\begin{proof}
		We first observe that by the definition of the domain $\mathcal{D}(\mathcal{L})$ from \eqref{domainD}, we have
		$1\in \mathcal{D}(\mathcal{L})$ and $\mathcal{L}1=0.$
		
		Also, we claim that  $\mathcal{D}(\mathcal{L})$ is dense in $C(S)$.  Choose any $\xi\in C^\infty(S)$. Since $C^\infty(S)$ is dense in $C(S)$, it suffices to show that there exists a distribution $\xi_\varepsilon\in \mathcal{D}(\mathcal{L})$ such that 	$$\|\xi-\xi_\varepsilon\|<\varepsilon,$$ for any $\varepsilon>0$ where the uniform norm $\|\cdot\|$ is defined as \eqref{uniform norm}. For each $\xi$ and $\varepsilon>0$, we will construct $\xi_\varepsilon$ from $C^\infty(S)$ such that it also satisfies$$(\mathcal{L}\xi_\varepsilon)(0,\theta)=\frac{1}{\kappa}\bigg(\chi_\kappa(\theta)\xi_\varepsilon(0,0)+(1-\chi_\kappa(\theta))\xi_\varepsilon(0,-\pi)-\xi_\varepsilon(0,\theta)\bigg),\text{ for }\theta \in(-\pi,0).$$ 
		This is equivalent to
		$$\kappa (\sin\theta \partial_{x_2}+\partial_\theta^2)\xi_\varepsilon(0,\theta)=\chi_\kappa(\theta)\xi_\varepsilon(0,0)+(1-\chi_\kappa(\theta))\xi_\varepsilon(0,-\pi)-\xi_\varepsilon(0,\theta),$$ for $\theta \in(-\pi,0)$. 
		For this, we first define a non-negative smooth cutoff function $\lambda(x_2,\theta)\in [0,1]$ such that for an arbitrarily chosen small constant $\delta>0$, \begin{equation}\notag
			\lambda(x_2,\theta) \eqdef \begin{cases}
				&0,\text{ if } x_2 > 2\delta \text{ or }(x_2,\theta)\in [0,2\delta]\times \{[-\pi,-\pi+\delta)\cup (-\delta,\pi]\},\\
				&1,\text{ if } (x_2,\theta)\in [0,\delta]\times [-\pi+2\delta,-2\delta],\\
				&\text{smooth, otherwise.}
			\end{cases}
		\end{equation} Note that $\lambda$ is supported only on $(x_2,\theta)\in [0,2\delta]\times [-\pi+\delta,-\delta].$
		Then define a smooth function $\xi_\varepsilon$ as
		$$ \xi_\varepsilon\eqdef \lambda\bar{\xi}_\varepsilon +(1-\lambda) \xi,$$ where $\bar{\xi}_\varepsilon$ is a smooth solution of the following parabolic problem: for $x_2\ge 0$ and $\theta \in [-\pi+2\delta,-2\delta]$, $\bar{\xi}_\varepsilon$ solves
		\begin{equation}\label{eq for bxe}\kappa (\sin\theta \partial_{x_2}+\partial_\theta^2)\bar{\xi}_\varepsilon(x_2,\theta)=\chi_\kappa(\theta)\xi(0,0)+(1-\chi_\kappa(\theta))\xi(0,-\pi)-\bar{\xi}_\varepsilon(x_2,\theta),\end{equation}
		with the boundary condition \begin{align*}\bar{\xi}_\varepsilon(0,\theta)&=\xi(0,\theta)\text{ for }\theta \in [-\pi+\delta,-\delta],\\
			\bar{\xi}_\varepsilon(x_2,-\pi+2\delta)&=\xi(0,-\pi+2\delta) \text{ for }x_2\ge 0 \\
			\bar{\xi}_\varepsilon(x_2,-2\delta)&=\xi(0,-2\delta)\text{ for }x_2\ge 0 .
		\end{align*}In addition, we assume that $\bar{\xi}_\varepsilon$ is constant on the domain near $\theta =0$ and $\theta =-\pi$:
		\begin{align*}
			\bar{\xi}_\varepsilon(x_2,\theta)&=\xi(0,-\pi+2\delta) \text{ for }x_2\ge 0 \text{ and }\theta \in [-\pi,-\pi+2\delta] \\
			\bar{\xi}_\varepsilon(x_2,\theta)&=\xi(0,-2\delta)\text{ for }x_2\ge 0  \text{ and }\theta \in [-2\delta,0],
		\end{align*} Note that $\sin\theta<0$ for $\theta\in(-\pi,0)$ and the equation \eqref{eq for bxe} for $\bar{\xi}_\varepsilon$ is a parabolic equation with smooth coefficients. Thus the wellposedness and the regularity of a solution can be given by the classical parabolic theory with smooth coefficients. Then for a sufficiently small $\delta>0$, we have
		$ \|\lambda(\xi-\bar{\xi}_\varepsilon)\|<\varepsilon$ by continuity. Also, $\xi_\varepsilon=\lambda\bar{\xi}_\varepsilon +(1-\lambda) \xi$ satisfies the boundary condition $$(\mathcal{L}\xi_\varepsilon)(0,\theta)=\frac{1}{\kappa}\bigg(\chi_\kappa(\theta)\xi_\varepsilon(0,0)+(1-\chi_\kappa(\theta))\xi_\varepsilon(0,-\pi)-\xi_\varepsilon(0,\theta)\bigg)\text{ for }\theta \in(-\pi,0),$$since $\delta$ can be chosen arbitrarily small.   This completes the proof.

		Regarding the last condition for being a pregenerator, it suffices to prove that if ${u_\kappa}\in \mathcal{D}(\mathcal{L})$ and ${u_\kappa}(\eta)=\min_{\zeta\in S} {u_\kappa}(\zeta)$, then $\mathcal{L} {u_\kappa}(\eta)\ge 0,$ by Proposition 2.2 of \cite{Liggett}. If ${u_\kappa}\in \mathcal{D}(\mathcal{L}),$ then ${u_\kappa},\mathcal{L}{u_\kappa}\in C^1(S)$ and $$(\mathcal{L}{u_\kappa})(0,\theta)=\frac{1}{\kappa}\bigg(\chi_\kappa(\theta)u_\kappa(0,0)+(1-\chi_\kappa(\theta))u_\kappa(0,-\pi)-u_\kappa(0,\theta)\bigg),\text{ for }\theta \in[-\pi,0].$$ Also, suppose that ${u_\kappa}(\eta)=\min_{\zeta\in S} {u_\kappa}(\zeta)$. 
		Then observe that
		$$\mathcal{L}{u_\kappa}(\eta)= \sin\theta \partial_{x_2}{u_\kappa}(\eta)+\partial^2_\theta {u_\kappa}(\eta).$$  
		If the minimum $\eta$ is at $x_2=0$ and some $\theta_0 \in[-\pi,0],$ then we have $$\mathcal{L}{u_\kappa}(0,\theta_0)=\frac{1}{\kappa}\bigg(\chi_\kappa(\theta_0)u_\kappa(0,0)+(1-\chi_\kappa(\theta_0))u_\kappa(0,-\pi)-u_\kappa(0,\theta_0)\bigg).$$ Thus, $\mathcal{L}{u_\kappa}(0,\theta_0)\ge 0$, as
		$$\mathcal{L}{u_\kappa}(0,\theta_0)\ge \frac{1}{\kappa}\bigg(\chi_\kappa(\theta_0) \min_{\zeta\in S}u_\kappa+(1-\chi_\kappa(\theta_0)) \min_{\zeta\in S}u_\kappa-u_\kappa(0,\theta_0)\bigg)=0.$$
		On the other hand, if the minimum $\eta$ is on $x_2=0$ with $\theta \in(0,\pi),$ then note that $\sin\theta >0$, $\partial_{x_2}{u_\kappa}\ge 0$, and $\partial^2_\theta \ge 0.$ Thus, $\mathcal{L}{u_\kappa}(\eta)\ge 0.$ In addition, if $\eta$ is in the interior of $S$, then note that $\partial_{x_2}{u_\kappa}(\eta)=0$ and $\partial^2_\theta {u_\kappa}(\eta)\ge 0,$ and hence $\mathcal{L}{u_\kappa}(\eta)\ge 0.$ Finally, if the minimum $\eta$ occurs at $x_2=\infty$, 
		suppose on the contrary that $\mathcal{L}{u_\kappa}(\infty)<0.$ Then there exists a sufficiently small $\varepsilon>0$ such that $\mathcal{L}{u_\kappa}(\infty)+\varepsilon\le 0.$ 
		Thus, by the continuity of $\mathcal{L}{u_\kappa}$, there exists a sufficiently large constant $R>0$ such that if $x_2\ge R$, then $\mathcal{L}{u_\kappa}(x_2,\theta)<0,$ for $\theta \in [-\pi,\pi].$ Also, since ${u_\kappa}(\infty)=\min_{\zeta\in S} {u_\kappa}(\zeta)$, there exists a sufficiently large $x_2^*>2R$ and a small constant $\delta>0$ such that 
		the local minimum of ${u_\kappa}(x_2,\theta)$ on the neighborhood $N_\delta\eqdef [x_2^*-\delta,x_2^*+\delta]\times [-\pi/2-\delta,-\pi/2+\delta]$ occurs at the point $(y,\psi)$ on the upper boundary of $N_\delta$ with $y=x_2^*+\delta$ and $\psi \in [-\pi/2-\delta,-\pi/2+\delta].$ 
		Then note that
		$$
		0>\mathcal{L}{u_\kappa}(y,\psi) = \sin\psi\partial_{x_2}{u_\kappa}(y,\psi)+\partial_\theta^2 {u_\kappa}(y,\psi)  \ge \sin\psi\partial_{x_2}{u_\kappa}(y,\psi)\ge 0,
		$$which leads to the contradiction. This completes the proof.
	\end{proof}
	Indeed, a Markov pregenerator is a \textit{Markov generator} if 
	$$\mathcal{R}(I-\lambda \mathcal{L})=C(S),$$ for $\lambda>0$ small. We define this more precisely as follows:\begin{definition}
		A \textit{Markov} generator is a closed \textit{Markov} pregenerator $\mathcal{L}$ which satisfies 
		$$\mathcal{R}(I-\lambda \mathcal{L})=C(S),$$ for all sufficiently small $\lambda>0$.\end{definition} Equivalently, a closed Markov pregenerator $\mathcal{L}$ is a Markov generator if for any small $\lambda>0$ and for any $g\in C(S)$, there exists a solution $u_\kappa\in \mathcal{D}(\mathcal{L})$ to the following elliptic equation:
	\begin{equation}\label{eq1}
		\lambda \mathcal{L}{u_\kappa}={u_\kappa}-g.
	\end{equation}
	Our main goal in Section \ref{sec:dual mass density} and Section \ref{sec: 1d wellposedness} is to prove the following proposition:
	\begin{proposition}\label{hille1}
		For any $g\in C(S)$ there exists ${u_\kappa}\in \mathcal{D}(\mathcal{L})$ which solves \eqref{eq1}.
	\end{proposition}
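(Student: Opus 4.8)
The goal is to solve the elliptic boundary-value problem $\lambda\mathcal{L}u_\kappa = u_\kappa - g$ with $u_\kappa\in\mathcal{D}(\mathcal{L})$ for every $g\in C(S)$. The plan is to approach this by a regularization–compactness scheme combined with the maximum principle (already available, since $\mathcal{L}$ is a Markov pregenerator by Proposition~\ref{pregenerator proposition}). First I would pass to a truncated domain $S_R = \{0\le x_2\le R\}\times[-\pi,\pi]$ (with $\theta$-periodicity), imposing an artificial boundary condition at $x_2=R$ — the natural choice is the Neumann-type or absorbing condition $u_\kappa = g$ on $\{x_2=R\}$, which is compatible with the maximum principle. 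On $S_R$ the operator $\mathcal{L}=\sin\theta\,\partial_{x_2}+\partial_\theta^2$ is a (degenerate) second-order operator: parabolic in $\theta$, first-order transport in $x_2$ with speed $\sin\theta$ that changes sign. Because $\sin\theta<0$ on $(-\pi,0)$ and $>0$ on $(0,\pi)$, the characteristics of the transport part enter the domain from $x_2=0$ on the upper half $\theta\in(0,\pi)$ and exit there on the lower half $\theta\in(-\pi,0)$; on the lower half the regularized relation \eqref{regularized modified adjoint boundarycon} (equivalently the constraint in the definition \eqref{domainD} of $\mathcal{D}(\mathcal{L})$) is exactly the missing boundary datum. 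This is a well-posed boundary-value problem for a hypoelliptic operator and can be solved by standard linear theory (Galerkin in the $\theta$-variable plus the method of characteristics/vanishing viscosity in $x_2$, or directly by the Lax–Milgram theorem applied to a suitably weighted bilinear form), yielding a unique solution $u_\kappa^R$ with the a priori bound $\|u_\kappa^R\|_{C(S_R)} \le \|g\|$ coming from the maximum principle.

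Second, I would add a further vanishing-viscosity regularization $\varepsilon\partial_{x_2}^2$ (or a small elliptic perturbation) if needed to make the truncated problem genuinely uniformly elliptic, solve it by classical Schauder/$L^p$ theory on $S_R$, and then remove both $\varepsilon$ and the truncation. The maximum principle gives the uniform $L^\infty$ bound $\|u_\kappa^{R,\varepsilon}\|\le\|g\|$ independent of $R$ and $\varepsilon$; hypoellipticity of $\mathcal{L}$ in the interior (Section~\ref{sec:hypoellipticity}, or a direct Hörmander bracket computation: $[\sin\theta\,\partial_{x_2},\partial_\theta]$ recovers the $\partial_{x_2}$ direction) gives interior $C^\infty$ bounds on compact subsets of $\{x_2>0\}$ uniform in $\varepsilon$, and the boundary relation in \eqref{domainD} together with parabolic estimates in $\theta$ near $x_2=0$ gives control up to the boundary away from the corner $\theta=-\pi/2$; the smoothing $\chi_\kappa$ removes the difficulty at $\theta=-\pi/2$. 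Passing to the limit along a subsequence (Arzelà–Ascoli on compacta, plus the uniform bound to control the behavior as $x_2\to\infty$) produces a limit $u_\kappa\in C(S)$ solving $\lambda\mathcal{L}u_\kappa=u_\kappa-g$ with $\mathcal{L}u_\kappa\in C^1(S)$, and the boundary relation passes to the limit, so $u_\kappa\in\mathcal{D}(\mathcal{L})$.

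The step I expect to be the main obstacle is the control of the solution as $x_2\to\infty$ — i.e., showing that the truncation at $x_2=R$ can be removed and that the limiting $u_\kappa$ actually lies in $C(S)$, meaning $\lim_{x_2\to\infty}\sup_\theta|u_\kappa(x_2,\theta)|$ exists. The $L^\infty$ bound alone is not enough; one needs a decay/stabilization statement. This is precisely where the analysis of the stationary equation \eqref{stationary1d eq}–\eqref{stationary periodic} and the large-$x_2$ asymptotics announced in Section~\ref{x2infty intro} enter: I would construct explicit supersolutions of $\lambda\mathcal{L}w = w - \|g\|$ (and subsolutions with $-\|g\|$) that are monotone in $x_2$ and converge to a constant as $x_2\to\infty$ — for instance of the form $\|g\| + C\psi(\theta)e^{-cx_2}$ with $\psi>0$ chosen so that $\partial_\theta^2\psi - c\sin\theta\,\psi$ has a favorable sign — and then invoke the comparison principle (the third pregenerator property) to sandwich $u_\kappa^R$ between them, uniformly in $R$. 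This gives both the uniform decay of oscillation in $x_2$ and the existence of the limit at infinity, closing the argument. A secondary technical point is verifying that the limit lands in $\mathcal{D}(\mathcal{L})$ with the required $C^1(S)$ regularity of $\mathcal{L}u_\kappa$; this follows from the regularized boundary ODE \eqref{regularized modified adjoint boundarycon}, whose right-hand side is manifestly $C^1$ once $u_\kappa(0,0)$, $u_\kappa(0,-\pi)$, and $u_\kappa(0,\cdot)$ are continuous, together with interior hypoelliptic regularity propagated up to $x_2=0$.
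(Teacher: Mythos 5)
Your proposal takes a genuinely different route from the paper's. The paper does not truncate in $x_2$ or add a viscous term $\varepsilon\partial_{x_2}^2$; instead it replaces $\partial_\theta^2$ by the bounded nonlocal jump operator $Q^\epsilon$ of \eqref{discretized}, solves the associated $t$-dependent transport-plus-jump problem \eqref{eq3} by Duhamel and a contraction argument (Lemma~\ref{contraction1}, Corollary~\ref{contractioncor}), reads off the resolvent from the Laplace-transform formula $u_\kappa^\epsilon=\int_0^\infty e^{-t}\bar\psi_\kappa^\epsilon(\lambda t,\cdot)\,dt$, proves uniform-in-$\epsilon$ $C^{(1,2)}_{x_2,\theta}$ bounds from the maximum principle and repeated differentiation of the Duhamel formula (Lemma~\ref{maxprinciple}, Corollary~\ref{x2 derivative estimates}, Proposition~\ref{1d derivative estimates}), and sends $\epsilon\to 0$ in Section~\ref{sec:limit epsilon to zero}. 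Your plan --- truncate, add viscosity, solve a uniformly elliptic problem, pass to the limit, and use explicit sub- and super-solutions to control $x_2\to\infty$ --- is a legitimate classical alternative. What it buys is avoiding the semigroup machinery altogether; what the paper's route buys is that regularizing $\partial_\theta^2$ rather than adding $\partial_{x_2}^2$ makes the regularized generator a bounded operator, so the $t$-dependent problem is a Banach-space ODE and no boundary layer in $x_2$ is ever created. In the paper the stabilization as $x_2\to\infty$ is handled entirely inside the maximum-principle argument, which directly rules out a maximum at the compactification point $\infty\in S$; no supersolution construction is needed at that step.

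Two points in your plan are weaker than they read. First, the rationale ``the transport characteristics exit on $\theta\in(-\pi,0)$, so that is where the boundary datum is missing'' lands on the correct set but for the wrong reason: for a pure first-order transport one prescribes data on the \emph{inflow} side $\theta\in(0,\pi)$, not the outflow side. That the datum in fact belongs on $\theta\in[-\pi,0]$ is a consequence of the degenerate second-order structure (the Fichera function $-\sin\theta$ is positive there; equivalently, this is the only part of $\{x_2=0\}$ through which the maximum of a subsolution can escape, as the proof of Lemma~\ref{maxprinciple} shows). Second, ``Lax--Milgram applied to a suitably weighted bilinear form'' glosses over real obstructions: the skew term $\lambda\int\sin\theta\,\partial_{x_2}u\cdot u$ produces boundary contributions of indeterminate sign, and the constraint in \eqref{domainD} is nonlocal, coupling $u_\kappa(0,\theta)$ to $u_\kappa(0,0)$ and $u_\kappa(0,-\pi)$. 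One can rescue this by observing that, combined with $\lambda\mathcal{L}u_\kappa=u_\kappa-g$, the nonlocal relation pins down $u_\kappa(0,\theta)$ explicitly in terms of $g$, reducing to an honest Dirichlet datum; but one then still has to control the boundary layer at $\{x_2=0,\theta\in(0,\pi)\}$ created by the added viscosity, and to obtain the uniform derivative estimates required to land in $\mathcal{D}(\mathcal{L})$ with $u_\kappa,\mathcal{L}u_\kappa\in C^1(S)$. The paper gets these by differentiating the Duhamel representation; ``hypoellipticity propagated up to $x_2=0$'' would need a genuine degenerate boundary-regularity argument. None of this is fatal, but collectively it makes your route considerably harder to carry out than it is presented, which is presumably why the paper opts for the $Q^\epsilon$/semigroup regularization instead.
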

	This will be proved in Section \ref{sec: 1d wellposedness}.
	Then, by Proposition 2.8 (a) of \cite{Liggett}, this provides the sufficient condition to the following theorem to be hold:
	\begin{theorem}\label{markovgen} The operator $\mathcal{L}$ is a \textit{Markov} generator.
	\end{theorem}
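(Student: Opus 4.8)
The plan is to obtain Theorem~\ref{markovgen} from the abstract semigroup machinery in Liggett \cite{Liggett} once two structural facts are in hand. The first is already established: by Proposition~\ref{pregenerator proposition}, the operator $\mathcal{L}=\sin\theta\,\partial_{x_2}+\partial_\theta^2$ with domain $\mathcal{D}(\mathcal{L})$ given by \eqref{domainD} is a \emph{Markov pregenerator}, i.e.\ it annihilates constants, its domain is dense in $C(S)$, and it satisfies the minimum principle. Being a Markov pregenerator, $\mathcal{L}$ is closable (this is a general fact from \cite{Liggett}), so by the definition of a Markov generator it only remains to verify the range condition
$$\mathcal{R}(I-\lambda\mathcal{L})=C(S),\qquad \lambda>0\text{ small},$$
for the closure $\overline{\mathcal{L}}$; under that condition $\overline{\mathcal{L}}$ is a Markov generator, which is exactly the assertion of the theorem (identifying $\mathcal{L}$ with its closure where needed).

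The second fact is precisely Proposition~\ref{hille1}: for every $g\in C(S)$ and every sufficiently small $\lambda>0$, the resolvent equation \eqref{eq1}, namely $\lambda\mathcal{L}u_\kappa=u_\kappa-g$, has a solution $u_\kappa\in\mathcal{D}(\mathcal{L})$. Granting Proposition~\ref{hille1}, the surjectivity $\mathcal{R}(I-\lambda\mathcal{L})=C(S)$ holds for all small $\lambda>0$; in particular the range is dense, and Proposition~2.8(a) of \cite{Liggett} then yields directly that the closure of the Markov pregenerator $\mathcal{L}$ is a Markov generator. So the proof of Theorem~\ref{markovgen} reduces to a one-line citation of Propositions~\ref{pregenerator proposition} and \ref{hille1} together with Liggett's Proposition~2.8(a); no further computation is needed at this stage.

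Accordingly, all the real work is deferred to Section~\ref{sec: 1d wellposedness}, where Proposition~\ref{hille1} is proved, and that is where I expect the main obstacle to be. Equation \eqref{eq1} is a \emph{degenerate} elliptic problem: the transport coefficient $\sin\theta$ vanishes at $\theta=0,-\pi$, so the characteristics become tangent to the boundary $\{x_2=0\}$ there and standard elliptic estimates fail up to the singular set; one must instead construct explicit sub- and supersolutions (self-similar barriers), combine them with the large-$x_2$ asymptotics developed in Section~\ref{x2 asymptotics section} to control behavior as $x_2\to\infty$, and close the argument via the comparison principle, verifying along the way that the constructed $u_\kappa$ lies in $\mathcal{D}(\mathcal{L})$, i.e.\ satisfies the regularized trapping boundary condition \eqref{regularized modified adjoint boundarycon}. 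Note that this regularized boundary condition, built into \eqref{domainD}, is what makes $S$ compact and $C(S)$ a genuine Banach space with no jump at $(x_2,\theta)=(0,-\pi/2)$, and is therefore indispensable for the Hille--Yosida framework here; the passage $\kappa\to0$ back to the original trapping condition \eqref{adjoint trapping boundary} is performed only afterward.
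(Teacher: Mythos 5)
Your proposal reduces Theorem~\ref{markovgen} to Proposition~\ref{pregenerator proposition} (Markov pregenerator), Proposition~\ref{hille1} (surjectivity of $I-\lambda\mathcal{L}$), and Liggett's Proposition~2.8(a), which is exactly the paper's argument. Correct and essentially identical to the paper's route, including the observation that one formally passes to the closure and that the $\kappa\to0$ limit is deferred.
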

	Here we also state the classical Hille-Yosida theorem in the text of \cite[Theorem 2.9]{Liggett}:
	\begin{theorem}[Hille-Yosida]
		\label{hilleyosidathm}
		There is a one-to-one correspondence between \textit{Markov} generators on $C(S)$ and \textit{Markov} semigroups on $C(S)$. The correspondence is given by the following:
		$$\mathcal{D}(\mathcal{L})=\left\{{u}\in C(S)\ :\ \lim_{t\rightarrow 0^+}\frac{S(t)u-u}{t}\text{ exists}\right\},$$ and $$\mathcal{L}{u}= \lim_{t\rightarrow 0^+}\frac{S(t){u}-{u}}{t},\ {u}\in \mathcal{D}(\mathcal{L}).$$ If ${u}\in \mathcal{D}(\mathcal{L}),$ then $S(t){u} \in \mathcal{D}(\mathcal{L})$ and 
		$$\frac{d}{dt}(S(t)){u}=\mathcal{L}S(t){u}.$$
	\end{theorem}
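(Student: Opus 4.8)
This statement is the classical Hille--Yosida theorem in the form given in Liggett, Theorem~2.9 of \cite{Liggett}, and the plan is to reproduce that proof, which establishes the two directions of the correspondence separately and then checks that it is a bijection.

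\emph{From a Markov semigroup to its generator.} Given a Markov semigroup $S(t)$ on $C(S)$, define $\mathcal L$ by the displayed limit with the indicated domain $\mathcal D(\mathcal L)$. Density of $\mathcal D(\mathcal L)$ follows from the averaging trick: for $u\in C(S)$ the elements $u_t:=\tfrac1t\int_0^t S(s)u\,ds$ lie in $\mathcal D(\mathcal L)$ (a direct computation using $S(h)S(s)=S(h+s)$ and strong continuity gives $\tfrac1h(S(h)u_t-u_t)\to\tfrac1t(S(t)u-u)$ as $h\to0^+$) and $u_t\to u$ as $t\to0^+$. The identity $\mathcal L1=0$ is immediate from $S(t)1=1$. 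For the minimum property one introduces the resolvent $R_\lambda g:=\tfrac1\lambda\int_0^\infty e^{-t/\lambda}S(t)g\,dt$, verifies that $R_\lambda=(I-\lambda\mathcal L)^{-1}$, and observes that since $S(t)$ is positive with $S(t)1=1$ the operator $R_\lambda$ is a positive contraction with $R_\lambda1=1$; this yields simultaneously $\mathcal R(I-\lambda\mathcal L)=C(S)$ for every $\lambda>0$ and the inequality $\min u\ge\min g$ whenever $u-\lambda\mathcal L u=g$. Since $\mathcal L$ then has a bounded inverse it is automatically closed, so $\mathcal L$ is a Markov generator.

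\emph{From a Markov generator to its semigroup.} This is the substantive direction, carried out via the Yosida approximation. From the pregenerator minimum property one gets $\|(I-\lambda\mathcal L)^{-1}\|\le1$, and a resolvent-identity argument upgrades $\mathcal R(I-\lambda\mathcal L)=C(S)$ from small $\lambda$ to all $\lambda>0$; moreover $(I-\lambda\mathcal L)^{-1}$ is positive and fixes constants. Set $\mathcal L_\lambda:=\tfrac1\lambda\big((I-\lambda\mathcal L)^{-1}-I\big)$, a \emph{bounded} operator, and $S_\lambda(t):=e^{t\mathcal L_\lambda}$. The expansion $e^{t\mathcal L_\lambda}=e^{-t/\lambda}\sum_{k\ge0}\tfrac{(t/\lambda)^k}{k!}(I-\lambda\mathcal L)^{-k}$ shows each $S_\lambda(t)$ is positive with $S_\lambda(t)1=1$, hence $\|S_\lambda(t)\|\le1$. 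For $u\in\mathcal D(\mathcal L)$ one has $(I-\lambda\mathcal L)^{-1}u-u=\lambda(I-\lambda\mathcal L)^{-1}\mathcal L u\to0$, so $\mathcal L_\lambda u\to\mathcal L u$; combined with the commuting-semigroups estimate $\|S_\lambda(t)u-S_\mu(t)u\|\le t\,\|\mathcal L_\lambda u-\mathcal L_\mu u\|$, the family $\{S_\lambda(t)u\}$ is Cauchy as $\lambda\to0$, uniformly for $t$ in bounded intervals. Hence $S(t)u:=\lim_{\lambda\to0}S_\lambda(t)u$ exists on $\mathcal D(\mathcal L)$, and by density together with $\|S_\lambda(t)\|\le1$ it extends to all of $C(S)$; the limit inherits $S(0)=I$, the semigroup property, positivity, $S(t)1=1$, and strong continuity, so $S(t)$ is a Markov semigroup. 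Passing to the limit in $\tfrac{d}{dt}S_\lambda(t)u=S_\lambda(t)\mathcal L_\lambda u$ gives $\tfrac{d}{dt}S(t)u=S(t)\mathcal L u$ for $u\in\mathcal D(\mathcal L)$, and feeding this back through the definition of the generator shows $S(t)u\in\mathcal D(\mathcal L)$ and $\tfrac{d}{dt}S(t)u=\mathcal L S(t)u$, which is the last displayed assertion of the theorem.

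\emph{The correspondence is one-to-one.} If $\widetilde{\mathcal L}$ denotes the generator of the semigroup $S(t)$ just constructed, then $\tfrac{d}{dt}S(t)u=S(t)\mathcal L u$ for $u\in\mathcal D(\mathcal L)$ shows $\mathcal L\subseteq\widetilde{\mathcal L}$; but both $I-\lambda\mathcal L$ and $I-\lambda\widetilde{\mathcal L}$ are bijections of $C(S)$ (the latter by the first part applied to $S(t)$), and an extension of one bijection by another must be an equality, so $\mathcal L=\widetilde{\mathcal L}$. Conversely, the generator determines the semigroup uniquely by the standard argument that, for $u\in\mathcal D(\mathcal L)$, $s\mapsto T(t-s)S(s)u$ has vanishing derivative whenever $T(t)$ is another Markov semigroup with the same generator, forcing $T(t)=S(t)$ on $\mathcal D(\mathcal L)$ and hence on $C(S)$. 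I expect the main obstacle to be the generator-to-semigroup direction: controlling the Yosida estimates, the uniformity in $t$ of the convergence $S_\lambda(t)\to S(t)$, strong continuity of the limit, and --- crucially --- that the generator of the limit is exactly $\mathcal L$ rather than a proper extension; the remaining steps are routine manipulations with the resolvent.
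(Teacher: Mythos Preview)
The paper does not prove this theorem; it is stated as the classical Hille--Yosida theorem and attributed to \cite[Theorem~2.9]{Liggett}, then used as a black box. Your proposal faithfully outlines the standard proof from Liggett's text (Yosida approximation for the generator-to-semigroup direction, resolvent construction for the converse, and the bijection argument), so it is consistent with what the paper cites and there is nothing to compare.
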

	
	We first note that a consequence of the Hille-Yosida theorem is that for $g\in C(S)$ and $\lambda \ge 0$, the solution to $(I-\lambda\mathcal{L}) {u} =g$ is given by 
	$${u}=\int_0^\infty e^{-t}S(\lambda t) g dt,$$
	where $\mathcal{L}$ is called the generator of $S(t)$, and $S(t)$ is the semigroup generated by $\mathcal{L}$. 
	Therefore, in order to prove that $\mathcal{L}$ is a \textit{Markov generator} we need to prove that it is possible to solve the elliptic problem \eqref{eq1}
	where $g\in C(S)$ for any small $\lambda>0$ and
	$
	\mathcal{L}=\sin\theta \partial_{x_2}+\partial^2_\theta.
	$
	
	\subsection{A regularized equation }Throughout the paper, we will consider the regularized version of the equation. More precisely, we discretize the Laplace-Beltrami operator $\partial_\theta^2$ using
	\begin{equation}\label{discretized}Q^\epsilon[{u_\kappa}]\eqdef \frac{2}{\epsilon^2} \int_{-\pi}^\pi d\nu \left({u_\kappa}(x_2,\theta+\epsilon\nu)-{u_\kappa}(x_2,\theta)\right)\zeta(\nu),\end{equation}
	for some small $\epsilon>0$ and a cutoff function $\zeta(\nu)\in C^\infty_c (-\pi,\pi)$ such that 
	\begin{equation}\label{zeta property}\int_{-\pi}^\pi\zeta(\nu)d\nu=1,\ \int_{-\pi}^\pi \nu\zeta(\nu)d\nu=0,\ \int_{-\pi}^\pi \nu^2\zeta(\nu)d\nu=1.\end{equation}
	Then we note that $Q^\epsilon$ is a jump process, and $Q^\epsilon \to \partial^2_\theta$ as $\epsilon\to 0$ at least formally. 
	
	In the next section, we will then discuss the solvability of the elliptic equation \eqref{eq1}. In order for this, we regularize the operator $\partial^2_\theta$ as $Q^\epsilon$ of \eqref{discretized} and consider the regularized equation
	\begin{equation}\label{eq2}
		\lambda \mathcal{L}^\epsilon{u_\kappa^\epsilon}={u_\kappa^\epsilon}-g,
	\end{equation} with the regularized operator
	\begin{equation}
		\mathcal{L}^\epsilon\eqdef \sin\theta \partial_{x_2}+Q^\epsilon.
	\end{equation}
	%
	
	%
	
	\section{The solvability of the adjoint problem for the mass density \texorpdfstring{$\rho_1$}{}}
	\label{sec: 1d wellposedness}
	
	\subsection{A \texorpdfstring{$\lowercase{t}$}{}-dependent problem associated to the elliptic problem}\label{sec:1d intro}In this section, we introduce an associated $t$-dependent problem to the elliptic problem \eqref{eq2}. 
	We first observe that obtaining a solution ${\bar{\psi}^\epsilon_\kappa} ={\bar{\psi}^\epsilon_\kappa} (t,x_2,\theta)$ for the following $t$-dependent problem can guarantee a solution $u_\kappa^\epsilon$ to the regularized equation \eqref{eq2} for some $\lambda>0$:
	\begin{equation}
		\label{eq3}
		\begin{split}\partial_t {\bar{\psi}^\epsilon_\kappa}  &= \mathcal{L}^\epsilon{\bar{\psi}^\epsilon_\kappa} \text{ for }t> 0 \text{ and }  (x_2,\theta)\in S,\\
			{\bar{\psi}^\epsilon_\kappa} (0,x_2,\theta)&=\bar{\psi}_{\kappa,in}(x_2,\theta)=g(x_2,\theta),\\
			{\bar{\psi}^\epsilon_\kappa} (t,x_2,-\pi)&={\bar{\psi}^\epsilon_\kappa} (t,x_2,\pi),\text{ for } t\ge 0,\ x_2\ge 0,\text{ and }\\
			\partial_t {\bar{\psi}^\epsilon_\kappa}  (t,0,\theta)&=\frac{1}{\kappa}\bigg(\chi_\kappa(\theta)\bar{\psi}^\epsilon_\kappa(t,0,0)+(1-\chi_\kappa(\theta))\bar{\psi}^\epsilon_\kappa(t,0,-\pi)-\bar{\psi}^\epsilon_\kappa(t,0,\theta)\bigg),\\ &\qquad\qquad\qquad\qquad\qquad\qquad\text{ for }t\ge 0\text{ and }\theta\in[-\pi,0].
		\end{split}
	\end{equation}
	This is because we can recover a solution $u_\kappa^\epsilon$ to the elliptic problem \eqref{eq2} from a solution ${\bar{\psi}^\epsilon_\kappa} $ to the associated $t$-dependent problem \eqref{eq3} as
	$$u_\kappa^\epsilon(x_2,\theta)=\int_0^\infty e^{-t}{\bar{\psi}^\epsilon_\kappa} (\lambda t , x_2,\theta)\ dt.$$
	Then, by plugging $\theta=0$ and $\theta=-\pi$ to \eqref{eq3}$_3$, we observe that
	$$\partial_t {\bar{\psi}^\epsilon_\kappa}  (t,0,0)=\partial_t {\bar{\psi}^\epsilon_\kappa}  (t,0,-\pi)=0.$$
	\begin{figure}[h] 
		\includegraphics[width=0.6\linewidth]{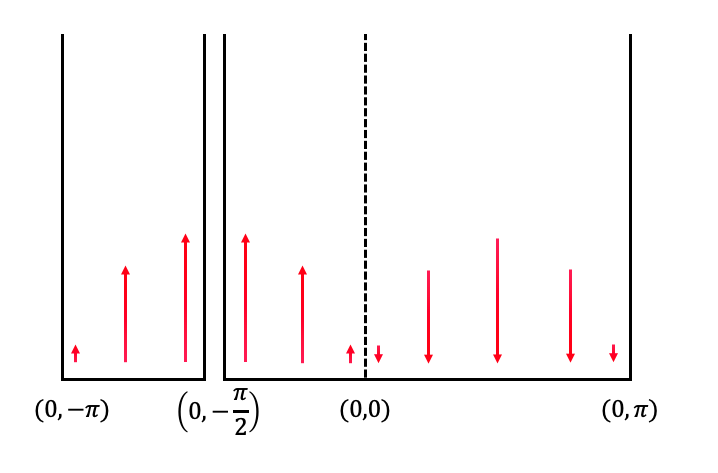}
		\caption{Characteristic flows}
	\end{figure}
	Without loss of generality, we will further assume that $g$ is arbitrarily chosen from a dense subset $C^\infty(S)$ of $C(S)$. 
	
	
	\subsection{Construction of a mild solution}In this section, we construct a mild solution to the $t$-dependent problem \eqref{eq3} considering solutions to the homogeneous equation and the Duhamel principle afterwards.
	\subsubsection{Homogeneous problem}We first consider a solution $W=W(t,x_2,\theta)$ to the following free transport equation:
	\begin{equation}\label{1dhomoeq}
		\begin{split}
			\partial_t W &=\sin\theta \partial_x W ,\text{ for }t>0,\ (x_2,\theta)\in S,\\
			W(0,x_2,\theta)&=g(x_2,\theta),\text{ for }x_2>0,\ \theta\in [-\pi,\pi], \\
			W (t,x_2,-\pi)&=W (t,x_2,\pi),\text{ for } t\ge 0,\ x_2\ge 0,\text{ and },\\
			\partial_t W(t,0,\theta)&=\frac{1}{\kappa}\bigg(\chi_\kappa(\theta)W(0,0,0)+(1-\chi_\kappa(\theta))W(0,0,-\pi)-W(t,0,\theta)\bigg),\\& \text{ for }t\ge 0,\ \theta \in \left[-\pi,0\right].
		\end{split}
	\end{equation}
	Then the solution $W$ is given by
	\begin{equation}
		W(t,x_2,\theta)=\begin{cases}g(x_2+t\sin\theta, \theta)\ &\text{ if } x_2+t\sin\theta>0 \text{ or } \theta\in[0,\pi],\\
			W\left(t+\frac{x_2}{\sin\theta},0,\theta\right) &\text{ if } x_2+t\sin\theta\le 0 \text { and } \theta \in\left(-\pi,0\right),
		\end{cases}
	\end{equation}
	where $W(s,0,\theta)$ 
	for $\theta\in[-\pi,0]$ is given by
	\begin{multline}\label{ws0theta}
		W(s,0,\theta)=\bigg(\chi_\kappa(\theta)g(0,0)+(1-\chi_\kappa(\theta))g(0,-\pi)\bigg)\\+ e^{-\frac{s}{\kappa}}\left(g(0,\theta)-(\chi_\kappa(\theta)g(0,0)+(1-\chi_\kappa(\theta))g(0,-\pi))\right),
	\end{multline} by solving the ODE \eqref{1dhomoeq}$_4$ with \eqref{1dhomoeq}$_2$.
	\subsubsection{Inhomogeneous problem}\label{duhamel}
	Then, by the Duhamel principle, the solution to \eqref{eq3} can be obtained. 
	
	\begin{enumerate}
		\item  If $x_2+t\sin\theta >0\text{ or } \theta\in[0,\pi]$, we have
		$${\bar{\psi}^\epsilon_\kappa} (t,x_2,\theta)=W(t,x_2,\theta)+\int_0^t Q^\epsilon[{\bar{\psi}^\epsilon_\kappa} ](s,U(t-s)(x_2,\theta) )ds,$$ where the semigroup $U(t)$ is given by
		$$U(t)(x_2,\theta)\eqdef (x_2+t\sin\theta,\theta).$$
		Therefore, we have
		$${\bar{\psi}^\epsilon_\kappa} (t,x_2,\theta)=g(x_2+t\sin\theta,\theta)+\int_0^t Q^\epsilon[{\bar{\psi}^\epsilon_\kappa} ](s,x_2+(t-s)\sin\theta,\theta )ds.$$
		
		\item  On the other hand, 
		if  $x_2+t\sin\theta\le 0 \text { and } \theta \in\left(-\pi,0\right),$ we have
		$${\bar{\psi}^\epsilon_\kappa} (t,x_2,\theta)=W\left(t+\frac{x_2}{\sin\theta},0,\theta\right)+\int_{t+\frac{x_2}{\sin\theta}}^t Q^\epsilon[{\bar{\psi}^\epsilon_\kappa} ](s,x_2+(t-s)\sin\theta,\theta )ds,$$ where $W\left(s,0,\theta\right)$ is given by \eqref{ws0theta}.
	\end{enumerate}
	\subsection{Solvability} 
	Define an operator $\mathcal{T}^\epsilon$ as
	$$\mathcal{T}^\epsilon[{\bar{\psi}^\epsilon_\kappa} ]\eqdef W(t,x_2,\theta)+\int_{\max\left\{0,\ t+\frac{x_2}{\sin\theta}\right\}} ^t  \ Q^\epsilon[{\bar{\psi}^\epsilon_\kappa} ](s,x_2+(t-s)\sin\theta,\theta)ds,$$ where 
	\begin{equation}
		W(t,x_2,\theta)= \begin{cases}
			& g(x_2+t\sin\theta,\theta),\text{ if }x_2+t\sin\theta >0\text{ or }\theta\in[0,\pi],\\
			& W\left(t+\frac{x_2}{\sin\theta},0,\theta\right), \text{ if }x_2+t\sin\theta\le 0\text { and }\theta\in (-\pi,0).\end{cases}
	\end{equation}
	Define a set $E(S)$ as
	$$E(S)=\left\{\psi \in C(S) \ : \ \|\psi(t,\cdot )\|_{C^{(1,2)}_{x_2,\theta}}\le C\|g\|_{C^{2}}\text{ for }t\in [0,T]\right\},$$ for some fixed $C>\frac{27}{2}.$
	Then, we obtain the following lemma.
	\begin{lemma}[Contraction mapping]\label{contraction1}If ${\bar{\psi}^\epsilon_\kappa} \in E(S)$, then we have $\mathcal{T}^\epsilon[{\bar{\psi}^\epsilon_\kappa} ]\in E(S).$ Moreover,
		$\mathcal{T}^\epsilon$ is a contraction in $E(S)$, for $T=T_1$ sufficiently small.
	\end{lemma}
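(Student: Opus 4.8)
The plan is to prove the lemma by a Banach fixed-point argument for $\mathcal{T}^\epsilon$ on the set $E(S)$, viewed as a complete metric space under the uniform norm $\sup_{t\in[0,T_1]}\|\psi(t,\cdot)\|_{C^0(S)}$; $E(S)$ is closed in this norm because a uniform limit of a sequence with uniformly bounded $C^{(1,2)}_{x_2,\theta}$ norm retains that bound by a standard compactness argument. Two estimates are then needed: the self-mapping property $\mathcal{T}^\epsilon\big(E(S)\big)\subseteq E(S)$, which demands a $C^{(1,2)}_{x_2,\theta}$ bound on $\mathcal{T}^\epsilon[\psi]$ and is the substantive part, and the contraction property, which needs only a $C^0$ bound.

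For the self-mapping property I would first control the transport part $W$. On the branch where $W(t,x_2,\theta)=g(x_2+t\sin\theta,\theta)$ (see \eqref{1dhomoeq}), differentiating up to order $(1,2)$ in $(x_2,\theta)$ produces only the chain-rule weights $t\cos\theta$, $t\sin\theta$ and their products, each bounded by $1$ once $T_1\le 1$, so that $\|W(t,\cdot)\|_{C^{(1,2)}_{x_2,\theta}}\le c_0\|g\|_{C^2}$ with $c_0\le\tfrac{27}{2}$; this is precisely the origin of the constant $\tfrac{27}{2}$ in the definition of $E(S)$. On the branch $W(t,x_2,\theta)=W\big(t+\tfrac{x_2}{\sin\theta},0,\theta\big)$ the boundary datum \eqref{ws0theta} carries the factor $e^{-s/\kappa}$ multiplying $g(0,\theta)-\chi_\kappa(\theta)g(0,0)-(1-\chi_\kappa(\theta))g(0,-\pi)$, and this expression vanishes to first order at $\theta=0$ and at $\theta=-\pi$ because $\chi_\kappa(0)=1$ and $\chi_\kappa(-\pi)=0$; the resulting cancellation absorbs the singular factors $\tfrac{1}{\sin\theta}$ and $\tfrac{\cos\theta}{\sin^2\theta}$ coming from differentiating the composition $x_2\mapsto x_2/\sin\theta$, keeping the $C^{(1,2)}_{x_2,\theta}$ norm of $W$ bounded by $c_0\|g\|_{C^2}$ uniformly in $t\in[0,T_1]$. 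For the Duhamel term, $Q^\epsilon$ from \eqref{discretized} acts only in $\theta$ by translations, hence commutes with $\partial_{x_2}$ and $\partial_\theta$ and obeys $\|Q^\epsilon[h]\|_{C^{(1,2)}_{x_2,\theta}}\le 4\epsilon^{-2}\|h\|_{C^{(1,2)}_{x_2,\theta}}$ by \eqref{zeta property}; differentiating $\int_{\max\{0,\,t+x_2/\sin\theta\}}^{t}Q^\epsilon[\psi]\big(s,x_2+(t-s)\sin\theta,\theta\big)\,ds$, including the boundary term from the moving lower endpoint (where the spatial argument equals $0$), gives a bound $c_1(\epsilon)\,T_1\,\|\psi(s,\cdot)\|_{C^{(1,2)}_{x_2,\theta}}\le c_1(\epsilon)\,T_1\,C\|g\|_{C^2}$ with $c_1(\epsilon)$ of order $\epsilon^{-2}$. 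Therefore $\|\mathcal{T}^\epsilon[\psi](t,\cdot)\|_{C^{(1,2)}_{x_2,\theta}}\le\big(c_0+c_1(\epsilon)\,T_1\,C\big)\|g\|_{C^2}$, which is $\le C\|g\|_{C^2}$ once $T_1\le (C-c_0)/(c_1(\epsilon)\,C)$; such $T_1>0$ exists because $C>\tfrac{27}{2}\ge c_0$. Continuity of $\mathcal{T}^\epsilon[\psi]$ in $t$ is immediate from the formulas, so $\mathcal{T}^\epsilon[\psi]\in E(S)$.

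For the contraction the $W$-terms cancel, so $\mathcal{T}^\epsilon[\psi_1]-\mathcal{T}^\epsilon[\psi_2]=\int_{\max\{0,\,t+x_2/\sin\theta\}}^{t}Q^\epsilon[\psi_1-\psi_2]\big(s,x_2+(t-s)\sin\theta,\theta\big)\,ds$, and estimating in the $C^0$ norm alone gives $\sup_{t\le T_1}\|\mathcal{T}^\epsilon[\psi_1]-\mathcal{T}^\epsilon[\psi_2]\|_{C^0}\le 4\epsilon^{-2}T_1\sup_{t\le T_1}\|\psi_1-\psi_2\|_{C^0}$, so $\mathcal{T}^\epsilon$ is a strict contraction as soon as $T_1<\epsilon^2/4$; taking $T_1=T_1(\epsilon,\kappa,C)$ small enough for both requirements finishes the proof. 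The main obstacle is the $C^{(1,2)}_{x_2,\theta}$ estimate in the second paragraph: one must control every derivative up to $\partial_{x_2}\partial_\theta^2$ of $W$ and of the Duhamel integral uniformly up to the singular corners $(x_2,\theta)=(0,0)$ and $(0,-\pi)$ and across the characteristic $\{x_2+t\sin\theta=0\}$ where the representation of $W$ switches branches; this is exactly where the structure of the regularized boundary condition — the decaying factor $e^{-s/\kappa}$ together with the smooth cutoff $\chi_\kappa$ and its endpoint values — is indispensable and where the bookkeeping that produces $c_0\le\tfrac{27}{2}$ must be carried out with care.
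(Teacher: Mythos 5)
Your overall plan --- write $\mathcal{T}^\epsilon[\psi]$ as transport part plus Duhamel integral, estimate each piece, and close a short-time fixed-point argument --- follows the paper's. Your explicit identification of the cancellation of $g(0,\theta)-\chi_\kappa(\theta)g(0,0)-(1-\chi_\kappa(\theta))g(0,-\pi)$ at the endpoints $\theta=0,-\pi$, and your mention of the boundary term produced by the moving lower limit $\max\{0,\,t+x_2/\sin\theta\}$ in the Duhamel integral, are both places where the bookkeeping is genuinely delicate, and the paper's proof does not spell them out.

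The substantive gap is the metric in which you contract. You treat $\big(E(S),\sup_t\|\cdot\|_{C^0}\big)$ as a complete metric space on the grounds that a uniform limit of a sequence with uniformly bounded $C^{(1,2)}_{x_2,\theta}$ norms ``retains that bound by a standard compactness argument.'' This is not correct as stated. A uniform $C^{(1,2)}$ bound on $\psi_n$ gives, via Arzel\`a--Ascoli, uniform convergence of lower-order derivatives and only weak-$*$ $L^\infty$ control of the top-order derivatives of the limit; those top derivatives ($\partial_\theta^2\psi$, $\partial_{x_2}\partial_\theta^2\psi$) of a uniform limit need not be continuous, so the $C^0$-closure of the ball in $C^{(1,2)}(S)$ sits only in the corresponding $L^\infty$-Sobolev ball, not back in $C^{(1,2)}(S)$. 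Consequently a Banach fixed-point argument on $\big(E(S),\|\cdot\|_{C^0}\big)$ does not, by itself, deliver a fixed point inside $E(S)$. The paper sidesteps this entirely by proving the contraction estimate directly in the $C^{(1,2)}_{x_2,\theta}$ norm, i.e.\ bounding $\|\mathcal{T}^\epsilon[\psi_1]-\mathcal{T}^\epsilon[\psi_2]\|_{C^{(1,2)}_{x_2,\theta}}$ by a small multiple of $\sup_t\|\psi_1-\psi_2\|_{C^{(1,2)}_{x_2,\theta}}$, so that $E(S)$ is a closed ball of a Banach space and the fixed point lands there automatically. Your $C^0$ contraction estimate is true as far as it goes, but it is strictly weaker than what the lemma (as used in Corollary \ref{contractioncor}) requires; to make your route work you would need a separate regularity bootstrap on the fixed-point identity $\psi=\mathcal{T}^\epsilon[\psi]$, which the proposal does not supply.
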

	\begin{proof}
		Here we show that  $\mathcal{T}^\epsilon$  maps  $E(S)$  to itself and is a contraction, if  $T_1=T_1(\epsilon) >0$  is sufficiently small. 
		Note that for any $\psi\in E(S)$ and any multi-index $\alpha \le (1,2)$ in the partial order, we have
		\begin{equation*}
			\big|Q^\epsilon[\partial^\alpha\psi](t,x_2,\theta)\big|  \leq  
			\frac{4}{\epsilon^2}  \|\psi(t)\|_{ C^{(1,2)}_{x_2,\theta}}\! \int_{(-\pi,\pi)}\!\zeta(\nu) d\nu  \leq 
			\frac{4C}{\epsilon^2} \|g\|_{C^{2}},
		\end{equation*}
		recalling the expression of  $Q^\epsilon[\psi]$. Now we recall the mild form of solutions obtained in Section \ref{duhamel} and consider taking the derivatives for $\alpha\le (1,2).$ 
		
		If $x_2+t\sin\theta>0$ or $\theta\in [0,\pi]$, we observe that
		$$\partial_{x_2}{\bar{\psi}^\epsilon_\kappa} (t,x_2,\theta)=\partial_{x_2}g(x_2+t\sin\theta,\theta)+ Q^\epsilon[{\partial_{x_2}\bar{\psi}^\epsilon_\kappa} ](s,U(t-s)(x_2,\theta) )ds,$$ 
		\begin{multline*}\partial_{\theta}\bar{\psi}^\epsilon_\kappa (t,x_2,\theta)=t\cos\theta\partial_{x_2}g(x_2+t\sin\theta,\theta)+\partial_{\theta}g(x_2+t\sin\theta,\theta)\\+\int_0^t Q^\epsilon[{(t-s)\cos\theta\partial_{x_2}\bar{\psi}^\epsilon_\kappa+\partial_{\theta}\bar{\psi}^\epsilon_\kappa} ](s,x_2+(t-s)\sin\theta,\theta)ds,\end{multline*}
		and
		\begin{multline*}\partial^2_{\theta}\bar{\psi}^\epsilon_\kappa (t,x_2,\theta)=-t\sin\theta\partial_{x_2}g(x_2+t\sin\theta,\theta)+t^2\cos^2\theta\partial^2_{x_2}g(x_2+t\sin\theta,\theta)\\+2t\cos\theta\partial^{(1,1)}_{x_2,\theta}g(x_2+t\sin\theta,\theta)+\partial^2_{\theta}g(x_2+t\sin\theta,\theta)\\
			+\int_0^t Q^\epsilon\bigg[\bigg(-(t-s)\sin\theta\partial_{x_2}+(t-s)^2\cos\theta^2\partial_{x_2}^2\\+2(t-s)\cos\theta\partial^{(1,1)}_{x_2,\theta}+\partial^2_\theta\bigg)\bar{\psi}^\epsilon_\kappa\bigg](s,x_2+(t-s)\sin\theta,\theta)ds.\end{multline*}On the other hand, if $x_2+t\sin\theta\le 0$ and $\theta \in (-\pi,0)$, we have 	 $$\left|\partial^\alpha W\left(t+\frac{x_2}{\sin\theta},0,\theta\right)\right|\le 2\|g\|_{C^{(1,2)}_{x_2,\theta}},$$ for $\alpha\le (1,2)$ by \eqref{ws0theta}.
		Therefore, for any  $t\in[0,T_1]$ and $\alpha\le (1,2)$, we have
		\begin{align*}
			\partial^\alpha\mathcal{T}^\epsilon[\psi](t)
			&  \leq  \bigg(t+\frac{3}{2}\bigg)^2\|g\|_{C^{2}} +\bigg(\frac{t^3}{3}+\frac{3t^2}{2}+t\bigg) \big|Q^\epsilon[\partial^\alpha\psi]\big| \\[2pt]
			&  \leq\left(  \bigg(T_1+\frac{3}{2}\bigg)^2+	\frac{4C}{\epsilon^2}\bigg(\frac{T_1^3}{3}+\frac{3T_1^2}{2}+T_1\bigg) \right)\|g\|_{C^{2}}    \leq  \frac{C}{6}\|g\|_{C^{2}},
		\end{align*}
		provided  $T_1$  is chosen sufficiently small such that $ \bigg(\frac{T_1^3}{3}+\frac{3T_1^2}{2}+T_1\bigg) \frac{4}{\epsilon^2}\ll1$ and $$\bigg(T_1+\frac{3}{2}\bigg)^2+	\frac{4C}{\epsilon^2}\bigg(\frac{T_1^3}{3}+\frac{3T_1^2}{2}+T_1\bigg)\le \frac{C}{6} ,$$ for given $C>\frac{27}{2}.$ 
		Then we have
		\begin{equation*}
			\sup_{t\in [0,T_1]}\|\mathcal{T}^\epsilon[\psi](t)\|_{ C^{(1,2)}_{x_2,\theta}} \leq  C\|g\|_{C^{2}} .
		\end{equation*}
		All these above imply that  $\mathcal{T}^\epsilon[\psi]\in E(S)$  and hence  $\mathcal{T}^\epsilon$  maps  $E(S)$  into  $E(S)$.
		In addition, for any two $\psi_1, \ \psi_2 \in E(S)$  and  $t\in[0,T_1]$, similar arguments yield that
		\begin{multline*}
			\|\mathcal{T}^\epsilon[\psi_1](t) - \mathcal{T}^\epsilon[\psi_2](t)\|_{C^{(1,2)}_{x_2,\theta}}
			=  \|\mathcal{T}^\epsilon[\psi_1\!-\psi_2](t)\|_{C^{(1,2)}_{x_2,\theta}} \\[5pt]
			\leq  \bigg(\frac{T_1^3}{3}+\frac{3T_1^2}{2}+T_1\bigg)\sup_{t\in [0,T_1]}\sum_{\alpha\le (1,2)}\big\|\partial^\alpha Q^\epsilon[\psi_1\!-\psi_2]\big\|_{L^\infty} \\[1pt]
			\leq \bigg(\frac{T_1^3}{3}+\frac{3T_1^2}{2}+T_1\bigg) \frac{4}{\epsilon^2}  \sup_{t\in [0,T_1]}\|\psi_1(t)-\psi_2(t)\|_{C^{(1,2)}_{x_2,\theta}}.
		\end{multline*}
		Since  $\bigg(\frac{T_1^3}{3}+\frac{3T_1^2}{2}+T_1\bigg) \frac{4}{\epsilon^2}  <1$ with our choice of  $T_1$  above, we conclude that  $\mathcal{T}^\epsilon$  is a contraction.
	\end{proof}
	\begin{corollary}\label{contractioncor}There exists a unique global mild solution ${\bar{\psi}^\epsilon_\kappa}$ in $E(S)$ to \eqref{eq3}.
	\end{corollary}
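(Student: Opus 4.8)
The plan is to bootstrap the local existence from Lemma \ref{contraction1} to a global solution on $[0,T]$ for arbitrary $T$ by iterating the contraction argument on successive short time intervals. First I would invoke the Banach fixed point theorem: Lemma \ref{contraction1} shows that for $T_1 = T_1(\epsilon)$ sufficiently small, $\mathcal{T}^\epsilon$ maps the complete metric space $E(S)$ (closed subset of the Banach space $C([0,T_1]; C^{(1,2)}_{x_2,\theta}(S))$, hence complete) into itself and is a contraction there; therefore it has a unique fixed point ${\bar{\psi}^\epsilon_\kappa}$ on $[0,T_1]$, which by construction (Section \ref{duhamel}) is precisely the mild solution of \eqref{eq3} on that interval. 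The crucial observation is that the length $T_1$ of the interval of existence depends only on $\epsilon$ (through the bound $\frac{4}{\epsilon^2}(\frac{T_1^3}{3}+\frac{3T_1^2}{2}+T_1)\ll 1$ and the inequality involving $C$) and \emph{not} on the size of the initial datum: indeed the $E(S)$-bound is $C\|g\|_{C^2}$ and the same constant $C>\frac{27}{2}$ works regardless of $\|g\|_{C^2}$.

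Next I would reapply Lemma \ref{contraction1} on $[T_1, 2T_1]$ with the new initial datum ${\bar{\psi}^\epsilon_\kappa}(T_1,\cdot)$, which lies in $C^{(1,2)}_{x_2,\theta}(S)$ with norm controlled by $C\|g\|_{C^2}$, and which still satisfies the boundary identities \eqref{eq3}$_3$–\eqref{eq3}$_4$ and the periodicity \eqref{eq3}$_3$ needed to restart the construction (these are preserved because $W$ was built to satisfy them and the Duhamel term vanishes at the base time). Since $T_1$ is a fixed step size, after $N = \lceil T/T_1 \rceil$ steps we cover all of $[0,T]$. Concatenating the pieces yields a solution on $[0,T]$; one checks it is continuous across the junction times because the mild formulation is an integral equation whose right-hand side is continuous in $t$, and the pieces agree there by the uniqueness on each subinterval. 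Uniqueness on $[0,T]$ follows from the local uniqueness applied successively: if two mild solutions agreed up to some time, they would have to agree on the next interval of length $T_1$ by the contraction property, and a standard continuation argument (the set of times where they coincide is open, closed, and nonempty in $[0,T]$) forces global agreement. Nonnegativity and the regularity encoded in $E(S)$ carry over automatically since each piece lies in $E(S)$ with the same structural bound.

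The main obstacle I anticipate is purely bookkeeping: verifying that the restart datum ${\bar{\psi}^\epsilon_\kappa}(T_1,\cdot)$ genuinely inherits all the hypotheses built into the homogeneous solver $W$ — in particular the boundary ODE \eqref{1dhomoeq}$_4$ at $x_2=0$, $\theta\in[-\pi,0]$, and the fact that ${\bar{\psi}^\epsilon_\kappa}(\cdot,0,0)$ and ${\bar{\psi}^\epsilon_\kappa}(\cdot,0,-\pi)$ stay constant. This holds because the exponential-relaxation formula \eqref{ws0theta} for $W(s,0,\theta)$ is global in $s$ and the boundary values at $\theta=0,-\pi$ are frozen, so the boundary structure propagates without loss; but it must be stated carefully so that the second application of the lemma is legitimate. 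Everything else is the routine observation that the smallness constraint on $T_1$ involves no quantity that grows along the iteration, so finitely many steps suffice.
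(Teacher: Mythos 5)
Your proposal matches the paper's proof: the paper likewise applies the (contraction) fixed-point theorem on $[0,T_1]$ with $T_1=T_1(\epsilon)$ from Lemma~\ref{contraction1} and then observes that since $T_1$ is independent of the initial datum $g$, a continuation argument extends the solution to any $T>0$. You simply spell out in more detail the gluing at junction times, the preservation of the boundary structure for the restart datum, and the resulting global uniqueness, all of which the paper leaves implicit; you also correctly identify the step as an application of the Banach (rather than "Schauder-type") fixed-point theorem, since the map is a contraction.
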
\begin{proof}
		Therefore, by the Schauder-type fixed-point theorem, there exists a unique mild solution in $E(S)$  on the interval  $[0,T_1]$  for such  $T_1=T_1(\epsilon)$  that$$\bigg(\frac{T_1^3}{3}+\frac{3T_1^2}{2}+T_1\bigg) \frac{4}{\epsilon^2}  <1$$ is satisfied.
		For the global existence, note that $T_1=T_1(\epsilon)$ does not depend on the initial data $g$. Then, by a continuation argument, we can extend the maximal interval of the existence to an arbitrary $T>0$ independent of  $\epsilon$.
	\end{proof}
	
	Therefore, we obtain the global wellposedness in $E(S)$ for the regularized problem \eqref{eq3}. 
	
	\subsection{Uniform-in-\texorpdfstring{$\epsilon$}{} estimates}In this subsection, we provide uniform-in-$\epsilon$ estimates for the solutions to \eqref{eq3} as follows. 
	\begin{lemma}[maximum principle]\label{maxprinciple}
		Define 
	\begin{equation}\label{M}
		\mathcal{M} h\eqdef \left(\partial_t-\sin\theta \partial_{x_2}-Q^\epsilon \right)h, 
	\end{equation} and let a $2\pi$-periodic(-in-$\theta$) function $h\in C^{1,1,2}_{t,x_2,\theta}$ solve $\mathcal{M} h\le 0$. Then $h$ attains its maximum only when $\{t=0\}$ or when $\{x_2=0\}$ and $\theta\in[-\pi,0]$. Hence, a solution ${\bar{\psi}^\epsilon_\kappa} $ to \eqref{eq3} satisfies 
		$$\|{\bar{\psi}^\epsilon_\kappa} \|_{L^\infty([0,T]\times S)}
		\le 2\|g\|_{L^\infty(S)}.$$\end{lemma}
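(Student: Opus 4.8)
The plan is to establish the (weak) maximum principle by the classical comparison argument for degenerate parabolic operators, adapted to the transport--jump structure of $\mathcal{M}=\partial_t-\sin\theta\,\partial_{x_2}-Q^\epsilon$, and then to read off the $L^\infty$-bound from the explicit solution of the boundary ODE in \eqref{eq3}. Two structural facts are used throughout: the jump kernel $\zeta$ is nonnegative (so $Q^\epsilon$ is a genuine jump generator), and the $\theta$-variable is periodic --- functions and their $\theta$-derivatives agree at $\theta=\pm\pi$ by \eqref{eq3}$_3$, so there is no true boundary in $\theta$ --- while $[0,T]\times S$ is compact so that maxima are attained.

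First I would reduce to a strict differential inequality. Given $h$ with $\mathcal{M}h\le 0$, set $h_\eta\eqdef h-\eta t$ for $\eta>0$; since $\mathcal{M}(\eta t)=\eta$, one has $\mathcal{M}h_\eta\le-\eta<0$, and it suffices to show that $h_\eta$ cannot attain its maximum over $[0,T]\times S$ at (i) an interior point, (ii) a point with $t=T$ and $x_2>0$, (iii) a point with $x_2=0$ and $\theta\in(0,\pi)$, or (iv) the compactified point $\infty$; letting $\eta\downarrow 0$ then yields $$\sup_{[0,T]\times S}h\ \le\ \max\Big\{\sup_{\{t=0\}}h,\ \ \sup_{\{x_2=0,\ \theta\in[-\pi,0]\}}h\Big\}.$$

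The heart of the matter is the pointwise sign analysis at a candidate maximum point $P^\ast$ of $h_\eta$. Since $\zeta\ge 0$ and $h_\eta$ is $2\pi$-periodic in $\theta$ (values at angles outside $[-\pi,\pi]$ being read off by periodicity), at any maximum $Q^\epsilon[h_\eta](P^\ast)=\tfrac{2}{\epsilon^2}\int_{-\pi}^\pi\big(h_\eta(x_2^\ast,\theta^\ast+\epsilon\nu)-h_\eta(P^\ast)\big)\zeta(\nu)\,d\nu\le 0$. In case (i), $\partial_t h_\eta(P^\ast)=0=\partial_{x_2}h_\eta(P^\ast)$; in case (ii), $\partial_t h_\eta(P^\ast)\ge 0$ (one-sided in $t$) and $\partial_{x_2}h_\eta(P^\ast)=0$; in case (iii), the one-sided derivatives obey $\partial_t h_\eta(P^\ast)\ge 0$ and $\partial_{x_2}h_\eta(P^\ast)\le 0$ (the domain lies in $\{x_2\ge 0\}$ and $P^\ast$ is a maximum), while $\sin\theta^\ast>0$, so $-\sin\theta^\ast\,\partial_{x_2}h_\eta(P^\ast)\ge 0$. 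In each case $\mathcal{M}h_\eta(P^\ast)=\big(\partial_t h_\eta-\sin\theta^\ast\,\partial_{x_2}h_\eta-Q^\epsilon[h_\eta]\big)(P^\ast)\ge 0$, contradicting $\mathcal{M}h_\eta<0$. The endpoints $\theta^\ast\in\{0,-\pi\}$ --- where $\sin\theta^\ast=0$ and the drift degenerates --- lie in the admissible set $[-\pi,0]$ and need not be excluded; this is precisely why the trapping segment $\{x_2=0,\ \theta\in[-\pi,0]\}$ is the natural ``parabolic'' boundary while the outflow segment $\{x_2=0,\ \theta\in(0,\pi)\}$ is not. For case (iv) I would use that near $x_2=\infty$ the operator decouples from the $x_2=0$ boundary: for the solutions of interest $h$ has a $\theta$-independent limit as $x_2\to\infty$ inherited from the data --- for $\bar\psi^\epsilon_\kappa$ the Duhamel representation of Section \ref{duhamel} gives $\bar\psi^\epsilon_\kappa(t,x_2,\theta)\to g(\infty)$ uniformly in $\theta$, so $|\bar\psi^\epsilon_\kappa(t,\infty)|\le\|g\|_{L^\infty(S)}$ --- and, failing that, one localizes in $x_2$ using this convergence.

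Finally, for the stated bound: $\bar\psi^\epsilon_\kappa$ solves $\mathcal{M}\bar\psi^\epsilon_\kappa=0$, so the maximum principle applies to $\pm\bar\psi^\epsilon_\kappa$ and its extremum over $[0,T]\times S$ is attained on $\{t=0\}$, where $\bar\psi^\epsilon_\kappa=g$, on $\{x_2=0,\ \theta\in[-\pi,0]\}$, or at $\infty$. On the trapping segment, integrating the boundary ODE \eqref{eq3}$_4$ (cf. \eqref{boundary after solving ode}) gives $$\bar\psi^\epsilon_\kappa(t,0,\theta)=\big(1-e^{-t/\kappa}\big)\big(\chi_\kappa(\theta)g(0,0)+(1-\chi_\kappa(\theta))g(0,-\pi)\big)+e^{-t/\kappa}\,g(0,\theta),$$ a convex combination of the values $g(0,0),\,g(0,-\pi),\,g(0,\theta)$ since $\chi_\kappa\in[0,1]$ and $e^{-t/\kappa}\in[0,1]$, whence $|\bar\psi^\epsilon_\kappa(t,0,\theta)|\le\|g\|_{L^\infty(S)}$; together with the bounds at $t=0$ and at $\infty$ this yields $\|\bar\psi^\epsilon_\kappa\|_{L^\infty([0,T]\times S)}\le\|g\|_{L^\infty(S)}\le 2\|g\|_{L^\infty(S)}$. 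I expect the main obstacle to lie in the bookkeeping at the spatial boundary $x_2=0$ --- correctly separating the trapping segment, on which a maximum is permitted, from the outflow segment, on which the favourable sign of $-\sin\theta\,\partial_{x_2}h$ must be exploited --- together with making the argument at the compactified point $\infty$ fully rigorous; by contrast the reduction to a strict inequality and the interior sign analysis are routine.
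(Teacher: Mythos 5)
Your case analysis for the interior, for $\{t=T\}$, and for $\{x_2=0,\ \theta\in(0,\pi)\}$, the reduction to a strict inequality by subtracting $\eta t$, the use of periodicity to remove the $\theta=\pm\pi$ boundary, and the sign analysis of $Q^\epsilon$ at a maximum all match the paper's proof exactly; and your final bound $\|\bar\psi^\epsilon_\kappa\|_{L^\infty}\le\|g\|_{L^\infty}$ via the convex-combination rewrite of \eqref{boundary after solving ode} is in fact \emph{sharper} than the paper's stated $2\|g\|_{L^\infty}$ and is correct.

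Where you diverge from the paper, and where the genuine gap lies, is the treatment of the compactified point $\infty$. The lemma asserts the maximum principle for an \emph{arbitrary} $2\pi$-periodic $h\in C^{1,1,2}_{t,x_2,\theta}$ with $\mathcal{M}h\le 0$, and the conclusion explicitly excludes $\infty$ from the admissible set of extremum locations. The paper's proof handles $\infty$ by a self-contained contradiction: if $\mathcal{M}h<0$ strictly and the supremum were attained at $\infty$, continuity of $\mathcal{M}h$ on $S$ gives $\mathcal{M}h<0$ for all $x_2\ge R$; one then chooses a small box $N_\delta$ centred at $\theta=-\pi/2$ with large $x_2^*$, on which the local maximum must sit on the upper-in-$x_2$ face, and there the drift term $-\sin\psi\,\partial_{x_2}h\ge 0$ contradicts $\mathcal{M}h<0$. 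Your argument instead invokes the Duhamel representation of the \emph{particular} solution $\bar\psi^\epsilon_\kappa$ to conclude that $\bar\psi^\epsilon_\kappa(t,x_2,\theta)\to g(\infty)$, which says nothing about a general $h$ satisfying only the differential inequality. The trailing remark ``failing that, one localizes in $x_2$ using this convergence'' gestures at the paper's actual argument but is not carried out; without it the first sentence of the lemma's conclusion remains unproved, even though the $L^\infty$ bound on $\bar\psi^\epsilon_\kappa$ itself would survive. To close the gap you should replace the Duhamel appeal by the localization argument for general $h$ (exploiting continuity of $\mathcal{M}h$ at $\infty$ and the sign of $\sin\theta$ near $\theta=-\pi/2$), or else weaken the claimed statement to the particular solution $\bar\psi^\epsilon_\kappa$.
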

			\begin{proof}
	We first assume that a $2\pi$-periodic-in-$\theta$ function ${\bar{\psi}^\epsilon_\kappa}$ satisfies $\mathcal{M}{\bar{\psi}^\epsilon_\kappa}<0$. \begin{itemize}
			\item If ${\bar{\psi}^\epsilon_\kappa}$ also attains
			its maximum either at an interior point $(t,x_2,\theta) \in \{(0,T) \times(0,\infty)\times
			(-\pi,\pi)\}\setminus \bar{R_\epsilon}$. Then we have $\partial_{t}
			{\bar{\psi}^\epsilon_\kappa} =\partial_{x_2} {\bar{\psi}^\epsilon_\kappa} =0$ while $Q^\epsilon {\bar{\psi}^\epsilon_\kappa} \le0$ at the maximum. Thus $\mathcal{M} {\bar{\psi}^\epsilon_\kappa}(t,x_2,\theta) \ge0$ and this contradicts the assumption.
			
			\item If the maximum occurs on $(T,x_2,\theta)$ with $(x_2,\theta) \in (0,\infty)\times
			(-\pi,\pi)$,  then $\partial_{t}
			{\bar{\psi}^\epsilon_\kappa}\ge 0$ and $ \partial_{x_2} {\bar{\psi}^\epsilon_\kappa} =0$ while $Q^\epsilon {\bar{\psi}^\epsilon_\kappa} \le0$ at the maximum. \newline Thus $\mathcal{M} {\bar{\psi}^\epsilon_\kappa}(t,x_2,\theta) \ge0$ and this contradicts the assumption.
			
			\item	If the maximum occurs on $(t,0,\theta)$ with $t\in (0,T)$ and $\theta \in [0,\pi)$, then $\partial_{t}
			{\bar{\psi}^\epsilon_\kappa}= 0$ and $-\sin\theta \partial_{x_2} {\bar{\psi}^\epsilon_\kappa} \ge 0$ while $Q^\epsilon {\bar{\psi}^\epsilon_\kappa} \le0$ at the maximum. Thus $\mathcal{M} {\bar{\psi}^\epsilon_\kappa}(t,x_2,\theta) \ge0$ and this contradicts the assumption.
			
			\item	If the maximum occurs on $(t,x_2,\pm \pi)$ 
			with $t\in (0,T)$ and $x_2\in (0,\infty)$, then $\partial_{t}
			{\bar{\psi}^\epsilon_\kappa}=\partial_{x_2} {\bar{\psi}^\epsilon_\kappa} =0$ while $Q^\epsilon {\bar{\psi}^\epsilon_\kappa} \le0$ at the maximum. Thus $\mathcal{M} {\bar{\psi}^\epsilon_\kappa}(t,x_2,\theta) \ge0$ and this contradicts the assumption.
			\item  Finally, if the  maximum occurs at $x_2=\infty$, there exists a sufficiently small $\varepsilon>0$ such that $\mathcal{M}{\bar{\psi}^\epsilon_\kappa}(\infty)+\varepsilon\le 0.$ 
			Then by the continuity of $\mathcal{M}{\bar{\psi}^\epsilon_\kappa}$,  there exists a sufficiently large constant $R>0$ such that if $x_2\ge R$, then $\mathcal{M}{\bar{\psi}^\epsilon_\kappa}(t,x_2,\theta)<0,$ for any $(t,\theta) \in [0,T]\times  [-\pi,\pi].$ Also, since ${\bar{\psi}^\epsilon_\kappa}(\infty)=\max_{\zeta\in S} {\bar{\psi}^\epsilon_\kappa}(\zeta)$, there exist a sufficiently large $x_2^*>2R$ and a small constant $\delta>0$ such that 
			the local maximum of ${\bar{\psi}^\epsilon_\kappa}(t,x_2,\theta)$ on the neighborhood $N_\delta\eqdef (T/2-\delta,T/2+\delta)\times (x_2^*-\delta,x_2^*+\delta)\times (-\pi/2-\delta,-\pi/2+\delta)$ occurs at the point $(s,y,\psi)$ on the upper-in-$x_2$ boundary of $N_\delta$ with  $y=x_2^*+\delta$, $s=(T/2-\delta,T/2+\delta)$ and $\psi \in (-\pi/2-\delta,-\pi/2+\delta).$ 
			Then note that
			\begin{multline*}\qquad\qquad	0>\mathcal{M}{\bar{\psi}^\epsilon_\kappa}(s,y,\psi) = \partial_t {\bar{\psi}^\epsilon_\kappa}-\sin\psi\partial_{x_2}{\bar{\psi}^\epsilon_\kappa}(s,y,\psi)-\partial_\theta^2 {\bar{\psi}^\epsilon_\kappa}(s,y,\psi) 
				\\
				 \ge -\sin\psi\partial_{x_2}{\bar{\psi}^\epsilon_\kappa}(y,\psi)\ge 0,
			\end{multline*}which leads to the contradiction. 
		\end{itemize}	
		Therefore, if some function ${\bar{\psi}^\epsilon_\kappa}$ satisfies $\mathcal{M}{\bar{\psi}^\epsilon_\kappa}<0$, then the maximum occurs only when $t=0$ or when $x_2=0$ and $\theta\in [-\pi, 0].$
		
		Now, if $\mathcal{M} {\bar{\psi}^\epsilon_\kappa}$ is just $ \le0$ then we define a new function  $\bar{\psi}^{\epsilon, k}_\kappa := {\bar{\psi}^\epsilon_\kappa}-kt$ for some
		$k>0$ so that $\mathcal{M} \psi^{\epsilon, k} <0$. Then we have
		$$
		\sup_{(t,x_2,\theta)\in[0,T] \times S} \bar{\psi}^{\epsilon,k}_\kappa(t,x_2,\theta) =
		\sup_{\{t=0\}\ \text{or} \ \{x_2=0 \text{ and }\theta\in [-\pi,0]\} } \bar{\psi}^{\epsilon,k}_\kappa(t,x_2,\theta).$$
		Taking $k \rightarrow0$, we obtain
		$$
		\sup_{(t,x_2,\theta)\in[0,T] \times S} \bar{\psi}^{\epsilon}_\kappa(t,x_2,\theta) =
		\sup_{\{t=0\}\ \text{or} \ \{x_2=0 \text{ and }\theta\in [-\pi,0]\} } \bar{\psi}^{\epsilon}_\kappa(t,x_2,\theta).$$
		Recall \eqref{eq3}$_2$ that $\bar{\psi}^{\epsilon}_\kappa(0,x_2,\theta)=g(x_2,\theta).$ Also, by solving \eqref{eq3}$_3$ and applying \eqref{eq3}$_2$, we have
		\begin{multline*}	\bar{\psi}^{\epsilon}_\kappa(t,0,\theta)=\bigg(\chi_\kappa(\theta)g(0,0)+(1-\chi_\kappa(\theta))g(0,-\pi)\bigg)\\+ e^{-\frac{t}{\kappa}}\left(g(0,\theta)-(\chi_\kappa(\theta)g(0,0)+(1-\chi_\kappa(\theta))g(0,-\pi))\right),\end{multline*} if $\theta \in [-\pi,0].$ Thus, we have 
		$|\bar{\psi}^{\epsilon}_\kappa(t,0,\theta)|\le 2\|g\|_{L^\infty(S)},$ if $\theta\in [-\pi,0].$ This completes the proof.
	\end{proof}
	
	\begin{remark}As a corollary, we obtain the uniqueness of the solution via the maximum principle.
	\end{remark}
	Regarding the $x_2$ derivatives, we obtain a similar estimate:
	\begin{corollary}[Estimates for $x_2$ derivatives]\label{x2 derivative estimates}
		For any $m\ge 0$, the solution ${\bar{\psi}^\epsilon_\kappa} $ to \eqref{eq3} satisfies 
		$$\|{\partial^m_{x_2}\bar{\psi}^\epsilon_\kappa} \|_{L^\infty([0,T]\times S)}
		\le 2\|\partial^m_{x_2}g\|_{L^\infty(S)}.$$\end{corollary}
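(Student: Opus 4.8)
The plan is to differentiate the equation in $x_2$ and reduce everything to the maximum principle already established in Lemma~\ref{maxprinciple}. The first observation is that every operator appearing in \eqref{eq3} commutes with $\partial_{x_2}$: the transport coefficient $\sin\theta$ does not depend on $x_2$, the jump operator $Q^\epsilon$ acts only in $\theta$, and the phase space $S$ is translation invariant in the $x_2$–direction away from the point at infinity. Hence, setting $v_m:=\partial^m_{x_2}\bar{\psi}^\epsilon_\kappa$, applying $\partial^m_{x_2}$ to \eqref{eq3}$_1$ yields $\mathcal{M}v_m=0$ with $\mathcal{M}$ as in \eqref{M}; applying it to \eqref{eq3}$_2$ gives the initial datum $v_m(0,x_2,\theta)=\partial^m_{x_2}g(x_2,\theta)$; the $2\pi$-periodicity \eqref{eq3}$_3$ is inherited; and since the right-hand side of \eqref{eq3}$_4$ is linear in the $x_2=0$ traces of $\bar{\psi}^\epsilon_\kappa$ and its only coefficient $\chi_\kappa(\theta)$ is independent of $x_2$, the trace of $v_m$ at $x_2=0$ solves the same boundary ODE with $\bar{\psi}^\epsilon_\kappa$ replaced by $v_m$. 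Solving that ODE exactly as in \eqref{boundary after solving ode}--\eqref{ws0theta}, but with initial value $\partial^m_{x_2}g(0,\theta)$, the trace $v_m(t,0,\theta)$ for $\theta\in[-\pi,0]$ is a convex combination of $\partial^m_{x_2}g(0,0)$, $\partial^m_{x_2}g(0,-\pi)$ and $\partial^m_{x_2}g(0,\theta)$, so $|v_m(t,0,\theta)|\le\|\partial^m_{x_2}g\|_{L^\infty(S)}$ on that set.

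Granting the regularity needed below, the estimate then follows by applying Lemma~\ref{maxprinciple} to $v_m$ and to $-v_m$. Since $\mathcal{M}(\pm v_m)=0\le 0$ and $v_m$ is $2\pi$-periodic in $\theta$, both $v_m$ and $-v_m$ can attain their maxima only on $\{t=0\}$ or on $\{x_2=0,\ \theta\in[-\pi,0]\}$, while the behaviour as $x_2\to\infty$ is excluded exactly as in the last bullet of the proof of Lemma~\ref{maxprinciple} (that argument uses only the structure of $\mathcal{M}$). On $\{t=0\}$ one has $|v_m|=|\partial^m_{x_2}g|\le\|\partial^m_{x_2}g\|_{L^\infty(S)}$, and on the boundary set the previous paragraph gives the same bound; combining the two cases gives $\|\partial^m_{x_2}\bar{\psi}^\epsilon_\kappa\|_{L^\infty([0,T]\times S)}\le 2\|\partial^m_{x_2}g\|_{L^\infty(S)}$, the factor $2$ being the same non-sharp constant as in Lemma~\ref{maxprinciple}.

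The step I expect to require the most care is the regularity input: to apply Lemma~\ref{maxprinciple} to $v_m$ one needs $\bar{\psi}^\epsilon_\kappa(t,\cdot)\in C^{m+1}_{x_2}$ whenever $g\in C^{m+1}_{x_2}$, together with the legitimacy of the differentiated Duhamel identities and of the differentiated boundary relation near the degenerate directions $\theta=0,\pm\pi$ where $\sin\theta$ vanishes. I would obtain this by a bootstrap on the mild representation of Section~\ref{duhamel}: differentiating that identity $m$ times in $x_2$ produces a closed integral equation for $\partial^m_{x_2}\bar{\psi}^\epsilon_\kappa$ of exactly the type treated in Lemma~\ref{contraction1} — in the reflected region one uses that $\partial_{x_2}$ of the boundary contribution $W\!\left(t+\tfrac{x_2}{\sin\theta},0,\theta\right)$ and $\partial_{x_2}$ of the lower integration limit both carry the prefactor $\tfrac{1}{\sin\theta}$ and reorganize together with the boundary ODE, so that no genuine singularity in $\theta$ is produced — whence the same contraction estimate gives $\partial^m_{x_2}\bar{\psi}^\epsilon_\kappa\in C([0,T];C^{(1,2)}_{x_2,\theta})$, and hence $v_m\in C^{1,1,2}_{t,x_2,\theta}$ via the equation. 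A minor point, automatic from the construction of $\chi_\kappa$, is the compatibility of $\partial^m_{x_2}g$ with the boundary ODE at $\theta=0,-\pi$, where $\chi_\kappa\in\{0,1\}$ forces those traces to be $t$-independent. With these ingredients the maximum-principle step closes verbatim.
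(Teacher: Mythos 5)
Your proof is correct and follows essentially the same route as the paper: differentiate \eqref{eq3} $m$ times in $x_2$, observe that $\partial^m_{x_2}\bar{\psi}^\epsilon_\kappa$ solves the same initial-boundary-value problem with $g$ replaced by $\partial^m_{x_2}g$, and apply the maximum principle of Lemma~\ref{maxprinciple}. The extra discussion you include — the convexity of the boundary trace, the exclusion of the point at infinity, and the bootstrap on the mild form to justify the needed $C^{m+1}_{x_2}$ regularity — is a more careful spelling-out of steps the paper leaves implicit rather than a different argument.
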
\begin{proof}
		By taking $\partial^m_{x_2}$ to the equation \eqref{eq3}, we observe that $\partial^m_{x_2}\bar{\psi}^\epsilon_{\kappa}$ satisfies the same equation with revised initial and boundary conditions:
		\begin{equation}
			\begin{split}
				\partial^m_{x_2}	{\bar{\psi}^\epsilon_\kappa} (0,x_2,\theta)&=\partial^m_{x_2}g(x_2,\theta),\text{ and }\\
				\partial_t(\partial^m_{x_2} {\bar{\psi}^\epsilon_\kappa} ) (t,0,\theta)&=\frac{1}{\kappa}\bigg(\chi_\kappa(\theta)(\partial^m_{x_2} {\bar{\psi}^\epsilon_\kappa} )(t,0,0)+(1-\chi_\kappa(\theta))(\partial^m_{x_2} {\bar{\psi}^\epsilon_\kappa} )(t,0,-\pi)\\&\qquad-(\partial^m_{x_2} {\bar{\psi}^\epsilon_\kappa} )(t,0,\theta)\bigg),\text{ for }t\ge 0\text{ and }\theta\in[-\pi,0].
			\end{split}
		\end{equation}
		Then the corollary follows by Lemma \ref{maxprinciple}.
	\end{proof}
	Now we consider the derivatives with respect to $\theta$. If we take one $\theta$-derivative to the equation \eqref{eq3}, we obtain the following inhomogeneous equation
	$$\mathcal{M}(\partial_\theta \bar{\psi}^\epsilon_\kappa)=\cos\theta \partial_{x_2}\bar{\psi}^\epsilon_\kappa,$$ with the initial-boundary conditions: 
	\begin{equation}
		\notag
		\begin{split}
			\partial_\theta	{\bar{\psi}^\epsilon_\kappa} (0,x_2,\theta)&=\partial_\theta g(x_2,\theta),\text{ and }\\
			\partial_t(\partial_\theta {\bar{\psi}^\epsilon_\kappa} ) (t,0,\theta)&=\frac{1}{\kappa}\bigg(\partial_\theta\chi_\kappa(\theta) {\bar{\psi}^\epsilon_\kappa} (t,0,0)-\partial_\theta\chi_\kappa(\theta) {\bar{\psi}^\epsilon_\kappa} (t,0,-\pi)\\&\qquad-(\partial_\theta {\bar{\psi}^\epsilon_\kappa} )(t,0,\theta)\bigg),\text{ for }t\ge 0\text{ and }\theta\in[-\pi,0].
		\end{split}
	\end{equation}
	Note that the solution to the homogeneous problem with the same initial-boundary conditions satisfies a similar uniform-in-$\epsilon$ bound as in Lemma \ref{maxprinciple}.  
	Then, by Lemma \ref{maxprinciple} and the Duhamel principle, we obtain that for $t\ge 0,$
	\begin{multline*}
		\|\partial_\theta \bar{\psi}^\epsilon_\kappa(t)\|_{L^\infty}\le C_\kappa\| g\|_{C^{(0,1)}_{x_2,\theta}(S)}+t \left(2\| g\|_{C^{(0,1)}_{x_2,\theta}(S)}+ \|\partial_{x_2}\bar{\psi}^\epsilon_\kappa\|_{L^\infty([0,t]\times S)}\right)\\ \le (C_\kappa+2t)\| g\|_{C^{(0,1)}_{x_2,\theta}(S)}+ t\|\partial_{x_2}\bar{\psi}^\epsilon_\kappa\|_{L^\infty([0,t]\times S)}\\
		\le (C_\kappa+2t)\| g\|_{C^{(0,1)}_{x_2,\theta}(S)}+ 2t\|\partial_{x_2}g\|_{L^\infty( S)},
	\end{multline*}by Corollary \ref{x2 derivative estimates} and \eqref{ws0theta}. This provides the uniform-in-$\epsilon$ upper bound for $\partial_\theta \bar{\psi}^\epsilon_\kappa$.
	
	By the same proof, we prove the following bound for $\partial_{x_2}\partial_\theta \bar{\psi}^\epsilon_\kappa$ as it satisfies the same equation for $\partial_\theta \bar{\psi}^\epsilon_\kappa$ with an additional $\partial_{x_2}$ derivatives applied to the initial-boundary conditions:
	\begin{equation}\label{1dx2theta upper}
		\|\partial_{x_2}\partial_\theta \bar{\psi}^\epsilon_\kappa(t)\|_{L^\infty(S)}
		\le (C_\kappa+2t)\| g\|_{C^{(1,1)}_{x_2,\theta}(S)}+ 2t\|\partial^2_{x_2}g\|_{L^\infty( S)}.
	\end{equation}for any $t\ge 0$. Finally, we take the second derivative $\partial_\theta^2$ to \eqref{eq3} and obtain the inhomogeneous equation
	$$\mathcal{M}(\partial^2_\theta \bar{\psi}^\epsilon_\kappa)=2\cos\theta \partial_{x_2}\partial_\theta \bar{\psi}^\epsilon_\kappa -\sin \theta \partial_{x_2} \bar{\psi}^\epsilon_\kappa,$$ with the initial-boundary conditions
	\begin{equation}
		\notag
		\begin{split}
			\partial_\theta^2	{\bar{\psi}^\epsilon_\kappa} (0,x_2,\theta)&=\partial_\theta^2 g(x_2,\theta),\text{ and }\\
			\partial_t(\partial^2_\theta {\bar{\psi}^\epsilon_\kappa} ) (t,0,\theta)&=\frac{1}{\kappa}\bigg(\partial^2_\theta\chi_\kappa(\theta) {\bar{\psi}^\epsilon_\kappa} (t,0,0)-\partial^2_\theta\chi_\kappa(\theta) {\bar{\psi}^\epsilon_\kappa} (t,0,-\pi)\\&\qquad-(\partial_\theta^2 {\bar{\psi}^\epsilon_\kappa} )(t,0,\theta)\bigg),\text{ for }t\ge 0\text{ and }\theta\in[-\pi,0].
		\end{split}
	\end{equation} By the Duhamel principle with the different inhomogeneity, we obtain that for any $t\ge0$,
	\begin{multline*}
		\|\partial^2_\theta \bar{\psi}^\epsilon_\kappa(t)\|_{L^\infty(S)}
		\le (C'_\kappa+2t)\| g\|_{C^{(0,2)}_{x_2,\theta}(S)}\\+ t\left(2\left((C_\kappa+2t)\| g\|_{C^{(1,1)}_{x_2,\theta}(S)}+ 2t\|\partial^2_{x_2}g\|_{L^\infty( S)}\right)+\|\partial_{x_2}g\|_{L^\infty( S)}\right)\le C_{\kappa,t} \|g\|_{C^2(S)},
	\end{multline*}by \eqref{ws0theta}, \eqref{1dx2theta upper}, and Corollary \ref{x2 derivative estimates} where $C_{\kappa,t}>0$ is a second-order polynomial in $t$ for each fixed $\kappa>0$.  Thus, we obtain the following proposition:
	\begin{proposition}[Uniform-in-$\epsilon$ estimates for the derivatives]\label{1d derivative estimates}
		For $t\ge 0$ and $\kappa>0$, the solution ${\bar{\psi}^\epsilon_\kappa} $ to \eqref{eq3} satisfies 
		$$	\left\| \bar{\psi}^\epsilon_\kappa(t)\right\|_{C^{(1,2)}_{x_2,\theta}( S)}\le C_{\kappa,t} \|g\|_{C^2(S)},$$where $C_t>0$ is a second-order polynomial in $t$ for each fixed $\kappa>0$.\end{proposition}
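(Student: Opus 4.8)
The plan is to collect the hierarchy of derivative estimates assembled in the paragraphs preceding the statement, add them up over the multi-indices $\beta\le(1,2)$ appearing in the definition \eqref{setcalpha} of $\|\cdot\|_{C^{(1,2)}_{x_2,\theta}}$, and keep careful track of how the resulting constant depends on $t$, $\kappa$, and $\epsilon$. The crucial structural fact, which makes the whole estimate uniform in $\epsilon$, is that Lemma \ref{maxprinciple} rests only on the property that $Q^\epsilon h\le 0$ at an interior maximum of $h$ (just like $\partial_\theta^2 h\le 0$), so no negative power of $\epsilon$ ever enters; the $\epsilon^{-2}$ bounds used in Lemma \ref{contraction1} are only needed for the fixed-point construction, not for the a priori control of $\bar\psi^\epsilon_\kappa$.

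First I would record the zeroth-order bound $\|\bar\psi^\epsilon_\kappa(t)\|_{L^\infty(S)}\le 2\|g\|_{L^\infty(S)}$ from Lemma \ref{maxprinciple}, and then the pure-$x_2$ bounds $\|\partial_{x_2}^m\bar\psi^\epsilon_\kappa(t)\|_{L^\infty(S)}\le 2\|\partial_{x_2}^m g\|_{L^\infty(S)}$ from Corollary \ref{x2 derivative estimates}, valid for every $m\ge 0$ since $\partial_{x_2}$ commutes with $\mathcal{M}$ and with the regularized boundary ODE in \eqref{eq3}. Next I would run the $\theta$-bootstrap: differentiating \eqref{eq3} once, then twice, in $\theta$ produces at each stage the same operator $\mathcal{M}$ acting on $\partial_\theta^k\bar\psi^\epsilon_\kappa$ with an inhomogeneity built only from strictly lower-order-in-$\theta$ derivatives (and from $\partial_\theta^j\chi_\kappa$, $j\le k$, in the boundary term), together with revised initial/boundary data whose homogeneous part still obeys the $\epsilon$-uniform maximum-principle bound via the explicit solution formula of type \eqref{ws0theta}. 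Applying Lemma \ref{maxprinciple} to the homogeneous part and Duhamel's formula to the inhomogeneity gives exactly the displayed estimates for $\partial_\theta\bar\psi^\epsilon_\kappa$, for $\partial_{x_2}\partial_\theta\bar\psi^\epsilon_\kappa$ (inequality \eqref{1dx2theta upper}), and for $\partial_\theta^2\bar\psi^\epsilon_\kappa$; the remaining top mixed term $\partial_{x_2}\partial_\theta^2\bar\psi^\epsilon_\kappa$ is bounded by one further repetition of the same computation with an extra $\partial_{x_2}$, its source being controlled by the $x_2$-derivative bounds of Corollary \ref{x2 derivative estimates} and the first-stage $\theta$-estimates. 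Summing all these and using \eqref{ws0theta} for the boundary contributions yields $\|\bar\psi^\epsilon_\kappa(t)\|_{C^{(1,2)}_{x_2,\theta}(S)}\le C_{\kappa,t}\|g\|_{C^2(S)}$; since each Duhamel integration contributes one factor of $t$ and at most two $\theta$-derivatives (hence at most two time-integrations) occur, $C_{\kappa,t}$ is a polynomial of degree $\le 2$ in $t$, it is independent of $\epsilon$, and it depends on $\kappa$ only through the factors $\partial_\theta^j\chi_\kappa\sim\kappa^{-j}$.

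The main obstacle I expect is the bookkeeping in the $\theta$-bootstrap: one must verify at each order that the homogeneous part of the $\theta$-differentiated problem genuinely falls under the $\epsilon$-uniform maximum principle — which requires treating the revised boundary ODE on $\{x_2=0\}\times[-\pi,0]$ by the same explicit-integration argument as in \eqref{ws0theta} rather than by Lemma \ref{maxprinciple} directly — and that the inhomogeneity at each stage involves only quantities already estimated, so that the induction closes with constants that remain polynomial in $t$ and finite (though $\kappa$-dependent). Once that is in place, the rest is routine summation.
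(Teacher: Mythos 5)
Your proposal follows the same route the paper takes: the uniform-in-$\epsilon$ control comes from the maximum principle (Lemma \ref{maxprinciple}), which only uses $Q^\epsilon h\le 0$ at a maximum and therefore costs no power of $\epsilon$; pure $x_2$-derivatives commute with $\mathcal{M}$ and the boundary ODE, giving Corollary \ref{x2 derivative estimates}; and the $\theta$-bootstrap differentiates \eqref{eq3} once and then twice in $\theta$, treats the boundary ODE on $\{x_2=0\}\times[-\pi,0]$ by the explicit integration as in \eqref{ws0theta}, and closes each stage via Duhamel against sources already bounded. Your bookkeeping of the $t$-dependence (one factor of $t$ per Duhamel integral, at most two $\theta$-integrations, hence degree $\le 2$) and of the $\kappa$-dependence (through $\partial_\theta^j\chi_\kappa$) also matches the paper.

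One remark on the final multi-index $(1,2)$, which your proposal touches but does not resolve cleanly, and which the paper itself glosses over. The norm $\|\cdot\|_{C^{(1,2)}_{x_2,\theta}}$ sums over all $\beta\le(1,2)$ componentwise, so it includes $\partial_{x_2}\partial_\theta^2\bar\psi^\epsilon_\kappa$. The paper's proof stops at $\partial_\theta^2$ and never bounds this mixed term; your ``one further repetition with an extra $\partial_{x_2}$'' is the right move, but that repetition feeds $\partial_{x_2}^2\partial_\theta\bar\psi^\epsilon_\kappa$ into the source, and by \eqref{1dx2theta upper} (shifted by one $x_2$-derivative) this is controlled by $\|g\|_{C^{(2,1)}}$ and $\|\partial_{x_2}^3 g\|_{L^\infty}$ — third-order data, not second-order. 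So the stated bound in terms of $\|g\|_{C^2(S)}$ is not quite what either argument delivers for the top mixed term; $\|g\|_{C^3(S)}$ would close the induction. This is inherited from the paper rather than a new error on your part, and it is harmless downstream since $g$ is taken from the dense class $C^\infty(S)$, but it is worth flagging if you want the constant tracked honestly.
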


	\subsection{Passing to the limit \texorpdfstring{$\epsilon\to 0$}{}}
\label{sec:limit epsilon to zero}	
	Now we recover the solution to \eqref{eq2} via the definition
	$$u_\kappa^\epsilon({x_2,\theta})=\int_0^\infty e^{-t}{\bar{\psi}^\epsilon_\kappa} (\lambda t , {x_2,\theta})\ dt,$$ which is well-defined for any $\epsilon>0$.
	Then we pass to the limit $\epsilon \rightarrow 0$ and obtain the existence of a unique global solution to \eqref{eq1}. We obtain the limit of the approximating sequence $\left\{u_\kappa^\epsilon\right\}$ as a candidate for a solution by the compactness (Banach-Alaoglu theorem), which is ensured by the \emph{uniform} estimates of the approximate solutions established in the previous subsections. From the uniform estimate (Proposition~\ref{1d derivative estimates}) and from taking the limit as  $\epsilon \rightarrow 0$, we obtain that a sequence of  $\left\{u_\kappa^\epsilon\right\}$ converges to ${u_\kappa}$  in $  C^{(1,2)}_{x_2,\theta} (S )$. 
	Again it follows from Proposition~\ref{1d derivative estimates} that ${u_\kappa}$ also satisfies the bound
	\begin{equation} \label{L^infty-bound}
		\left\| u_\kappa\right\|_{C^{(1,2)}_{x_2,\theta}( S)}\le C_\kappa(1+\lambda^2) \|g\|_{C^2(S)},
	\end{equation}for some constant $C_\kappa>0$.
	\subsection{Hille-Yosida theorem and the global wellposedness of the adjoint problem}Now we are ready to prove the following proposition on the well-posedness of the 1-dimensional adjoint problem \eqref{FP modified adjoint eq}- \eqref{modified periodic}.
	\begin{proposition}[Well-posedness of the 1-dimensional adjoint problem]
	  \label{Thm.sol.adjoint0} 
	    Suppose that $\varphi_{in}=\varphi_{in}(x_2,\theta)\in C([0,\infty)\times [-\pi,\pi))$ satisfies \eqref{forward adjoint initial0}. Then there exists a unique solution $\varphi \in C(S)$ which solves the 1-dimensional adjoint problem \eqref{FP modified adjoint eq}- \eqref{modified periodic}.
	\end{proposition}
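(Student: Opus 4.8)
The plan is to obtain $\varphi$ as the $\kappa\to 0$ limit of the orbit $\varphi_\kappa(t,\cdot):=S_\kappa(t)\varphi_{in}$ of the \emph{Markov} semigroup $S_\kappa(t)$ generated by $\mathcal{L}$ with the regularized boundary condition \eqref{regularized modified adjoint boundarycon}. First I would invoke Proposition~\ref{hille1}, which (together with Proposition~\ref{pregenerator proposition} and Proposition~2.8(a) of \cite{Liggett}) yields via Theorem~\ref{markovgen} that $\mathcal{L}$ with domain \eqref{domainD} is a \emph{Markov} generator on the Banach space $C(S)$; by the Hille--Yosida theorem (Theorem~\ref{hilleyosidathm}) it generates a unique \emph{Markov} semigroup $S_\kappa(t)$, which in particular is positivity-preserving and a contraction, so $\|S_\kappa(t)\|\le 1$. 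For $\varphi_{in}\in\mathcal{D}(\mathcal{L})$, Theorem~\ref{hilleyosidathm} gives $\tfrac{d}{dt}S_\kappa(t)\varphi_{in}=\mathcal{L}S_\kappa(t)\varphi_{in}$ with $S_\kappa(t)\varphi_{in}\in\mathcal{D}(\mathcal{L})$, so $\varphi_\kappa$ solves the $\kappa$-regularized adjoint problem \eqref{FP modified adjoint eq}, \eqref{modified adjoint initial}, \eqref{modified periodic}, \eqref{regularized modified adjoint boundarycon} classically for $x_2>0$; for general $\varphi_{in}\in C(S)$ one first approximates by $\mathcal{D}(\mathcal{L})$-data using the density established in Proposition~\ref{pregenerator proposition}, together with the mild/Duhamel representation behind Corollary~\ref{contractioncor} and the bound $\|\varphi_\kappa(t)\|_{L^\infty(S)}\le\|\varphi_{in}\|_{L^\infty(S)}$, which is uniform in $t$ and in $\kappa$.

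The second step is the passage $\kappa\to 0$, and this is where the essential point lies. The compatibility condition \eqref{forward adjoint initial0}, combined with continuity of $\varphi_{in}$ at $(x_2,\theta)=(0,-\pi/2)$, forces $\varphi_{in}(0,0)=\varphi_{in}(0,-\pi)=:c$ and hence $\varphi_{in}(0,\theta)\equiv c$ on $\{x_2=0\}\times[-\pi,0]$; substituting this into the explicit solution \eqref{boundary after solving ode} of the boundary ODE gives $\varphi_\kappa(t,0,\theta)=c$ for \emph{every} $t\ge 0$, $\theta\in[-\pi,0]$ and $\kappa>0$. Thus for compatible data the regularization is invisible along the boundary, no jump discontinuity develops at the pathological point $(0,-\pi/2)$, and the candidate limit will lie in $C(S)$. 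For compactness I would combine this boundary information with (i) the interior hypoellipticity of $\partial_t-\sin\theta\,\partial_{x_2}-\partial_\theta^2$ in $\{x_2>0\}$ (cf. Section~\ref{sec:hypoellipticity}), giving $\kappa$-uniform $C^\infty_{\mathrm{loc}}$ bounds on compact subsets of $(0,T]\times(0,\infty)\times[-\pi,\pi]$ that depend only on $\|\varphi_{in}\|_{L^\infty}$, and (ii) $\kappa$-uniform barriers near the singular boundary $\{x_2=0,\ \theta\in\{0,-\pi\}\}$ and for large $x_2$, built from explicit self-similar sub- and super-solutions and from the steady state of \eqref{stationary1d eq}--\eqref{stationary periodic} (cf. Section~\ref{x2infty intro} and Section~\ref{x2 asymptotics section}). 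Arzel\`a--Ascoli on the compact set $S$ and a diagonal argument then extract $\varphi_{\kappa_j}\to\varphi$ in $C(S)$.

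Third, I would identify the limit. Local uniform convergence plus the interior estimates let $\partial_t\varphi-\sin\theta\,\partial_{x_2}\varphi=\partial_\theta^2\varphi$ and the periodic conditions \eqref{modified periodic} pass to the limit on $\{x_2>0\}$; the initial condition $\varphi(0,\cdot)=\varphi_{in}$ follows from $S_\kappa(0)=I$ and strong continuity of $S_\kappa$, the absence of a boundary layer at $x_2=0$ shown above making this uniform. Finally, since $\varphi(t,0,\theta)=c$ for all $\theta\in[-\pi,0]$, we get $\partial_t\varphi(t,0,0)=\partial_t\varphi(t,0,-\pi)=0$ and $\varphi(t,0,\theta)=\varphi(t,0,-\pi)$ on $[-\pi,-\pi/2)$, $\varphi(t,0,\theta)=\varphi(t,0,0)$ on $(-\pi/2,0]$, i.e. \eqref{FP modified adjoint eq}$_2$ and \eqref{modified adjoint boundarycon} hold, so $\varphi$ is a solution of \eqref{FP modified adjoint eq}--\eqref{modified periodic}.

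For uniqueness, if $\varphi^{(1)},\varphi^{(2)}\in C(S)$ both solve the problem then $w:=\varphi^{(1)}-\varphi^{(2)}$ solves the homogeneous equation with $w(0,\cdot)=0$ and $w(t,0,\theta)=0$ on $\{x_2=0\}\times[-\pi,0]$ (both boundary constants equal $\varphi_{in}(0,0)$); a maximum principle of the Lemma~\ref{maxprinciple} type for the limiting operator $\partial_t-\sin\theta\,\partial_{x_2}-\partial_\theta^2$ — whose maximum over $[0,T]\times S$ is attained only on $\{t=0\}\cup(\{x_2=0\}\times[-\pi,0])$, the value at $x_2=\infty$ being excluded exactly as in the proof of Lemma~\ref{maxprinciple} — forces $w\le 0$, and symmetrically $w\ge 0$, hence $w\equiv 0$. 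The main obstacle throughout is the second step: securing the $\kappa$-uniform control of $\varphi_\kappa$ up to and including the singular boundary $\{x_2=0,\ \theta\in\{0,-\pi\}\}$ and at $x_2=\infty$, which is precisely where the construction of self-similar sub/super-solutions and the analysis of the stationary problem \eqref{stationary1d eq}--\eqref{stationary periodic} are genuinely needed; the remainder is soft functional analysis built on the Hille--Yosida framework already in place.
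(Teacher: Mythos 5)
Your proposal follows the same two-stage route as the paper (Hille--Yosida for the regularized generator, then $\kappa\to0$), but you add an observation that the paper's own argument does not exploit and that genuinely streamlines the passage to the limit: continuity of $\varphi_{in}$ at $(0,-\pi/2)$ together with \eqref{forward adjoint initial0} forces $\varphi_{in}(0,0)=\varphi_{in}(0,-\pi)=:c$, whence $\varphi_{in}(0,\cdot)\equiv c$ on $[-\pi,0]$ and, by \eqref{boundary after solving ode}, $\varphi_\kappa(t,0,\theta)\equiv c$ for every $t,\theta,\kappa$. This makes the $\kappa$-dependence of the boundary condition invisible and removes the need for the shrinking intervals $[-\pi,-\pi/2-\delta_\kappa]\cup[-\pi/2+\delta_\kappa,0]$ that the paper uses to control the boundary as $\kappa\to0$; it is also a necessary consistency check that the paper leaves implicit, since without $\varphi_{in}(0,0)=\varphi_{in}(0,-\pi)$ the limiting boundary data would jump at $(0,-\pi/2)$ and no limit could land in $C(S)$. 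You also aim for a stronger conclusion than the paper's proof actually delivers: the paper extracts a weak-$*$ limit from the uniform $L^\infty$ bound and then reads off the limiting boundary condition, which does not by itself show $\varphi\in C(S)$, whereas you ask for $C(S)$-convergence via Arzel\`a--Ascoli, using interior hypoelliptic estimates plus boundary barriers. That is the right idea if one wants to honestly produce $\varphi\in C(S)$, but it is also where your argument is incomplete: the $\kappa$-uniform barriers near $\{x_2=0,\ \theta\in\{0,-\pi\}\}$ that you invoke are not actually supplied by the paper for the adjoint solutions $\varphi_\kappa$ — Section \ref{sec:holder cont} constructs such barriers for the forward density $f$, and Section \ref{x2 asymptotics section} for the $\kappa\to0$ stationary limit, so an explicit adaptation to $\varphi_\kappa$ with constants independent of $\kappa$ would have to be carried out; similarly the control at $x_2=\infty$ uniform in $\kappa$ needs a separate super-solution argument. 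The uniqueness step is correct and agrees with the remark after Lemma \ref{maxprinciple}. In short: same skeleton, but your boundary-compatibility observation is a genuine simplification, and your insistence on strong $C(S)$ convergence is more faithful to the proposition's statement than the paper's weak-$*$ limit, at the price of a barrier construction that you sketch but do not carry out.
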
 
\begin{proof}	 We first note that we obtain Proposition \ref{hille1}, which states that there exists a unique ${u_\kappa}\in C(S)$ which solves \eqref{eq1} for any $g\in C(S)$ by the standard density argument. Then this provides the sufficient condition for Theorem \ref{markovgen}, which states that $\mathcal{L}$ is the \textit{Markov} generator. Now, via the Hille-Yosida theorem (Theorem \ref{hilleyosidathm}), we obtain the corresponding \textit{Markov} semigroup $S_\kappa(t)$ and the solution $\varphi_\kappa(t,\cdot)=S_\kappa(t)u_\kappa(\cdot)$ to the adjoint equation \eqref{FP modified adjoint eq}, \eqref{modified adjoint initial}, and \eqref{modified periodic} with the regularized boundary condition \eqref{regularized modified adjoint boundarycon} for a small $\kappa>0$. We note that the semigroup $S_\kappa$ is continuous and $u_\kappa$ is differentiable. More importantly, we have the maximum principle on $\varphi_\kappa$ as in Lemma \ref{maxprinciple} and we have the uniform-in-$\kappa$ $L^\infty$ estimate on $\varphi_\kappa$; namely, we obtain $$\|\varphi_\kappa\|_{L^\infty([0,T]\times S)}\le 2\max\{1, \|\varphi_{in}\|_{L^\infty(S)}\},$$ since the maximum occurs only at $\{t=0\}$ or $\{x_2=0\}\cap \{\theta\le0\}$ and the boundary values at $x_2=0$ and $\theta\le0$ is bounded by 1 from above. 
	This allows us to pass to the limit $\kappa \to 0$ and obtain a weak solution $\varphi$ as a weak-$\ast$ limit of $\varphi_\kappa(t,x_2,\theta). $   
	Note that the limiting system for its weak-$\ast$ limit $\varphi$ as $\kappa\rightarrow 0$ is the same system as that of $\varphi_\kappa$ except for the difference in the boundary conditions \eqref{modified adjoint boundarycon} and \eqref{regularized modified adjoint boundarycon}. In order to study the limit of the boundary condition for $\varphi_\kappa$, we solve the ODE \eqref{regularized modified adjoint boundarycon} and observe that the boundary condition gives 
	\begin{multline*}
		\varphi_\kappa(t,0,\theta)=\bigg(\chi_\kappa(\theta)\varphi_\kappa(0,0,0)+(1-\chi_\kappa(\theta))\varphi_\kappa(0,0,-\pi)\bigg)\\+ e^{-\frac{t}{\kappa}}\left(\varphi_\kappa(0,0,\theta)-(\chi_\kappa(\theta)\varphi_\kappa(0,0,0)+(1-\chi_\kappa(\theta))\varphi_\kappa(0,0,-\pi))\right),
	\end{multline*}for $\theta\in[-\pi,0].$ Then we also have the bound for the values at the boundary
	$$\|\varphi_\kappa(t,0,\theta)\|_{L^\infty([0,T]\times [-\pi,0])} \le 2 \|\varphi_{in} (0,\theta)\|_{L^\infty([-\pi,\pi])},$$ where $\varphi_{in}$ is the initial profile of \eqref{modified adjoint initial}.
	Moreover, for $t\ge 0$ and a sequence of small $\delta_\kappa>0$ such that $\delta_\kappa\gg \kappa$ and $\delta_\kappa \to 0$ as $\kappa \to 0,$ we have the uniform-in-$\kappa$ limit for $\theta \in [-\pi,-\pi/2-\delta_\kappa]\cup[-\pi/2+\delta_\kappa,0]$ as $\kappa\to 0$
	\begin{multline*}
		\left\|\varphi_\kappa(t,0,\theta)-\bigg(\chi_\kappa(\theta)\varphi_\kappa(0,0,0)+(1-\chi_\kappa(\theta))\varphi_\kappa(0,0,-\pi)\bigg)\right\|\\\le e^{-\frac{t}{\kappa}}\left|\varphi_\kappa(0,0,\theta)-(\chi_\kappa(\theta)\varphi_\kappa(0,0,0)+(1-\chi_\kappa(\theta))\varphi_\kappa(0,0,-\pi))\right|\rightarrow 0.\end{multline*}  Thus, we have that the weak-$\ast$ limit $\varphi$ of $\varphi_\kappa$ solves the limiting system \eqref{FP modified adjoint eq}, \eqref{modified adjoint initial}, and \eqref{modified periodic} with the boundary condition\begin{equation*} \begin{split}
			\varphi(t,0,\theta)&=\varphi(0,0,-\pi)\ \text{if}\ -\pi\le \theta<-\frac{\pi}{2},\ \text{and}\\
			\varphi(t,0,\theta)&=\varphi(0,0,0)\ \text{if}\ -\frac{\pi}{2}<\theta\le 0,
	\end{split}\end{equation*}which is the limit of the regularized boundary condition $$\varphi_\kappa(t,0,\theta)=\bigg(\chi_\kappa(\theta)\varphi_\kappa(0,0,0)+(1-\chi_\kappa(\theta))\varphi_\kappa(0,0,-\pi)\bigg).$$ 
	\end{proof}This completes the proof for the solvability of the adjoint problem \eqref{FP modified adjoint eq}- \eqref{modified periodic}. In the next section, we generalize our method to study the adjoint problem \eqref{FP adjoint eq}, \eqref{adjoint initial}, \eqref{adjoint boundarycon}, and \eqref{adjoint periodic} in the 2-dimensional half-plane.
	
	\section{The generalized adjoint problem for the distribution \texorpdfstring{$\lowercase{f}$}{}}\label{sec:2d section start}
	In this section, we generalize the method and study the full 2-dimensional dual adjoint problem \eqref{FP adjoint eq}, \eqref{adjoint initial}, \eqref{adjoint boundarycon}, and \eqref{adjoint periodic} for the test function $\phi(t,x_1,x_2,\theta)$. This problem is more complicated than the previous problem in a reduced dimension, as we also have the additional dynamics of the free transport in $(t,x_1)$ plane whose speed of propagation varies depending on the $\theta$ variable. 
	Since the functions that satisfies the \textit{trapping} boundary condition \eqref{adjoint boundarycon} can be discontinuous at $(x_2,\theta)=(0,-\pi/2)$, we first regularize the boundary condition.
	\subsection{The regularization of the boundary conditions}\label{sec:2dboundaryreg} In this subsection, we introduce the regularization of the \textit{trapping} boundary condition \eqref{adjoint boundarycon} for the adjoint problem \eqref{FP adjoint eq}- \eqref{adjoint periodic}. 
	Recall the boundary conditions \eqref{adjoint boundarycon}, \eqref{FP adjoint eq}$_2$, and \eqref{FP adjoint eq}$_3$ that 
	\begin{equation}
		\label{2dadjoint trapping boundary}
		\begin{split}
			\partial_t\phi(t,x_1,0,0)&=\partial_{x_1}\phi (t,x_1,0,0),\\
			\partial_t\phi(t,x_1,0,-\pi)&=-\partial_{x_1}\phi (t,x_1,0,-\pi),\\
			\phi(t,x_1,0,\theta)&=\phi(t,x_1,0,\pi)\ \text{if}\ -\pi\le \theta<-\frac{\pi}{2},\ \text{and}\\
			\phi(t,x_1,0,\theta)&=\phi(t,x_1,0,0)\ \text{if}\ -\frac{\pi}{2}<\theta\le 0.
		\end{split}
	\end{equation}Since a solution $\phi$ which satisfies the conditions above can have a discontinuity at $(x_2,\theta)=(0,-\pi/2)$, we regularize the boundary conditions as follows. 
	For each fixed small $\kappa>0$, we define the regularized boundary condition for $\theta \in [-\pi,0] $ as the solution $\phi_\kappa(t,x_1,0,\theta)$ of 
	\begin{equation}
		\label{regularized adjoint boundarycon}
		\begin{split} \partial_t&\phi_\kappa(t,x_1,0,\theta)=\cos\theta \partial_{x_1}\phi_\kappa(t,x_1,0,\theta)\\&+\frac{1}{\kappa}\bigg(\chi_\kappa(\theta)\phi_\kappa(t,x_1,0,0)+(1-\chi_\kappa(\theta))\phi_\kappa(t,x_1,0,-\pi)-\phi_\kappa(t,x_1,0,\theta)\bigg).\end{split}
	\end{equation}
	Here the smooth function $\chi_\kappa$ is given by \eqref{smoothchi}. Then note that if $\theta=0$ or $\theta=-\pi,$ we have\begin{equation}\notag
		\begin{split}
			\partial_t\phi_\kappa(t,x_1,0,0)&=\partial_{x_1}\phi_\kappa (t,x_1,0,0),\\
			\partial_t\phi_\kappa(t,x_1,0,-\pi)&=-\partial_{x_1}\phi_\kappa (t,x_1,0,-\pi).
		\end{split}
	\end{equation} Therefore, $\phi_\kappa (t,x_1,0,0)$ and $\phi_\kappa (t,x_1,0,0)$ are given by the solutions of the free transport equations as
	\begin{equation}\notag
		\begin{split}
			\phi_\kappa(t,x_1,0,0)&=\phi_\kappa (0,x_1+t,0,0),\\
			\phi_\kappa(t,x_1,0,-\pi)&=\phi_\kappa (0,x_1-t,0,-\pi).
		\end{split}
	\end{equation} Here we let $\phi_\kappa(0,x_1,x_2,\theta)=\phi_{in}(x_1,x_2,\theta).$
	\begin{figure}[ht]
		\begin{center}
			\includegraphics[scale=0.3]{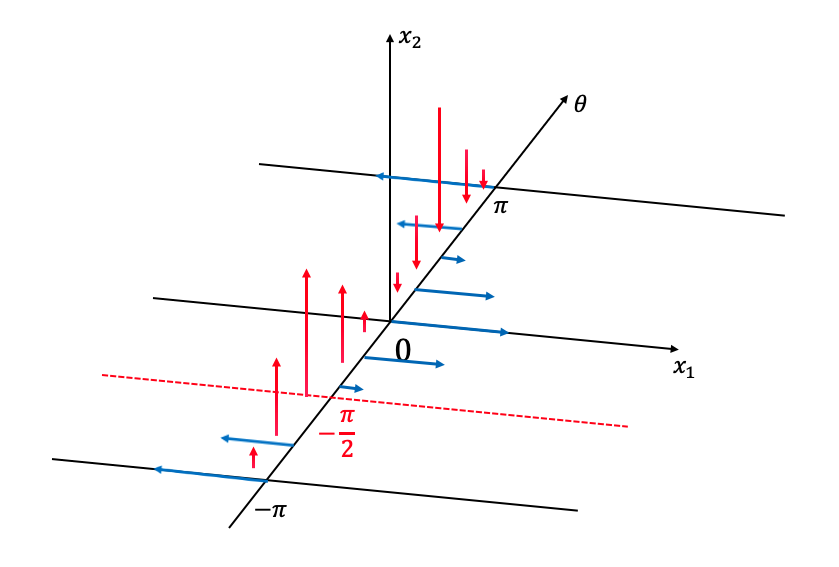}
		\end{center}
		\caption{Characteristics for the 2-dimensional case }
	\end{figure}

	In the rest of this section and in the next section, we will observe that the solvability of the adjoint problem \eqref{FP adjoint eq}-\eqref{adjoint periodic} with \eqref{adjoint boundarycon} replaced by the regularized boundary condition \eqref{regularized adjoint boundarycon} corresponds to the construction of a Markov generator of the semigroup via the Hille-Yosida theorem. 
	\subsection{The Hille-Yosida theorem and the plan} Note that we want to prove the solvability of the 2-dimensional adjoint problem \eqref{FP adjoint eq} together with the initial-boundary conditions \eqref{adjoint initial}, \eqref{regularized adjoint boundarycon} and \eqref{adjoint periodic}. As in the previous sections, our main goal is to prove that the range of the operator $I-\lambda \mathcal{A}$ is equal to $C(X)$:
	\begin{equation}\label{wholerange}\mathcal{R}(I-\lambda \mathcal{A})=C(X),\end{equation} for any small $\lambda>0$, where the linear operator $\mathcal{A}$ is now defined as 
	\begin{equation}
		\label{mathcalA}
		\mathcal{A}\eqdef (\cos\theta,\sin\theta)\cdot \nabla_x + \partial_\theta^2.
	\end{equation}Then, since the adjoint problem is equivalent to
	$$\partial_t \phi_\kappa = \mathcal{A}\phi_\kappa, \ t\in [0,T],\ \phi_\kappa(t,\cdot)\in \mathcal{D}(\mathcal{A}),\text{ if } t\ge 0,\ \phi_\kappa(0,x_1,x_2,\theta)=\upsilon_\kappa(x_1,x_2,\theta),$$ the Hille-Yosida theorem guarantees the one-to-one correspondence of the Markov generator and the Markov semigroup $S_\kappa(t)$, which provides the solvability of the 2-dimensional adjoint problem. We follow the Hille-Yosida theory that is introduced in \cite{Liggett}.

	Observe that \eqref{wholerange} is equivalent to the claim that for any small $ \lambda>0$ and a given function $g\in C(X)$, it is possible to solve the elliptic problem
	\begin{equation}\label{eq4}
		\lambda \mathcal{A}\upsilon_\kappa=\upsilon_\kappa-g.
	\end{equation} 
	Here we define the domain $\mathcal{D}(\mathcal{A})$ of the operator $\mathcal{A}$ which involves the information of the regularized boundary condition \eqref{regularized adjoint boundarycon} as 
	\begin{multline}\label{domainDA}\mathcal{D}(\mathcal{A})=\bigg\{\upsilon_\kappa, \mathcal{A}\upsilon_\kappa\in C^1(X):\text{ for }x_1\in \mathbb{R}\text{ and }\theta \in[-\pi,0]\\ (\mathcal{L}\upsilon_\kappa)(x_1,0,\theta)=\frac{1}{\kappa}\bigg(\chi_\kappa(\theta)\upsilon_\kappa(x_1,0,0)+(1-\chi_\kappa(\theta))\upsilon_\kappa(x_1,0,-\pi)-\upsilon_\kappa(x_1,0,\theta)\bigg)\bigg\},\end{multline}where the operator $\mathcal{L}$ is defined in \eqref{operator L}. \begin{remark}\label{simpletransport}Here we remark that the definition $\mathcal{D}(\mathcal{A})$ is written using the operator $\mathcal{L}$ instead of $\mathcal{A}$, as the leftover dynamics in $t$ and $x_1$ is a simple free-transport on the plane. \end{remark}Then we have the following proposition:
	\begin{proposition}\label{pregenerator proposition2}
		The operator $\mathcal{A}$ is a Markov pregenerator.
	\end{proposition}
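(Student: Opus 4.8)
The plan is to verify the three defining properties of a \emph{Markov pregenerator} on $C(X)$ directly, following almost verbatim the proof of Proposition~\ref{pregenerator proposition}; the only structurally new ingredient compared with the one-dimensional case is the free-transport term $\cos\theta\,\partial_{x_1}$ present in $\mathcal{A}$ (see \eqref{mathcalA}), and it will turn out to be benign precisely because $\mathcal{D}(\mathcal{A})$ in \eqref{domainDA} is formulated through $\mathcal{L}=\sin\theta\,\partial_{x_2}+\partial_\theta^2$ only (cf.\ Remark~\ref{simpletransport}), so that on $\{x_2=0\}$ one has $\mathcal{A}\upsilon_\kappa=\cos\theta\,\partial_{x_1}\upsilon_\kappa+(\mathcal{L}\upsilon_\kappa)$, with $(\mathcal{L}\upsilon_\kappa)(x_1,0,\theta)=\tfrac1\kappa\bigl(\chi_\kappa(\theta)\upsilon_\kappa(x_1,0,0)+(1-\chi_\kappa(\theta))\upsilon_\kappa(x_1,0,-\pi)-\upsilon_\kappa(x_1,0,\theta)\bigr)$ for $\theta\in[-\pi,0]$. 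By Proposition~2.2 of \cite{Liggett} it is enough to check: (i) $1\in\mathcal{D}(\mathcal{A})$ and $\mathcal{A}1=0$; (ii) $\mathcal{D}(\mathcal{A})$ is dense in $C(X)$; and (iii) whenever $\upsilon_\kappa\in\mathcal{D}(\mathcal{A})$ attains its minimum over $X$ at a point $\eta$, then $(\mathcal{A}\upsilon_\kappa)(\eta)\ge0$.

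Property (i) is immediate from \eqref{domainDA}: $1$ and $\mathcal{A}1=0$ lie in $C^1(X)$, and on $\{x_2=0\}$ the constant $1$ satisfies $(\mathcal{L}1)(x_1,0,\theta)=0=\tfrac1\kappa\bigl(\chi_\kappa(\theta)+(1-\chi_\kappa(\theta))-1\bigr)$. For (ii) I would reproduce the corrector construction of Proposition~\ref{pregenerator proposition}: pick $\xi\in C^\infty(X)$ (dense in $C(X)$), fix a small $\delta>0$ and a cutoff $\lambda=\lambda(x_2,\theta)\in[0,1]$ supported in $[0,2\delta]\times[-\pi+\delta,-\delta]$ and $\equiv1$ on $[0,\delta]\times[-\pi+2\delta,-2\delta]$, and set $\xi_\varepsilon\eqdef \lambda\bar{\xi}_\varepsilon+(1-\lambda)\xi$, where for each fixed parameter $x_1\in\mathbb{R}$ the function $\bar{\xi}_\varepsilon(x_1,\cdot,\cdot)$ solves the $x_1$-parametrized analogue of \eqref{eq for bxe} with data $\xi(x_1,0,0)$, $\xi(x_1,0,-\pi)$ and the same $\theta$-boundary conditions. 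Since $\sin\theta<0$ on $(-\pi,0)$, that problem, rewritten as $\partial_{x_2}\bar{\xi}_\varepsilon-\tfrac{1}{|\sin\theta|}\partial_\theta^2\bar{\xi}_\varepsilon=\tfrac{1}{\kappa|\sin\theta|}(\bar{\xi}_\varepsilon-\cdots)$, is forward parabolic in the ``time'' variable $x_2$ with smooth, uniformly nondegenerate coefficients on $\theta\in[-\pi+2\delta,-2\delta]$, so the classical theory gives a smooth solution depending smoothly on the parameter $x_1$; because $\xi\in C(X)$ its boundary traces have limits as $|x_1|\to\infty$, hence so does $\bar{\xi}_\varepsilon$, and therefore $\xi_\varepsilon\in C^\infty(X)\cap\mathcal{D}(\mathcal{A})$. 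As before $\|\xi-\xi_\varepsilon\|\le\|\lambda(\xi-\bar{\xi}_\varepsilon)\|<\varepsilon$ for $\delta$ small by continuity, and the modification is supported away both from $(x_2,\theta)=(0,-\pi/2)$ and from $\infty$.

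For (iii) I would carry out the same case analysis at a minimizer $\eta$. If $\eta$ is an interior point of $X$, then $\nabla_x\upsilon_\kappa(\eta)=0$, $\partial_\theta\upsilon_\kappa(\eta)=0$ and $\partial_\theta^2\upsilon_\kappa(\eta)\ge0$, whence $\mathcal{A}\upsilon_\kappa(\eta)=\partial_\theta^2\upsilon_\kappa(\eta)\ge0$. If $\eta\in\{x_2=0\}$ with $\theta\in[-\pi,0]$ and finite $x_1$, then $\partial_{x_1}\upsilon_\kappa(\eta)=0$, so $\mathcal{A}\upsilon_\kappa(\eta)=(\mathcal{L}\upsilon_\kappa)(\eta)\ge\tfrac1\kappa\bigl(\chi_\kappa(\theta)+(1-\chi_\kappa(\theta))-1\bigr)\min_X\upsilon_\kappa=0$, using $\chi_\kappa\in[0,1]$ and $\upsilon_\kappa(x_1,0,0),\upsilon_\kappa(x_1,0,-\pi)\ge\upsilon_\kappa(\eta)=\min_X\upsilon_\kappa$. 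If $\eta\in\{x_2=0\}$ with $\theta\in(0,\pi)$ and finite $x_1$, then $\partial_{x_1}\upsilon_\kappa(\eta)=0$, $\partial_{x_2}\upsilon_\kappa(\eta)\ge0$ (inward derivative at a boundary minimum), $\sin\theta>0$ and $\partial_\theta^2\upsilon_\kappa(\eta)\ge0$, so $\mathcal{A}\upsilon_\kappa(\eta)\ge0$; the identified value $\theta=\pm\pi$ reduces to $\theta=-\pi$ already treated. Finally, the case $\eta=\infty$ is handled by the contradiction argument of Proposition~\ref{pregenerator proposition}: if $\mathcal{A}\upsilon_\kappa(\infty)<0$, then continuity of $\mathcal{A}\upsilon_\kappa\in C(X)$ gives $\mathcal{A}\upsilon_\kappa<0$ on some $\mathcal{O}_M$, and since $\upsilon_\kappa\to\upsilon_\kappa(\infty)=\min_X\upsilon_\kappa$ as $x_2\to\infty$ uniformly in $(x_1,\theta)$ while $\upsilon_\kappa>\min_X\upsilon_\kappa$ at finite points, one can place a small box deep in the region $x_2\gg1$, $\theta\approx-\pi/2$ carrying a local minimum of $\upsilon_\kappa$ that is interior, or lies on the larger-$x_2$ face with the remaining coordinates interior; at that point $\partial_{x_1}\upsilon_\kappa=0$, $\partial_\theta^2\upsilon_\kappa\ge0$, $\sin\theta<0$ and $\partial_{x_2}\upsilon_\kappa\le0$, so $0>\mathcal{A}\upsilon_\kappa=\cos\theta\,\partial_{x_1}\upsilon_\kappa+\sin\theta\,\partial_{x_2}\upsilon_\kappa+\partial_\theta^2\upsilon_\kappa\ge0$, a contradiction.

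I expect the only genuinely delicate point to be this last case of (iii), the behavior at the compactification point $\infty$: one must argue that the far-out box can be chosen so that a local minimum of $\upsilon_\kappa$ really does sit at an interior point or on the larger-$x_2$ face with the other coordinates interior — in the present two-dimensional setting with $x_1$ interior to the box so that the extra drift $\cos\theta\,\partial_{x_1}\upsilon_\kappa$ drops out — and the concentration of $\theta$ near $-\pi/2$, where $\cos\theta\approx0$ and $\sin\theta<0$, is exactly what makes this possible, just as in Proposition~\ref{pregenerator proposition}. The only other place requiring some attention is the parametric dependence on $x_1$ in the corrector of (ii), i.e.\ verifying that $\bar{\xi}_\varepsilon$ stays in $C(X)$, having a limit as $|x_1|\to\infty$; beyond that, the proof is a direct transcription of the one-dimensional argument.
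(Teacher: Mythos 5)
Your proof takes essentially the same route as the paper, which simply declares the argument "almost the same as the one for Proposition~\ref{pregenerator proposition}" and omits it; you correctly identify that the new transport term $\cos\theta\,\partial_{x_1}$ is benign because the boundary condition in $\mathcal{D}(\mathcal{A})$ of \eqref{domainDA} is phrased through $\mathcal{L}$ alone (as the paper flags in Remark~\ref{simpletransport}), and your handling of properties (i), (ii), and the finite-point part of (iii) follows the 1D template faithfully, including the observation that no cutoff in $x_1$ is needed in the density argument.

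One point to sharpen in (iii) at $\eta=\infty$: the point $\infty$ in the compactification of Definition~\ref{setS2d} is approached both as $x_2\to\infty$ and as $|x_1|\to\infty$, and the paper's analogous 2D maximum-principle argument (Lemma~\ref{maxprinciple2}) treats these as two explicit alternatives, with the far-out box taken near $\theta\approx-3\pi/4$ for $|x_1|\to\infty$ (so that $\cos\theta$ has a \emph{definite} negative sign and the $x_1$-drift at an $x_1$-face contributes with the right sign) and near $\theta\approx-\pi/2$ for $x_2\to\infty$. Your argument uses only the $x_2\to\infty$ alternative with $\theta\approx-\pi/2$, which makes $\cos\theta$ merely \emph{small}, not zero; if the boundary local minimum of $\upsilon_\kappa$ on the far-out box falls on an $x_1$-face rather than the upper-$x_2$ face, the term $\cos\theta\,\partial_{x_1}\upsilon_\kappa$ no longer drops out and its sign is not controlled. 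The paper itself does not formally justify that the local extremum lands on the favorable face in either case, so this informality is inherited; but to match the paper's implicit intent you should either add the $|x_1|\to\infty$ alternative with $\theta$ bounded away from $\pm\pi/2$ (e.g.\ $\theta\approx-3\pi/4$), or argue explicitly that the far-out box can always be chosen with the extremum interior in $x_1$ and $\theta$.
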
Note that the boundary condition at $x_2=0$ in \eqref{domainDA} is the same as the one given in \eqref{domainD} and the operator is independent of $x_1$. Thus, for the proof of Proposition \ref{pregenerator proposition2}, there is no need of taking an additional cutoff in $x_1$ variable for the construction of $\lambda$ in the proof of Proposition \ref{pregenerator proposition}. Therefore, the proof is almost the same as the one for Proposition \ref{pregenerator proposition}, and we omit it. As mentioned in Remark \ref{simpletransport}, we observe that the dynamics in $t$ and $x_1$ variables is the free-transport.
	
	Then our goal is to prove that the operator $\mathcal{A}$ is indeed the Markov generator:
	\begin{theorem}\label{markovgen2} The operator $\mathcal{A}$ is a \textit{Markov} generator.
	\end{theorem}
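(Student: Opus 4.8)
The plan is to repeat, in the two–dimensional setting, the argument that established Theorem~\ref{markovgen} in Section~\ref{sec: 1d wellposedness}, now accounting for the additional free transport in the $(t,x_1)$–plane with $\theta$–dependent speed $\cos\theta$. By Proposition~\ref{pregenerator proposition2} the operator $\mathcal{A}$ is a Markov pregenerator, and it is closed by the very definition of $\mathcal{D}(\mathcal{A})$ in \eqref{domainDA}. Hence, by Proposition~2.8(a) of \cite{Liggett}, it suffices to verify the range condition \eqref{wholerange}, i.e.\ that for every small $\lambda>0$ and every $g\in C(X)$ the elliptic equation \eqref{eq4}, $\lambda\mathcal{A}\upsilon_\kappa=\upsilon_\kappa-g$, has a solution $\upsilon_\kappa\in\mathcal{D}(\mathcal{A})$. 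As in Section~\ref{sec:1d intro}, I would first reduce this to the associated forward-in-$t$ problem $\partial_t\bar\phi^\epsilon_\kappa=\mathcal{A}^\epsilon\bar\phi^\epsilon_\kappa$ with $\mathcal{A}^\epsilon\eqdef(\cos\theta,\sin\theta)\cdot\nabla_x+Q^\epsilon$ (with $Q^\epsilon$ as in \eqref{discretized}), initial datum $g$, the regularized boundary condition \eqref{regularized adjoint boundarycon} on $\{x_2=0\}$, and the periodic condition in $\theta$; once this is solved, the Laplace transform $\upsilon_\kappa(x)=\int_0^\infty e^{-t}\bar\phi_\kappa(\lambda t,x)\,dt$ (after $\epsilon\to0$) yields a solution of \eqref{eq4}.

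Second, I would construct the mild solution of the regularized forward problem along the characteristics $U(t)(x_1,x_2,\theta)=(x_1+t\cos\theta,\,x_2+t\sin\theta,\theta)$. A trajectory with $\theta\in(-\pi,0)$ reaches $\{x_2=0\}$ in finite time; there its value is prescribed by \eqref{regularized adjoint boundarycon}. Crucially, this boundary equation is solvable explicitly once $\phi_\kappa(t,x_1,0,0)=\phi_{in}(x_1+t,0,0)$ and $\phi_\kappa(t,x_1,0,-\pi)=\phi_{in}(x_1-t,0,-\pi)$ are inserted: it is then a linear transport equation in $(t,x_1)$ with a bounded $C^1$ source for each $\theta$, whose solution is bounded $C^1$. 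Feeding this back through Duhamel with the bounded nonlocal operator $Q^\epsilon$, one sets up a fixed-point map on the set $E(X)=\{\Phi\in C(X):\|\Phi(t,\cdot)\|_{C^{(1,1,2)}_{x_1,x_2,\theta}}\le C\|g\|_{C^2}\}$, exactly as in Lemma~\ref{contraction1}: the term $Q^\epsilon$ costs a factor $O(\epsilon^{-2})$, so the map is a contraction on a short interval $[0,T_1(\epsilon)]$, and a continuation argument, with $T_1(\epsilon)$ independent of the data, extends the solution to any $T>0$.

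Third, I would prove uniform-in-$\epsilon$ estimates. The equation is translation invariant in $x_1$, so $\partial^m_{x_1}\bar\phi^\epsilon_\kappa$ and $\partial^m_{x_2}\bar\phi^\epsilon_\kappa$ satisfy the same equation (with transported/differentiated data), and the maximum principle — the direct analogue of Lemma~\ref{maxprinciple}, with the same case analysis at interior points, at $t=T$, on $\{x_2=0,\ \theta\in[0,\pi)\}$, at $\theta=\pm\pi$, and at the compactification point $\infty$ — gives $\|\bar\phi^\epsilon_\kappa(t)\|_{L^\infty}\le 2\|g\|_{L^\infty}$ and likewise for these derivatives. The $\theta$–derivatives are then controlled inductively via Duhamel, as in Section~\ref{sec: 1d wellposedness}: $\partial_\theta$ produces the inhomogeneity $(-\sin\theta\,\partial_{x_1}+\cos\theta\,\partial_{x_2})\bar\phi^\epsilon_\kappa$ and $\partial_\theta^2$ one further already-bounded term, yielding $\|\bar\phi^\epsilon_\kappa(t)\|_{C^{(1,1,2)}_{x_1,x_2,\theta}}\le C_{\kappa,t}\|g\|_{C^2}$ with $C_{\kappa,t}$ polynomial in $t$. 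Banach–Alaoglu then produces a weak-$\ast$ limit as $\epsilon\to0$ solving the forward problem with the genuine operator $\mathcal{A}$, and $\upsilon_\kappa=\int_0^\infty e^{-t}\bar\phi_\kappa(\lambda t,\cdot)\,dt$ lies in $C^{(1,1,2)}_{x_1,x_2,\theta}(X)$, solves \eqref{eq4}, and inherits from \eqref{regularized adjoint boundarycon} the boundary identity required by \eqref{domainDA}, so $\upsilon_\kappa\in\mathcal{D}(\mathcal{A})$. This establishes \eqref{wholerange}, and combined with Proposition~\ref{pregenerator proposition2} and Proposition~2.8(a) of \cite{Liggett} gives the theorem.

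The main obstacle I anticipate is the maximum principle at the compactification point $\infty\in X$: unlike in the one–dimensional set $S$, one must now rule out a maximizing sequence escaping to infinity in either $x_1$ or $x_2$, and the exclusion argument has to cope simultaneously with the non-compactness in $x_1$, the mixed transport/diffusion structure, and the pathological angle $\theta=-\pi/2$ (which is why the regularization \eqref{regularized adjoint boundarycon} is needed in the first place). The bookkeeping for the $\theta$–derivative estimates, where the coefficients $\cos\theta$ and $\sin\theta$ now couple $\partial_{x_1}$ and $\partial_{x_2}$, is routine but must be organized carefully to keep every constant uniform in $\epsilon$.
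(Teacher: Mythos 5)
Your plan reproduces the paper's proof of Theorem~\ref{markovgen2} essentially step for step: appeal to Proposition~\ref{pregenerator proposition2}, reduce the range condition \eqref{wholerange} to the regularized $t$-dependent problem \eqref{eq6}, solve it by a Duhamel/characteristics fixed point (Lemma~\ref{contraction2d}), prove a maximum principle (Lemma~\ref{maxprinciple2}) to obtain uniform-in-$\epsilon$ $C^{(1,1,2)}_{x_1,x_2,\theta}$ bounds (Proposition~\ref{2d derivative estimates}), pass $\epsilon\to 0$ by Banach--Alaoglu, and define $\upsilon_\kappa$ via the Laplace transform. The one quantitative slip in your account concerns the regularity lost through the boundary: because \eqref{regularized adjoint boundarycon} contains the transport term $\cos\theta\,\partial_{x_1}\phi_\kappa$, the boundary value of $\psi^\epsilon_\kappa$ obtained from \eqref{boundary duhamel 2d} depends on $\partial_{x_1}g$ --- exactly what the paper flags as ``one of the main differences from the 1-dimensional case'' --- so the contraction set must be defined with $\|g\|_{C^3}$ rather than your $\|g\|_{C^2}$, the $L^\infty$ maximum-principle bound acquires an extra term $2\kappa\|\partial_{x_1}g\|_{L^\infty(X)}$, and the final uniform derivative estimate is of order $(1+t^2)\|g\|_{C^4(X)}$; this changes the constants but not the structure of the argument.
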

	To this end, it suffices to prove that the operator $\mathcal{A}$ is bounded by Proposition 2.8 (a) of \cite{Liggett}. Equivalently, we will prove the following proposition:
	\begin{proposition}\label{hille2}
		For any $g\in C(X)$ there exists  $\upsilon_\kappa\in \mathcal{D}(\mathcal{A})$ which solves \eqref{eq4}.
	\end{proposition}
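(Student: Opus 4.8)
The plan is to transcribe the scheme of Section~\ref{sec: 1d wellposedness}, now carrying the extra free-transport variable $x_1$. As in the one-dimensional reduced problem, I first pass from the elliptic equation \eqref{eq4} to an associated forward $t$-dependent problem: I regularize $\partial_\theta^2$ by the jump operator $Q^\epsilon$ of \eqref{discretized}, set $\mathcal{A}^\epsilon\eqdef (\cos\theta,\sin\theta)\cdot\nabla_x+Q^\epsilon$, and look for $\bar\Psi^\epsilon_\kappa=\bar\Psi^\epsilon_\kappa(t,x_1,x_2,\theta)$ solving $\partial_t\bar\Psi^\epsilon_\kappa=\mathcal{A}^\epsilon\bar\Psi^\epsilon_\kappa$ with $\bar\Psi^\epsilon_\kappa(0,\cdot)=g$, the $2\pi$-periodicity in $\theta$, and the regularized trapping boundary condition \eqref{regularized adjoint boundarycon} on $\{x_2=0,\ \theta\in[-\pi,0]\}$. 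Then $\upsilon_\kappa^\epsilon(x_1,x_2,\theta)\eqdef \int_0^\infty e^{-t}\bar\Psi^\epsilon_\kappa(\lambda t,x_1,x_2,\theta)\,dt$ solves $\lambda\mathcal{A}^\epsilon\upsilon_\kappa^\epsilon=\upsilon_\kappa^\epsilon-g$, exactly as in Section~\ref{sec:1d intro}; it then remains to construct $\bar\Psi^\epsilon_\kappa$, to bound it uniformly in $\epsilon$, and to let $\epsilon\to0$ for fixed $\kappa$.

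First I would solve the homogeneous free-transport equation $\partial_t W=(\cos\theta,\sin\theta)\cdot\nabla_x W$. The bulk characteristics are now $(x_1,x_2)\mapsto(x_1+t\cos\theta,\,x_2+t\sin\theta)$; they reach $\{x_2=0\}$ in finite time precisely when $\theta\in(-\pi,0)$, and there one must read off the boundary trace. Unlike in Section~\ref{sec:dual mass density}, the boundary relation \eqref{regularized adjoint boundarycon} is itself a linear transport equation in $(t,x_1)$ rather than a pure ODE: at $\theta=0$ and $\theta=-\pi$ it is free transport, so that $\phi_\kappa(t,x_1,0,0)=g(x_1+t,0,0)$ and $\phi_\kappa(t,x_1,0,-\pi)=g(x_1-t,0,-\pi)$, while for $\theta\in(-\pi,0)$ one integrates the resulting linear ODE along the $x_1$-characteristic $s\mapsto x_1+(t-s)\cos\theta$, obtaining the analogue of \eqref{ws0theta}: the trace equals a $\chi_\kappa$-convex combination of $g(\cdot,0,0)$ and $g(\cdot,0,-\pi)$ plus an $e^{-t/\kappa}$ correction. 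These traces are explicit, bounded by $\|g\|_{L^\infty(X)}$, and (for $\theta$ bounded away from $-\pi/2$) smooth uniformly in $\kappa$. With $W$ in hand, the Duhamel representation and the contraction-mapping argument of Lemma~\ref{contraction1} and Corollary~\ref{contractioncor} run verbatim on the compact set $X$ in the norm $C^{(1,1,2)}_{x_1,x_2,\theta}$ (controlled by a fixed finite-order $C^k(X)$-norm of $g$), yielding a unique global mild solution $\bar\Psi^\epsilon_\kappa$ for each fixed $\epsilon>0$; since $Q^\epsilon$ commutes with $\partial_{x_1}$ as well as $\partial_{x_2}$, the extra variable costs nothing at this stage.

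Next I would establish the $\epsilon$-uniform estimates. The maximum principle of Lemma~\ref{maxprinciple} carries over with $\mathcal{M}h\eqdef (\partial_t-(\cos\theta,\sin\theta)\cdot\nabla_x-Q^\epsilon)h$: a $2\pi$-periodic-in-$\theta$ subsolution attains its maximum only on $\{t=0\}$ or on $\{x_2=0,\ \theta\in[-\pi,0]\}$, the case analysis being identical, except that the single compactification point $\infty$ of $X$ now absorbs both $|x_1|\to\infty$ and $x_2\to\infty$; there one localizes in a slab at large $|x_1|$ (resp.\ large $x_2$) with $\theta$ near $0$ or $\pm\pi$ (resp.\ near $-\pi/2$), so that the relevant transport coefficient has a definite sign, and concludes as in the last case of the proof of Lemma~\ref{maxprinciple}. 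Since the boundary traces are bounded by $2\|g\|_{L^\infty}$, this gives $\|\bar\Psi^\epsilon_\kappa\|_{L^\infty([0,T]\times X)}\le2\|g\|_{L^\infty(X)}$. The $x_1$- and $x_2$-derivatives satisfy the same equation with differentiated data (the coefficients depend only on $\theta$), so Corollary~\ref{x2 derivative estimates} applies to $\partial_{x_1}^{m}\partial_{x_2}^{m'}\bar\Psi^\epsilon_\kappa$; the $\theta$-derivatives satisfy inhomogeneous equations whose source terms involve $\partial_{x_1},\partial_{x_2}$-derivatives with coefficients polynomial in $t$ (arising from differentiating $(\cos\theta,\sin\theta)\cdot\nabla_x$ in $\theta$), and the Duhamel bounds of Section~\ref{sec: 1d wellposedness} then give $\|\bar\Psi^\epsilon_\kappa(t)\|_{C^{(1,1,2)}_{x_1,x_2,\theta}(X)}\le C_{\kappa,t}\|g\|_{C^k(X)}$ with $C_{\kappa,t}$ polynomial in $t$, in analogy with Proposition~\ref{1d derivative estimates}. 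Passing to the limit $\epsilon\to0$ by the compactness argument of Section~\ref{sec:limit epsilon to zero} produces $\bar\Psi_\kappa$ solving the $t$-dependent problem with $Q^\epsilon$ replaced by $\partial_\theta^2$; setting $\upsilon_\kappa\eqdef \int_0^\infty e^{-t}\bar\Psi_\kappa(\lambda t,\cdot)\,dt$ one checks $\lambda\mathcal{A}\upsilon_\kappa=\upsilon_\kappa-g$, that $\upsilon_\kappa,\mathcal{A}\upsilon_\kappa\in C^1(X)$, and that $\upsilon_\kappa$ satisfies the boundary relation defining $\mathcal{D}(\mathcal{A})$ in \eqref{domainDA} (because $\bar\Psi_\kappa$ does for every $t$), so that $\upsilon_\kappa\in\mathcal{D}(\mathcal{A})$ solves \eqref{eq4}.

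I expect the only genuinely new point — hence the main obstacle — to be the coupling of the $x_1$-transport with the boundary dynamics: in the Duhamel/characteristic bookkeeping one must track the $x_1$-location at which a bulk characteristic first hits $\{x_2=0\}$ and glue it consistently to the boundary transport--reaction equation \eqref{regularized adjoint boundarycon}, and then propagate the $\theta$-derivative estimates through this gluing while keeping all constants independent of $\epsilon$. Everything else is a routine transcription of Sections~\ref{sec:dual mass density}--\ref{sec: 1d wellposedness}, and the proof of Proposition~\ref{pregenerator proposition2} (already essentially reduced to that of Proposition~\ref{pregenerator proposition}) supplies the remaining hypotheses for the Hille--Yosida machinery.
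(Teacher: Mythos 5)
Your proposal follows the paper's proof essentially verbatim — passing to the forward $t$-dependent problem \eqref{eq6}, solving the boundary transport--reaction relation \eqref{regularized adjoint boundarycon} along the $x_1$-characteristics (the analogue of \eqref{ws0theta}, here \eqref{boundary duhamel 2d}), the Duhamel/contraction argument of Lemma~\ref{contraction2d}, the maximum principle of Lemma~\ref{maxprinciple2} with the compactification-point case handled by localizing at large $|x_1|$ and large $x_2$, the $x$- and $\theta$-derivative estimates leading to Proposition~\ref{2d derivative estimates}, and the $\epsilon\to0$ limit — and you correctly isolate the one genuinely new point, namely that the boundary relation is now a transport equation in $(t,x_1)$ whose Duhamel formula picks up $\partial_{x_1}g$. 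One small slip: precisely because of that dependence, the uniform bound is $\|\bar\Psi^\epsilon_\kappa\|_{L^\infty([0,T]\times X)}\le 3\|g\|_{L^\infty(X)}+2\kappa\|\partial_{x_1}g\|_{L^\infty(X)}$ as in Lemma~\ref{maxprinciple2}, not $2\|g\|_{L^\infty(X)}$; this is harmless for fixed $\kappa$ and smooth $g$, but you noted the $\partial_{x_1}g$ dependence a paragraph earlier and should carry it through the $L^\infty$ estimate.
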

	Then Proposition 2.8 (b) of \cite{Liggett} will imply that Theorem \ref{markovgen2} holds.

	\subsubsection{The regularized equation}
	As before, we regularize the Laplace-Beltrami operator as $Q^\epsilon$ defined in \eqref{discretized} with \eqref{zeta property} and obtain the following regularized operator\begin{equation}\label{regularizedmathcalA}
		\mathcal{A}^\epsilon\eqdef (\cos\theta,\sin\theta)\cdot \nabla_x + Q^\epsilon[\ \cdot \ ] .
	\end{equation} Then the regularized problem that we consider is 
	\begin{equation}\label{eq5}
		\lambda \mathcal{A}^\epsilon\upsilon_\kappa^\epsilon=\upsilon_\kappa^\epsilon-g,
	\end{equation}
	where $g\in C(X)$ for small $\lambda>0$.
	
	\section{The solvability of the adjoint problem for \texorpdfstring{$\lowercase{f}$}{}}\label{sec:2d wellposedness}\subsection{A \texorpdfstring{$\lowercase{t}$}{}-dependent problem} The solution $\upsilon_\kappa^\epsilon$ to the elliptic equation \eqref{eq5} can correspond to the solution ${\psi^\epsilon_\kappa}$ of the following $t$-dependent problem: 
	\begin{multline}
		\label{eq6}\qquad\partial_t {\psi^\epsilon_\kappa}  = \mathcal{A}^\epsilon{\psi^\epsilon_\kappa}, \text{ for }t\ge 0 \text{ and } \xi \eqdef (x_1,x_2,\theta)\in X,\\
		{\psi^\epsilon_\kappa} (0,\xi)=\psi_{\kappa,in}(\xi)=g(\xi),\\
	{\psi^\epsilon_\kappa} (t,x_1,x_2,-\pi)={\psi^\epsilon_\kappa} (t,x_1,x_2,\pi),\text { for }t\ge 0, \ x_1\in\mathbb{R},\ x_2\ge 0,	\text{ and }\\
		(\partial_t-\cos\theta\partial_{x_1}) {\psi^\epsilon_\kappa}  (t,x_1,0,\theta)=\frac{1}{\kappa}\bigg(\chi_\kappa(\theta)\psi^\epsilon_\kappa(t,x_1,0,0)\\+(1-\chi_\kappa(\theta))\psi^\epsilon_\kappa(t,x_1,0,-\pi)-\psi^\epsilon_\kappa(t,x_1,0,\theta)\bigg),
		\text{ for }x_1\in\mathbb{R}\text{ and }\theta\in[-\pi,0],
	\end{multline}  via the definition	
	$$\upsilon_\kappa^\epsilon(\xi)=\int_0^\infty e^{-t}{\psi^\epsilon_\kappa} (\lambda t , \xi)\ dt.$$ 
	Note that if we plug $\theta=0$ or $\theta=-\pi$ into \eqref{eq6}$_3$ we have\begin{equation}\notag
		\begin{split}
			\partial_t\psi^\epsilon_\kappa(t,x_1,0,0)&=\partial_{x_1}\psi^\epsilon_\kappa (t,x_1,0,0),\\
			\partial_t\psi^\epsilon_\kappa(t,x_1,0,-\pi)&=-\partial_{x_1}\psi^\epsilon_\kappa (t,x_1,0,-\pi).
		\end{split}
	\end{equation} Therefore, $\psi^\epsilon_\kappa (t,x_1,0,0)$ and $\psi^\epsilon_\kappa (t,x_1,0,0)$ are given by the solutions of the transport equations as
	\begin{equation}\notag
		\begin{split}
			\psi^\epsilon_\kappa(t,x_1,0,0)&=\psi^\epsilon_\kappa (0,x_1+t,0,0),\\
			\psi^\epsilon_\kappa(t,x_1,0,-\pi)&=\psi^\epsilon_\kappa (0,x_1-t,0,-\pi).
		\end{split}
	\end{equation} 
	Without loss of generality, we will assume that $g$ is chosen from a dense subset $C^\infty(X)$  of $C(X)$.
	%

	We first solve the boundary equation \eqref{eq6}$_3$
	\begin{multline}\label{lemma5.1.1}
		(\partial_t-\cos\theta\partial_{x_1}) \psi^\epsilon_\kappa (t,x_1,0,\theta)=\frac{1}{\kappa}\bigg(\chi_\kappa(\theta)\psi^\epsilon_\kappa(0,x_1+t,0,0)\\+(1-\chi_\kappa(\theta))\psi^\epsilon_\kappa(0,x_1-t,0,-\pi)-\psi^\epsilon_\kappa(t,x_1,0,\theta)\bigg),
		\text{ for }x_1\in\mathbb{R}\text{ and }\theta\in[-\pi,0],
	\end{multline}
	as$$ \psi^\epsilon_\kappa(t,\xi)= \psi^\epsilon_\kappa\left(t+\frac{x_2}{\sin\theta},x_1-\frac{x_2\cos\theta}{\sin\theta},0,\theta\right),$$ if $ x_2+t\sin\theta\le 0\text{  and }\theta\in[-\pi,0].$ Note that $\chi_\kappa\in [0,1]$ and $\psi^\epsilon_\kappa(0,\xi)=g(\xi)$. For each fixed $\theta\in[-\pi,0]$, we have the following inhomogeneous transport equation
	\begin{multline}\label{nonhomo transport for Upsilon}
		(\partial_t-\cos\theta\partial_{x_1}) \psi^\epsilon_\kappa (t,x_1,0,\theta)=\frac{1}{\kappa}\bigg(\chi_\kappa(\theta)g(x_1+t,0,0)\\+(1-\chi_\kappa(\theta))g(x_1-t,0,-\pi)-\psi^\epsilon_\kappa(t,x_1,0,\theta)\bigg).
	\end{multline}	
	Define $$
	\psi^\epsilon_{\kappa,g} (t,x_1,0,\theta)\eqdef \psi^\epsilon_\kappa(t,x_1,0,\theta) -\chi_\kappa(\theta)g(x_1+t,0,0)-(1-\chi_\kappa(\theta))g(x_1-t,0,-\pi).
	$$ Then $ \psi^\epsilon_{\kappa,g}$ satisfies 
	\begin{multline}
		(\partial_t-\cos\theta\partial_{x_1}) \psi^\epsilon_{\kappa,g} (t,x_1,0,\theta)=-\frac{1}{\kappa}\psi^\epsilon_{\kappa,g}(t,x_1,0,\theta)\\
		-(\partial_t -\cos\theta\partial_{x_1}) \bigg(\chi_\kappa(\theta)g(x_1+t,0,0)+(1-\chi_\kappa(\theta))g(x_1-t,0,-\pi)\bigg)\\
		=-\frac{1}{\kappa}\psi^\epsilon_{\kappa,g}(t,x_1,0,\theta)
		-\chi_\kappa(\theta)(1-\cos\theta)\partial_{x_1}g(x_1+t,0,0)\\+(1-\chi_\kappa(\theta))(1+\cos\theta)\partial_{x_1}g(x_1-t,0,-\pi).
	\end{multline}
	Then we use Duhamel's principle and obtain that
	\begin{multline}\label{boundary duhamel 2d}
		\psi^\epsilon_{\kappa,g}(t,x_1,0,\theta)= e^{-\frac{t}{\kappa}}	\psi^\epsilon_{\kappa,g}(0,x_1+t\cos\theta ,0,\theta)\\+\int_0^te^{-\frac{t-s}{\kappa}}\bigg(	-\chi_\kappa(\theta)(1-\cos\theta)\partial_{x_1}g(x_1-(t-s)\cos\theta+s,0,0)\\+ (1-\chi_\kappa(\theta))(1+\cos\theta)\partial_{x_1}g(x_1+(t-s)\cos\theta-s,0,-\pi)\bigg)ds.
	\end{multline}We remark that this formula depends on the first derivative of $g$ and this is one of the main differences from the 1-dimensional case discussed in Section \ref{sec: 1d wellposedness}. Since $$\psi^\epsilon_{\kappa,g}(0,x_1,0,\theta)=g(x_1,0,\theta)-\chi_\kappa(\theta)g(x_1,0,0)-(1-\chi_\kappa(\theta))g(x_1,0,-\pi),$$ we have
	\begin{multline}\label{boundary final duhamel 2d}\|\psi^\epsilon_{\kappa,g}(t,\cdot,0,\cdot)\|_{L^\infty(\mathbb{R}\times [-\pi,\pi])}\le 2e^{-\frac{t}{\kappa}}\|g\|_{L^\infty(X)}+2\|\partial_{x_1}g\|_{L^\infty(X)}\int_0^t e^{-\frac{t-s}{\kappa}}ds\\=2e^{-\frac{t}{\kappa}}\|g\|_{L^\infty(X)}+2\kappa\|\partial_{x_1}g\|_{L^\infty(X)} .\end{multline}Thus, we have 
	$$\|\psi^\epsilon_{\kappa,g}(t,\cdot,0,\cdot)\|_{L^\infty(\mathbb{R}\times [-\pi,\pi])}\to 0,$$ as $\kappa\to 0$ for $t>0$.
	
	\subsection{Construction of a mild solution}In this section, we will construct a mild solution to the $t$-dependent problem via Duhamel's principle
	\subsubsection{Homogeneous problem}We first consider a solution $\Upsilon=\Upsilon(t,\xi)$ to the following free transport equation:
	\begin{multline}\label{2dhomo}
		\qquad \partial_t \Upsilon =(\cos\theta,\sin\theta)\cdot\nabla_x \Upsilon ,\ \ 
		\Upsilon(0,\xi)=g(    \xi),\text{ for }\xi\in X,\\
	\Upsilon (t,x_1,x_2,-\pi)=\Upsilon (t,x_1,x_2,\pi),\text { for }t\ge 0, \ x_1\in\mathbb{R},\ x_2\ge 0,	\text{ and }\\
		(\partial_t-\cos\theta\partial_{x_1}) \Upsilon (t,x_1,0,\theta)=\frac{1}{\kappa}\bigg(\chi_\kappa(\theta)\Upsilon(0,x_1+t,0,0)\\+(1-\chi_\kappa(\theta))\Upsilon(0,x_1-t,0,-\pi)-\Upsilon(t,x_1,0,\theta)\bigg),
		\text{ for }x_1\in\mathbb{R}\text{ and }\theta\in[-\pi,0].
	\end{multline}
	Then the solution $\Upsilon$ is given by
	\begin{equation}
		\Upsilon(t,x,\theta)=\begin{cases}g(x_1+t\cos\theta,x_2+t\sin\theta, \theta),\ &\text{ if } x_2+t\sin\theta>0\text{ or }\theta\in[0,\pi]\\ \Upsilon\left(t+\frac{x_2}{\sin\theta},x_1-\frac{x_2\cos\theta}{\sin\theta},0,\theta\right),&\text{ if } x_2+t\sin\theta\le 0 \text { and } \theta \in\left(-\pi,0\right).
		\end{cases}
	\end{equation}
	
	\subsubsection{Inhomogeneous problem}\label{duhamel2}
	Then, by the variation of parameters, the solution to \eqref{eq6} can be obtained. 
	Let us denote $x=(x_1,x_2)$ below.
	\begin{enumerate}
		\item  If $x_2+t\sin\theta >0\text{ or }\theta\in[0,\pi]$, we have
		$${\psi^\epsilon_\kappa} (t,x,\theta)=\Upsilon(t,x,\theta)+\int_0^t Q^\epsilon[{\psi^\epsilon_\kappa} ](s,U(t-s)(\xi) )ds,$$ where the semigroup $U(t)$ is given by
		$$U(t)\xi=U(t)(x,\theta)\eqdef (x+t(\cos\theta,\sin\theta),\theta).$$
		Therefore, we have
		$$\qquad{\psi^\epsilon_\kappa} (t,x,\theta)=g(x+t(\cos\theta,\sin\theta),\theta)+\int_0^t Q^\epsilon[{\psi^\epsilon_\kappa} ](s,x+(t-s)(\cos\theta,\sin\theta),\theta )ds.$$
		
		\item  On the other hand, if $x_2+t\sin\theta \le0\text{ and }\theta\in[-\pi,0]$, we have 
		\begin{multline*}\qquad{\psi^\epsilon_\kappa} (t,x,\theta)=\Upsilon\left(t+\frac{x_2}{\sin\theta},x_1-\frac{x_2\cos\theta}{\sin\theta},0,\theta\right)\\
			+\int_{t+\frac{x_2}{\sin\theta}}^t Q^\epsilon[{\psi^\epsilon_\kappa} ](s,x+(t-s)(\cos\theta,\sin\theta),\theta )ds.\end{multline*}
	\end{enumerate}
	\subsection{Solvability} 
	Define an operator $\mathcal{T}^\epsilon$ as
	$$\mathcal{T}^\epsilon[{\psi^\epsilon_\kappa} ]\eqdef \Upsilon(t,\xi)+\int_{\max\left\{0,\ t+\frac{x_2}{\sin\theta}\right\}} ^t ds \ Q^\epsilon[{\psi^\epsilon_\kappa} ](s,U(t-s)\xi),$$ where 
	\begin{equation}
		\Upsilon(t,\xi)= \begin{cases}
			& g(x+t(\cos\theta,\sin\theta),\theta),\text{ if }x_2+t\sin\theta >0\text{ or }\theta\in[0,\pi]\\
			& \Upsilon\left(t+\frac{x_2}{\sin\theta},x_1-\frac{x_2\cos\theta}{\sin\theta},0,\theta\right), \text{ if }x_2+t\sin\theta\le 0\text{  and }\theta\in[-\pi,0].\end{cases}
	\end{equation}
	Define a set $\mathrm{E}(X)$ as
	$$\mathrm{E}(X)=\{\psi \in C(X) \ : \ \|\psi(t)\|_{C^{(1,1,2)}(X)}\le C\|g\|_{C^{3}(X)}\text{ for any }t\in[0,T]\},$$
	for some $C>54$.

	%
	%
	%

	Then, we obtain the following lemma.
	\begin{lemma}[Contraction mapping]\label{contraction2d}If ${\psi^\epsilon_\kappa} \in \mathrm{E}(X)$, then we have $\mathcal{T}^\epsilon[{\psi^\epsilon_\kappa} ]\in \mathrm{E}(X).$ Moreover,
		$\mathcal{T}^\epsilon$ is a contraction in $\mathrm{E}(X)$, for $T=T_1$ sufficiently small.
	\end{lemma}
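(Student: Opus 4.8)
The plan is to mirror the proof of Lemma~\ref{contraction1}, the one–dimensional contraction statement, now carrying along the extra free transport in the $(t,x_1)$ variables and, crucially, the boundary trace formula \eqref{boundary duhamel 2d}. Concretely, I would fix $\psi\in\mathrm{E}(X)$, split the phase space into the two regions appearing in the definition of $\mathcal{T}^\epsilon$ — the region $\{x_2+t\sin\theta>0\}\cup\{\theta\in[0,\pi]\}$, in which the backward characteristics do not reach $x_2=0$, and the region $\{x_2+t\sin\theta\le 0,\ \theta\in[-\pi,0]\}$, in which they do — and estimate $\partial^\alpha\mathcal{T}^\epsilon[\psi]$ for every multi-index $\alpha\le(1,1,2)$ in each region.

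The first step is the uniform bound
\[
\bigl|Q^\epsilon[\partial^\alpha\psi](t,x,\theta)\bigr|\le\frac{4}{\epsilon^2}\,\|\psi(t)\|_{C^{(1,1,2)}(X)}\le\frac{4C}{\epsilon^2}\,\|g\|_{C^3(X)},\qquad \alpha\le(1,1,2),
\]
which follows at once from \eqref{discretized}, \eqref{zeta property}, and $\psi\in\mathrm{E}(X)$. In the first region one has $\Upsilon(t,x,\theta)=g\bigl(x+t(\cos\theta,\sin\theta),\theta\bigr)$, and differentiating this composition by the chain rule produces, exactly as in the one-dimensional computation, coefficients that are polynomials in $t$ of degree $\le 2$ times derivatives of $g$, all bounded by $\|g\|_{C^3(X)}$, while differentiating the Duhamel integral $\int_0^t Q^\epsilon[\psi](s,U(t-s)\xi)\,ds$ — using the $\theta$–dependence of $U(t-s)\xi=(x+(t-s)(\cos\theta,\sin\theta),\theta)$ — yields similar polynomial-in-$(t-s)$ factors times $Q^\epsilon[\partial^\beta\psi]$ with $\beta\le(1,1,2)$; integrating in $s$ gives a factor $\bigl(\tfrac{T_1^3}{3}+\tfrac{3T_1^2}{2}+T_1\bigr)\tfrac{4C}{\epsilon^2}\|g\|_{C^3(X)}$. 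In the second region one writes $\Upsilon(t,\xi)=\Upsilon\bigl(t+\tfrac{x_2}{\sin\theta},x_1-\tfrac{x_2\cos\theta}{\sin\theta},0,\theta\bigr)$ and inserts the explicit trace formula \eqref{boundary duhamel 2d}; here one extra $x_1$–derivative of $g$ is consumed, which is why $\mathrm{E}(X)$ is set up with $C^3(X)$ data, and after adding back the Duhamel remainder over the truncated interval $[t+\tfrac{x_2}{\sin\theta},t]$ one obtains a bound of the same form. Collecting the two regions and choosing $T_1=T_1(\epsilon)$ small enough that $\bigl(\tfrac{T_1^3}{3}+\tfrac{3T_1^2}{2}+T_1\bigr)\tfrac{4}{\epsilon^2}\ll1$ and that the polynomial-in-$T_1$ homogeneous contribution plus $\tfrac{4C}{\epsilon^2}\bigl(\tfrac{T_1^3}{3}+\tfrac{3T_1^2}{2}+T_1\bigr)$ is at most a fixed fraction of $C$ (with $C>54$), one gets $\sup_{t\in[0,T_1]}\|\mathcal{T}^\epsilon[\psi](t)\|_{C^{(1,1,2)}(X)}\le C\|g\|_{C^3(X)}$, i.e. $\mathcal{T}^\epsilon(\mathrm{E}(X))\subset\mathrm{E}(X)$. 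For the contraction one observes that $\mathcal{T}^\epsilon[\psi_1]-\mathcal{T}^\epsilon[\psi_2]$ has no $\Upsilon$–contribution (that term depends only on $g$), so the difference is the Duhamel integral of $Q^\epsilon[\psi_1-\psi_2]$; the estimate above then gives $\|\mathcal{T}^\epsilon[\psi_1](t)-\mathcal{T}^\epsilon[\psi_2](t)\|_{C^{(1,1,2)}(X)}\le\bigl(\tfrac{T_1^3}{3}+\tfrac{3T_1^2}{2}+T_1\bigr)\tfrac{4}{\epsilon^2}\sup_{t\in[0,T_1]}\|\psi_1(t)-\psi_2(t)\|_{C^{(1,1,2)}(X)}$, a strict contraction for our choice of $T_1$.

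The step I expect to be the main obstacle is the estimate in the boundary region $\{x_2+t\sin\theta\le0,\ \theta\in[-\pi,0]\}$. Unlike the one-dimensional case, the trace along $\theta\in[-\pi,0]$ solves an \emph{inhomogeneous} transport equation in $x_1$ (see \eqref{lemma5.1.1}--\eqref{boundary duhamel 2d}), whose Duhamel representation already involves $\partial_{x_1}g$; one must therefore check that all of $\partial_{x_1},\partial_{x_2},\partial_\theta$ applied to $\Upsilon\bigl(t+\tfrac{x_2}{\sin\theta},x_1-\tfrac{x_2\cos\theta}{\sin\theta},0,\theta\bigr)$ stay controlled by $\|g\|_{C^3(X)}$ — in particular that the geometric factors $1/\sin\theta$ entering from the characteristic change of variables (harmless because in this region $\theta$ is bounded away from $0$ and $-\pi$ whenever $x_2>0$, and the trace is smooth when $x_2=0$) do not spoil the bound. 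This is precisely the reason the target space is taken to be $C^{(1,1,2)}(X)$ with $C^3$ data and the absolute constant in $\mathrm{E}(X)$ is enlarged to $C>54$.
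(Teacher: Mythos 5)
Your proposal follows essentially the same route as the paper's proof: the same two-region decomposition of the mild form, the same $Q^\epsilon$ bound, the same invocation of the boundary trace formula \eqref{boundary duhamel 2d} (which is indeed why one extra $x_1$-derivative of $g$ is spent and why $\mathrm{E}(X)$ is set up with $C^3$ data and $C>54$), and the same observation that the contraction estimate is free of the $\Upsilon$-term. The only discrepancy is cosmetic: in the 2D case the paper tracks the polynomial-in-$T_1$ factor as $\bigl(\tfrac{2T_1^3}{3}+3T_1^2+T_1\bigr)$ rather than the 1D $\bigl(\tfrac{T_1^3}{3}+\tfrac{3T_1^2}{2}+T_1\bigr)$ you quote, which does not affect the conclusion.
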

	\begin{proof}

		Now we show that  $\mathcal{T}^\epsilon$  maps  $E(X)$  to itself and is a contraction, if  $T_1=T_1(\epsilon) >0$  is sufficiently small. 
		Note that for any $\psi\in E(X)$ and any multi-index $\alpha \le (1,1,2)$ in the partial order, we have
		\begin{equation*}
			\big|Q^\epsilon[\partial^\alpha\psi](t,x_1,x_2,\theta)\big|  \leq  
			\frac{4}{\epsilon^2}  \|\psi(t)\|_{ C^{(1,1,2)}_{x_1,x_2,\theta}}\! \int_{(-\pi,\pi)}\!\zeta(\nu) d\nu  \leq 
			\frac{4C}{\epsilon^2} \|g\|_{C^{3}},
		\end{equation*}
		by the definition of $Q^\epsilon[\psi]$. We now take the derivatives $\partial^\alpha$ for $\alpha\le (1,1,2)$ on the solutions in the mild form.
		
		If $x_2+t\sin\theta>0$ or $\theta\in [0,\pi]$, we observe that
		\begin{multline*}
			\qquad\partial^{m,l}_{x_1,x_2}{\psi^\epsilon_\kappa} (t,x_1,x_2,\theta)=\partial^{m,l}_{x_1,x_2}g(x_1+t\cos\theta,x_2+t\sin\theta,\theta)\\
			+ Q^\epsilon[{\partial^{m,l}_{x_1,x_2}\psi^\epsilon_\kappa} ](s,U(t-s)(x_1,x_2,\theta) )ds,
		\end{multline*} for any non-negative integers $m$ and $l$, and
		\begin{multline*}\partial_{\theta}\psi^\epsilon_\kappa (t,x_1,x_2,\theta)=-t\sin\theta\partial_{x_1}g(x_1+t\cos\theta,x_2+t\sin\theta,\theta)\\+t\cos\theta\partial_{x_2}g(x_1+t\cos\theta,x_2+t\sin\theta,\theta)+\partial_{\theta}g(x_1+t\cos\theta,x_2+t\sin\theta,\theta)\\+\int_0^t Q^\epsilon[{(-(t-s)\sin\theta\partial_{x_1}+(t-s)\cos\theta\partial_{x_2}+\partial_{\theta})\psi^\epsilon_\kappa} ]\\
			(s,x_1+(t-s)\cos\theta,x_2+(t-s)\sin\theta,\theta)ds,\end{multline*}
		and
		\begin{multline*}\partial^2_{\theta}\psi^\epsilon_\kappa (t,x_1,x_2,\theta)
			=-t\cos\theta\partial_{x_1}g(x_1+t\cos\theta,x_2+t\sin\theta,\theta)\\+t^2\sin^2\theta\partial^2_{x_1}g(x_1+t\cos\theta,x_2+t\sin\theta,\theta)-t\sin\theta\partial_{x_2}g(x_1+t\cos\theta,x_2+t\sin\theta,\theta)\\+t^2\cos^2\theta\partial^2_{x_2}g(x_1+t\cos\theta,x_2+t\sin\theta,\theta)-2t\sin\theta\partial^{(1,0,1)}_{x_1,x_2,\theta}g(x_1+t\cos\theta,x_2+t\sin\theta,\theta)\\+2t\cos\theta\partial^{(0,1,1)}_{x_1,x_2,\theta}g(x_1+t\cos\theta,x_2+t\sin\theta,\theta)+\partial^2_{\theta}g(x_1+t\cos\theta,x_2+t\sin\theta,\theta)\\+\int_0^t Q^\epsilon\bigg[\bigg(-(t-s)\cos\theta\partial_{x_1}+(t-s)^2\sin^2\theta\partial^2_{x_1}-(t-s)\sin\theta\partial_{x_2}+(t-s)^2\cos\theta^2\partial_{x_2}^2\\-2(t-s)\sin\theta\partial^{(1,0,1)}_{x_1,x_2,\theta}+2(t-s)\cos\theta\partial^{(0,1,1)}_{x_1,x_2,\theta}+\partial^2_\theta\bigg)\psi^\epsilon_\kappa\bigg](s,x_2+(t-s)\sin\theta,\theta)ds.\end{multline*}On the other hand, if $x_2+t\sin\theta\le 0$ and $\theta \in (-\pi,0)$, we have 	 $$\left|\partial^\alpha_{x_1,x_2} \Upsilon\left(t+\frac{x_2}{\sin\theta},x_1-\frac{x_2\cos\theta}{\sin\theta},0,\theta\right)\right|\le 2\|\partial^\alpha_{x_1,x_2} g\|_{L^\infty(X)}+2\|\partial^\alpha_{x_1,x_2}\partial_{x_1}g\|_{L^\infty(X)},$$ by \eqref{boundary duhamel 2d} and \eqref{2dhomo}$_2$ for any $x$-derivatives. Also, regarding the $\theta$-derivatives, we have $$\left|\partial^\beta_{\theta} \Upsilon\left(t+\frac{x_2}{\sin\theta},x_1-\frac{x_2\cos\theta}{\sin\theta},0,\theta\right)\right|\le 2^{\beta+1}\|\partial^\beta_{\theta} g\|_{L^\infty(X)}+2^{\beta+1}\|\partial^\beta_{\theta}\partial_{x_1}g\|_{L^\infty(X)},$$ by \eqref{boundary duhamel 2d}.
		Therefore, for any  $t\in[0,T_1]$ and $\alpha\le (1,1,2)$ with $|\alpha|\le 2$, we have
		\begin{align*}
			\partial^\alpha_{x_1,x_2,\theta}\mathcal{T}^\epsilon[\psi](t)
			&  \leq  2\bigg (t+\frac{3}{2}\bigg)^2\|g\|_{C^3} +\bigg(\frac{2t^3}{3}+3t^2+t\bigg) \big|Q^\epsilon[\partial^\alpha\psi]\big| \\[2pt]
			&  \leq\left(  2\bigg(T_1+\frac{3}{2}\bigg)^2+	\frac{4C}{\epsilon^2}\bigg(\frac{2T_1^3}{3}+3T_1^2+T_1\bigg) \right)\|g\|_{C^3}   \leq  \frac{C}{12}\|g\|_{C^3},
		\end{align*} 
		provided  $T_1$  is chosen sufficiently small such that $ \frac{4}{\epsilon^2}\bigg(\frac{2T_1^3}{3}+3T_1^2+T_1\bigg) \ll1$ and $$2\bigg(T_1+\frac{3}{2}\bigg)^2+	\frac{4C}{\epsilon^2}\bigg(\frac{2T_1^3}{3}+3T_1^2+T_1\bigg) \le \frac{C}{12} ,$$ for given $C>54.$ 
		Then we have
		\begin{equation*}
			\sup_{t\in [0,T_1]}\|\mathcal{T}^\epsilon[\psi](t)\|_{ C^{(1,1,2)}_{x_1,x_2,\theta}} \leq  C\|g\|_{C^{3}(X)} .
		\end{equation*}
		All these above imply that  $\mathcal{T}^\epsilon[\psi]\in E(X)$  and hence  $\mathcal{T}^\epsilon$  maps  $E(X)$  into  $E(X)$.
		In addition, for any two $\psi_1, \ \psi_2 \in E(X)$  and  $t\in[0,T_1]$, similar arguments yield that
		\begin{multline*}
			\|\mathcal{T}^\epsilon[\psi_1](t) - \mathcal{T}^\epsilon[\psi_2](t)\|_{C^{(1,1,2)}_{x_1,x_2,\theta}}
			=  \|\mathcal{T}^\epsilon[\psi_1\!-\psi_2](t)\|_{C^{(1,1,2)}_{x_1,x_2,\theta}} \\[5pt]
			\leq \bigg(\frac{2T_1^3}{3}+3T_1^2+T_1\bigg) \frac{4}{\epsilon^2}  \sup_{t\in [0,T_1]}\|\psi_1(t)-\psi_2(t)\|_{C^{(1,1,2)}_{x_1,x_2,\theta}}.
		\end{multline*}
		Since  $\bigg(\frac{2T_1^3}{3}+3T_1^2+T_1\bigg) \frac{4}{\epsilon^2}  <1$ with our choice of  $T_1$  above, we conclude that  $\mathcal{T}^\epsilon$  is a contraction.
	\end{proof}	\begin{corollary}There exists a unique global mild solution ${\psi^\epsilon_\kappa}$ in $\mathrm{E}(X)$ to \eqref{eq6}.
	\end{corollary}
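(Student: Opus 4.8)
The plan is to obtain $\psi^\epsilon_\kappa$ as the unique fixed point of $\mathcal{T}^\epsilon$ on a short time interval and then extend it globally by iterating in time, exactly as Corollary~\ref{contractioncor} was deduced from Lemma~\ref{contraction1} in the reduced problem. First I would observe that $\mathrm{E}(X)$, regarded as a closed bounded subset of the Banach space $C\bigl([0,T_1];C^{(1,1,2)}(X)\bigr)$, is a complete metric space for the metric induced by $\sup_{t\in[0,T_1]}\|\cdot\|_{C^{(1,1,2)}_{x_1,x_2,\theta}}$; by Lemma~\ref{contraction2d}, $\mathcal{T}^\epsilon$ maps $\mathrm{E}(X)$ into itself and is a contraction on it once $T_1=T_1(\epsilon)$ is chosen small enough. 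The Banach fixed-point theorem then produces a unique $\psi^\epsilon_\kappa\in\mathrm{E}(X)$ with $\mathcal{T}^\epsilon[\psi^\epsilon_\kappa]=\psi^\epsilon_\kappa$ on $[0,T_1]$, which by the construction of Section~\ref{duhamel2} is precisely the mild form of \eqref{eq6}, the boundary data being encoded in $\Upsilon$ through \eqref{2dhomo} and \eqref{boundary duhamel 2d}. Continuity in $t$, the mild-solution identity and local uniqueness are all immediate from the fixed-point statement.

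For global existence I would iterate. The crucial structural point, already exploited in Corollary~\ref{contractioncor}, is that the length $T_1=T_1(\epsilon)$ of the existence interval coming out of Lemma~\ref{contraction2d} depends only on $\epsilon$ and on the fixed constant $C>54$, and not on the datum $g$. Hence one resets the clock at $t=T_1$ with new datum $\psi^\epsilon_\kappa(T_1,\cdot)$, solves again on $[T_1,2T_1]$, and repeats; after $\lceil T/T_1\rceil$ steps the solution is defined on all of $[0,T]$ for any fixed $T>0$. The map so constructed is the mild solution of \eqref{eq6}, and it is unique on $[0,T]$ because local uniqueness propagates step by step.

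The one point that needs care — and which I expect to be the main obstacle in making this iteration rigorous — is a regularity mismatch: the definition of $\mathrm{E}(X)$ controls the $C^{(1,1,2)}$ norm of the solution by the $C^{3}$ norm of the datum, but the fixed-point argument a priori only delivers a $C^{(1,1,2)}$ solution, so $\psi^\epsilon_\kappa(T_1,\cdot)$ need not lie in $C^{3}(X)$ and a naïve restart is not licensed. The remedy is higher-order bootstrapping: since we may take $g\in C^\infty(X)$ (a dense class, as already assumed) and since for each fixed $\epsilon>0$ the operator $Q^\epsilon$ of \eqref{discretized} maps $C^{k}(X)$ boundedly into itself with no loss of derivatives, the contraction estimates of Lemma~\ref{contraction2d} go through verbatim in the spaces $C^{(m+1,m+1,m+2)}(X)$ for every $m$, with the \emph{same} $T_1=T_1(\epsilon)$ — the only $\epsilon$-dependence, the prefactor $4/\epsilon^{2}$ bounding $|Q^\epsilon[\partial^\alpha\psi]|$, being independent of $m$. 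This shows $\psi^\epsilon_\kappa(t,\cdot)\in C^\infty(X)$ for every $t\in[0,T_1]$, in particular $\psi^\epsilon_\kappa(T_1,\cdot)\in C^{3}(X)$, so the restart is legitimate and the iteration closes. With this in hand the Corollary follows.
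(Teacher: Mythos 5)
Your argument follows the paper's own route — Banach fixed point from Lemma~\ref{contraction2d} on $[0,T_1]$ with $T_1=T_1(\epsilon)$ independent of the datum, then continuation to arbitrary finite $T$ — but you go further than the paper in one substantive way. You flag that the set $\mathrm{E}(X)$ bounds $\|\psi(t)\|_{C^{(1,1,2)}}$ in terms of $\|g\|_{C^3}$, whereas after one step one only controls $\psi^\epsilon_\kappa(T_1,\cdot)$ in $C^{(1,1,2)}(X)$, and $C^{(1,1,2)}(X)\not\subset C^3(X)$ (e.g.\ $\partial_\theta^3$ and $\partial_{x_1}^3$ are not controlled). The paper's one-line ``continuation argument'' does not address this compatibility question, so your worry is genuine, and your remedy — bootstrap by running the same fixed-point scheme at each regularity level for $g\in C^\infty(X)$, using that for fixed $\epsilon$ the jump operator $Q^\epsilon$ is bounded on every $C^k$ without loss of derivatives — is the correct fix, since it yields $\psi^\epsilon_\kappa(T_1,\cdot)\in C^\infty(X)$ and makes the restart licit.

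One imprecision in your justification deserves a sentence: the contraction factor at order $m$ is \emph{not} literally $\bigl(\frac{2T_1^3}{3}+3T_1^2+T_1\bigr)\frac{4}{\epsilon^2}$ independently of $m$. Differentiating $Q^\epsilon[\psi]\bigl(s,U(t-s)\xi\bigr)$ up to $m+2$ times in $\theta$ produces coefficients $(t-s)^k$ for $k\leq m+2$, so after integrating in $s$ the prefactor is a degree-$(m+3)$ polynomial in $T_1$, not the fixed cubic. The argument survives because the relevant sums $\sum_{k\le m+2}\frac{T_1^{k+1}}{k+1}$ are dominated uniformly in $m$ by $-\log(1-T_1)$ for $T_1<1$, so choosing $T_1=T_1(\epsilon)<1$ with $-\log(1-T_1)\cdot\frac{4}{\epsilon^2}<1$ makes the map a contraction at every order simultaneously. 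With that single adjustment your proposal is complete, and is in fact a more careful version of the paper's own proof.
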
\begin{proof}
		Therefore, by the Schauder-type fixed-point theorem, there exists a unique mild solution in $E(X)$  on the interval  $[0,T_1]$  for such  $T_1=T_1(\epsilon)$  that $\bigg(\frac{2T_1^3}{3}+3T_1^2+T_1\bigg) \frac{4}{\epsilon^2}   <1$ is satisfied.
		For the global existence, note that $T_1=T_1(\epsilon)$ does not depend on the initial data $g$. Then, by a continuation argument, we can extend the existence interval to an arbitrary  $T>0$ independent of  $\epsilon$.
	\end{proof}
	Therefore, we obtain the global wellposedness in $\mathrm{E}(X)$ for the regularized problem \eqref{eq6}. In the next subsection, we will establish the uniform-in-$\epsilon$ estimates for $\psi^\epsilon_\kappa$ and its derivatives.

	\subsection{Uniform-in-\texorpdfstring{$\epsilon$}{} estimates}
	In this section, we will provide a uniform-in-$\epsilon$ $L^\infty$ estimate for the solutions to \eqref{eq6}.
	\begin{lemma}[maximum principle]\label{maxprinciple2}
	Define 	\begin{equation}\label{M2}
		\mathcal{M} h\eqdef \left(\partial_t-(\cos\theta,\sin\theta)\cdot \nabla_x-Q^\epsilon \right)h, 
	\end{equation} and let a $2\pi$-periodic(-in-$\theta$) function $h\in C^{1,1,1,2}_{t,x_1,x_2,\theta}$ solve $\mathcal{M} h\le 0$. Then $h$ attains its maximum only when $t=0$ or when $x_2=0$ and $\theta\in[-\pi,0]$. Therefore, a solution ${\psi^\epsilon_\kappa} $ to \eqref{eq6} satisfies 
		$$\|{\psi^\epsilon_\kappa} \|_{L^\infty([0,T]\times X)}\le 3\|g\|_{L^\infty(X)}+2\kappa\|\partial_{x_1}g\|_{L^\infty(X)} .$$\end{lemma}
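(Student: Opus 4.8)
The plan is to mirror the proof of Lemma~\ref{maxprinciple}, the only genuinely new features being the free transport in $x_1$ and the fact that the boundary values on $\{x_2=0,\ \theta\in[-\pi,0]\}$ are now governed by the transport identity \eqref{boundary duhamel 2d} rather than by a pure ODE in $t$. First I would reduce to the case $\mathcal{M}h<0$: given a $2\pi$-periodic $h$ with $\mathcal{M}h\le 0$, put $h_k\eqdef h-kt$ with $k>0$; since $kt$ is constant in $(x_1,x_2,\theta)$ we have $Q^\epsilon(kt)=0$ and hence $\mathcal{M}h_k=\mathcal{M}h-k<0$. Once the location of the maximum of $h_k$ is pinned down, sending $k\to 0^+$ and using uniform convergence on the compact set $[0,T]\times X$ transfers the conclusion to $h$. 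So assume $\mathcal{M}h<0$ and suppose, for contradiction, that $h$ attains its maximum over $[0,T]\times X$ at a point lying outside $\{t=0\}\cup\{x_2=0,\ \theta\in[-\pi,0]\}$.

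Next I would run the case analysis on the location $(t_\ast,x_\ast,\theta_\ast)$ of the maximizer, each case producing $\mathcal{M}h\ge 0$ and a contradiction. If the maximizer is an interior point of $(0,T)\times\mathbb{R}\times(0,\infty)\times(-\pi,\pi)$, then $\partial_t h=0$, $\nabla_x h=0$, and $Q^\epsilon h\le 0$ (the jump operator $Q^\epsilon$ is nonpositive at a maximum in $\theta$), so $\mathcal{M}h\ge 0$; the same holds at $t_\ast=T$ with $\partial_t h\ge 0$ instead, and at $\theta_\ast=\pm\pi$ by the $2\pi$-periodicity. If the maximizer lies on $\{x_2=0\}$ with $\theta_\ast\in[0,\pi]$, then $\partial_t h=0$, $\partial_{x_1}h=0$, $Q^\epsilon h\le 0$, and since the domain is $x_2>0$ one has $\partial_{x_2}h\le 0$, so with $\sin\theta_\ast\ge 0$ the term $-\sin\theta_\ast\,\partial_{x_2}h\ge 0$ and again $\mathcal{M}h\ge 0$. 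Finally, if the maximizer is the point $\infty$ of $X$, I would reproduce the localization argument of Lemma~\ref{maxprinciple}: by continuity of $\mathcal{M}h$ on $X$, $\mathcal{M}h<0$ on a neighborhood $\mathcal{O}_M$; for $x_2^\ast$ large one can choose a small cube $N_\delta$ around a point $(T/2,x_1^\ast,x_2^\ast,-\pi/2)$ on which the local maximum of $h$ is attained on the face $x_2=x_2^\ast+\delta$, at a point where $\partial_t h=0$, $\partial_{x_1}h=0$, $\partial_\theta^2 h\le 0$, $\partial_{x_2}h\ge 0$, and $\sin\theta<0$, so that $\mathcal{M}h=\partial_t h-\cos\theta\,\partial_{x_1}h-\sin\theta\,\partial_{x_2}h-Q^\epsilon h\ge 0$, contradicting $\mathcal{M}h<0$ there. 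This establishes that $h$ attains its maximum only on $\{t=0\}\cup\{x_2=0,\ \theta\in[-\pi,0]\}$.

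To finish, apply this both to $h=\psi^\epsilon_\kappa$ and to $h=-\psi^\epsilon_\kappa$ (each a $2\pi$-periodic solution of $\mathcal{M}h=0\le 0$); it then suffices to bound $\pm\psi^\epsilon_\kappa$ on the two sets above. On $\{t=0\}$ we have $\psi^\epsilon_\kappa(0,\cdot)=g$, hence $|\psi^\epsilon_\kappa|\le\|g\|_{L^\infty(X)}$. On $\{x_2=0,\ \theta\in[-\pi,0]\}$ I would use the decomposition $\psi^\epsilon_\kappa(t,x_1,0,\theta)=\psi^\epsilon_{\kappa,g}(t,x_1,0,\theta)+\chi_\kappa(\theta)g(x_1+t,0,0)+(1-\chi_\kappa(\theta))g(x_1-t,0,-\pi)$ together with the estimate \eqref{boundary final duhamel 2d}, namely $\|\psi^\epsilon_{\kappa,g}(t,\cdot,0,\cdot)\|_{L^\infty(\mathbb{R}\times[-\pi,\pi])}\le 2e^{-t/\kappa}\|g\|_{L^\infty(X)}+2\kappa\|\partial_{x_1}g\|_{L^\infty(X)}\le 2\|g\|_{L^\infty(X)}+2\kappa\|\partial_{x_1}g\|_{L^\infty(X)}$; since $\chi_\kappa\in[0,1]$, the remaining two terms contribute at most $\|g\|_{L^\infty(X)}$, so $|\psi^\epsilon_\kappa(t,x_1,0,\theta)|\le 3\|g\|_{L^\infty(X)}+2\kappa\|\partial_{x_1}g\|_{L^\infty(X)}$. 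Combining the bounds on the two sets yields $\|\psi^\epsilon_\kappa\|_{L^\infty([0,T]\times X)}\le 3\|g\|_{L^\infty(X)}+2\kappa\|\partial_{x_1}g\|_{L^\infty(X)}$.

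The main obstacle is the point-at-infinity case: because the compactification $X$ of Definition~\ref{setS2d} collapses $\{|x_1|\to\infty\}\cup\{x_2\to\infty\}$ to a single point, a maximizer ``at infinity'' need not be approached along the $x_2$-direction, so one must argue carefully that the localization can be performed in a region where $\sin\theta<0$ and $\cos\theta\approx 0$ near $\theta=-\pi/2$, exactly as in the proof of Lemma~\ref{maxprinciple}. Beyond that, the extra transport term $-\cos\theta\,\partial_{x_1}h$ vanishes at any $x_1$-interior critical point, so I expect the two-dimensional argument to go through with only cosmetic modifications of the one-dimensional one; the one point worth stating explicitly is the use of the boundary formula \eqref{boundary duhamel 2d}, which, unlike in the reduced problem, forces the appearance of $\|\partial_{x_1}g\|_{L^\infty(X)}$ in the final constant.
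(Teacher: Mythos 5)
Your proof mirrors the paper's argument on all the finite-point cases (interior, $t=T$, $x_2=0$ with $\theta\in[0,\pi]$, $\theta=\pm\pi$ via periodicity), the reduction from $\mathcal{M}h\le 0$ to $\mathcal{M}h<0$ via the perturbation $h-kt$, and the final $L^\infty$ bound on the boundary set $\{x_2=0,\,\theta\in[-\pi,0]\}$ via the Duhamel estimate \eqref{boundary final duhamel 2d}. All of that is essentially identical to the paper.

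The one place you diverge is the point-at-infinity case, and there your treatment is incomplete. The paper runs \emph{two} distinct localizations at $\infty$. For the branch where the supremum is approached along $|x_1|\to\infty$ it places the small cube at $x_1^*\gg1$ with the $\theta$-window centered at $\theta\approx-3\pi/4$, so that $\cos\psi<0$ there and the term $-\cos\psi\,\partial_{x_1}h\ge0$ on the outer $x_1$-face produces the contradiction; for the branch along $x_2\to\infty$ it places the cube at $x_2^*\gg1$ with $\theta\approx-\pi/2$, so that $\sin\psi<0$ and $-\sin\psi\,\partial_{x_2}h\ge0$ on the outer $x_2$-face. You correctly flag this as ``the main obstacle'' and observe that the compactification collapses both directions to a single point, but your resolution — asserting that the localization can always be performed near $\theta=-\pi/2$ ``exactly as in the proof of Lemma~\ref{maxprinciple}'' — does not actually close the gap. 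If the supremum $h(\infty)$ is realized in the limit $|x_1|\to\infty$ with $x_2$ bounded, there is no reason a cube at large $x_2^*$ and $x_1\approx 0$ should have its local maximum on the outer $x_2$-face, and the contradiction $-\sin\psi\,\partial_{x_2}h\ge0$ is unavailable. The $x_1$-transport term $-\cos\theta\,\partial_{x_1}$ is precisely the new ingredient in the 2D operator relative to the 1D one, and the paper exploits it to cover that branch; you should add the $|x_1|\to\infty$ localization with $\theta^*\approx-3\pi/4$ (and its mirror at $-\pi/4$ for $x_1\to-\infty$) rather than relying solely on the $\sin\theta$-term as in Lemma~\ref{maxprinciple}.
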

	\begin{proof}
Note that the derivatives $\partial_t{\psi^\epsilon_\kappa}$ and $\nabla_x{\psi^\epsilon_\kappa}$ are well-defined due to the mild form of the solution in Section \ref{duhamel2} and that $g$ is sufficiently smooth.
		
		First of all, we suppose that a $2\pi$-periodic-in-$\theta$ function ${\psi^\epsilon_\kappa}$ satisfies $\mathcal{M}{\psi^\epsilon_\kappa}<0$. \begin{itemize}
			\item If ${\psi^\epsilon_\kappa}$ also attains
			its maximum either at an interior point $(t,x,\theta) \in (0,T) \times\mathbb{R}\times (0,\infty)\times
			(-\pi,\pi)$. Then we have $\partial_{t}
			{\psi^\epsilon_\kappa} =\partial_{x_1}{\psi^\epsilon_\kappa}=\partial_{x_2} {\psi^\epsilon_\kappa} =0$ while $Q^\epsilon {\psi^\epsilon_\kappa} \le0$ at the maximum. Thus $\mathcal{M} {\psi^\epsilon_\kappa}(t,x,\theta) \ge0$ and this contradicts the assumption.
			
			\item If the maximum occurs on $(T,x,\theta)$ with $(x,\theta) \in\mathbb{R}\times (0,\infty)\times
			(-\pi,\pi)$,  then $\partial_{t}
			{\psi^\epsilon_\kappa}\ge 0$ and $ \partial_{x_1}{\psi^\epsilon_\kappa}=\partial_{x_2} {\psi^\epsilon_\kappa} =0$ while $Q^\epsilon {\psi^\epsilon_\kappa} \le0$ at the maximum. Thus $\mathcal{M} {\psi^\epsilon_\kappa}(t,x,\theta) \ge0$ and this contradicts the assumption.
			
			\item	If the maximum occurs on $(t,x_1,0,\theta)$ with $t\in (0,T)$, $x_1\in\mathbb{R}$ and $\theta \in [0,\pi)$, then $\partial_{t}
			{\psi^\epsilon_\kappa}= \partial_{x_1}{\psi^\epsilon_\kappa}=0$ and $-\sin\theta \partial_{x_2} {\psi^\epsilon_\kappa} \ge 0$ while $Q^\epsilon {\psi^\epsilon_\kappa} \le0$ at the maximum. Thus $\mathcal{M} {\psi^\epsilon_\kappa}(t,x,\theta) \ge0$ and this contradicts the assumption.
			
			\item If the maximum occurs on $(t,x,\pm \pi)$ with $t\in (0,T)$ and $x\in\mathbb{R}\times (0,\infty)$, then $\partial_{t}
			{\psi^\epsilon_\kappa}=\partial_{x_1} {\psi^\epsilon_\kappa}=\partial_{x_2} {\psi^\epsilon_\kappa} =0$ while $Q^\epsilon {\psi^\epsilon_\kappa} \le0$ at the maximum. Thus $\mathcal{M} {\psi^\epsilon_\kappa}(t,x,\theta) \ge0$ and this contradicts the assumption.
			\item  Now suppose that the maximum occurs at $|x_1|=\infty$. Without loss of generality, suppose that it occurs at $x_1=+\infty$. Then there exists a sufficiently small $\varepsilon>0$ such that $\mathcal{M}{\psi^\epsilon_\kappa}(\infty)+\varepsilon\le 0.$ 
			Then by the continuity of $\mathcal{M}{\psi^\epsilon_\kappa}$,  there exists a sufficiently large constant $R>0$ such that if $x_1\ge  R$, then $\mathcal{M}{\psi^\epsilon_\kappa}(t,x_1,x_2,\theta)<0,$ for any $(t,x_2,\theta) \in [0,T]\times[0,\infty]\times  [-\pi,\pi].$ Also, since ${\psi^\epsilon_\kappa}(\infty)=\max_{\zeta\in S} {\psi^\epsilon_\kappa}(\zeta)$, there exist a sufficiently large $x_1^*>2R$ and a small constant $\delta>0$ such that 
			the local maximum of ${\psi^\epsilon_\kappa}(t,x_1,x_2,\theta)$ on the neighborhood $N_\delta\eqdef (T/2-\delta,T/2+\delta)\times (x_1^*-\delta,x_1^*+\delta)\times (R-\delta,R+\delta)\times  (-3\pi/4-\delta,-3\pi/4+\delta)$ occurs at the point $(s,y_1,y_2,\psi)$ on the upper-in-$x_1$ boundary of $N_\delta$ with  $y_1=x_1^*+\delta$, $s=(T/2-\delta,T/2+\delta)$, $y_2\in(R-\delta,R+\delta)$  and $\psi \in (-3\pi/4-\delta,-3\pi/4+\delta).$ 
			Then note that
			\begin{multline*}
				\qquad \qquad	0>\mathcal{M}{\psi^\epsilon_\kappa}(s,y_1,y_2,\psi)\\\qquad \quad = \partial_t {\psi^\epsilon_\kappa}-\cos\psi\partial_{x_1}{\psi^\epsilon_\kappa}(s,y_1,y_2,\psi)-\sin\psi\partial_{x_2}{\psi^\epsilon_\kappa}(s,y_1,y_2,\psi)-\partial_\theta^2 {\psi^\epsilon_\kappa}(s,y_1,y_2,\psi)  \\\ge -\cos\psi\partial_{x_1}{\psi^\epsilon_\kappa}(s,y_1,y_2,\psi)\ge 0,
			\end{multline*}which leads to the contradiction. 
			\item  Finally, if the  maximum occurs at $x_2=\infty$, there exists a sufficiently small $\varepsilon>0$ such that $\mathcal{M}{\psi^\epsilon_\kappa}(\infty)+\varepsilon\le 0.$ 
			Then by the continuity of $\mathcal{M}{\psi^\epsilon_\kappa}$,  there exists a sufficiently large constant $R>0$ such that if $x_2\ge R$, then $\mathcal{M}{\psi^\epsilon_\kappa}(t,x_1,x_2,\theta)<0,$ for any $(t,x_1,\theta) \in [0,T]\times\mathbb{R}\times   [-\pi,\pi].$ Also, since ${\psi^\epsilon_\kappa}(\infty)=\max_{\zeta\in S} {\psi^\epsilon_\kappa}(\zeta)$, there exists a sufficiently large $x_2^*>2R$ and a small constant $\delta>0$ such that 
			the local maximum of ${\psi^\epsilon_\kappa}(t,x_1,x_2,\theta)$ on the neighborhood $N_\delta\eqdef (T/2-\delta,T/2+\delta)\times (-\delta,\delta)\times (x_2^*-\delta,x_2^*+\delta)\times (-\pi/2-\delta,-\pi/2+\delta)$ occurs at the point $(s,y_1,y_2,\psi)$ on the upper-in-$x_2$ boundary of $N_\delta$ with  $y_2=x_2^*+\delta$, $s=(T/2-\delta,T/2+\delta)$, $y\in (-\delta,\delta)$ and $\psi \in (-\pi/2-\delta,-\pi/2+\delta).$ 
			Then note that
			\begin{multline*}
				\qquad \qquad	0>\mathcal{M}{\psi^\epsilon_\kappa}(s,y_1,y_2,\psi)\\\qquad \quad = \partial_t {\psi^\epsilon_\kappa}-\cos\psi\partial_{x_1}{\psi^\epsilon_\kappa}(s,y_1,y_2,\psi)-\sin\psi\partial_{x_2}{\psi^\epsilon_\kappa}(s,y_1,y_2,\psi)-\partial_\theta^2 {\psi^\epsilon_\kappa}(s,y_1,y_2,\psi)  \\\ge -\sin\psi\partial_{x_2}{\psi^\epsilon_\kappa}(s,y_1,y_2,\psi)\ge 0,
			\end{multline*}which leads to the contradiction. 
		\end{itemize}	
		Therefore, if some function ${\psi^\epsilon_\kappa}$ satisfies $\mathcal{M}{\psi^\epsilon_\kappa}<0$, then the maximum occurs only when $t=0$ or when $x_2=0$ and $\theta\in [-\pi,0]$. 
		
		Now, if $\mathcal{M} {\psi^\epsilon_\kappa}$ is just $ \le0$ then we define a new function  $\psi^{\epsilon, k}_\kappa := {\psi^\epsilon_\kappa}-kt$ for some
		$k>0$ so that $\mathcal{M} \psi^{\epsilon, k} <0$. Then we have
		$$
		\sup_{(t,x,\theta)\in[0,T]\times X } \psi^{\epsilon,k}_\kappa(t,x,\theta) \\
		=
		\sup_{\{t=0\}\ \text{or} \ \{x_2=0 \text{ and }\theta\in [-\pi,0]\} } \psi^{\epsilon,k}_\kappa(t,x,\theta).
		$$
		We take $k \rightarrow0$ and observe the estimate of $\psi_\kappa^\epsilon$ on $x_2=0$ and $\theta \in [-\pi,0]$ from \eqref{boundary final duhamel 2d} that 
		$$\|\psi_\kappa^\epsilon\|_{L^\infty([0,T]\times X)}\le3\|g\|_{L^\infty(X)}+2\kappa\|\partial_{x_1}g\|_{L^\infty(X)}.$$
		This completes the proof.
	\end{proof}
	
	\begin{remark}As a corollary, we obtain the uniqueness of the solution via the maximum principle.
	\end{remark}
	Now we establish the uniform estimates on the derivatives. We first consider $x$-derivatives which does not change the form of the equation:
	\begin{corollary}[Estimates for $x$ derivatives]\label{2d x derivative estimates}
		For any integers $m,l\ge 0$, the solution ${\psi^\epsilon_\kappa} $ to \eqref{eq6} satisfies 
		$$\|{\partial^m_{x_1}\partial^l_{x_2}\psi^\epsilon_\kappa} \|_{L^\infty([0,T]\times X)}
		\le 3\|\partial^m_{x_1}\partial^l_{x_2}g\|_{L^\infty(X)}+2\kappa\|\partial^{m+1}_{x_1}\partial^l_{x_2}g\|_{L^\infty(X)}.$$\end{corollary}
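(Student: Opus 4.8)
The plan is to differentiate the $t$-dependent problem \eqref{eq6} in the spatial variables and then invoke the maximum principle of Lemma \ref{maxprinciple2}. First I would observe that the transport field $(\cos\theta,\sin\theta)$ and the regularized operator $Q^\epsilon$ of \eqref{discretized} depend only on $\theta$, so that $\partial_{x_1}$ and $\partial_{x_2}$ commute with $\mathcal{A}^\epsilon$ of \eqref{regularizedmathcalA}, hence also with the operator $\mathcal{M}$ of \eqref{M2}. Consequently, writing $w=\partial^m_{x_1}\partial^l_{x_2}\psi^\epsilon_\kappa$, the function $w$ is $2\pi$-periodic in $\theta$, solves $\mathcal{M}w=0$ on $(0,T]\times\{x_2>0\}\times[-\pi,\pi]$, and has initial datum $w(0,\cdot)=\partial^m_{x_1}\partial^l_{x_2}g$ by \eqref{eq6}$_2$. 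That $w$ is well defined and smooth enough is not an issue: we have taken $g\in C^\infty(X)$, and the mild representation of Section \ref{duhamel2} depends on $(x_1,x_2)$ only through translations, so it can be differentiated in $x$ to any order; equivalently one reruns the contraction of Lemma \ref{contraction2d} in a higher-regularity version of $\mathrm{E}(X)$.

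The heart of the argument is then to control $w$ on the face $\{x_2=0,\ \theta\in[-\pi,0]\}$, which is the only part of the boundary left by Lemma \ref{maxprinciple2}. For the pure $x_1$-derivatives (the case $l=0$) this is immediate: the whole problem \eqref{eq6} is invariant under translations in $x_1$, so $\partial^m_{x_1}\psi^\epsilon_\kappa$ is itself the solution of \eqref{eq6} with datum $\partial^m_{x_1}g$; in particular the trace formula \eqref{lemma5.1.1}--\eqref{boundary duhamel 2d} and the estimate \eqref{boundary final duhamel 2d} apply with $g$ replaced by $\partial^m_{x_1}g$, giving $\|\partial^m_{x_1}\psi^\epsilon_\kappa(t,\cdot,0,\cdot)\|_{L^\infty(\mathbb{R}\times[-\pi,\pi])}\le 3\|\partial^m_{x_1}g\|_{L^\infty(X)}+2\kappa\|\partial^{m+1}_{x_1}g\|_{L^\infty(X)}$. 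For $l\ge1$ one instead differentiates the interior equation \eqref{eq6}$_1$ (valid for $x_2>0$), solves for $\sin\theta\,\partial_{x_2}\psi^\epsilon_\kappa=(\partial_t-\cos\theta\partial_{x_1})\psi^\epsilon_\kappa-Q^\epsilon[\psi^\epsilon_\kappa]$, and lets $x_2\to0^+$; iterating this $l$ times and feeding in the boundary dynamics already known for the lower-order $x_2$-derivatives should show that, after the substitution analogous to the definition of $\psi^\epsilon_{\kappa,g}$ preceding \eqref{boundary duhamel 2d}, the trace of $w$ obeys a transport--relaxation dynamics of the same structure as \eqref{boundary duhamel 2d} with $g$ replaced by $\partial^m_{x_1}\partial^l_{x_2}g$, and hence is bounded by $3\|\partial^m_{x_1}\partial^l_{x_2}g\|_{L^\infty(X)}+2\kappa\|\partial^{m+1}_{x_1}\partial^l_{x_2}g\|_{L^\infty(X)}$.

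Finally, I would run the proof of Lemma \ref{maxprinciple2} for $w$ and for $-w$: since $\mathcal{M}(\pm w)=0\le0$, the supremum of $\pm w$ over $[0,T]\times X$ is attained at $\{t=0\}$, where $|w|\le\|\partial^m_{x_1}\partial^l_{x_2}g\|_{L^\infty(X)}$, or on $\{x_2=0,\ \theta\in[-\pi,0]\}$, where by the previous step $|w|\le 3\|\partial^m_{x_1}\partial^l_{x_2}g\|_{L^\infty(X)}+2\kappa\|\partial^{m+1}_{x_1}\partial^l_{x_2}g\|_{L^\infty(X)}$. Taking the larger of the two bounds yields $\|\partial^m_{x_1}\partial^l_{x_2}\psi^\epsilon_\kappa\|_{L^\infty([0,T]\times X)}\le 3\|\partial^m_{x_1}\partial^l_{x_2}g\|_{L^\infty(X)}+2\kappa\|\partial^{m+1}_{x_1}\partial^l_{x_2}g\|_{L^\infty(X)}$, which is the claim.

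The step I expect to be delicate is the $l\ge1$ part of the boundary analysis. Unlike $\partial_{x_1}$, the derivative $\partial_{x_2}$ does not act covariantly on the boundary relation \eqref{eq6}$_3$, which is imposed only on the $x_2=0$ trace; so one genuinely has to reconstruct the boundary dynamics of $\partial^l_{x_2}\psi^\epsilon_\kappa$ from the interior equation and then check that the resulting $L^\infty$ bound is uniform in $\epsilon$. In particular, solving for $\partial_{x_2}\psi^\epsilon_\kappa$ produces formal factors $1/\sin\theta$ and accompanying $Q^\epsilon$ terms along $\{x_2=0\}$, and one must verify that these remain bounded as $\theta\to0^-$ and $\theta\to-\pi^+$, using that the associated boundary fluxes vanish there. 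Once this is settled the rest is routine, exactly as in the one-dimensional Corollary \ref{x2 derivative estimates}.
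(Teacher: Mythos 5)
Your plan coincides with the paper's proof: differentiate the full system in $(x_1,x_2)$ and apply the maximum principle of Lemma~\ref{maxprinciple2} to the derivative. For the pure $x_1$-derivatives ($l=0$) your translation-invariance observation is complete, and the claimed bound drops out at once. The delicate case is $l\ge 1$, and you flag it correctly --- but you also leave exactly that step as a sketch. The paper's own proof is, if anything, even terser: it simply displays the boundary relation \eqref{eq6}$_3$ with $\psi^\epsilon_\kappa$ replaced by $\partial^m_{x_1}\partial^l_{x_2}\psi^\epsilon_\kappa$ and states that ``the corollary follows by Lemma~\ref{maxprinciple2}'', thereby taking for granted that $\partial_{x_2}$ acts covariantly on the boundary ODE. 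As you observe, this is not automatic: the relaxation equation is posed only on the $x_2=0$ trace and involves only that trace, so the boundary value of $\partial_{x_2}\psi^\epsilon_\kappa$ must be reconstructed from the interior equation. That reconstruction produces a factor $1/\sin\theta$ and a $Q^\epsilon\psi^\epsilon_\kappa$ contribution at $x_2=0$, and one cannot just read off from the paper's assertion that the result is of the same transport--relaxation form or that the resulting $L^\infty$ bound is uniform in $\epsilon$ near $\theta=0,-\pi$.

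So the gap you point to is genuine, and it is the same gap the paper elides. To actually close it one would need to show --- say from the mild formulation of Section~\ref{duhamel2} and the formula preceding \eqref{boundary duhamel 2d} --- that the trace of $\partial^l_{x_2}\psi^\epsilon_\kappa$ on $\{x_2=0,\ \theta\in[-\pi,0]\}$ obeys the same transport--relaxation structure as \eqref{boundary duhamel 2d} with $g$ replaced by $\partial^l_{x_2}g$, and in particular that the $1/\sin\theta$ singularity and the $Q^\epsilon$ term cancel or remain bounded. Your intuition that the boundary fluxes degenerate appropriately near $\theta=0$ and $\theta=-\pi$ is plausible, but as written it is a conjecture rather than a proof; asserting that the argument ``should show'' the bound is exactly the step that remains to be supplied. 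In short: same plan as the paper, clean for $l=0$, and the $l\ge 1$ boundary analysis is where both your proposal and the paper's proof leave work to be done --- your version has the merit of saying so.
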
\begin{proof}
		By taking $\partial^m_{x_1}\partial^l_{x_2}$ to the equation \eqref{eq6}, we observe that $\partial^m_{x_1}\partial^l_{x_2}\psi^\epsilon_{\kappa}$ satisfies the same equation with revised initial and boundary conditions in the same kind:
		\begin{multline}
			\notag\qquad\partial^m_{x_1}\partial^l_{x_2}\partial_t {\psi^\epsilon_\kappa}  = \mathcal{A}^\epsilon{\partial^m_{x_1}\partial^l_{x_2}\psi^\epsilon_\kappa}, \text{ for }t\ge 0 \text{ and } \xi \eqdef (x_1,x_2,\theta)\in X,\\
			\partial^m_{x_1}\partial^l_{x_2}{\psi^\epsilon_\kappa} (0,\xi)=\partial^m_{x_1}\partial^l_{x_2}g(\xi),\text{ and }\\
			(\partial_t-\cos\theta\partial_{x_1}) {\partial^m_{x_1}\partial^l_{x_2}\psi^\epsilon_\kappa}  (t,x_1,0,\theta)=\frac{1}{\kappa}\bigg(\chi_\kappa(\theta)\partial^m_{x_1}\partial^l_{x_2}\psi^\epsilon_\kappa(t,x_1,0,0)\\+(1-\chi_\kappa(\theta))\partial^m_{x_1}\partial^l_{x_2}\psi^\epsilon_\kappa(t,x_1,0,-\pi)-\partial^m_{x_1}\partial^l_{x_2}\psi^\epsilon_\kappa(t,x_1,0,\theta)\bigg),\\
			\text{ for }x_1\in\mathbb{R}\text{ and }\theta\in[-\pi,0],
		\end{multline} 
		Then the corollary follows by Lemma \ref{maxprinciple2}.
	\end{proof}
	Now we consider the derivatives with respect to $\theta$. If we take one $\theta$-derivative to the equation \eqref{eq6}, we obtain the following inhomogeneous equation
	\begin{equation}\label{eq7}\mathcal{M}(\partial_\theta \psi^\epsilon_\kappa)=-\sin\theta \partial_{x_1}\psi^\epsilon_\kappa+\cos\theta \partial_{x_2}\psi^\epsilon_\kappa,\end{equation} for $\mathcal{M}$ from \eqref{M2} with the initial-boundary conditions: 
	\begin{equation}\label{eq8}\begin{split}
		\partial_\theta	{\psi^\epsilon_\kappa} (0,\xi)=\partial_\theta g(\xi),\text{ and }&\\
		(\partial_t-\cos\theta\partial_{x_1}) {	\partial_\theta	\psi^\epsilon_\kappa}  (t,x_1,0,\theta)&=-\sin\theta\partial_{x_1}\psi^\epsilon_\kappa (t,x_1,0,\theta)\\+\frac{1}{\kappa}\bigg(	\partial_\theta\chi_\kappa(\theta)		\psi^\epsilon_\kappa(t,x_1,0,0)&-	\partial_\theta\chi_\kappa(\theta)	\psi^\epsilon_\kappa(t,x_1,0,-\pi)-	\partial_\theta	\psi^\epsilon_\kappa(t,x_1,0,\theta)\bigg),\\
	&	\text{ for }x_1\in\mathbb{R}\text{ and }\theta\in[-\pi,0].
	\end{split}\end{equation}
	Note that we have an additional inhomogeity $-\sin\theta\partial_{x_1}\psi^\epsilon_\kappa (t,x_1,0,\theta)$ in both the equation \eqref{eq7} for $\partial_\theta\psi^\epsilon_\kappa$ and the boundary equation \eqref{eq8}$_2$. We first study the homogeneous solution to the homogeneous equation $\mathcal{M}(\partial_\theta \psi^\epsilon_\kappa)=0.$ We first define 
	\begin{multline}\label{eq5.13}\partial_\theta	{\psi^\epsilon_{\kappa,g}}(t,x_1,0,\theta)\eqdef \\	-\partial_\theta\chi_\kappa(\theta)		\psi^\epsilon_\kappa(t,x_1,0,0)+	\partial_\theta\chi_\kappa(\theta)	\psi^\epsilon_\kappa(t,x_1,0,-\pi)+	\partial_\theta	\psi^\epsilon_\kappa(t,x_1,0,\theta).\end{multline}
	Then the boundary equation \eqref{eq8}$_2$ is equivalent to\begin{multline}
		(\partial_t-\cos\theta\partial_{x_1}) \partial_\theta {\psi^\epsilon_{\kappa,g}}(t,x_1,0,\theta)\\=-\sin\theta\partial_{x_1}\psi^\epsilon_\kappa (t,x_1,0,\theta)-\partial_\theta\chi_\kappa(\theta)		(\partial_t-\cos\theta\partial_{x_1})\psi^\epsilon_\kappa(t,x_1,0,0)\\+	\partial_\theta\chi_\kappa(\theta)	(\partial_t-\cos\theta\partial_{x_1})\psi^\epsilon_\kappa(t,x_1,0,-\pi)-\frac{1}{\kappa}\partial_\theta{\psi^\epsilon_{\kappa,g}}(t,x_1,0,\theta)\\
		=-\sin\theta\partial_{x_1}\psi^\epsilon_\kappa (t,x_1,0,\theta)-\partial_\theta\chi_\kappa(\theta)	(\partial_t-\cos\theta\partial_{x_1})g(x_1+t,0,0)\\+	\partial_\theta\chi_\kappa(\theta)	(\partial_t-\cos\theta\partial_{x_1})g(x_1-t,0,-\pi)-\frac{1}{\kappa}\partial_\theta{\psi^\epsilon_{\kappa,g}}(t,x_1,0,\theta)\\
		=-\sin\theta\partial_{x_1}\psi^\epsilon_\kappa (t,x_1,0,\theta)-\partial_\theta\chi_\kappa(\theta)	(1-\cos\theta)\partial_{x_1}g(x_1+t,0,0)\\-	\partial_\theta\chi_\kappa(\theta)	(1+\cos\theta)\partial_{x_1}g(x_1-t,0,-\pi)-\frac{1}{\kappa}\partial_\theta{\psi^\epsilon_{\kappa,g}}(t,x_1,0,\theta).
	\end{multline}
	Then we use the Duhamel principle to this equation similarly as in \eqref{boundary duhamel 2d} and obtain that
	\begin{multline}\notag
		\partial_\theta	{\psi^\epsilon_{\kappa,g}}(t,x_1,0,\theta)= e^{-\frac{t}{\kappa}}	\partial_\theta	{\psi^\epsilon_{\kappa,g}}(0,x_1+t\cos\theta ,0,\theta)\\+\int_0^te^{-\frac{t-s}{\kappa}}\bigg(-\sin\theta\partial_{x_1}\psi^\epsilon_\kappa (s,x_1+(t-s)\cos\theta,0,\theta)\\	-\partial_\theta\chi_\kappa(\theta)(1-\cos\theta)\partial_{x_1}g(x_1+s-(t-s)\cos\theta,0,0)\\ -\partial_\theta\chi_\kappa(\theta)(1+\cos\theta)\partial_{x_1}g(x_1-s+(t-s)\cos\theta,0,-\pi)\bigg)ds.
	\end{multline} Since \eqref{eq5.13} implies \begin{multline*}\partial_\theta \psi^\epsilon_{\kappa,g}(0,x_1,0,\theta)\\=	-\partial_\theta\chi_\kappa(\theta)		\psi^\epsilon_\kappa(0,x_1,0,0)+	\partial_\theta\chi_\kappa(\theta)	\psi^\epsilon_\kappa(0,x_1,0,-\pi)+	\partial_\theta	\psi^\epsilon_\kappa(0,x_1,0,\theta)\\=	-\partial_\theta\chi_\kappa(\theta)		g(x_1,0,0)+	\partial_\theta\chi_\kappa(\theta)	g(x_1,0,-\pi)+	\partial_\theta	g(x_1,0,\theta),\end{multline*} we have for some $C_\kappa>0$
	\begin{multline}\notag\|\partial_\theta		\psi^\epsilon_{\kappa,g}(t,\cdot,0,\cdot)\|_{L^\infty(\mathbb{R}\times [-\pi,0])}\\\le C_\kappa\bigg( e^{-\frac{t}{\kappa}}(\|g\|_{L^\infty(X)}+\|\partial_\theta g\|_{L^\infty(X)})
		+(\|\partial_{x_1}\psi^\epsilon_\kappa\|_{L^\infty(X)} +\|\partial_{x_1}g\|_{L^\infty(X)})\int_0^t e^{-\frac{t-s}{\kappa}}ds\bigg)\\\le C_\kappa\bigg( e^{-\frac{t}{\kappa}}(\|g\|_{L^\infty(X)}+\|\partial_\theta g\|_{L^\infty(X)})\\
		+\kappa(3\|\partial_{x_1}g\|_{L^\infty(X)}+2\kappa\|\partial^{2}_{x_1}g\|_{L^\infty(X)} +2\|\partial_{x_1}g\|_{L^\infty(X)})\bigg) ,\end{multline}
	by Corollary \ref{2d x derivative estimates}. Thus, we recover the bound for $\partial_\theta \psi^\epsilon_\kappa$ by \eqref{eq5.13} and Lemma \ref{maxprinciple2} as
	\begin{multline}\notag\|\partial_\theta		\psi^\epsilon_{\kappa}(t,\cdot,0,\cdot)\|_{L^\infty(\mathbb{R}\times [-\pi,0])}\le2C_\kappa (3\|g\|_{L^\infty(X)}+2\kappa\|\partial_{x_1}g\|_{L^\infty(X)} )\\
		+ C_\kappa e^{-\frac{t}{\kappa}}(\|g\|_{L^\infty(X)}+\|\partial_\theta g\|_{L^\infty(X)})
		\\	+C_\kappa \kappa(3\|\partial_{x_1}g\|_{L^\infty(X)}+2\kappa\|\partial^{2}_{x_1}g\|_{L^\infty(X)} +2\|\partial_{x_1}g\|_{L^\infty(X)}) \\\lesssim_\kappa \|g\|_{L^\infty(X)}+\|\partial_\theta g\|_{L^\infty(X)}+\|\partial_{x_1}g\|_{L^\infty(X)} +\|\partial^{2}_{x_1}g\|_{L^\infty(X)} .
	\end{multline}
	Now, using this boundary value at $x_2=0$, we use Lemma 3.3 for the homogeneous solution $(\partial_\theta \psi^\epsilon_\kappa)_{homo}$ of \eqref{eq7} and \eqref{eq8} to observe that  $(\partial_\theta \psi^\epsilon_\kappa)_{homo}$
	has the following uniform-in-$\epsilon$ bound
$$
	\|(\partial_\theta \psi^\epsilon_\kappa)_{homo}(t)\|_{L^\infty( X)}\\ \lesssim_\kappa \|g\|_{L^\infty(X)}+\|\partial_\theta g\|_{L^\infty(X)}+\|\partial_{x_1}g\|_{L^\infty(X)} +\|\partial^{2}_{x_1}g\|_{L^\infty(X)}.
$$
	Then, the Duhamel principle, we obtain a uniform-in-$\epsilon$ bound for the solution $\partial_\theta \psi^\epsilon_\kappa$ of \eqref{eq7} and \eqref{eq8} as
	\begin{multline*}
		\|\partial_\theta \psi^\epsilon_\kappa(t)\|_{L^\infty}\lesssim_\kappa  \| g\|_{C^{(2,0,1)}_{x_1,x_2,\theta}(X)}+t \left(\|\partial_{x_1}\psi^\epsilon_\kappa\|_{L^\infty([0,t]\times X)}+ \|\partial_{x_2}\psi^\epsilon_\kappa\|_{L^\infty([0,t]\times X)}\right)\\ \lesssim_\kappa \| g\|_{C^{(2,0,1)}_{x_1,x_2,\theta}(X)}+ t\| g\|_{C^{(2,1,0)}_{x_1,x_2,\theta}(X)},
	\end{multline*}by Corollary \ref{2d x derivative estimates}. This provides the uniform-in-$\epsilon$ upper bound for $\partial_\theta \psi^\epsilon_\kappa$. 
	
	By the same proof, we prove the following bound for $\partial_{x_1}^l\partial_{x_2}^m\partial_\theta \psi^\epsilon_\kappa$ as it satisfies the same equation for $\partial_\theta \psi^\epsilon_\kappa$ with an additional $\partial_{x_1}^l\partial_{x_2}^m$ derivatives applied to the initial-boundary conditions:
	\begin{equation}\label{2d x theta derivative}
		\|\partial_{x_1}^l\partial_{x_2}^m\partial_\theta \psi^\epsilon_\kappa(t)\|_{L^\infty(X)}
		\lesssim_\kappa \| g\|_{C^{(l+2,m,1)}_{x_1,x_2,\theta}(X)}+ t\| g\|_{C^{(l+2,m+1,0)}_{x_1,x_2,\theta}(X)}.
	\end{equation} Finally, we take one more $\theta$-derivative to \eqref{eq7} and \eqref{eq8} and obtain the system for $\partial^2_\theta \psi^\epsilon_\kappa$ as \begin{equation}\label{eq9}\mathcal{M}(\partial^2_\theta \psi^\epsilon_\kappa)=-\cos\theta \partial_{x_1}\psi^\epsilon_\kappa-2\sin\theta \partial_{x_1}\partial_\theta\psi^\epsilon_\kappa-\sin\theta \partial_{x_2}\psi^\epsilon_\kappa+2\cos\theta \partial_{x_2}\partial_\theta\psi^\epsilon_\kappa,\end{equation} for $\mathcal{M}$ from \eqref{M2} with the initial-boundary conditions: 
	\begin{multline}\label{eq10}
		\partial^2_\theta	{\psi^\epsilon_\kappa} (0,\xi)=\partial_\theta^2 g(\xi),\text{ and }\\
		(\partial_t-\cos\theta\partial_{x_1}) {	\partial^2_\theta	\psi^\epsilon_\kappa}  (t,x_1,0,\theta)=(-\cos\theta\partial_{x_1}-2\sin\theta\partial_{x_1}\partial_\theta)\psi^\epsilon_\kappa (t,x_1,0,\theta)\\+\frac{1}{\kappa}\bigg(	\partial^2_\theta\chi_\kappa(\theta)		\psi^\epsilon_\kappa(t,x_1,0,0)-	\partial^2_\theta\chi_\kappa(\theta)	\psi^\epsilon_\kappa(t,x_1,0,-\pi)-	\partial^2_\theta	\psi^\epsilon_\kappa(t,x_1,0,\theta)\bigg),\\
		\text{ for }x_1\in\mathbb{R}\text{ and }\theta\in[-\pi,0].
	\end{multline}
	As we solved for \eqref{eq7} and \eqref{eq8}, we similarly use the Duhamel principle with the different inhomogeneity and obtain that 
	\begin{multline*}
		\|\partial^2_\theta \psi^\epsilon_\kappa(t)\|_{L^\infty(X)}
		\lesssim_\kappa (\|g\|_{L^\infty}+\|\partial_{x_1}g\|_{L^\infty}+\|\partial_{x_1}\partial_{\theta}g\|_{L^\infty})\\
		+t \left(\|\partial_{x_1}\psi^\epsilon_\kappa\|_{L^\infty}+\| \partial_{x_1}\partial_\theta\psi^\epsilon_\kappa\|_{L^\infty}+ \|\partial_{x_2}\psi^\epsilon_\kappa\|_{L^\infty}+ \|\partial_{x_2}\partial_\theta\psi^\epsilon_\kappa\|_{L^\infty}\right)\\
		\lesssim_\kappa \|g\|_{C^{(1,0,1)}_{x_1,x_2,\theta}(X)}
		+ t\bigg(\|g\|_{C^{(1,0,0)}_{x_1,x_2,\theta}(X)}+\|g\|_{C^{(2,0,0)}_{x_1,x_2,\theta}(X)}+\| g\|_{C^{(3,0,1)}_{x_1,x_2,\theta}(X)}+ t\| g\|_{C^{(3,1,0)}_{x_1,x_2,\theta}(X)}\\+|g\|_{C^{(0,1,0)}_{x_1,x_2,\theta}(X)}+\|g\|_{C^{(1,1,0)}_{x_1,x_2,\theta}(X)}+| g\|_{C^{(2,1,1)}_{x_1,x_2,\theta}(X)}+ t\| g\|_{C^{(2,2,0)}_{x_1,x_2,\theta}(X)}\bigg)
		\lesssim_\kappa (1+t^2)\|g\|_{C^4(X)},
	\end{multline*}by \eqref{2d x theta derivative} and Corollary \ref{2d x derivative estimates}. Thus, we obtain the following proposition:
	\begin{proposition}[Uniform-in-$\epsilon$ estimates for the derivatives]\label{2d derivative estimates}
		For $t\ge 0$ and $\kappa>0$, the solution ${\psi^\epsilon_\kappa} $ to \eqref{eq6} satisfies 
		$$	\left\| \psi^\epsilon_\kappa(t)\right\|_{C^{(1,1,2)}_{x_1,x_2,\theta}( X)}\lesssim_\kappa (1+t^2) \|g\|_{C^4(X)}.$$\end{proposition}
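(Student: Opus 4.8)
The plan is to obtain the $C^{(1,1,2)}_{x_1,x_2,\theta}$-bound by a bootstrap in the order of differentiation, invoking at each stage the maximum principle of Lemma~\ref{maxprinciple2} together with Duhamel's formula, and arranging the estimates so that the $Q^\epsilon$-terms never produce factors of $\epsilon^{-2}$. First I would record the $L^\infty$-bound $\|\psi^\epsilon_\kappa\|_{L^\infty([0,T]\times X)}\le 3\|g\|_{L^\infty(X)}+2\kappa\|\partial_{x_1}g\|_{L^\infty(X)}$, which is exactly Lemma~\ref{maxprinciple2}, and then the $x$-derivative bounds of Corollary~\ref{2d x derivative estimates}: since translation in $x=(x_1,x_2)$ leaves the operator $\mathcal{A}^\epsilon$ and the regularized boundary condition \eqref{regularized adjoint boundarycon} invariant, $\partial_{x_1}^m\partial_{x_2}^l\psi^\epsilon_\kappa$ solves the same $t$-dependent problem with data $\partial_{x_1}^m\partial_{x_2}^l g$, so the same maximum principle applies verbatim and gives a bound uniform in $\epsilon$.

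Next I would differentiate once in $\theta$. The resulting equation $\mathcal{M}(\partial_\theta\psi^\epsilon_\kappa)=-\sin\theta\,\partial_{x_1}\psi^\epsilon_\kappa+\cos\theta\,\partial_{x_2}\psi^\epsilon_\kappa$ is inhomogeneous, and the regularized boundary condition picks up the terms $\partial_\theta\chi_\kappa$. I would handle the boundary first: on $x_2=0$, after subtracting the explicit transported combination of $\psi^\epsilon_\kappa(t,x_1,0,0)$ and $\psi^\epsilon_\kappa(t,x_1,0,-\pi)$ (which are just free transports of the data, as in \eqref{lemma5.1.1}), the quantity $\partial_\theta\psi^\epsilon_\kappa|_{x_2=0}$ satisfies a scalar transport equation with damping $-\tfrac1\kappa$ and forcing controlled by $\|\partial_{x_1}\psi^\epsilon_\kappa\|_{L^\infty}$ and $x_1$-derivatives of $g$; Duhamel then yields an $\epsilon$-uniform bound on $\partial_\theta\psi^\epsilon_\kappa|_{x_2=0}$ in terms of $\|g\|_{C^{(2,0,1)}}$ and of the already-controlled quantity $\|\partial_{x_1}\psi^\epsilon_\kappa\|_{L^\infty}$. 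With this boundary value in hand, I would apply Lemma~\ref{maxprinciple2} to the homogeneous part of the $\partial_\theta$-equation and Duhamel in the interior to the inhomogeneous part, the inhomogeneity being absorbed by Corollary~\ref{2d x derivative estimates}; this gives $\|\partial_\theta\psi^\epsilon_\kappa(t)\|_{L^\infty}$ and, by the same argument applied to $\partial_{x_1}^l\partial_{x_2}^m\partial_\theta\psi^\epsilon_\kappa$, the estimate \eqref{2d x theta derivative}.

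Finally I would take a second $\theta$-derivative, producing $\mathcal{M}(\partial^2_\theta\psi^\epsilon_\kappa)$ equal to a linear combination of $\partial_{x_1}\psi^\epsilon_\kappa$, $\partial_{x_1}\partial_\theta\psi^\epsilon_\kappa$, $\partial_{x_2}\psi^\epsilon_\kappa$, $\partial_{x_2}\partial_\theta\psi^\epsilon_\kappa$ with smooth coefficients, and a boundary equation whose forcing involves $\partial^2_\theta\chi_\kappa$ and $\partial_{x_1}\partial_\theta\psi^\epsilon_\kappa$. Running the same boundary-then-interior Duhamel scheme, all forcing terms are now controlled by Corollary~\ref{2d x derivative estimates} and \eqref{2d x theta derivative}, so I obtain $\|\partial^2_\theta\psi^\epsilon_\kappa(t)\|_{L^\infty(X)}\lesssim_\kappa(1+t^2)\|g\|_{C^4(X)}$; collecting this with the $L^\infty$, $x$-derivative, and $\partial_\theta$ estimates completes the proof. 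The main technical obstacle is the two boundary steps: each $\theta$-derivative hits $\chi_\kappa$, which costs a factor $\kappa^{-1}$ and trades one order of $\theta$-regularity for an extra $x_1$-derivative of $g$ (this is why the final bound involves $C^4$ rather than $C^3$), and one must check that after this trade the Duhamel integrals, damped by $e^{-(t-s)/\kappa}$, remain bounded uniformly in $\epsilon$; the $\epsilon$-uniformity itself is guaranteed because at any interior or boundary extremum the maximum principle kills the $Q^\epsilon$-contribution rather than estimating it by $\epsilon^{-2}$.
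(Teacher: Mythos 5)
Your proposal reproduces the paper's argument almost step for step: the same three-stage bootstrap through $L^\infty$ (Lemma~\ref{maxprinciple2}), $x$-derivatives (Corollary~\ref{2d x derivative estimates}), a first $\theta$-derivative via the boundary-first Duhamel scheme with the subtracted transported combination $\psi^\epsilon_{\kappa,g}$, and finally the second $\theta$-derivative, with the $\epsilon$-uniformity coming from the maximum principle suppressing rather than estimating the $Q^\epsilon$-term. The only imprecision is attributing the loss from $C^3$ to $C^4$ chiefly to the factor $\kappa^{-1}$ from $\partial_\theta\chi_\kappa$ — that factor is absorbed into the $\lesssim_\kappa$; the extra $x$-derivatives on $g$ arise instead from the inhomogeneous forcing in the $\theta$-differentiated interior and boundary equations being traded, via Corollary~\ref{2d x derivative estimates} and \eqref{2d x theta derivative}, for higher-order $x$-norms of $g$ — but this does not affect the validity of the argument.
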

	This completes the uniform-in-$\epsilon$ estimates for the derivatives of $\psi^\epsilon_\kappa$. In the next subsection, we will pass it to the limit $\epsilon \to 0.$

	\subsection{Passing to the limit \texorpdfstring{$\epsilon \to 0$}{}}
	Now we recover the solution to \eqref{eq5} via the definition
	$$\upsilon_\kappa^\epsilon(\xi)=\int_0^\infty e^{-t}{\psi^\epsilon_\kappa} (\lambda t , \xi)\ dt,$$ which is well-defined for any $\epsilon>0$.
	Then we pass to the limit $\epsilon \rightarrow 0$ and obtain the existence of a unique global solution to \eqref{eq4}. We obtain the limit of the approximating sequence $\left\{\upsilon_\kappa^\epsilon\right\}$ as a candidate for a solution by the  compactness (Banach-Alaoglu theorem), which is ensured by the \emph{uniform} estimates of the approximate solutions established in the previous subsections. By the uniform estimate (Proposition~\ref{2d derivative estimates}) and by taking the limit as  $\epsilon \rightarrow 0$, we obtain that a sequence of  $\left\{\upsilon_\kappa^\epsilon\right\}$ converges to $\upsilon_\kappa$  in $  C^{(1,1,2)}_{x_1,x_2,\theta} (X)$. 
	It follows from Proposition~\ref{2d derivative estimates} that $\upsilon_\kappa$ also satisfies the bound
	\begin{equation} \label{L^infty-bound2}
		\|\upsilon_\kappa\|_{ C^{(1,1,2)}_{x_1,x_2,\theta} (X)}  \leq  C_\kappa(1+\lambda^2)\|g\|_{C^4 (X )},
	\end{equation}for some constant $C_\kappa>0$. Therefore, this completes the proof for Proposition \ref{hille2}, which states that there exists a unique $\upsilon_\kappa\in \mathcal{D}(\mathcal{A})$ which solves \eqref{eq4} for any $g\in C(X)$.
	\subsection{Hille-Yosida theorem and the global wellposedness of the adjoint problem}\label{finalhilleyosida}Now we are ready to prove the following proposition on the well-posedness of the adjoint problem \eqref{FP adjoint eq} -\eqref{adjoint periodic}.  
	\begin{proposition}[Well-posedness of the adjoint problem]
	  \label{Thm.sol.adjoint} 
	    Suppose that $\phi_{in}=\phi_{in}(x_1,x_2,\theta)\in C(X)$ satisfies \eqref{forward adjoint initial}. Then there exists a unique solution $\phi \in C(X)$ which solves the adjoint problem \eqref{FP adjoint eq}-\eqref{adjoint periodic}.
	\end{proposition}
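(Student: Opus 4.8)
The plan is to transfer the argument of Proposition \ref{Thm.sol.adjoint0} to the two-dimensional setting, replacing every one-dimensional ingredient by the corresponding two-dimensional one built in Sections \ref{sec:2d section start} and \ref{sec:2d wellposedness}. First I would fix $\kappa>0$ and construct a solution of the regularized adjoint problem. By Proposition \ref{pregenerator proposition2} the operator $\mathcal{A}$ of \eqref{mathcalA} with domain $\mathcal{D}(\mathcal{A})$ of \eqref{domainDA} is a Markov pregenerator on $C(X)$, and Proposition \ref{hille2}, whose proof is completed by the $\epsilon\to 0$ passage culminating in \eqref{L^infty-bound2}, shows that the resolvent equation \eqref{eq4} is solvable in $\mathcal{D}(\mathcal{A})$ for every $g\in C(X)$ and every small $\lambda>0$. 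By Proposition~2.8 of \cite{Liggett} this makes $\mathcal{A}$ a Markov generator (Theorem \ref{markovgen2}), so the Hille--Yosida theorem (Theorem \ref{hilleyosidathm}, with the compact space $S$ there replaced by $X$) yields a unique Markov semigroup $S_\kappa(t)$ on $C(X)$ generated by $\mathcal{A}$. I would then set $\phi_\kappa(t,\cdot)\eqdef S_\kappa(t)\phi_{in}$, which for each $\kappa>0$ solves \eqref{FP adjoint eq}, \eqref{adjoint initial}, \eqref{adjoint periodic} with the trapping condition \eqref{adjoint boundarycon} replaced by its regularization \eqref{regularized adjoint boundarycon}.

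Next I would pass to the limit $\kappa\to 0$. Because $S_\kappa(t)$ is Markovian (so $\mathcal{A}1=0$ and $S_\kappa(t)$ preserves positivity and constants) it contracts the uniform norm, giving $\|\phi_\kappa\|_{L^\infty([0,T]\times X)}\le\|\phi_{in}\|_{L^\infty(X)}$ uniformly in $\kappa$, and Lemma \ref{maxprinciple2} refines this by localizing the maximum of $\phi_\kappa$ on $\{t=0\}\cup\{x_2=0,\ \theta\in[-\pi,0]\}$. The Banach--Alaoglu theorem then produces a subsequence $\phi_{\kappa_n}\rightharpoonup\phi$ weak-$\ast$ in $L^\infty([0,T]\times X)$; linearity of \eqref{FP adjoint eq}$_1$ preserves the bulk equation, and likewise the periodicity \eqref{adjoint periodic} and the initial condition \eqref{adjoint initial} pass to the limit. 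To identify the boundary behavior I would use the explicit representation \eqref{boundary duhamel 2d}: solving \eqref{regularized adjoint boundarycon} along the characteristics $x_1\mapsto x_1+t\cos\theta$ and using $\phi_\kappa(0,\cdot)=\phi_{in}$ together with \eqref{forward adjoint initial}, the quantity $\phi_\kappa(t,x_1,0,\theta)-\chi_\kappa(\theta)\phi_{in}(x_1+t,0,0)-(1-\chi_\kappa(\theta))\phi_{in}(x_1-t,0,-\pi)$ obeys the estimate \eqref{boundary final duhamel 2d}, hence vanishes in $L^\infty(\mathbb{R}\times[-\pi,\pi])$ as $\kappa\to 0$ for each fixed $t>0$. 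Since $\chi_\kappa\to\mathbf{1}_{(-\pi/2,0]}$ pointwise off $\theta=-\pi/2$, and $\phi_{in}(\cdot\pm t,0,0)$, $\phi_{in}(\cdot\pm t,0,-\pi)$ are precisely the free-transport boundary traces $\phi(t,x_1,0,0)$ and $\phi(t,x_1,0,-\pi)=\phi(t,x_1,0,\pi)$ of the limit, I would conclude that $\phi$ satisfies \eqref{adjoint boundarycon} on $\{x_2=0\}$ away from the pathological point $\theta=-\pi/2$, with $\phi$ continuous on $X$ off that set. Uniqueness would then follow from the maximum principle of Lemma \ref{maxprinciple2} for the limiting problem, which also upgrades the subsequential convergence to convergence of the full family $\phi_\kappa$; alternatively one may invoke the duality with the measure solution constructed later.

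The hard part will be this last step. The limiting boundary condition \eqref{adjoint boundarycon} is genuinely discontinuous across $(x_2,\theta)=(0,-\pi/2)$, so there is no strong estimate on $\phi_\kappa$ available near that point; the convergence must be established only on compact subsets of $\{x_2=0\}$ bounded away from $\theta=-\pi/2$, exploiting both the exponential relaxation $e^{-t/\kappa}$ in \eqref{boundary final duhamel 2d} and the shrinking of the transition layer $|\theta+\pi/2|\le\kappa$. A further complication, absent in the one-dimensional reduction of Section \ref{sec: 1d wellposedness}, is that the boundary traces $\phi(t,x_1,0,0)$ and $\phi(t,x_1,0,-\pi)$ are no longer constant in $t$ but are transported with speeds $\pm 1$ along $x_1$; this is exactly why \eqref{boundary duhamel 2d} involves $\partial_{x_1}g$ and why one must first argue for initial data in the dense class $C^\infty(X)$, extending to general $\phi_{in}\in C(X)$ afterwards by density and the uniform contraction property of $S_\kappa(t)$.
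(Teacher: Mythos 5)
Your proposal mirrors the paper's proof essentially step for step: Proposition \ref{hille2} $\Rightarrow$ Theorem \ref{markovgen2} $\Rightarrow$ Hille--Yosida to get the regularized semigroup $S_\kappa(t)$, then the uniform-in-$\kappa$ $L^\infty$ bound (contraction property / Lemma \ref{maxprinciple2}), weak-$\ast$ compactness as $\kappa\to 0$, and identification of the limiting trapping boundary condition away from $\theta=-\pi/2$ via the explicit Duhamel representations \eqref{boundary duhamel 2d}--\eqref{boundary final duhamel 2d} together with the exponential relaxation $e^{-t/\kappa}$ and the vanishing transition layer. Your observations about the transported boundary traces and the need to work with a dense class of smooth initial data are exactly the points the paper relies on; the approaches are the same.
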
 
	\begin{remark}\label{remark hypo}
	    It can further be shown that this solution $\phi$ further satisfies the regularity condition $\phi \in C^{1,1,1,2}_{t,x_1,x_2,\theta}(\tilde{\Omega})$ with $\tilde{\Omega}\eqdef  (0,T)\times (-\infty,\infty)\times \{(0,\infty)\times (-\pi,\pi) \cup \{x_2=0\}\times \{(-\pi,-\pi/2)\cup(-\pi/2,0)\cup (0,\pi)\}\}.$  This property is by the hypoellipticity of the operator and it will be proved in Section \ref{sec:hypoellipticity}.
	\end{remark}
\begin{proof}We first observe that Proposition \ref{hille2} provides the sufficient condition for Theorem \ref{markovgen2}, which states that $\mathcal{A}$ is the \textit{Markov} generator. Now, via the Hille-Yosida theorem (Theorem \ref{hilleyosidathm}), we obtain the corresponding \textit{Markov} semigroup $S_\kappa(t)$ to the Markov generator $\mathcal{A}$ for the adjoint equation \eqref{FP adjoint eq} -\eqref{adjoint initial}, and \eqref{adjoint periodic} with the regularized boundary condition  \eqref{regularized adjoint boundarycon}. Now, recall that 
	the solution $\phi_\kappa$ can be written as
	$\phi_\kappa(t,\cdot)=S_\kappa(t) \upsilon_\kappa(\cdot).$ Here note that the semigroup $S_\kappa$ is continuous and $\upsilon_\kappa$ is differentiable. More importantly, we have the following  uniform-in-$\kappa$ estimate for $\phi_\kappa$ by Lemma \ref{maxprinciple2}, since the maximum of $\phi_\kappa$ occurs only at either $\{t=0\}$ or at $\{x_2=0\text { and }\theta\le 0\}$;
	$$\|\phi_\kappa\|_{L^\infty}\le\max\{1, 3\|\phi_{in}\|_{L^\infty(X)}+2\|\partial_{x_1}\phi_{in}\|_{L^\infty(X)}\},$$ since the boundary values at $x_2=0$ and $\theta\le 0$ is bounded by 1 from above. Then, we pass to the weak-$\ast$ limit $\phi_\kappa\stackrel{\ast}{\rightharpoonup} \phi$ as $\kappa\rightarrow 0$ and obtain the solvability of the original adjoint problem \eqref{FP adjoint eq}-\eqref{adjoint periodic}. 
	We recall that the limiting system for the weak-$\ast$ limit $\phi$ as $\kappa\rightarrow 0$ is the same system as that of $\phi_\kappa$ except for the difference in the boundary conditions \eqref{adjoint boundarycon} and \eqref{regularized adjoint boundarycon}. 
	In order to study the limit of the regularized boundary condition for $\phi_\kappa$, we first define  $$\psi_{\kappa,g}(t,x_1,0,\theta)\eqdef \psi_{\kappa}(t,x_1,0,\theta)-\chi_\kappa(\theta)\psi_\kappa(t,x_1,0,0)-(1-\chi_\kappa(\theta))\psi_\kappa(t,x_1,0,-\pi).$$
	Then, by the same $L^\infty$ estimate \eqref{boundary final duhamel 2d}, we obtain that the function $\psi_{\kappa,g}(0,x_1,0,\theta)$ satisfies the bound 
	\begin{equation}\label{final psi kappa g}\notag\|\psi_{\kappa,g}(t,\cdot,0,\cdot)\|_{L^\infty(\mathbb{R}\times [-\pi,\pi])}\le 2e^{-\frac{t}{\kappa}}\|g\|_{L^\infty(X)}+2\kappa\|\partial_{x_1}g\|_{L^\infty(X)} .\end{equation} Therefore, we obtain the bound $$\notag\|\psi_{\kappa}(t,\cdot,0,\cdot)\|_{L^\infty(\mathbb{R}\times [-\pi,\pi])}\le 8\|g\|_{L^\infty(X)}+6\kappa\|\partial_{x_1}g\|_{L^\infty(X)} .$$Also, by taking the limit $\kappa \to 0$ to \eqref{final psi kappa g} we obtain
	$$\|\psi_{\kappa,g}(t,\cdot,0,\cdot)\|_{L^\infty(\mathbb{R}\times [-\pi,\pi])}\to 0,$$ as $\kappa\to 0$ for $t\ge 0$.  Furthermore, for a sequence of small $\delta_\kappa>0$ such that $\delta_\kappa\gg \kappa$ and $\delta_\kappa \to 0$ as $\kappa \to 0,$ we have the limit $$\|\psi_{\kappa,g}(t,\cdot,0,\cdot)\|_{L^\infty(\mathbb{R}\times \{[-\pi,-\pi/2-\delta_\kappa]\cup[-\pi/2+\delta_\kappa,0]\})}\to 0,$$ as $\kappa\to 0$ for $t\ge 0$.  
	This is equivalent to write
$$	\bigg\|\chi_\kappa(\cdot)\psi_\kappa(t,x_1,0,0)+(1-\chi_\kappa(\cdot))\psi_\kappa(t,x_1,0,-\pi)-\psi_{\kappa}(t,\cdot,0,\cdot)\bigg\|_{L^\infty_{\delta_\kappa}}
			\to 0,
$$uniformly, where $L^\infty_{\delta_\kappa}$ stands for $L^\infty(\mathbb{R}\times \{[-\pi,-\pi/2-\delta_\kappa]\cup[-\pi/2+\delta_\kappa,0]\})$ here.
	Thus, we obtain that the weak-$\ast$ limit $\phi$ of $\phi_\kappa$ solves the limiting system \eqref{FP adjoint eq}, \eqref{adjoint initial}, and \eqref{adjoint periodic} with the boundary condition \begin{equation}\notag
		\begin{split}
			\phi(t,x_1,0,\theta)&=\phi(t,x_1,0,-\pi)\ \text{if}\ -\pi\le \theta<-\frac{\pi}{2},\ \text{and}\\
			\phi(t,x_1,0,\theta)&=\phi(t,x_1,0,0)\ \text{if}\ -\frac{\pi}{2}<\theta\le 0.
		\end{split}
	\end{equation}which is the limit of the regularized boundary condition $$\phi_\kappa(t,x_1,0,\theta)=\bigg(\chi_\kappa(\theta)\phi_\kappa(t,x_1,0,0)+(1-\chi_\kappa(\theta))\phi_\kappa(t,x_1,0,-\pi)\bigg).$$ This completes the proof for the solvability of the adjoint problem \eqref{FP adjoint eq}- \eqref{adjoint periodic}.
	\end{proof}
	This completes Proposition \ref{Thm.sol.adjoint} on the proof of the solvability of the adjoint problem \eqref{FP adjoint eq}- \eqref{adjoint periodic}. In the next section, we will use Proposition \ref{Thm.sol.adjoint} to construct a measure-valued solution that corresponds to the weak solution of the original problem \eqref{FP eq}-\eqref{periodic} in the sense of Definition \ref{weaksoldef}. Furthermore, we will also prove that the solution is unique.
	
	\section{A unique weak solution to the original problem by duality}\label{sec: uniqueness}
	We are now ready to prove the main existence of a unique weak solution to the original problem \eqref{FP eq}-\eqref{periodic}. In this section, we use the well-posedness of the adjoint problem from the previous section and construct a unique weak measure-valued solution of the original problem \eqref{FP eq}-\eqref{periodic} via the duality argument which also coincides with the weak measure-valued solution in the sense of Definition \ref{weaksoldef}.
	
	We now prove our main existence and uniqueness theorem (Theorem \ref{mainexistencethm}).\begin{proof}[Proof of Theorem \ref{mainexistencethm}]
		For the proof of the existence of a weak solution, we construct a measure-valued function $f\in C([0,T]; \mathcal{M}_+(X))$ using the unique solution to the adjoint problem. The construction is motivated by Liggett \cite[Definition 1.6]{Liggett}. By Proposition \ref{Thm.sol.adjoint},  for any test function $\psi\in C([0,T];C(X))$, there exists a semi-group $S(\tau)$ such that $S(\tau)\psi(t)$ solves the \textit{backward-in-time} adjoint problem \eqref{backwardintimeproblem}-\eqref{backward initial} with respect to the \textit{backward} initial data $\psi(t)$ for any fixed $t\in[0,T]$.  Recall that we proved the global well-posedness in Proposition \ref{Thm.sol.adjoint} for the \textit{forward-in-time} adjoint problem  \eqref{FP adjoint eq} with the initial-boundary conditions \eqref{adjoint initial}, \eqref{adjoint boundarycon} and \eqref{adjoint periodic} in the previous section in which we reverse the variable $t\mapsto T-t$  on the \textit{backward-in-time} system \eqref{backwardintimeproblem}-\eqref{backward boundary}. By Remark \ref{remark hypo}, $S(\tau)\psi(t)$ is sufficiently regular.  Now, for any given initial data $f_{in}\in\mathcal{M}_+(X)$ of the original problem and any given $\tau\in [0,T]$, we define $f_\tau=f_\tau(\cdot,\cdot,\cdot)\eqdef f(\tau,\cdot,\cdot,\cdot)$ such that, for any $\psi(t)\in C(X)$ and the corresponding unique solution $S(\tau)\psi(t)$ to the \textit{backward} adjoint problem \eqref{backwardintimeproblem}-\eqref{backward initial}, the following dual-identity holds:  \begin{equation}\label{duality}\int \ \psi(t) df_\tau=\int \ S(\tau)\psi(t) df_{in},\ \text{for any fixed }t,\tau\in[0,T].\end{equation} The function $f$ in this definition exists by means of the semi-group  \cite[Definition 1.6]{Liggett}. 
		  Then, by Definition \ref{weaksoldef}, we have that the solution above coincides with the weak solution in the sense of Definition \ref{weaksoldef}, since
		\begin{multline*}0=\iiint_{\mathbb{R}^2_+\times [-\pi,\pi]} dx_1dx_2d\theta \ f(T)\psi(T)-\iiint_{\mathbb{R}^2_+\times [-\pi,\pi]} dx_1dx_2d\theta \ f_{in}\psi_{in}\\=\int_0^T dt\iiint_{\mathbb{R}^2_+\times [-\pi,\pi]} dx_1dx_2d\theta \left[\partial_t \psi +(\cos\theta,\sin\theta)\cdot \nabla_x \psi + \partial_\theta^2 \psi\right]f_r\\
			+\int_0^T dt\int_{\mathbb{R}}dx_1 \ [\partial_t\psi(t,x_1,0,0)+\partial_{x_1}\psi (t,x_1,0,0)]\rho_+(t,x_1)\\+\int_0^T dt\int_{\mathbb{R}}dx_1 \ [\partial_t\psi(t,x_1,0,-\pi)-\partial_{x_1}\psi (t,x_1,0,-\pi)]\rho_-(t,x_1).\end{multline*} 
	This completes the proof of the existence.
		
		For the proof of the uniqueness, we assume that there are two measure solutions $f_1$ and $f_2$ in the sense of Definition \ref{weaksoldef} with the same initial distribution $f_{in}$. Then $w\eqdef f_1-f_2$ is also a solution with the zero initial data in the sense of Definition \ref{weaksoldef} for any test functions $\phi(t)\in C^1([0,T]; C^1(\mathbb{R}^2_+;C^2( [-\pi,\pi])))$. If $w$ is not identically zero, then there exists some $T>0$ such that $w(T)$ is nonzero. Choose any distribution $\phi\in C^1([0,T]; C^1(\mathbb{R}^2_+;C^2( [-\pi,\pi])))$ such that
		$$	\iiint_{\mathbb{R}^2_+\times [-\pi,\pi]} dx_1dx_2d\theta \ w(T)\phi(T)\neq 0.$$ Then by the wellposedness of the adjoint problem of Section \ref{adjointproblem section} which is guaranteed by the Hille-Yosida theorem and the argument in Section \ref{finalhilleyosida}, there exists a solution $\phi$ of the backward-in-$t$ adjoint problem \eqref{backwardintimeproblem}-\eqref{backward boundary}, which has the initial value $\phi(T)$. Then the duality identity \eqref{duality} leads to  $$\iiint_{\mathbb{R}^2_+\times [-\pi,\pi]} dx_1dx_2d\theta \ w(T)\phi(T)=\iiint_{\mathbb{R}^2_+\times [-\pi,\pi]} dx_1dx_2d\theta \ w_0\phi_{in}=0,$$ which is a contradiction. 
	\end{proof}
	This concludes the discussion on the existence of a unique measure solution to the kinetic model for semiflexible polymers. In this next section, we will further study its long-chain asymptotic behavior.

	\section{Hypoellipticity} \label{sec:hypoellipticity}
	In this section, we will discuss the a posteriori regularity of a weak solution that we obtained from the previous section. Our main goal is to prove Theorem \ref{maintheorem}$_{ (\ref{hypoellipticitymaintheorem})}$ in this section.
	The linear Fokker-Planck operator with the standard Laplacian in the velocity variable $\Delta_v$ is known as a hypoelliptic operator \cite{MR3897919,Hormander, Pascucci, Rothschild, MR3237885, MR3436235}. Here we recall H\"ormander's definition of the hypoellipticity:
	\begin{definition}[{\cite[page 477]{MR126071}}]
		A differential operator $P(x,D)$ with coefficients in $C^\infty$ is called \textit{hypoelliptic} if the equation $$P(x,D)u=h$$ only has solutions $u\in C^\infty$ when $h\in C^\infty.$
	\end{definition}
	
	In our case, the standard Laplacian operator is now replaced by the Laplace-Beltrami operator $\Delta_{\frac{v}{|v|}}=\Delta_n$ where $n \in \mathbb{S}^1.$ Using the new parametrization of the variables $\theta \in [-\pi,\pi],$ we obtain that the equation \eqref{FP eq} in terms of the variables $(t,x,\theta)\in [0,T]\times \mathbb{R}^2_+\times [-\pi,\pi]$. In this section, we show that a weak solution $f=f(t,x,\theta)$ in the sense of Definition \ref{weaksoldef} is indeed smooth in the interior of the domain and at the boundary with $x_2=0$ and $\theta \in [-\pi,0).$
	For this, we will prove that the commutator of the vector fields of the differential operator in the sense of H\"ormander's hypoelliptic theory on the second-order differential operator \cite{Hormander} that we will introduce below is non-zero and hence there is a regularity mixing. Therefore, we can conclude that the weak solution is indeed smooth.  
	
	We first introduce H\"ormander's theorem on a sufficient condition for the hypoellipticity:
	\begin{theorem}[{\cite[Theorem 1.1]{Hormander}}]\label{hormandertheorem}
		Let $P$ be written in the form 
		$$ P=\sum_{1}^{r}X_j^2+X_0+c,$$ where $X_0,...,X_r$ denote first order homogeneous differential operators in an open set of $\Omega\in \mathbb{R}^n$ with $C^\infty$ coefficients, and $c\in C^\infty(\Omega).$ Assume that among the operators $X_{j_1},$ $[X_{j_1},X_{j_2}]$, $[X_{j_1},[X_{j_2},X_{j_3}]]$,...,$[X_{j_1},[X_{j_2},[X_{j_3},..., X_{j_k}]]]$,... where $j_i=0,1,...,r$, there exist $n$ which are linearly independent at any given point in $\Omega.$ Then it follows that $P$ is hypoelliptic.
	\end{theorem}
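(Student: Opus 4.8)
This is H\"ormander's hypoellipticity theorem, and I sketch the pseudodifferential proof. The plan is to deduce hypoellipticity from an \emph{a priori subelliptic estimate}: there should be an $\varepsilon>0$ such that every point of $\Omega$ admits a neighbourhood $\omega$ on which
\begin{equation}\label{horm-subell}
\|u\|_{\varepsilon}^2\le C\big(\|Pu\|_{0}^2+\|u\|_{0}^2\big),\qquad u\in C_c^\infty(\omega),
\end{equation}
where $\|\cdot\|_s$ is the $H^s$-norm. Granting \eqref{horm-subell}, together with the variants obtained after pre-composing $P$ with a family of Friedrichs mollifiers $S_\delta$ (order $0$, symbols $\chi(\delta\,\cdot\,)$) and with cutoffs, hypoellipticity follows by the classical bootstrap: if $Pu=h$ with $h\in C^\infty$, applying \eqref{horm-subell} to $S_\delta(\varphi u)$, using $PS_\delta=S_\delta P+[P,S_\delta]$ with $[P,S_\delta]$ uniformly controlled as $\delta\to0$, and letting $\delta\to0$, one gains $u\in H^{\varepsilon}_{\mathrm{loc}}$ near the point; iterating (the right-hand side of \eqref{horm-subell} stays finite at each stage because $h$ is smooth) raises the local Sobolev exponent by $\varepsilon$ each time, so $u\in H^\infty_{\mathrm{loc}}=C^\infty$. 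This reduction is delicate but standard, so I would quote it and put the weight of the argument on \eqref{horm-subell}.

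To prove \eqref{horm-subell} I would proceed in three stages. First, the \textbf{energy estimate}: writing $X_j^*=-X_j+a_j$ with $a_j\in C^\infty$, so that $X_0+X_0^*$ has order $0$, integration by parts gives for $u\in C_c^\infty(\omega)$
\begin{equation}\label{horm-energy}
\sum_{j=1}^{r}\|X_ju\|_0^2\le -\operatorname{Re}(Pu,u)+C\|u\|_0^2\le \tfrac12\|Pu\|_0^2+C'\|u\|_0^2 .
\end{equation}
Thus, with $Q(u)^2:=\|Pu\|_0^2+\|u\|_0^2$, the fields $X_1,\dots,X_r$ are controlled at level $H^0$ by $Q(u)$, and solving the equation for $X_0$ controls $X_0u$ at level $H^{-1}$ by $Q(u)$. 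Second, the \textbf{commutator estimate}: for smooth vector fields $Y,Z$ on $\omega$, every $\sigma\in\mathbb{R}$ and $u\in C_c^\infty(\omega)$,
\begin{equation}\label{horm-bracket}
\|[Y,Z]u\|_{\sigma-1}^2\le C\Big(\|Yu\|_{\sigma-1/2}^2+\|Zu\|_{\sigma-1/2}^2+\|Y^*u\|_{\sigma-1/2}^2+\|Z^*u\|_{\sigma-1/2}^2+\|u\|_{\sigma-1/2}^2\Big),
\end{equation}
proved by writing $\|[Y,Z]u\|_{\sigma-1}^2=\big([Y,Z]u,\Lambda^{2\sigma-2}[Y,Z]u\big)$, expanding $[Y,Z]=YZ-ZY$, transferring one vector field at a time to the second factor (which produces $Y^*,Z^*$) and commuting the pseudodifferential factors $\Lambda^{2\sigma-2}$ past the vector fields, each such commutator costing only one order; what survives is a sum of products such as $\|Yu\|_{\sigma-1/2}\|Zu\|_{\sigma-1/2}$ plus the stated errors. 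Heuristically, \eqref{horm-bracket} says a bracket $[Y,Z]$ is controlled with a loss of only half a derivative relative to $Y$ and $Z$.

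Third, I would run an \textbf{induction on bracket length}. Assign weight $1$ to $X_1,\dots,X_r$, weight $2$ to $X_0$, and to an iterated bracket the sum of the weights of its entries; by hypothesis there is a finite $m$ so that at every point of $\omega$ the iterated brackets of weight $\le m$ span $\mathbb{R}^n$. One shows by induction on $\ell$ that there is $\varepsilon_\ell>0$ (a typical choice being $\varepsilon_\ell=2^{1-\ell}$) such that every vector field in the $C^\infty(\omega)$-module generated by the iterated brackets of weight $\le\ell$ is controlled at level $H^{\varepsilon_\ell-1}$ by $Q(u)$: the base case $\ell=1$ is \eqref{horm-energy}, and the step combines two such fields through \eqref{horm-bracket} (using $Z^*=-Z+\text{l.o.}$ to absorb the adjoints into lower-order terms, themselves handled by the inductive bounds and interpolation between consecutive levels). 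The essential, and technically deepest, point is that the step must be arranged so that it \emph{halves} the available gain rather than subtracting a fixed amount; this self-similar bookkeeping is what keeps $\varepsilon_\ell$ strictly positive after the finitely many brackets needed to span. Finally, taking $\ell=m$, expressing each coordinate field $\partial_{x_k}$ as a smooth combination of a local frame of spanning brackets, and using $\|u\|_{\varepsilon}^2\lesssim\|u\|_0^2+\sum_k\|\partial_{x_k}u\|_{\varepsilon-1}^2$, one obtains \eqref{horm-subell} with $\varepsilon=\varepsilon_m$ after a partition of unity.

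The main obstacle is precisely this exponent accounting in the induction — showing that the lower-order error terms produced by \eqref{horm-bracket} never overwhelm the gain and that the gain decays only geometrically; the crude exponent $2^{1-m}$ is enough for hypoellipticity, whereas the sharp gain $2/m$ requires the more elaborate nilpotent-group lifting and approximation of Rothschild--Stein. I note in passing that, when $P$ is conjugate to the generator of a diffusion that is non-degenerate after brackets, an entirely different proof is available via the Malliavin calculus, deducing smoothness of the transition density from invertibility of its Malliavin covariance; but the analytic route sketched above accommodates the general zeroth-order term $c\in C^\infty(\Omega)$ directly.
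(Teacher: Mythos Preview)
Your sketch is a competent outline of the standard pseudodifferential proof of H\"ormander's theorem, but note that the paper does not prove this statement at all: it is quoted verbatim from \cite[Theorem~1.1]{Hormander} and used as a black box. The paper's contribution in Section~\ref{sec:hypoellipticity} is only to \emph{apply} the theorem, by writing $P=X_1^2+X_0$ with $X_1=i\partial_\theta$ and $X_0=\partial_t+\cos\theta\,\partial_{x_1}+\sin\theta\,\partial_{x_2}$, checking that $\{X_1,X_0,[X_1,X_0],[X_1,[X_1,X_0]]\}$ span at every interior point, and then handling the nonsingular boundary by extension/reflection. So there is nothing to compare: your proposal proves a theorem the paper merely cites.
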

	
	Now we will check if it is eligible for us to use this theorem to prove Theorem \ref{maintheorem}$_{ (\ref{hypoellipticitymaintheorem})}$.
	\begin{proof}[Proof of Theorem \ref{maintheorem}$_{ (\ref{hypoellipticitymaintheorem})}$]
	We first define the interior $\Omega$ of the domain of $[0,T]\times \mathbb{R}^2_+\times [-\pi,\pi]$ and let $\Omega= (0,T)\times (-\infty,\infty)\times (0,\infty)\times (-\pi,\pi).$ 
	We can write the equation of our interest \eqref{FP eq} as 
	$$Pf=(X_1^2+X_0)f=0,$$ where $$X_1\eqdef i\partial_\theta\text{ and }X_0=\partial_t+\cos\theta \partial_{x_1}+\sin\theta \partial_{x_2}.$$
	Then we first observe that the commutator of the vector fields of $X_1 $ and $X_0$ is 
	$$[X_1,X_0]
	=i\partial_\theta[(\cos\theta,\sin\theta)\cdot \nabla_x+\partial_t]-[(\cos\theta,\sin\theta)\cdot \nabla_x+\partial_t]i\partial_\theta
	=i(-\sin\theta,\cos\theta)\cdot \nabla_x.
	$$
	Using this, we further have that
	$$[X_1,[X_1,X_0]]
	=i\partial_\theta[i(-\sin\theta,\cos\theta)\cdot \nabla_x]-[i(-\sin\theta,\cos\theta)\cdot \nabla_x]i\partial_\theta
	=(\cos\theta,\sin\theta)\cdot \nabla_x.
	$$
	Therefore, we observe that
	$$\{X_1, X_0,[X_1,X_0],[X_1,[X_1,X_0]]\}$$forms a linearly independent set at any given point in $\Omega$. By Theorem \ref{hormandertheorem}, we conclude that $P$ is hypoelliptic. 
	
	Now, for any $(t_0,x_0,\theta_0)\in \Omega$, we define a smooth cutoff function $\chi=\chi(t,x,\theta)$ in the whole space $[0,T]\times \mathbb{R}^2\times [-\pi,\pi]$ for a sufficiently small $\epsilon>0$ such that $B_{2\epsilon}(t_0,x_0,\theta_0)$ is included in the interior of $\Omega,$ 
	$\chi(t,x,\theta) \equiv 1$ if $|(t,x,\theta)-(t_0,x_0,\theta_0)|\le \epsilon$, and $\chi(t,x,\theta) \equiv 0$ if $|(t,x,\theta)-(t_0,x_0,\theta_0)|>2\epsilon$.
	Then $\chi f\equiv 0$ if $|(t,x,\theta)-(t_0,x_0,\theta_0)|>2\epsilon$, and by Theorem \ref{hormandertheorem} and the hypoellipticity of $P$ we conclude that $\chi f$ is smooth in $B_{2\epsilon}(t_0,x_0,\theta_0)$. Since $\chi f \equiv f$ in $B_{\epsilon}(t_0,x_0,\theta_0)$, we conclude that for any $(t_0,x_0,\theta_0)\in \Omega$, there exists a sufficiently small $\epsilon>0$ such that $f$ is smooth in $B_{\epsilon}(t_0,x_0,\theta_0)$.

	Now we consider the hypoellipticity on the nonsingular boundary on $x_2=0$. Define the non-singular boundary  $S_b$ of $\Omega$ as
	$$ S_b \eqdef (0,T)\times (-\infty,\infty)\times \{x_2=0\}\times \{(-\pi,-\pi/2)\cup (-\pi/2,0)\cup (0,\pi)\}.$$
	Choose and fix any $(t_0,x_0,\theta_0) \in S_b.$
	Note that near the boundary, $f$ satisfies the equation $$\partial_t +(\cos\theta,\sin\theta)\cdot \nabla_x f =\partial_\theta^2 f.$$ We first consider $\theta_0<0.$ Choose a sufficiently small $\epsilon>0$ such that $[t_0-\epsilon,t_0+\epsilon]\in [0,T]$ and $[\theta_0-\epsilon, \theta_0+\epsilon ]\in (-\pi,-\pi/2)\cup (-\pi/2,0)$. Since $f$ has no boundary restriction for $\theta<0$, we extend the domain of $\mathbb{R}\times \{x_2\ge 0\}\times \{(-\pi,-\pi/2)\cup (-\pi/2,0)\}$ to 
	$\mathbb{R}\times \{x_2\ge -2\epsilon\}\times \{(-\pi,-\pi/2)\cup (-\pi/2,0)\}$. Then $B_\epsilon(t_0,x_0,\theta_0) $ is included in the extended domain. Then by the hypoellipticity of the operator $P$, we obtain that $f$ is locally smooth in $ B_\epsilon(t_0,x_0,\theta_0) $.
	On the other hand, if $\theta_0>0$, then we recall the boundary condition \eqref{FP eq tu} that $f=0$ if $x_2=0$. Thus we extend the solution $f$ to $f_{ext}$ in the whole space such that $f_{ext}\equiv 0$ if $x_2\le 0$. Then consider a sufficiently small $\epsilon>0$ such that $[\theta_0-\epsilon,\theta_0+\epsilon]\in(0,\pi)$ and a local subset $B_\epsilon(t_0,x_0,\theta_0)\subset [0,T]\times \mathbb{R}^2\times (0,\pi)$ near $(t_0,x_0,\theta_0)$. Then $f_{ext}$ is the solution to the same equation in the whole space $[0,T]\times \mathbb{R}^2\times (0,\pi)$, and by the hypoellipticity (Theorem \ref{hormandertheorem}) in the whole space. Thus $f_{ext}$ is locally smooth in $B_\epsilon$ and hence $f$ is smooth in $B_\epsilon\cap \bar{\Omega}.$
	This proves Theorem \ref{maintheorem}$_{ (\ref{hypoellipticitymaintheorem})}$.\end{proof}
	This completes the proof of the hypoellipticity away from the singular boundary and the pathological set. In the next section, we will prove that the solution is indeed H\"older continuous in the domain including the singular boundary.
	
	\section{H\"older continuity near the singular boundary}\label{sec:holder cont}
	In this section, we prove Theorem \ref{maintheorem}$_{ (\ref{holdercontinuity})}$ on the local H\"older continuity of the weak solution $f$ in $x_2$ and $\theta$ variables  in the region including the singular boundary $ (x_2,\theta)=(0,0)$ or $(0,-\pi)$. By the boundary hypoellipticity away from the singular points from the previous section, it suffices to consider the region near the singular points $ (x_2,\theta)=(0,0)$ or $(0,-\pi)$.
	We construct barriers using self-similar type solutions to a stationary problem and derive the maximum principle via the comparison.
	Without loss of generality, we consider the case near the singular point with $x_2=0$ and $\theta=0$ only. The other case $\theta=-\pi$ is similar.

	We start with constructing a supersolution of the equation \eqref{FP eq} near $(x_2,\theta)=(0,0)$. 
	We start with defining the definition of a supersolution to \eqref{FP eq}.
	\begin{definition}\label{supersolutiondef}
		A function $f\in C^1_t([0,T];C^1_{x_1,x_2}(\mathbb{R}\times (0,\infty);C^2_\theta((-\pi,0)\cup(0,\pi))))$ is a supersolution to \eqref{FP eq} near the singular boundary $(x_2,\theta)=(0,0)$ if it satisfies
		$$(\partial_t+(\cos\theta,\sin\theta)\cdot\nabla_x-\partial_\theta^2)f\ge 0.$$
	\end{definition}
	We first introduce preliminary lemmas on the solution to a steady equation, which will be used to construct a supersolution:
	\begin{lemma}[{\cite[Lemma 21]{MR3237885}}]\label{f*0lemma}
		There exists a positive function $f^*_0=f^*_0(x_2,\theta)>0$ on $(0,\infty)\times (-\pi,\pi)$ which solves
		\begin{equation}\label{originalsteadyFP} \theta \partial_{x_2}f^*_0 = \partial_\theta^2 f^*_0,\end{equation} and satisfies $$\lim_{R\rightarrow 0,\ x_2+|\theta|^3=R} f^*_0(x_2,\theta)=\infty.$$ Also, for some $\alpha<0$ with $|\alpha|$ being sufficiently small, $$f^*_0(x_2,\theta)\approx x_2^{\alpha},$$ if $x_2\approx |\theta|^3$ and $x_2+|\theta|^3<\delta$ for some sufficiently small $\delta>0.$ 
	\end{lemma}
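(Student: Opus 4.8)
The plan is to reprove the statement (which is Lemma~21 of \cite{MR3237885}) by exhibiting an explicit self-similar solution of the degenerate Kolmogorov equation \eqref{originalsteadyFP}. The operator $\theta\partial_{x_2}-\partial_\theta^2$ is invariant under the anisotropic dilations $(x_2,\theta)\mapsto(\lambda^3x_2,\lambda\theta)$, so I would look for a solution of the form $f^*_0(x_2,\theta)=x_2^{\alpha}\Phi_\alpha(\eta)$ with similarity variable $\eta=\theta\,x_2^{-1/3}$ and exponent $\alpha$ to be fixed with $\alpha<0$ and $|\alpha|$ small. Using $\partial_{x_2}\eta=-\tfrac13\eta x_2^{-1}$, substitution into \eqref{originalsteadyFP} reduces the PDE to the linear second-order ODE
\begin{equation*}
\Phi_\alpha''+\tfrac13\eta^{2}\Phi_\alpha'-\alpha\,\eta\,\Phi_\alpha=0,\qquad\eta\in\mathbb{R},
\end{equation*}
which is regular everywhere; in particular $\Phi_\alpha''(0)=0$, consistent with the PDE degenerating on $\{\theta=0\}$.

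Next I would perform the asymptotic (Levinson-type) integration of this ODE at $\eta\to\pm\infty$. As $\eta\to+\infty$ the two independent behaviors are $\Phi_\alpha\to\mathrm{const}$ and $\Phi_\alpha\sim\eta^{3\alpha}$; as $\eta\to-\infty$ they are $\Phi_\alpha\sim|\eta|^{3\alpha}$ and $\Phi_\alpha\sim\exp(|\eta|^{3}/9)$. I would then select $\Phi_\alpha$ to be a positive multiple of the solution that is recessive as $\eta\to-\infty$, so that $\Phi_\alpha(\eta)\sim c_\alpha^{-}|\eta|^{3\alpha}$ there; this singles out a one-dimensional family, and for $\alpha=0$ it is exactly $\Phi_0\equiv1$. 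By continuous dependence on $\alpha$ (normalizing, say, at a fixed large $\eta=-A$ together with the recessive boundary behavior), $\Phi_\alpha$ is uniformly close to $1$ on compact sets, tends to a strictly positive constant $c_\alpha^{+}$ as $\eta\to+\infty$, and decays like $|\eta|^{3\alpha}$ at $-\infty$; hence, for $|\alpha|$ small enough, $\Phi_\alpha>0$ on all of $\mathbb{R}$. This produces the positive solution $f^*_0(x_2,\theta)=x_2^{\alpha}\Phi_\alpha(\theta x_2^{-1/3})>0$ of \eqref{originalsteadyFP} on $(0,\infty)\times(-\pi,\pi)$, and the claimed estimate $f^*_0\approx x_2^{\alpha}$ on the cone $x_2\approx|\theta|^{3}$ follows because there $\eta$ is bounded and $\Phi_\alpha(\eta)$ is comparable to $1$.

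For the blow-up statement I would take any sequence with $R_k:=x_2^{(k)}+|\theta^{(k)}|^{3}\to0$ and split according to $\eta_k=\theta^{(k)}(x_2^{(k)})^{-1/3}$. If $(\eta_k)$ stays bounded, then $x_2^{(k)}\le R_k\to0$ and $f^*_0\gtrsim(x_2^{(k)})^{\alpha}\to\infty$. If $\eta_k\to+\infty$, then $\Phi_\alpha(\eta_k)\to c_\alpha^{+}>0$ while necessarily $x_2^{(k)}\to0$, so again $f^*_0\to\infty$. If $\eta_k\to-\infty$, then $\Phi_\alpha(\eta_k)\sim c_\alpha^{-}|\eta_k|^{3\alpha}$, and since $|\eta_k|^{3\alpha}=|\theta^{(k)}|^{3\alpha}(x_2^{(k)})^{-\alpha}$ we get $f^*_0\sim c_\alpha^{-}|\theta^{(k)}|^{3\alpha}$, which tends to $\infty$ because $R_k\to0$ forces $\theta^{(k)}\to0$ and $3\alpha<0$. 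In all three regimes $f^*_0\to\infty$, which is the first assertion.

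The main obstacle I expect is the ODE analysis in the second step: proving that the recessive-at-$-\infty$ solution $\Phi_\alpha$ has no zeros on all of $\mathbb{R}$ for an entire interval of small negative $\alpha$ (not merely near $\alpha=0$), and making the $\eta\to-\infty$ asymptotics $\Phi_\alpha\sim c_\alpha^{-}|\eta|^{3\alpha}$ rigorous and uniform in $\alpha$; a shooting/continuity argument combined with a phase-plane analysis of $(\Phi_\alpha,\Phi_\alpha')$ should suffice. A robust alternative, should an exact self-similar profile prove awkward to control globally, is to replace $\Phi_\alpha$ by a smooth comparison function (equal to a $|\eta|^{3\alpha}$-type profile for large $|\eta|$ and to a positive constant near $\eta=0$) and to verify directly that $x_2^{\alpha}\Phi_\alpha(\theta x_2^{-1/3})$ is a supersolution of \eqref{originalsteadyFP} in the sense needed for the comparison arguments of Section~\ref{sec:holder cont}.
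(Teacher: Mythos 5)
Your self-similar ansatz and the reduction of \eqref{originalsteadyFP} to the ODE $\Phi_\alpha''+\tfrac13\eta^2\Phi_\alpha'-\alpha\eta\Phi_\alpha=0$ with $\eta=\theta\,x_2^{-1/3}$ are exactly what the paper does; in the $z=-\eta^3/9$ variable this is Kummer's equation $zM''+(\tfrac23-z)M'+\alpha M=0$, and the paper (following \cite{MR3237885}) simply takes $\Phi_\alpha(\eta)=M(-\alpha;\tfrac23;-\eta^3/9)$. The discrepancy is in the choice of particular solution. You select the solution that is \emph{recessive} as $\eta\to-\infty$, i.e.\ the one behaving like $|\eta|^{3\alpha}$ there; the paper's $M$ is the \emph{dominant} branch at $\eta\to-\infty$, growing like $e^{|\eta|^3/9}$. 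Both choices satisfy the required blow-up and the $f^*_0\approx x_2^\alpha$ estimate on $x_2\approx|\theta|^3$, but they are not the same function. The decisive advantage of the paper's choice is that positivity is immediate: for $\alpha\in(-\tfrac23,0)$ one has $a=-\alpha\in(0,\tfrac23)=(0,b)$, so the Euler integral $M(a;b;z)=\tfrac{\Gamma(b)}{\Gamma(a)\Gamma(b-a)}\int_0^1 e^{zt}t^{a-1}(1-t)^{b-a-1}\,dt>0$ for all real $z$. That step — positivity on all of $\mathbb{R}$ — is precisely the one you flag as your main obstacle and leave to a shooting/continuity argument, so your route trades a one-line integral representation for a genuine analytical gap.

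There is also a concrete error in your asymptotic catalogue at $\eta\to+\infty$. The two fundamental behaviors there are $\eta^{3\alpha}$ (power-law decay, since $3\alpha<0$) and $\eta^{-2-3\alpha}e^{-\eta^3/9}$ (super-exponential decay); for $\alpha\neq0$ there is no solution tending to a nonzero constant, because plugging $\Phi\to L\neq0$, $\Phi',\Phi''\to0$ into the ODE forces $\alpha\eta L\to0$, a contradiction. This matters in your ``Case 2'' ($\eta_k\to+\infty$) of the blow-up argument: the correct conclusion is $\Phi_\alpha(\eta_k)\sim c_\alpha^+\eta_k^{3\alpha}\to0$, and then $f^*_0\sim c_\alpha^+|\theta^{(k)}|^{3\alpha}\to\infty$ because $R_k\to0$ forces $\theta^{(k)}\to0$ and $3\alpha<0$. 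So the lemma's conclusion survives, but the chain of reasoning as written ($\Phi_\alpha(\eta_k)\to c_\alpha^+>0$) is incorrect and should be replaced by the power-law estimate, exactly as you already do in your ``Case 3.''
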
 The function $f^*_0$ in this lemma is constructed as
	\begin{equation}\label{f*0 form}f^*_0(x_2,\theta)=x_2^\alpha M\left(-\alpha; \frac{2}{3};-\frac{\theta^3}{9x_2}\right),\end{equation} where $M=M(a;b;z)$ is the confluent hypergeometric function of the first kind.  Namely, this lemma implies that there exists a positive steady solution $f^*_0$ to \eqref{originalsteadyFP} which blows up at the singular boundary $(x_2,\theta)=(0,0)$. Now we modify the steady solution $f^*_0$ by considering an additional remainder term $R$ such that it is a sub/supersolution of the steady Fokker-Planck equation for the semiflexible polymers \eqref{steadyFP} as below: \begin{lemma}\label{f*lemma}
		There exists a sufficiently small function $R^*(x_2,\theta)$ such that
		$$|R^*|\ll f^*_0 \text{ if }x_2\approx |\theta|^3 \text{ and } x_2+|\theta|^3<\delta,$$ and that $f^*=f^*(x_2,\theta)=f^*_0+R^*$ satisfies\begin{equation}\label{steadyFP}\sin \theta \partial_{x_2}f^* - \partial_\theta^2 f^* =O(x_2^\alpha),\end{equation}and satisfies $$\lim_{R\rightarrow 0,\ x_2+|\theta|^3=R} f^*(x_2,\theta)=\infty,$$ for the same $\alpha<0$ of that in \eqref{f*0 form} with $|\alpha|$ being sufficiently small, and $$f^*(x_2,\theta)\approx x_2^{\alpha},$$  if $x_2\approx |\theta|^3$ and $x_2+|\theta|^3<\delta,$ where $f^*_0$ is given in Lemma \ref{f*0lemma}.
	\end{lemma}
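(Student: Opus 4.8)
The plan is to obtain $f^*$ as a lower-order perturbation of the explicit self-similar profile $f^*_0$ of Lemma \ref{f*0lemma}, which by \eqref{f*0 form} solves the \emph{model} equation $\theta\partial_{x_2}f^*_0=\partial_\theta^2 f^*_0$ exactly. First I would compute the residual produced by $f^*_0$ in the true stationary operator $\mathcal{S}[g]\eqdef \sin\theta\,\partial_{x_2}g-\partial_\theta^2 g$, i.e. the left-hand side of \eqref{steadyFP}. Since $f^*_0$ annihilates the model operator, one has $\mathcal{S}[f^*_0]=(\sin\theta-\theta)\partial_{x_2}f^*_0$, and the Taylor expansion $\sin\theta-\theta=-\tfrac{\theta^3}{6}+O(\theta^5)$ near $\theta=0$ reduces everything to estimating $\theta^3\partial_{x_2}f^*_0$ on $\{x_2+|\theta|^3<\delta\}$.

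Second, I would estimate $\partial_{x_2}f^*_0$ directly from the representation $f^*_0(x_2,\theta)=x_2^\alpha M\!\left(-\alpha;\tfrac23;-\tfrac{\theta^3}{9x_2}\right)$, using the classical asymptotics of the confluent hypergeometric function $M(a;b;z)$ in the three regimes $|z|\to 0$, $|z|$ of order one, and $z\to-\infty$ (where $M(a;b;z)\sim\frac{\Gamma(b)}{\Gamma(b-a)}(-z)^{-a}$, the second, exponentially small branch being negligible as $z\to-\infty$). In the self-similar region $x_2\approx|\theta|^3$ this yields $|\partial_{x_2}f^*_0|\lesssim x_2^{\alpha-1}$, hence $|\theta^3\partial_{x_2}f^*_0|\lesssim x_2^\alpha$; in the complementary regimes $x_2\gg|\theta|^3$ and $|\theta|^3\gg x_2$ the same computation (differentiating the first correction term of the $M$-asymptotics in $x_2$) gives $|\theta^3\partial_{x_2}f^*_0|\ll x_2^\alpha$. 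Thus $\mathcal{S}[f^*_0]=O(x_2^\alpha)$, and moreover the residual inherits the self-similar structure $\mathcal{S}[f^*_0](x_2,\theta)=x_2^\alpha\,h\!\left(-\tfrac{\theta^3}{9x_2}\right)$ for a bounded $h$ concentrated near the curve $x_2\approx|\theta|^3$.

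Third, to obtain a clean statement and, more importantly, a residual whose sign the comparison argument of the next steps can exploit, I would build the correction $R^*$ by solving $\mathcal{S}[R^*]=-\mathcal{S}[f^*_0]+O(x_2^\alpha)$ with $R^*$ strictly subordinate to $f^*_0$ near $(x_2,\theta)=(0,0)$. Because the inhomogeneity is self-similar, the natural ansatz $R^*=x_2^{\alpha}\,\mathfrak{r}\!\left(-\tfrac{\theta^3}{9x_2}\right)$ turns this into an ordinary differential equation for $\mathfrak{r}$ in the variable $z=-\theta^3/(9x_2)$, which I would solve by variation of parameters against the two fundamental solutions of Kummer's equation; alternatively, after multiplying by a small cutoff supported in $\{x_2+|\theta|^3<\delta\}$, one sets up a contraction in the weighted space $\{R:\ \|x_2^{-\alpha}R\|_{L^\infty}<\infty\}$, so that a gain of a positive power of $\delta$ forces $|R^*|\ll f^*_0$ on the self-similar region (where $f^*_0\approx x_2^\alpha$). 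Since $R^*$ is then a strictly lower-order correction, $f^*=f^*_0+R^*$ inherits both $f^*\approx x_2^\alpha$ for $x_2\approx|\theta|^3$, $x_2+|\theta|^3<\delta$, and $\lim_{R\to0,\ x_2+|\theta|^3=R}f^*=\infty$ from $f^*_0$, completing the proof.

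The main obstacle I expect is the uniform control of $M\!\left(-\alpha;\tfrac23;-\tfrac{\theta^3}{9x_2}\right)$ and of $\partial_{x_2}$ of it through the matching of the three asymptotic regimes, together with the bookkeeping needed to certify that the correction $R^*$ produced by the ODE/fixed-point construction is genuinely of lower order than $f^*_0$ up to the singular point, rather than merely of the same size $x_2^\alpha$. The smallness of $|\alpha|$ is what keeps the exponents appearing in the hypergeometric asymptotics and in the weighted estimates within the admissible range, and choosing $\delta$ small then closes the argument.
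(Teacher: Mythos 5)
Your overall strategy is the right one and matches the paper: write the stationary operator as a perturbation of the model operator that $f^*_0$ annihilates, compute the residual $(\sin\theta-\theta)\partial_{x_2}f^*_0$, use $\sin\theta-\theta=O(\theta^3)$ together with $\partial_{x_2}f^*_0\approx x_2^{\alpha-1}$ to see that the residual is $O(x_2^\alpha)$ in the self-similar region, and then try to build a subordinate self-similar correction $R^*$.

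The gap is in the ansatz for $R^*$. You propose $R^*=x_2^\alpha\,\mathfrak{r}(-\theta^3/(9x_2))$, i.e.\ the \emph{same} leading power $x_2^\alpha$ as $f^*_0$. A scaling count shows this cannot close: with $z=-\theta^3/(9x_2)$ of order one in the self-similar region, $\partial_\theta^2\bigl(x_2^\alpha\mathfrak{r}(z)\bigr)$ and $\sin\theta\,\partial_{x_2}\bigl(x_2^\alpha\mathfrak{r}(z)\bigr)$ are both $O(x_2^{\alpha-2/3})$, which is much larger than the $O(x_2^\alpha)$ residual you are trying to absorb. Forcing the leading $x_2^{\alpha-2/3}$ part to vanish sends $\mathfrak{r}$ back to the homogeneous Kummer solution, i.e.\ a multiple of $f^*_0$ itself, and then the correction does not help. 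Equally important, a bounded $\mathfrak{r}(z)$ makes $R^*\approx x_2^\alpha\approx f^*_0$, so the requirement $|R^*|\ll f^*_0$ would \emph{fail} rather than follow from the construction; the $L^\infty$-weighted norm $\|x_2^{-\alpha}R\|_\infty$ you propose in the fixed-point variant has the same defect. The paper's ansatz is $R^*=x_2^{\alpha+2/3}\,Q(\theta/x_2^{1/3})$: the extra factor $x_2^{2/3}$ is exactly what brings $(\sin\theta\,\partial_{x_2}-\partial_\theta^2)R^*$ down to order $x_2^\alpha$ (so the scales match the residual) and, at the same time, makes $R^*$ automatically subordinate to $f^*_0\approx x_2^\alpha$ near the origin. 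The paper then chooses $Q$ to solve the homogeneous equation $Q''+z^2Q'-(\alpha+2/3)zQ=0$, after which the remaining error is $O(x_2^\alpha)$ with the correct structure. If you redo your dimensional analysis you will find that $\alpha+2/3$ is the unique exponent compatible with both constraints; with that fix, the variation-of-parameters/ODE version of your argument goes through and is essentially the paper's proof.
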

	\begin{proof}We will construct $R$ such that 
		$$|R^*|\ll f^*_0 \text{ if }x_2\approx |\theta|^3 \text{ and } x_2+|\theta|^3<\delta,$$ and 
		$$(\sin\theta-\theta)\partial_{x_2}f^*_0+(\sin\theta \partial_{x_2}-\partial^2_\theta)R^*\ge 0.$$
		Recall that $$f^*_0(x_2,\theta)=x_2^\alpha M\left(-\alpha; \frac{2}{3};-\frac{\theta^3}{9x_2}\right),$$ where $M=M(a;b;z)$ is the confluent hypergeometric function of the first kind. $f^*_0$ is the solution of $$(\theta\partial_{x_2}-\partial^2_\theta)f^*_0=0,$$ by Lemma \ref{f*0lemma}.
		Note that for $x_2\approx |\theta|^3$ and $x_2+|\theta|^3<\delta$, we have  $\sin\theta -\theta =O(\theta^3)\approx O(x_2)$ and $\partial_{x_2}f^*_0\approx x_2^{\alpha-1}$. 
		Thus, $$(\sin\theta-\theta)\partial_{x_2}f^*_0\approx O(x_2^{\alpha}),$$ for $x_2\approx |\theta|^3$ and $x_2+|\theta|^3<\delta.$
		We want that this order $O(x_2^\alpha)$ can absorb all the remainder contribution in terms of $R$.   Define $R$ as $$R^*(x_2,\theta)= x_2^{\alpha+2/3} Q\left(\frac{\theta}{x_2^{1/3}}\right),$$ for some $Q$.
		Then observe that
		\begin{multline*}(\sin\theta \partial_{x_2}-\partial^2_\theta)R^*
			=(\alpha+2/3) \sin \theta x_2^{\alpha+2/3-1}Q\left(\frac{\theta}{x_2^{1/3}}\right) \\-\theta x_2^{\alpha+2/3-4/3} \sin\theta Q'\left(\frac{\theta}{x_2^{1/3}}\right)-x_2^{\alpha+2/3-2/3}Q''\left(\frac{\theta}{x_2^{1/3}}\right)\\
			=(\alpha+2/3) (\theta+O(\theta^3)) x_2^{\alpha+2/3-1}Q -\theta x_2^{\alpha+2/3-4/3}(\theta+O(\theta^3))Q' -x_2^{\alpha+2/3-2/3}Q'' \\
			=-x_2^{\alpha}(Q''+z^2Q' -(\alpha+2/3) z Q +O(\theta^2)(Q+Q')), 
		\end{multline*}for $z=\frac{\theta}{x_2^{1/3}}$ for $x_2\approx |\theta|^3$ and $x_2+|\theta|^3<\delta.$ Then we have
		\begin{multline*}\sin \theta \partial_{x_2}f^* - \partial_\theta^2 f^*=(\sin\theta-\theta)\partial_{x_2}f^*_0+(\sin\theta \partial_{x_2}-\partial^2_\theta)R^*\\
			=-x_2^{\alpha}(x_2^{-\alpha}(\sin\theta-\theta)\partial_{x_2}f^*_0+Q''+z^2Q' -(\alpha+2/3) z Q +O(\theta^2)(Q+Q')).
		\end{multline*}Now we define $Q$ as the Kummer function which is the solution to $$Q''(z)+z^2Q'(z) -(\alpha+2/3) z Q(z)=0.$$
		Note that $O(\theta^2)(Q(z)+Q'(z))$ is sufficiently small if $\theta$ is small. Then, since $x_2^{-\alpha}(\sin\theta-\theta)\partial_{x_2}f^*_0$ is of order 1, $O(\theta^2)(Q(z)+Q'(z))$  is in the smaller scale than $x_2^{-\alpha}(\sin\theta-\theta)\partial_{x_2}f^*_0$ and is being absorbed. Thus, we can conclude that $f^*=f^*_0+R^*$  satisfies  $$\sin \theta \partial_{x_2}f^* - \partial_\theta^2 f^* = O(x_2^\alpha).$$ This completes the proof.
	\end{proof}

	On the other hand, we would also like to construct a part of the supersolution which behaves like a polynomial near the singular boundary. We first introduce the following lemma from \cite{MR3237885} on the existence of a function $\Lambda$ which will be used for the construction of a positive function $Z^*_0$ behaves like a power-law near the singular boundary and satisfies \eqref{Z*equation}.
	\begin{lemma}[{\cite[Claim 3.3.1]{MR3237885}}]\label{claim3.7}
		For any $\alpha \in (0,1/6),$ there exists a solution $\Lambda(\zeta)$ of \begin{equation}\label{3.29}\Lambda''(\zeta)+3\zeta^2\Lambda'(\zeta)-9\alpha \zeta \Lambda(\zeta)=0,\end{equation} with the form 
		$$\Lambda(\zeta)= U(-\alpha,2/3,-\zeta^3),\ \zeta\in \mathbb{R},$$ where we denote as $U(a,b,z)$ the Tricomi confluent hypergeometric function. The function $\Lambda(\zeta)$ has the following properties:
		\begin{itemize}
			\item $\Lambda(\zeta)>0$ for any $\zeta \in \mathbb{R}.$
			\item There exists a positive constant $K_+>0$ such that 
			\begin{equation*}
				\Lambda(\zeta)\sim \begin{cases}
					&K_+|\zeta|^{3\alpha},\ \zeta\rightarrow \infty,\\
					&|\zeta|^{3\alpha},\ \zeta\rightarrow-\infty.
				\end{cases}
			\end{equation*}
			\item The function $\Lambda(\zeta),$ up to a multiplicative constant, is the only solution of \eqref{3.29} which is polynomially bounded for large $|\zeta|$. 	
		\end{itemize} 
	\end{lemma}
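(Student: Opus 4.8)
The plan is to reduce equation \eqref{3.29} to Kummer's confluent hypergeometric equation by the substitution $u=-\zeta^{3}$, and then to deduce the three asserted properties of $\Lambda$ from the classical asymptotics and zero distribution of the Tricomi function $U$. Writing $\Lambda(\zeta)=w(-\zeta^{3})$, so that $\Lambda'(\zeta)=-3\zeta^{2}w'(-\zeta^{3})$ and $\Lambda''(\zeta)=9\zeta^{4}w''(-\zeta^{3})-6\zeta\,w'(-\zeta^{3})$, equation \eqref{3.29} becomes, after division by $-9\zeta$, the equation $u\,w''(u)+(\tfrac23-u)\,w'(u)+\alpha\,w(u)=0$ with $u=-\zeta^{3}$; this is Kummer's equation with parameters $a=-\alpha$ and $b=\tfrac23$, whose two-dimensional solution space is spanned by $M(-\alpha,\tfrac23,u)$ and $U(-\alpha,\tfrac23,u)$. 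Hence $\Lambda(\zeta)=U(-\alpha,\tfrac23,-\zeta^{3})$ solves \eqref{3.29} for $\zeta\neq0$, and since the coefficients of \eqref{3.29} are entire it extends to a smooth solution on all of $\mathbb{R}$.

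For the asymptotics I would invoke the standard expansions of $U$. As $\zeta\to-\infty$ the argument $u=-\zeta^{3}$ tends to $+\infty$, and $U(a,b,u)\sim u^{-a}$ gives $\Lambda(\zeta)\sim(-\zeta^{3})^{\alpha}=|\zeta|^{3\alpha}$ with leading coefficient $1$. As $\zeta\to+\infty$ the argument $u$ tends to $-\infty$; I would combine a connection formula expressing $U(-\alpha,\tfrac23,u)$ through the $M$-functions $M(-\alpha,\tfrac23,u)$ and $M(\tfrac13-\alpha,\tfrac43,u)$ (equivalently, via Kummer's transformation, through $M$-functions of argument $-u>0$) with the large-argument behaviour $M(a,b,u)\sim\frac{\Gamma(b)}{\Gamma(b-a)}(-u)^{-a}$, the $e^{u}=e^{-\zeta^{3}}$ branch being super-exponentially small. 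Both contributions turn out to be of order $\zeta^{3\alpha}$, with net coefficient $K_{+}=\frac{\Gamma(\tfrac13)\Gamma(\tfrac23)}{\Gamma(\tfrac13-\alpha)\Gamma(\tfrac23+\alpha)}-\frac{\Gamma(-\tfrac13)\Gamma(\tfrac43)}{\Gamma(-\alpha)\Gamma(1+\alpha)}$, which equals $1$ at $\alpha=0$ and remains strictly positive for $\alpha\in(0,\tfrac16)$; this yields $\Lambda(\zeta)\sim K_{+}|\zeta|^{3\alpha}$ with $K_{+}>0$.

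Positivity of $\Lambda$ and the uniqueness claim I would read off from the ODE together with these asymptotics. In self-adjoint form \eqref{3.29} reads $\bigl(e^{\zeta^{3}}\Lambda'\bigr)'=9\alpha\,\zeta\,e^{\zeta^{3}}\Lambda$; since $\Lambda$ is positive for $|\zeta|$ large, a Sturm-type argument rules out an interior zero, which (equivalently) is the classical fact that $U(a,b,x)$ has no positive zeros when $b-1\le a<0$, valid here because $\tfrac23-1=-\tfrac13\le-\alpha$ for $\alpha\in(0,\tfrac16)$. For the uniqueness statement, a dominant-balance/Levinson analysis of \eqref{3.29} near $\zeta=-\infty$ shows that its solution space is spanned by one solution $\sim|\zeta|^{3\alpha}$ and one solution $\sim e^{-\zeta^{3}}$; the latter blows up like $e^{|\zeta|^{3}}$ as $\zeta\to-\infty$, so any polynomially bounded solution is a constant multiple of $\Lambda$.

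I expect the main obstacle to be the second paragraph: tracking the solution $\Lambda$, which is most transparently described near $\zeta=-\infty$ (where $-\zeta^{3}>0$ and $U$ behaves as usual despite the negative parameter $a=-\alpha$), all the way to $\zeta=+\infty$ (where $-\zeta^{3}<0$ lies in the branch-cut region of $U$), and in particular certifying that the connection coefficient $K_{+}$ is strictly positive rather than merely nonzero. This computation, and the positivity of $\Lambda$ on all of $\mathbb{R}$ in the range $\alpha\in(0,\tfrac16)$, are carried out in detail in \cite{MR3237885}, whose argument we follow.
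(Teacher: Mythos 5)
The paper does not actually prove this lemma; it imports it verbatim with the citation to \cite[Claim 3.3.1]{MR3237885}. Your proposal therefore supplies considerably more than the paper does, and it is the natural argument one would give (and, by your own account, the one the cited reference gives). The reduction to Kummer's equation is algebraically correct: with $u=-\zeta^{3}$ one obtains $u w''+(\tfrac23-u)w'+\alpha w=0$, i.e.\ parameters $a=-\alpha$, $b=\tfrac23$, so $U(-\alpha,\tfrac23,-\zeta^{3})$ solves \eqref{3.29} where it is defined, and the real-analytic ODE coefficients let you continue across $\zeta=0$. The asymptotics $\Lambda\sim|\zeta|^{3\alpha}$ as $\zeta\to-\infty$ is the standard $U(a,b,u)\sim u^{-a}$ for $u\to+\infty$. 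For $\zeta\to+\infty$ the connection formula $U(a,b,z)=\frac{\Gamma(1-b)}{\Gamma(a-b+1)}M(a,b,z)+\frac{\Gamma(b-1)}{\Gamma(a)}z^{1-b}M(a-b+1,2-b,z)$ combined with $M(a,b,z)\sim\frac{\Gamma(b)}{\Gamma(b-a)}(-z)^{-a}$ for $z\to-\infty$ and the real cube root $z^{1/3}=-(-z)^{1/3}$ does yield your expression for $K_{+}$, which reduces to $1$ at $\alpha=0$.

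Two points deserve a caveat. First, the ``Sturm-type'' maximum-principle argument as you phrase it only closes on $\zeta>0$: at an interior negative minimum $\zeta^{*}$ the ODE gives $\Lambda''(\zeta^{*})=9\alpha\,\zeta^{*}\Lambda(\zeta^{*})$, which is a contradiction for $\zeta^{*}>0$ but not for $\zeta^{*}<0$, where the coefficient changes sign. On $\zeta<0$ (equivalently positive argument of $U$) you instead need the zero-counting fact for $U(a,b,x)$ with $-1<a<0$, which you do invoke; so the structure is fine, but the two half-lines genuinely require two different arguments rather than one global Sturm argument. Second, the strict positivity of $K_{+}$ on $(0,\tfrac16)$ is the real content here and is asserted rather than proved; you correctly acknowledge that this, together with the branch bookkeeping at $z<0$, is carried out in \cite{MR3237885}. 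Given that the paper defers everything to that reference, your sketch is a faithful and correct expansion of it, with the genuinely delicate steps identified and explicitly left to the same source.
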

		Using the lemma above, we will now construct a positive function $Z^*_0=Z^*_0(y,\zeta)$ which behaves like a power-law and further satisfies \eqref{Z*equation}.
	\begin{lemma}\label{Z0*lemma}For $\alpha \in (0,1/6),$
		there exists a sufficiently small $R_0(y,\zeta)$ such that \begin{itemize}
			\item $|R_0|\ll F_0$ for $|\zeta|^3+|y|\ll 1$ and that \item $Z^*_0(y,\zeta)\eqdef F_0(y,\zeta)+R_0(y,\zeta)$ satisfies \begin{equation}\label{Z*equation}(\partial_t+(\cos\theta,\sin\theta)\cdot\nabla_x-\partial_\theta^2)Z^*_0\left(\frac{x_2}{t^{3/2}},\frac{\theta}{t^{1/2}}\right)\ge 0,\end{equation} for $|\theta|^3+x_2\ll t^{3/2}$, where $F_0$ is given by $$F_0(y,\zeta)=y^\alpha\Lambda(\zeta/(9y)^{1/3}).$$
		\end{itemize}Notice that $Z^*_0>0.$
	\end{lemma}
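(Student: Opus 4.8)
The plan is to mimic the construction in the proof of Lemma~\ref{f*lemma}, carried out now in the self-similar variables $(y,\zeta)=(x_2 t^{-3/2},\theta t^{-1/2})$ and for the full evolution operator. The key initial observation is that the similarity variable $w:=\zeta/(9y)^{1/3}=\theta/(9x_2)^{1/3}$ is invariant under the parabolic rescaling, so that, writing $\widetilde F_0(t,x_2,\theta):=F_0(x_2 t^{-3/2},\theta t^{-1/2})$, one has $\widetilde F_0=t^{-3\alpha/2}g_0^*(x_2,\theta)$ with $g_0^*(x_2,\theta):=x_2^\alpha\Lambda(\theta/(9x_2)^{1/3})$; and by Lemma~\ref{claim3.7} together with the change of variable that turns \eqref{3.29} into the confluent hypergeometric equation, $g_0^*$ solves exactly $\theta\partial_{x_2}g_0^*=\partial_\theta^2 g_0^*$. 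Since $\widetilde F_0$ does not depend on $x_1$, the $\cos\theta\,\partial_{x_1}$ term drops out, and a direct computation gives the residual
\[
\bigl(\partial_t+(\cos\theta,\sin\theta)\cdot\nabla_x-\partial_\theta^2\bigr)\widetilde F_0
=-\tfrac{3\alpha}{2}\,t^{-1}\widetilde F_0+t^{-3\alpha/2}(\sin\theta-\theta)\,\partial_{x_2}g_0^* .
\]
Using $\partial_{x_2}g_0^*=x_2^{\alpha-1}\bigl(\alpha\Lambda(w)-\tfrac w3\Lambda'(w)\bigr)$ and $\sin\theta-\theta=-\tfrac32 x_2 w^3+O\bigl((x_2 w^3)^{5/3}\bigr)$ (note $\theta^3=9x_2w^3$), both contributions are of size $t^{-3\alpha/2-1}x_2^{\alpha}$ times a function of $w$ that remains controlled by $\Lambda(w)$; crucially, the first one has the ``wrong'' sign for a supersolution, namely $-\tfrac{3\alpha}{2}t^{-1}\widetilde F_0<0$ since $\Lambda>0$.

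The correction $R_0$ is taken one ``power'' below $F_0$: set $R_0(y,\zeta)=y^{\alpha+2/3}Q_\star\bigl(\zeta/(9y)^{1/3}\bigr)$, so that $R_0(x_2 t^{-3/2},\theta t^{-1/2})=t^{-3\alpha/2-1}x_2^{\alpha+2/3}Q_\star(w)$. Exactly as in Lemma~\ref{f*lemma}, applying $\theta\partial_{x_2}-\partial_\theta^2$ to $x_2^{\alpha+2/3}Q_\star(w)$ produces, at the order $x_2^\alpha$, a Kummer-type operator in $w$ that is the analogue of \eqref{3.29} with $\alpha$ replaced by $\alpha+\tfrac23$, plus errors of order $x_2^{\alpha+2/3}$. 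Hence I would choose $Q_\star$ to be the polynomially bounded solution of the \emph{inhomogeneous} equation of that type with source a suitably signed multiple of $\Lambda$, chosen large enough to overwhelm the $\tfrac{3\alpha}{2}$ coming from the time derivative; such a $Q_\star$ is obtained by variation of parameters from the two homogeneous solutions $M\bigl(-(\alpha+\tfrac23),\tfrac23,-w^3\bigr)$ and $U\bigl(-(\alpha+\tfrac23),\tfrac23,-w^3\bigr)$, and, using their asymptotics together with $\alpha\in(0,1/6)$, one finds $Q_\star(w)=O(|w|^{3\alpha-1})$ as $|w|\to\infty$ and $Q_\star$ bounded near $0$. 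With this choice the leading part of the residual of $Z_0^*=F_0+R_0$ becomes
\[
\bigl(\partial_t+(\cos\theta,\sin\theta)\cdot\nabla_x-\partial_\theta^2\bigr)Z_0^*
= t^{-3\alpha/2-1}x_2^\alpha\bigl[\,\bigl(C_\star-\tfrac{3\alpha}{2}\bigr)\Lambda(w)+\mathcal E\,\bigr],
\]
where $C_\star$ is a fixed constant that can be prescribed as large as we wish and $\mathcal E$ collects the cubic defect $\sin\theta-\theta$ and the subleading $O(x_2^{\alpha+2/3})$ terms produced by the operator acting on $R_0$. One then checks that $\mathcal E$ is bounded relative to $\Lambda(w)$ by $C(|\theta|+t)$ on the region $|\theta|^3+x_2\ll t^{3/2}$ — the $x_2$-dependence cancels by construction, just as the $O(\theta^2)(Q+Q')$ error was absorbed in Lemma~\ref{f*lemma} — so for $|\theta|$ and $t$ small (which may be arranged since the statement is local near the singular boundary) we get $C_\star-\tfrac{3\alpha}{2}+\mathcal E/\Lambda>0$, and since $\Lambda>0$ the inequality \eqref{Z*equation} follows. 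Finally $R_0/F_0\sim y^{2/3}Q_\star(w)/\Lambda(w)$, which tends to $0$ on $|\zeta|^3+|y|\ll1$ because $Q_\star$ decays faster than $\Lambda$ grows, giving $|R_0|\ll F_0$ there and hence $Z_0^*=F_0\bigl(1+R_0/F_0\bigr)>0$.

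I expect the main obstacle to be exactly the sign bookkeeping that separates this lemma from Lemma~\ref{f*lemma}: there it sufficed to know the residual had controlled size $O(x_2^\alpha)$, whereas here it must be genuinely nonnegative, and the self-similar time derivative contributes the negative term $-\tfrac{3\alpha}{2}t^{-1}F_0$. The resolution above hinges on two points that require care: (i) that the inhomogeneous Kummer-type equation for $Q_\star$ can be solved with the source large enough to dominate $\tfrac{3\alpha}{2}$ while still keeping $Q_\star$ polynomially bounded (so that $R_0$ stays small) — this is where $\alpha\in(0,1/6)$ enters, through the exponent $3\alpha-1<0$; and (ii) that every remaining error, in particular the cubic defect $\sin\theta-\theta$, which relative to the leading term carries an extra factor of order $t$ and does not decay in $w$ at infinity, is strictly dominated by $\bigl(C_\star-\tfrac{3\alpha}{2}\bigr)\Lambda(w)$ throughout $\{|\theta|^3+x_2\ll t^{3/2}\}$, which is what forces the region in \eqref{Z*equation} to lie in a small neighborhood of the singular boundary.
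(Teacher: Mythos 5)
Your proposal matches the paper's argument: the same ansatz $R_0 = y^{\alpha+2/3}\,(\text{function of } \zeta/(9y)^{1/3})$, the same reduction to an inhomogeneous Kummer-type ODE solved by variation of parameters with $M$ and $U$, the same identification of $-\tfrac{3\alpha}{2}t^{-1}F_0$ as the term with the wrong sign, and the same absorption of the $\sin\theta-\theta$ defect and subleading errors in the regime $|\theta|^3+x_2\ll t^{3/2}$. The only cosmetic difference is that the paper carries out the residual computation entirely in the self-similar variables $(y,\zeta)$ (with auxiliary variable $z=-\zeta^3/(9y)$), whereas you first exploit the factorization $\widetilde F_0=t^{-3\alpha/2}g_0^*(x_2,\theta)$ and work in $(t,x_2,\theta)$; the two computations are equivalent.
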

	\begin{proof}We prove this locally on some interval $t\in [t_0, t_0+1]$ so that $t -t_0\le 1$ and locally on the domain with  $|\theta|^3+x_2\ll t^{3/2}$. Without loss of generality, we can assume that $t_0=0$ and let $t\le 1$. 	Since $\Lambda$ satisfies \eqref{3.29}, the function $F_0$ with the definition $F_0(y,\zeta)=y^\alpha\Lambda(\zeta/(9y)^{1/3})$ satisfies the following identity:
	$$(\partial^2_\zeta -\zeta\partial_y)F_0=0.$$Therefore, we further note that
\begin{equation}\label{F0identity}
	  \left(\partial^2_\zeta+\frac{1}{2}\zeta\partial_\zeta+\left(\frac{3}{2}y -\zeta\right)\partial_y\right)F_0
	  =\left(\frac{1}{2}\zeta\partial_\zeta+\frac{3}{2}y \partial_y\right)F_0
	  =\frac{3}{2}\alpha F_0.
\end{equation}On the other hand, we also notice that, for $\zeta=\theta t^{-1/2}$ and $y=x_2t^{-3/2},$
		\begin{multline}\label{eq64}
			(\partial_t+(\cos\theta,\sin\theta)\cdot\nabla_x-\partial_\theta^2)Z^*_0\left(\frac{x_2}{t^{3/2}},\frac{\theta}{t^{1/2}}\right)	=(\partial_t+\sin\theta\partial_{x_2}-\partial_\theta^2)Z^*_0\left(\frac{x_2}{t^{3/2}},\frac{\theta}{t^{1/2}}\right)\\
			=\left(-\frac{3}{2}\frac{x_2}{t^{5/2}}\partial_y-\frac{1}{2}\frac{\theta}{t^{3/2}}\partial_\zeta+\sin\theta t^{-3/2}\partial_y-t^{-1}\partial_\zeta^2\right) Z^*_0\left(\frac{x_2}{t^{3/2}},\frac{\theta}{t^{1/2}}\right)\\
			=t^{-1}\left(-\frac{3}{2}y\partial_y-\frac{1}{2}\zeta\partial_\zeta+\sin\theta t^{-1/2}\partial_y-\partial_\zeta^2\right)  Z^*_0\left(y,\zeta\right)\\
			=-t^{-1}\left(\partial_\zeta^2+\frac{1}{2}\zeta\partial_\zeta+\left(\frac{3}{2}y-\zeta\right)\partial_y\right)  Z^*_0+t^{-3/2}\left(-\theta+\sin\theta \right)\partial_{y}Z^*_0\\
			=-t^{-1}\left[\frac{3}{2}\alpha F_0+\left(\partial_\zeta^2+\frac{1}{2}\zeta\partial_\zeta+\left(\frac{3}{2}y-\zeta\right)\partial_y\right)  R_0\right]+t^{-3/2}\left(-\theta+\sin\theta \right)\partial_{y}(F_0+R_0)\\
			=-t^{-1}\left[\left(\partial_\zeta^2+\frac{1}{2}\zeta\partial_\zeta+\left(\frac{3}{2}y-\frac{\sin\theta}{\theta}\zeta\right)\partial_y\right)  R_0+\frac{3}{2}\alpha F_0-\frac{\sin\theta}{\theta}\zeta F_0\right],
		\end{multline}by \eqref{F0identity} where $Z^*_0\eqdef F_0+R_0$. Therefore, if we find a function $R_0$ which satisfies
		\begin{equation}\label{R0ineq}\left(\partial_\zeta^2+\frac{1}{2}\zeta\partial_\zeta+\left(\frac{3}{2}y-\frac{\sin\theta}{\theta}\zeta\right)\partial_y\right)  R_0+\frac{3}{2}\alpha F_0-\frac{\sin\theta}{\theta}\zeta F_0\le 0,\end{equation} then $Z^*_0\eqdef F_0+R_0$ will then satisfy \eqref{Z*equation} by \eqref{eq64}. Hence, our goal is to construct $R_0$ that satisfies \eqref{R0ineq} for the given function $F_0(y,\zeta)=y^\alpha\Lambda(\zeta/(9y)^{1/3})$, and the rest of the proof is devoted to the construction.
		
		Now we put an additional ansatz that $R_0(y,\zeta)=y^{\frac{2}{3}+\alpha}\varphi\left(-\frac{\zeta^3}{9y}\right)=y^{\frac{2}{3}+\alpha}\varphi(z)$ for some function $\varphi$ such that $|\varphi(z)|\ll y^{2/3}\Lambda(-z^{1/3})$ for $y\ll 1$ with $z=-\frac{\zeta^3}{9y}$. Then we have
		\begin{multline*}
		    \left(\partial_\zeta^2+\frac{1}{2}\zeta\partial_\zeta+\left(\frac{3}{2}y-\frac{\sin\theta}{\theta}\zeta\right)\partial_y\right)  R_0\\
		    =-\zeta y^{\alpha-\frac{1}{3}}\left(z\frac{d^2}{dz^2}\varphi+\left(\frac{2}{3}-\frac{\sin\theta}{\theta}z\right)\frac{d}{dz}\varphi+\left(\frac{2}{3}+\alpha\right)\frac{\sin\theta}{\theta}\varphi\right)\\+\frac{3}{2}\left(\frac{2}{3}+\alpha\right)y^{\frac{2}{3}+\alpha}\varphi.
		\end{multline*}
		Thus, it suffices to find $\varphi(z)$ such that 
			\begin{multline*}
		 z^{1/3}\left(z\frac{d^2}{dz^2}\varphi+\left(\frac{2}{3}-\frac{\sin\theta}{\theta}z\right)\frac{d}{dz}\varphi+\left(\frac{2}{3}+\alpha\right)\frac{\sin\theta}{\theta}\varphi\right)+\left(1+\frac{3}{2}\alpha\right)y^{\frac{2}{3}}\varphi\\\le \left(-\frac{3}{2}\alpha +\frac{\sin\theta}{\theta}\zeta\right) \Lambda,
		\end{multline*}since $F_0=y^\alpha \Lambda.$ Then this is equivalent to	\begin{multline*}
		 z^{1/3}\left(z\frac{d^2}{dz^2}\varphi+\left(\frac{2}{3}-z\right)\frac{d}{dz}\varphi+\left(\frac{2}{3}+\alpha\right)\varphi\right)+\left(1+\frac{3}{2}\alpha\right)y^{\frac{2}{3}}\varphi\\\le \left(-\frac{3}{2}\alpha +\frac{\sin\theta}{\theta}\zeta\right) \Lambda-z^{1/3}\left(1-\frac{\sin\theta}{\theta}\right)z\frac{d}{dz}\varphi +z^{1/3}\left(1-\frac{\sin\theta}{\theta}\right)\varphi.
		\end{multline*}Here we note that 
		\begin{multline*}\left|z^{1/3}\left(1-\frac{\sin\theta}{\theta}\right)z\frac{d}{dz}\varphi\right|\approx \left|z^{1/3}\left(1-\frac{\sin\theta}{\theta}\right)\varphi\right|\approx \left|z^{1/3}\theta^2\varphi\right|\approx \left|\frac{\theta^3}{x_2^{1/3}}\varphi\right|\\
		\ll \left|\frac{\theta^3}{x_2^{1/3}}y^{2/3}\Lambda\right|\approx \left|\zeta y t \Lambda\right|\ll |\Lambda|,\end{multline*}
		for $y, |\zeta|^3\ll 1$ and $|\theta|\ll t^{1/2}\le 1$. Therefore, it suffices to find $\varphi(z)$ such that $$z\frac{d^2}{dz^2}\varphi+\left(\frac{2}{3}-z\right)\frac{d}{dz}\varphi+\left(\frac{2}{3}+\alpha\right)\varphi\le -2z^{-1/3}\alpha Q.$$ The existence of such a solution $\varphi$ to the ODE has already been studied in \cite{MR3237885} in the variation form \cite[(3.42)]{MR3237885} involving the Kummer and the Tricomi hypergeometric functions $M(a,b,z)$ and $U(a,b,z).$ Furthermore, it was also shown at \cite[(3.43)]{MR3237885} that the solution satisfies the ansatz that $|\varphi(z)|\ll y^{2/3}\Lambda (-z^{1/3})$ for $|\zeta|^3+y\ll 1.$ This completes the proof of the lemma. 
	\end{proof}
	Finally, we prove the H\"older continuity near the singular boundary:
	\begin{proof}[Proof of Theorem \ref{maintheorem}$_{(\ref{holdercontinuity})}$. ]
		Now using the function $Z^*_0$ defined in Lemma \ref{Z0*lemma}, we define
		\begin{equation}\label{eq fhat0}\hat{f}_0(t,x_1,x_2,\theta)=Z^*_0\left(\frac{x_2}{t^{3/2}},\frac{\theta}{t^{1/2}}\right),\end{equation}
		where $Z^*_0$ is given by Lemma \ref{Z0*lemma}. Then define 
		$$\bar{f}_0^\epsilon (t,x_1,x_2,\theta)=\|f_{in}\|_\infty \hat{f}_0(t,x_1,x_2,\theta)+\epsilon f^*(x_2,\theta),$$ where $f^*$ is given by Lemma \ref{f*lemma} which is a singular self-similar profile near the singular boundary $x_2=0$ and $\theta=0$. Then by Lemma \ref{Z0*lemma}, we have
		$$(\partial_t+(\cos\theta,\sin\theta)\cdot\nabla_x-\partial_\theta^2)\hat{f}_0(t,x_1,x_2,\theta)\ge 0.$$
		Therefore, $\bar{f}_0^\epsilon$ is a supersolution of the equation \eqref{FP eq}. Then since $f^*$ satisfies \eqref{steadyFP}, we have $\bar{f}_0^\epsilon$ is a supersolution.   By the asymptotic behavior in Lemma \ref{claim3.7}, note that we also have
		$$f(0,x,\theta)\le \bar{f}_0^\epsilon(0,x,\theta),$$
		for $(x,\theta)\in \mathbb{R}^2_+\times (-\pi,\pi).$
		Define $h=f-\bar{f}^\epsilon_0$. Then $h$ satisfies that
		$$(\partial_t+(\cos\theta,\sin\theta)\cdot\nabla_x-\partial_\theta^2)h\le 0,$$ with the initial condition $h(0,x,\theta)\le 0.$ Then by the maximum principle (cf. Lemma \ref{maxprinciple2}), we obtain that $h(t,x,\theta)\le 0.$   Then by taking $\epsilon \rightarrow 0,$ we obtain that
		$$f(t,x,\theta)\le \|f_{in}\|_\infty \hat{f}_0(t,x,\theta),$$ for all $t>0$ and $(x,\theta)\in \mathbb{R}^2_+\times (-\pi,\pi).$ 
		This implies that 
		$$f(t,x,\theta)\le C(x_2^\alpha + |\theta|^{3\alpha}),$$ where $\alpha$ is the one given in Lemma \ref{claim3.7} and $C$ depends only on $\|f_{in}\|_\infty$.
		
		Now choose any two points $(x_2,\theta)$ and $(z_2,\vartheta)$ both in $\mathbb{R}_+\times [-\pi,\pi].$ Without loss of generality, we assume that 
		$0<\delta_2\eqdef(z_2+|\vartheta|^3)^{1/3}<\delta_1\eqdef(x_2+|\theta|^3)^{1/3}<\delta_0$ for a small constant $\delta_0>0.$ Define
		$$\rho\eqdef (|z_2-x_2|+|\vartheta-\theta|^3)^{1/3}.$$ 
		
		If $\rho^3\ge \frac{1}{100}(\delta_1^3+\delta_2^3),$ then
		we have
		\begin{multline*}
			|f(t,x_1,x_2,\theta)-f(t,x_1,z_2,\vartheta)|\le 	|f(t,x_1,x_2,\theta)|+|f(t,x_1,z_2,\vartheta)|\\\lesssim x_2^\alpha+|\theta|^{3\alpha}+z_2^\alpha+|\vartheta|^{3\alpha}\lesssim (\delta_1^{3\alpha}+\delta_2^{3\alpha})\rho^{3\alpha}, 
		\end{multline*}which results in 
		$$
		|f(t,x_1,x_2,\theta)-f(t,x_1,z_2,\vartheta)|\le C (|x_2-z_2|^\alpha+|\theta-\vartheta|^{3\alpha}).
		$$
		Therefore, we have Theorem \ref{maintheorem}$_{(\ref{holdercontinuity})}$ when $\rho^3\ge \frac{1}{100}(\delta_1^3+\delta_2^3).$
		
		On the other hand, if $\rho^3< \frac{1}{100}(\delta_1^3+\delta_2^3),$ then we have $\rho\ll \delta_1$ and $\delta_1\sim \delta_2$. Without loss of generality, let $\rho\le \frac{1}{10}\delta_2$ and $\delta_1\le 2\delta_2$ and consider the following rescaling:
		$$ f(t,x,\theta)= \delta_1^{3\alpha}  \bar{f}(\tau,X,\Theta),\ x=\delta_2^3X,\ \theta=\delta_2\Theta,\ t=\delta_2^2\tau.$$ Then, $\bar{f}$ solves 	$$ 
		\left(	\partial_\tau+\frac{\cos(\delta_2\Theta)}{\delta_2}\partial_{X_1}\right)\bar{f} + \frac{\sin(\delta_2\Theta)}{\delta_2}\partial_{X_2}\bar{f} - \partial_\Theta^2 \bar{f}=0.
		$$
		For a sufficiently small $\theta =\delta_2\Theta$, we have
		$$ 
		\left(	\partial_\tau+\frac{1}{\delta_2}\partial_{X_1}\right)\bar{f} + \frac{\sin(\delta_2\Theta)}{\delta_2}\partial_{X_2}\bar{f}  - \partial_\Theta^2 \bar{f}=\frac{1-\cos(\delta_2\Theta)}{\delta_2}\partial_{X_1}\bar{f} ,
		$$as $\cos\theta=1+O(\theta^2)$.

		We further make a change of variables $$(\tau,X_1,X_2,\Theta)\mapsto (\tau,X_1'=X_1+\frac{1}{\delta_2}\tau, X_2,\Theta),$$ such that we obtain
		$$ 
		\partial_\tau\bar{f} + \frac{\sin(\delta_2\Theta)}{\delta_2}\partial_{X_2}\bar{f} - \partial_\Theta^2 \bar{f}=\frac{1-\cos(\delta_2\Theta)}{\delta_2}\partial_{X'_1}\bar{f}=\frac{O(\delta_2^2\Theta^2)}{\delta_2}\partial_{X'_1}\bar{f}=\Theta O(\theta)\partial_{X_1'}\bar{f}.
		$$ Note that the coefficient of $\partial_{X_1'}\bar{f}$ is uniformly bounded for $\Theta \approx 1$ and $\theta \ll 1.$  Also, the coefficient of $\partial_{X_2}\bar{f}$ is uniformly bounded  for $\Theta \approx 1$ and $\theta \ll 1$ as 
		$$\frac{\sin(\delta_2\Theta)}{\delta_2}= \Theta +\frac{O(\delta_2^3\Theta^3)}{\delta_2}=\Theta(1+O(\theta^2)).$$
		For the given $\delta_2>0,$ 
		the operator is uniformly-in-$\delta_2$ hypoelliptic in the variables $(\tau, X_1',X_2,\Theta)$ by the H\"ormander's hypoelliptic theorem \cite{Hormander}. This can be shown similarly to Section \ref{sec:hypoellipticity} by choosing
		$$X_0=i\partial_\Theta\text{ and }X_1=\partial_{\tau}+\Theta O(\theta)\partial_{X_1'}+	\Theta(1+O(\theta^2))\partial_{X_2}.$$
		Therefore, for any sufficiently small $$|\Theta|^3+X_2 \le \delta_1^3/\delta_2^3,$$  we have $C>0$
		such that
		$$|\partial_\tau \bar{f}|+|\partial_{X_1'}\bar{f}|+|\partial_{X_2}\bar{f}|+|\partial_\Theta \bar{f}|\le C.$$
		In terms of the original variables, we have
		$$\delta_2^2|\partial_t f|+\delta_2^3 |\partial_{(t+x_1)} f|+\delta_2^3|\partial_{x_2}f|+\delta_2|\partial_\theta f|\le C\delta_1^{3\alpha}.$$
		Finally, we observe that
		\begin{multline*}
			|f(t,x_1,x_2,\theta)-f(t,x_1,z_2,\vartheta)|\\
			 \le |\partial_{x_2} f||x_2-z_2|+|\partial_\theta f||\theta-\vartheta|
			\le C\delta_1^{3\alpha}\left(\frac{|x_2-z_2|}{\delta_2^3}+\frac{|\theta-\vartheta|}{\delta_2}\right)\\
			\le C\delta_1^{3\alpha}\left(\frac{\rho^3}{\delta_2^3}+\frac{\rho}{\delta_2}\right)
			\le C\rho^{3\alpha}\left(\frac{\rho^{3-3\alpha}\delta_1^{3\alpha}}{\delta_2^3}+\frac{\rho^{1-3\alpha}\delta_1^{3\alpha}}{\delta_2}\right)
			\lesssim \rho^{3\alpha},
		\end{multline*} for $\alpha\in(0,1/6)$, since we have $\rho^3< \frac{1}{100}(\delta_1^3+\delta_2^3)$ and $\delta_1\sim \delta_2$. Therefore, we have
		$$
		|f(t,x_1,x_2,\theta)-f(t,x_1,z_2,\vartheta)|\le C (|x_2-z_2|^\alpha+|\theta-\vartheta|^{3\alpha}).
		$$
		Therefore, this completes the proof for Theorem \ref{maintheorem}$_{(\ref{holdercontinuity})}$.
	\end{proof}	This completes the proof of the H\"older continuity of the solution in the domain including the singular boundary. In the next section, we prove that the mass is being accumulated on the boundary $x_2=0$ and the total mass is conserved.
	
	\section{Balance of Mass}\label{sec:mass conservation}
	In this section, we introduce one property of the conservation of total mass that the solution $f$ to the problem enjoys. Namely, we will derive an identity on the balance of mass and will prove Theorem \ref{maintheorem}$_{\eqref{mainconmass1},\eqref{mainconmass2}}$ in this section. 
	
	We introduce the following lemma on the balance of mass identity:
	\begin{lemma}[Balance of mass]\label{lemma.balancemass}
		The solution $f$ to the problem \eqref{FP eq}-\eqref{periodic} and \eqref{boundarycon} in the sense of Definition \ref{weaksoldef} satisfies that
		\begin{equation}\label{balanceof mass eq}\frac{d}{dt}\iiint_{\mathbb{R}^2_+\times [-\pi,\pi]}  f_r(t,x_1,x_2,\theta)\ dx_1dx_2d\theta
			+\int_{\mathbb{R}} \left(\rho_+(t,x_1)+\rho_-(t,x_1)\right)dx_1 =0,\end{equation}
		for any $t\ge 0.$
	\end{lemma}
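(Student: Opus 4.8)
The plan is to integrate the reformulated system \eqref{FP eq tu}--\eqref{boundarycon tu} against the test function $\phi\equiv 1$ and track the boundary fluxes carefully. Concretely, I would apply the weak formulation \eqref{eq.weaksol}, or rather a localized-in-time version of it, with a test function that is $\phi\equiv 1$ on the whole domain. Since $\phi\equiv1$ trivially satisfies the boundary conditions and all the regularity requirements in Definition \ref{weaksoldef}, and since $\partial_t\phi=\nabla_x\phi=\partial_\theta^2\phi=0$, the identity \eqref{eq.weaksol} with $\phi\equiv1$ on a time interval $[t_1,t_2]$ collapses to
\begin{equation*}
\iiint f_r(t_2)+\int_{\mathbb{R}}(\rho_+(t_2)+\rho_-(t_2))\,dx_1=\iiint f_r(t_1)+\int_{\mathbb{R}}(\rho_+(t_1)+\rho_-(t_1))\,dx_1,
\end{equation*}
which is exactly the integrated (total-mass-conservation) statement. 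The \emph{differential} statement \eqref{balanceof mass eq} then follows by differentiating this relation in $t_2$; equivalently, one reads it off directly from the weak formulation restricted to an arbitrarily short time interval.

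A more hands-on derivation, which I would also include to make the flux bookkeeping transparent, is to integrate \eqref{FP eq tu}$_1$ over $\mathbb{R}\times(0,\infty)\times[-\pi,\pi]$. The term $\partial_\theta^2 f_r$ integrates to zero by the periodic boundary condition \eqref{FP eq tu}$_3$. The $\partial_{x_1}$ part of $(\cos\theta,\sin\theta)\cdot\nabla_x f_r$ integrates to zero (no boundary in $x_1$, and $f_r$ decays, using the $C(X)$ framework and the tightness guaranteed by the construction in Section \ref{sec: uniqueness}). The $\partial_{x_2}$ part produces a boundary term: $\int_{\mathbb{R}}dx_1\int_{-\pi}^{\pi}d\theta\,\sin\theta\,\partial_{x_2}f_r=-\int_{\mathbb{R}}dx_1\int_{-\pi}^\pi d\theta\,\sin\theta\,f_r\big|_{x_2=0}$ after integrating by parts in $x_2$ (with the vanishing of $f_r$ as $x_2\to\infty$). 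Because $f_r(\{x_2=0\})\equiv0$, one must interpret this as the limiting flux $\lim_{x_2\to0^+}$, which splits over $\theta\in(-\pi/2,0]$ and $\theta\in[-\pi,-\pi/2)$ (the incoming characteristics) and over $\theta\in(0,\pi)$ (outgoing, contributing nothing since those characteristics carry mass away from $x_2=0$ into the region where $f_r$ already lives, i.e. no new flux across $x_2=0$ from that side for $f_r$ itself). Thus
\begin{equation*}
\frac{d}{dt}\iiint f_r=-\lim_{x_2\to0^+}\int_{(-\pi/2,0]}f_r(-\sin\theta)\,d\theta-\lim_{x_2\to0^+}\int_{[-\pi,-\pi/2)}f_r(-\sin\theta)\,d\theta\,,
\end{equation*}
integrated in $x_1$. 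Comparing with \eqref{boundarycon tu}, the right-hand side is precisely $-\frac{d}{dt}\int_{\mathbb{R}}\rho_+\,dx_1-\frac{d}{dt}\int_{\mathbb{R}}\rho_-\,dx_1$, which rearranges to \eqref{balanceof mass eq}.

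I would organize the rigorous version as follows: (i) justify that $\phi\equiv1$ is an admissible test function in Definition \ref{weaksoldef} and that all integrals are finite, using that $f(t)\in\mathcal{M}_+(X)$ is a finite measure (total mass controlled by the initial mass via the maximum principle for the adjoint problem, Lemma \ref{maxprinciple2}); (ii) plug $\phi\equiv1$ into \eqref{eq.weaksol} over $[0,T]$ with arbitrary $T$ to get the integrated conservation law; (iii) differentiate in $T$ — legitimate because the map $t\mapsto\iiint f_r(t)+\int(\rho_++\rho_-)(t)$ is constant hence trivially differentiable, but to get the \emph{form} \eqref{balanceof mass eq} with the flux terms exposed, instead use a test function $\phi=\phi_\delta(t)$ approximating $\mathbf{1}_{[0,t_0]}$ and pass $\delta\to0$, or directly invoke the regularity (hypoellipticity, Theorem \ref{maintheorem}$_{(\ref{hypoellipticitymaintheorem})}$, and boundary H\"older continuity, Theorem \ref{maintheorem}$_{(\ref{holdercontinuity})}$) to make the integration-by-parts computation of the previous paragraph fully rigorous, including the existence of the one-sided limits $\lim_{x_2\to0^+}$.

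The main obstacle is step (iii): controlling the boundary flux limit $\lim_{x_2\to0^+}\int f_r(-\sin\theta)\,d\theta$ rigorously. One must know that this limit exists and that integration by parts in $x_2$ is valid up to the boundary. The H\"older continuity in $x_2$ and $\theta$ near the singular boundary (Theorem \ref{maintheorem}$_{(\ref{holdercontinuity})}$) together with the interior smoothness (hypoellipticity) should suffice: on $\theta\in(-\pi/2,0]$ and $\theta\in[-\pi,-\pi/2)$ away from the singular points $\theta=0,-\pi$ the solution is smooth up to $x_2=0$ by boundary hypoellipticity, and near $\theta=0,-\pi$ the H\"older bound $f\le C(x_2^\alpha+|\theta|^{3\alpha})$ controls the contribution, which vanishes in the limit since $-\sin\theta\to0$ there anyway — so the flux integrand is dominated by an integrable function and the limit passes inside the integral by dominated convergence. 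A secondary technical point is the decay as $|x_1|\to\infty$ and $x_2\to\infty$, which is handled by the compactification $X$ and the fact that $f_r(t)$ is a finite measure with no mass escaping to $\infty$ for finite $t$ — a fact one can extract from the maximum principle applied to the adjoint problem against cutoff test functions localized near $\infty$.
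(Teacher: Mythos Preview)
Your approach is essentially the same as the paper's: test the weak formulation against a function depending on $t$ only. The paper does this in one stroke by taking $\phi=h(t)$ for an arbitrary $h\in C^1([0,T])$, so that \eqref{eq.weaksol} immediately reads
\[
\int_0^T h'(t)\Big[\iiint f_r(t)+\int_{\mathbb{R}}(\rho_++\rho_-)(t)\,dx_1\Big]\,dt
= h(T)\!\iiint f(T)-h(0)\!\iiint f_{in},
\]
which is precisely the identity \eqref{balanceof mass eq} in the sense of distributions on $[0,T]$. This bypasses your step (iii) entirely: there is no need to first take $\phi\equiv 1$, then approximate $\mathbf{1}_{[0,t_0]}$, and no need to differentiate anything afterwards.

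Your second, ``hands-on'' paragraph (integrating \eqref{FP eq tu}$_1$ by parts in $x_2$, splitting the flux over $(-\pi/2,0]$ and $[-\pi,-\pi/2)$, invoking the H\"older and hypoellipticity results to justify the one-sided limits) is correct in spirit but entirely unnecessary here: the weak formulation already has the boundary conditions built in, so there is no boundary bookkeeping to do once you choose a spatially constant test function. The paper's proof is three lines for this reason. Your longer route would be the right one if you were working at the level of the strong equations and needed to \emph{verify} that the weak and strong formulations match at the boundary, but that is not what the lemma asks.
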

	\begin{proof}
		We prove the identity using a special test function in \eqref{eq.weaksol}. Namely, we choose $\phi=h(t)$ where $h(t)\in C^1([0,T])$.
		By \eqref{eq.weaksol}, we then have
		\begin{multline}\label{weaksolspecialtest}\int_0^T dt\ \partial_t h\left[\iiint_{\mathbb{R}^2_+\times [-\pi,\pi]} dx_1dx_2d\theta\  f_r
			+ \int_{\mathbb{R}}dx_1 \  \partial_th \left(\rho_+(t,x_1)  +\rho_-(t,x_1)\right)\right]\\=\iiint_{\mathbb{R}^2_+\times [-\pi,\pi]} dx_1dx_2d\theta \ f(T)h(T) -\iiint_{\mathbb{R}^2_+\times [-\pi,\pi]} dx_1dx_2d\theta \ f_{in}h(0) .\end{multline} 
		We therefore obtain from \eqref{weaksolspecialtest} that \eqref{balanceof mass eq} holds in the sense of distributions on $[0,T]$.
	\end{proof}
	As a direct consequence, we can obtain Theorem \ref{maintheorem}$_{\eqref{mainconmass1},\eqref{mainconmass2}}$:
	\begin{proof}[Proof of Theorem \ref{maintheorem}$_{\eqref{mainconmass1},\eqref{mainconmass2}}$]
		Note that Lemma \ref{lemma.balancemass} directly implies Theorem \ref{maintheorem}$_{\eqref{mainconmass2}}$ as
		$$
		f(t,x_1,x_2,\theta)=\rho_+(t,x_1)\delta(x_2)\delta(\theta)+\rho_-(t,x_1)\delta(x_2)\delta(\theta+\pi)+f_r(t,x_1,x_2,\theta),$$ by Definition \ref{weaksoldef}.
		
		In addition, note that Lemma  \ref{lemma.balancemass} implies that for any $t \ge 0$,
		\begin{equation}\notag
			\frac{d}{dt}\iiint_{\mathbb{R}^2_+\times [-\pi,\pi]}  f_r(t,x_1,x_2,\theta)\ dx_1dx_2d\theta
			\le 0,\end{equation} in the sense of distributions, since $\rho_+(t),\rho_-(t)\in\mathcal{M}_+(\mathbb{R})$.
		Therefore, for any $0\le t_1<t_2<\infty$, we have
	\begin{multline*}\int_{\mathbb{R}}dx_1\int_{\{x_2>0\}}dx_2 \int_{-\pi}^\pi d\theta \ f_r(t_1,x_1,x_2,\theta)\\
			\ge \int_{\mathbb{R}}dx_1\int_{\{x_2>0\}}dx_2 \int_{-\pi}^\pi d\theta \ f_r(t_2,x_1,x_2,\theta).\end{multline*}
		This proves Theorem \ref{maintheorem}$_{\eqref{mainconmass1}}.$
	\end{proof}This completes the proof on the mass accumulation on the boundary and the total mass conservation. In the next section, we will prove Theorem \ref{maintheorem}$_{ \eqref{longtimeasymptoticmain}}$ on the long-chain asymptotics as $t\to \infty.$
	
	\section{Long-chain asymptotics}\label{sec:longtime asymptotics}Provided the existence of a unique weak solution, we now study the long-chain asymptotics of the measure solution. In this section, we prove Theorem \ref{maintheorem}$_{ \eqref{longtimeasymptoticmain}}$.
	Suppose that we are given an initial measure $f_{in}\in \mathcal{M}_+(X)$.
	Then we claim that $f_r$ satisfies
	$$\iiint_{\mathbb{R}^2_+\times [-\pi,\pi]} f_r(t,x,\theta)\ dx_1dx_2d\theta \rightarrow 0,$$ as $t\rightarrow \infty.$ In addition, we will also observe that the long-chain asymptotics
	$\rho^+_\infty(x_1),\ \rho^-_\infty(x_1)$ of $\rho^+(t,x_1)$,  $\rho^-(t,x_1)$ satisfy
	$$\int_{\mathbb{R}}dx_1(\rho^+_\infty(x_1-t)+\rho^-_\infty(x_1+t))=\iint_{\mathbb{R}^2_+\times [-\pi,\pi]} dxd\theta f_{in}(x,\theta),$$ 
	for any $f_{in}=f_{in}(x,\theta)\in \mathcal{M}_+(X)$.
	
	The analysis on the long-chain asymptotics is closely related to that of the steady-state solution to a stationary problem. It is crucial to obtain the existence and the uniqueness of a solution to the stationary problem associated to the $t$-dependent problem. 
	In order to guarantee the uniqueness of the steady-state, it is crucial to study the behavior of the stationary solutions for large values of $x_2>0$. In order to understand where the mass of solution concentrates as $t\to \infty$, it is also crucial to guarantee that the mass is not escaping for large values of $x_2$. To this end, we will use the 1-dimensional integrated problem for the mass density $\rho_1$ and its adjoint problem that we have obtained by integrating the system with respect to $x_1$. 
	
	In the next subsection, we will first consider a stationary problem and observe that the mass does not escape for large values of $x_2>0$. 
	\subsection{Asymptotics of the stationary solutions with zero boundary}\label{x2 asymptotics section}As we mentioned in Section \ref{x2infty intro},
	it is crucial to obtain the asymptotics of the solutions $\bar{\varphi}_\infty$ to the stationary equation: 
	\begin{equation}\label{stationary} -\sin\theta \partial_{x_2}\bar{\varphi}=\partial^2_\theta \bar{\varphi}, \text { for } x_2>0,\ \theta \in [-\pi,\pi],\end{equation} with the periodic condition in $\theta$: \begin{equation}\label{periodic stationary} \bar{\varphi}(x_2,-\pi)=\bar{\varphi}(x_2,\pi)\text{ and }\partial_\theta\bar{\varphi}(x_2,-\pi)=\partial_\theta\bar{\varphi}(x_2,\pi), \text{ for }x_2\ge 0.\end{equation}We consider the following boundary condition for the stationary solution $\bar{\varphi}_\infty$ 
	\begin{equation}\notag
		\bar{\varphi}(0,\theta)=1 \text{ for }\theta \in [-\pi,-\pi/2)\cup(-\pi/2,0],
	\end{equation} which is obtained by rescaling each of the boundary conditions \eqref{psiminusboundary} and \eqref{psiplusboundary} without loss of generality and adding them together. Then our goal in this section is to prove that $\bar{\varphi}\equiv 1$ for any $x_2>0$ and $\theta \in [-\pi,\pi]$. This result will be used to prove that no mass is escaping for large values of $x_2$ via the duality argument.  In addition, this will be crucially used to prove the uniqueness of solutions to the stationary problem in the next section, which has the boundary values of \eqref{psiminusboundary} or \eqref{psiplusboundary}.
	
	By redefining the function $\bar{\varphi}$ as $1-\bar{\varphi}$, we can consider the same problem with the \textit{zero boundary} condition \begin{equation}\label{zeroboundary}
		\bar{\varphi}(0,\theta)=0 \text{ for }\theta \in [-\pi,-\pi/2)\cup(-\pi/2,0].
	\end{equation} In this case, our goal is to show that the stationary solution $\bar{\varphi}_\infty$ with the zero boundary condition on $x_2=0$ and $\theta\in [-\pi,-\pi/2)\cup(-\pi/2,0]$ is zero everywhere on $x_2>0$ and $\theta\in[-\pi,\pi]$. 
	The first step for the proof is to prove the non-negativity of a supersolution if the boundary value at $x_2=0$ is non-negative.
	\subsubsection{Non-negativity of supersolutions}\label{sec:nonnegativity}\begin{definition}\label{supersolutionstationary}
		We call $\varsup\in C^2((0,\infty)\times [-\pi,\pi])$ a supersolution to \eqref{stationary}-\eqref{zeroboundary} if $\varsup(0,\theta)\ge \bar{\varphi}(0,\theta)$ for $\theta\in [-\pi,-\pi/2)\cup(-\pi/2,0]$ and $\varsup$ satisfies
		\begin{equation}\label{supersol eq}-\sin\theta \partial_{x_2}\bar{\varphi}\ge \partial^2_\theta \bar{\varphi}.\end{equation}\end{definition}
	Then we claim the following lemma on the non-negativity of a supersolution.
	
	\begin{lemma}
		Suppose $\varsup\in C^2((0,\infty)\times [-\pi,\pi])$ is a supersolution to the stationary equation \eqref{stationary}-\eqref{zeroboundary} in the sense of Definition \ref{supersolutionstationary}.
		Suppose that $\varsup$ is non-negative at $x_2=0$ and $\theta\in [-\pi,-\pi/2)\cup(-\pi/2,0]$. Then $$\varsup(x_2,\theta)\ge 0,$$ for any $x_2>0$ and $\theta\in[-\pi,\pi].$
	\end{lemma}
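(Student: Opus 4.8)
The plan is to argue by a maximum principle for the degenerate operator $\mathcal{L}:=\partial_\theta^2+\sin\theta\,\partial_{x_2}$. Rewriting \eqref{supersol eq} as $\partial_\theta^2\varsup+\sin\theta\,\partial_{x_2}\varsup\le 0$, i.e. $\mathcal{L}\varsup\le 0$, one sees that $\varsup$ is a supersolution for the stationary operator underlying the adjoint semigroup of Section \ref{sec: 1d wellposedness}. Two structural facts will be used. First, $\mathcal{L}$ is hypoelliptic: with $X_1:=\partial_\theta$ and $X_0:=\sin\theta\,\partial_{x_2}$ one has $[X_1,X_0]=\cos\theta\,\partial_{x_2}$, and since $\sin^2\theta+\cos^2\theta=1$ the fields $X_1$ and $[X_1,X_0]$ (or $X_0$) span the tangent plane at every point, so H\"ormander's condition \cite{Hormander} holds and the associated strong maximum principle is available: a supersolution of $\mathcal{L}$ attaining an interior minimum is constant along the integral curves of $\pm X_1$ and of $+X_0$. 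Second, the integral curve of $+X_0$ through a point with $\theta\in(-\pi,0)$ satisfies $\dot x_2=\sin\theta<0$ and hence reaches $\{x_2=0\}$ (with the same $\theta\in(-\pi,0)$) in finite time, whereas through a point with $\theta\in(0,\pi)$ it satisfies $\dot x_2=\sin\theta>0$ and escapes to $x_2=+\infty$; moving along $\pm X_1=\pm\partial_\theta$ sweeps out the whole circle $\{x_2=\text{const}\}\times[-\pi,\pi]$, which is where periodicity \eqref{stationary periodic} enters.

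First I would reduce to the case where $\varsup$ is bounded below, the situation relevant to all applications of the lemma; some restriction at $x_2=\infty$ is genuinely needed, since $\varsup(x_2,\theta)=-x_2-\sin\theta$ is a stationary supersolution with $\varsup(0,\theta)=-\sin\theta\ge 0$ on $[-\pi,0]$ that becomes negative for large $x_2$. To neutralise the behaviour at infinity, for $\eta>0$ I introduce $v_\eta:=\varsup+\eta\,b$ with $b(x_2,\theta):=x_2+\sin\theta+2$. Since $\partial_\theta^2 b=-\sin\theta$, one has $\mathcal{L}b=-\sin\theta+\sin\theta=0$, so $v_\eta$ is again a supersolution of $\mathcal{L}$; moreover $b\ge 1$ for $x_2\ge 0$, hence $v_\eta\to+\infty$ as $x_2\to\infty$ and $v_\eta>0$ on $\{x_2=0,\ \theta\in[-\pi,0]\}$ using $\varsup(0,\cdot)\ge 0$. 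It then suffices to prove $v_\eta\ge 0$ on $\{x_2>0\}$ for every $\eta>0$ and let $\eta\to 0$.

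The heart of the argument is a case analysis, in the spirit of the proofs of Lemma \ref{maxprinciple} and Lemma \ref{maxprinciple2}. Suppose $\inf_{\{x_2>0\}}v_\eta<0$; since $v_\eta\to+\infty$ at infinity, this infimum is attained at a point of the closed region $\{x_2\ge 0\}\times[-\pi,\pi]$. If the minimiser is interior, the strong maximum principle forces $v_\eta$ to equal this negative value along the whole circle through it, hence (following $+X_0$ from the arc $\theta\in(-\pi,0)$) down to a point of $\{x_2=0,\ \theta\in(-\pi,0)\}$, where $v_\eta>0$ --- a contradiction; following $+X_0$ from the arc $\theta\in(0,\pi)$ instead would make $v_\eta$ constant out to $x_2=+\infty$, incompatible with $v_\eta\to+\infty$. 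The same escape-to-$+\infty$ mechanism rules out a minimiser on $\{x_2=0,\ \theta\in(0,\pi)\}$, because the $+X_0$-trajectory from such a point immediately enters $\{x_2>0\}$ and would propagate the minimum back into the interior. Finally the minimiser cannot lie on $\{x_2=0,\ \theta\in[-\pi,0]\}$, where $v_\eta>0$. Hence $v_\eta\ge 0$ on $\{x_2>0\}$, and $\eta\to 0$ yields $\varsup\ge 0$ on $\{x_2>0\}$.

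The step I expect to be the main obstacle is making the propagation argument rigorous \emph{up to the boundary} $\{x_2=0\}$ and \emph{at the point at infinity}: one must justify that the minimum of $v_\eta$ genuinely reaches the arc $\theta\in(-\pi,0)$ of $\{x_2=0\}$ (a Hopf/characteristic-boundary issue for the degenerate operator $\mathcal{L}$) and that the arc $\theta\in(0,\pi)$ of $\{x_2=0\}$, where the problem carries no data, causes no difficulty. I would treat both exactly as in the proof of Lemma \ref{maxprinciple2}, by the explicit local case distinction at each boundary portion together with the near-$(0,-\pi/2)$ localisation used there, and by the observation recorded above that the drift of $\mathcal{L}$ sends the arc $\theta\in(0,\pi)$ of $\{x_2=0\}$ into the interior, so that no boundary condition is needed there and only the inflow data on $\theta\in[-\pi,0]$ together with the control at $x_2=\infty$ supplied by the barrier $b$ are relevant.
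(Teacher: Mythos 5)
Your proposal takes a genuinely different route from the paper. The paper's argument is a direct and elementary perturbation argument: starting from a hypothetical local minimum $(x_{2,0},\theta_0)$ with $\varsup<0$, it splits into the cases $\theta_0=0$, $\theta_0<0$, $\theta_0>0$, and in the latter two perturbs $\varsup$ by $\pm\varepsilon x_2$ to produce a strict sign contradiction with the supersolution inequality at the (perturbed) minimum. You instead invoke the strong maximum principle of Bony type for the H\"ormander operator $\mathcal L=\partial_\theta^2+\sin\theta\,\partial_{x_2}$ and propagate the interior minimum along the $\pm X_1$ and $+X_0$ characteristics. Both arguments are in the spirit of a maximum principle, but yours uses much heavier machinery (Bony's propagation theorem, which you do not state or cite precisely) where the paper's $\pm\varepsilon x_2$ trick already yields the pointwise contradiction directly. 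Your observation that the lemma as stated needs some control at $x_2=\infty$ is correct and valuable: $\varsup(x_2,\theta)=-x_2-\sin\theta$ is indeed a $2\pi$-periodic solution of \eqref{stationary} with $\varsup(0,\theta)=-\sin\theta\ge 0$ for $\theta\in[-\pi,0]$ and $\varsup\to-\infty$ as $x_2\to\infty$, so the conclusion fails without a hypothesis at infinity, and the paper's proof does implicitly assume the infimum is attained at a local minimum in $x_2>0$. Your barrier $b=x_2+\sin\theta+2$, which satisfies $\mathcal L b=0$ and $b\ge 1$, is a clean way to force the infimum to be attained once $\varsup$ is bounded below. In the paper's actual applications (the comparison in the proof of Lemma \ref{lemma.eventually zero}) the domain is the bounded rectangle $[\delta,R]\times[-\pi,\pi]$, so this gap does not propagate, but it is worth recording.

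That said, your proposal has gaps of its own that you should not wave away. First, the "reduction to $\varsup$ bounded below" is asserted, not performed; without it your $v_\eta$ need not tend to $+\infty$, and you are then proving a weaker lemma than stated (which, given the counterexample, is unavoidable, but the modified hypothesis should be made explicit). Second, Bony's propagation theorem is an interior statement: constancy of $v_\eta$ on the propagation set inside $\{x_2>0\}$ only reaches $\{x_2=0\}$ after invoking continuity up to the boundary, and it gives nothing at a \emph{boundary} minimiser on the outflow arc $\{x_2=0,\ \theta\in(0,\pi)\}$, where $X_0$ points into the domain. You say you would handle this by the explicit case analysis of Lemma \ref{maxprinciple} and Lemma \ref{maxprinciple2}, but that case analysis \emph{is} the $\pm\varepsilon x_2$ perturbation argument, which, once deployed, already settles the whole lemma and renders the H\"ormander propagation step redundant. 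A cleaner version of your proposal would be: keep the barrier $b$ (a genuine improvement on the paper's argument), keep the compactification it provides, and then run the paper's perturbation case analysis at the resulting minimiser --- dropping Bony entirely.
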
\begin{proof}
		Suppose on the contrary that $\varsup<0$ for some region in $x_2>0$ and $\theta\in[-\pi,\pi].$ Suppose that a point $(x_{2,0}, \theta_0)$ is the local minimum of $\varsup$ in the region. Firstly, if $\theta_0=0$, then we have
		$\partial_\theta^2\varsup (x_{2,0}, \theta_0)\le 0,$ which is a contradiction. If $\theta_0< 0,$ define $\varsup^*(x_2,\theta)=\varsup(x_2,\theta)+\varepsilon x_2$ for a sufficiently small $\varepsilon>0$ such that $\varsup^*$ is still negative at a local minimum $(x_{2,1}, \theta_1)$ of $\varsup^*$ where $\theta_1$ is still negative. Then, we have that $x_{2,1}>0$ and that
	\begin{multline*}
	-\sin\theta_1\partial_{x_2}\varsup^*(x_{2,1}, \theta_1)=-\sin\theta_1(\partial_{x_2}\varsup(x_{2,1}, \theta_1)+\varepsilon)\\
	\ge \partial_\theta^2\varsup -\sin\theta_1\varepsilon\ge -\sin\theta_1\varepsilon>0,
	\end{multline*}
which leads to a contradiction.
		On the other hand, if $\theta_0>0$, then define $\varsup^*(x_2,\theta)=\varsup(x_2,\theta)-\varepsilon' x_2$ for a sufficiently small $\varepsilon'>0$ such that $\varsup^*$ attains its local minimum at $(x_{2,2},\theta_2)$ where $\theta_2>0$. Then we observe that
\begin{multline*}
-\sin\theta_2\partial_{x_2}\varsup^*(x_{2,2}, \theta_2)=-\sin\theta_2(\partial_{x_2}\varsup(x_{2,2}, \theta_2)-\varepsilon')\\
\ge \partial_\theta^2\varsup +\sin\theta_2\varepsilon'\ge \sin\theta_2\varepsilon'>0,
\end{multline*}
		which leads to  a contradiction. 
	\end{proof}
	\subsubsection{Construction of a super-solution}\label{sec.construction.sub} We now construct a super-solution $F_\lambda$ to \eqref{stationary}-\eqref{zeroboundary} as the following:
	\begin{lemma}\label{consubsol}
		Define $F_\lambda=F_\lambda(x_2,\theta)$ as
		$$ F_\lambda(x_2,\theta)= e^{-\lambda x_2}\left(1-\lambda \sin\theta -\frac{\lambda^2}{8}\cos(2\theta)\right),$$ for a sufficiently small $\lambda>0.$ Then $(1-F_\lambda)$ is a super-solution to \eqref{stationary}-\eqref{zeroboundary} in the sense of Definition \ref{supersolutionstationary}.
	\end{lemma}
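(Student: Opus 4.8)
The plan is a direct verification that $1-F_\lambda$ meets the two requirements of Definition~\ref{supersolutionstationary}: the interior differential inequality \eqref{supersol eq}, and the boundary comparison $(1-F_\lambda)(0,\theta)\ge 0$ on $[-\pi,-\pi/2)\cup(-\pi/2,0]$. Smoothness and $2\pi$-periodicity in $\theta$ are immediate from the explicit form of $F_\lambda$, so all of the content sits in these two inequalities, and the only parameter at our disposal is the smallness of $\lambda$.

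For the differential inequality I would introduce the stationary operator $\mathcal{N}:=-\sin\theta\,\partial_{x_2}-\partial_\theta^2$ and use that $\mathcal{N}$ annihilates constants, so $\mathcal{N}(1-F_\lambda)=-\mathcal{N}F_\lambda$; thus it suffices to show $F_\lambda$ is a strict subsolution, $\mathcal{N}F_\lambda<0$, for all small $\lambda>0$. Writing $F_\lambda(x_2,\theta)=e^{-\lambda x_2}g(\theta)$ with $g(\theta)=1-\lambda\sin\theta-\tfrac{\lambda^2}{8}\cos(2\theta)$, a short computation gives $\mathcal{N}F_\lambda=e^{-\lambda x_2}\bigl(\lambda\sin\theta\,g(\theta)-g''(\theta)\bigr)$. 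The key algebraic point is that the $O(\lambda)$ contributions cancel exactly — this is precisely why the $\sin\theta$-coefficient in $g$ is $-\lambda$ — while the two $O(\lambda^2)$ contributions, $-\lambda^2\sin^2\theta$ and $-\tfrac{\lambda^2}{2}\cos 2\theta$, collapse through the identity $\cos 2\theta=1-2\sin^2\theta$ to the $\theta$-independent constant $-\tfrac{\lambda^2}{2}$. One is then left with $\mathcal{N}F_\lambda=-e^{-\lambda x_2}\bigl(\tfrac{\lambda^2}{2}+\tfrac{\lambda^3}{8}\sin\theta\cos 2\theta\bigr)$, and since $|\sin\theta\cos 2\theta|\le 1$ the cubic remainder is dominated by the $\lambda^2$ gain once $\lambda$ is small, so $\mathcal{N}F_\lambda\le -e^{-\lambda x_2}\bigl(\tfrac{\lambda^2}{2}-\tfrac{\lambda^3}{8}\bigr)<0$, i.e.\ \eqref{supersol eq} holds with strict inequality.

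For the boundary comparison I would restrict $F_\lambda$ to $x_2=0$, so $(1-F_\lambda)(0,\theta)=1-g(\theta)$, and check $1-g(\theta)\ge 0$ for $\theta\in[-\pi,-\pi/2)\cup(-\pi/2,0]$. The natural split is into the region bounded away from the degenerate angles $\{0,-\pi\}$ and the pathological angle $-\pi/2$, where the sign is governed by the leading $\lambda$-term of $1-g$, together with small one-sided neighborhoods of $\theta=0$ and $\theta=-\pi$, where $\sin\theta$ is of order $|\theta|$, resp.\ $|\theta+\pi|$, and the quadratic correction $\tfrac{\lambda^2}{8}\cos 2\theta$ (with $\cos 2\theta\to 1$) takes over; choosing $\lambda$ small compared with the fixed lower bound extracted from the first region closes the estimate, and the monotonicity and smoothness of $g$ make the splitting routine.

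There is no substantive obstacle — the lemma is essentially a hand computation — but two points merit care in the write-up. First, the exact coefficients $-\lambda$ and $-\tfrac{\lambda^2}{8}$ in $F_\lambda$ are forced: any other choice leaves an uncancelled $O(\lambda)$ term or a $\theta$-dependent $O(\lambda^2)$ term in $\mathcal{N}F_\lambda$ whose sign cannot be controlled for small $\lambda$, so the verification of Step~1 and the robustness of the bound against the cubic remainder are the real checks. Second, the one mildly delicate estimate is matching the $O(\lambda)$ and $O(\lambda^2)$ behaviour of $(1-F_\lambda)(0,\cdot)$ uniformly as $\theta\to 0^-$ and $\theta\to(-\pi)^+$, which is what dictates how small $\lambda$ must be taken. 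Once both inequalities are established, $1-F_\lambda$ is a supersolution in the sense of Definition~\ref{supersolutionstationary}.
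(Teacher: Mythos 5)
Your verification of the differential inequality is correct and is exactly the paper's computation: the $O(\lambda)$ terms cancel, $\cos 2\theta=1-2\sin^2\theta$ collapses the $O(\lambda^2)$ terms to the constant $-\lambda^2/2$, and the cubic remainder is absorbed for small $\lambda$, giving $-\sin\theta\,\partial_{x_2}(1-F_\lambda)-\partial_\theta^2(1-F_\lambda)=e^{-\lambda x_2}\bigl(\tfrac{\lambda^2}{2}+\tfrac{\lambda^3}{8}\sin\theta\cos 2\theta\bigr)\ge 0$ once $\lambda\le 4$.

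The boundary step, however, does not go through, and the splitting you describe cannot be made to close, because the inequality $(1-F_\lambda)(0,\theta)\ge 0$ is false on most of $[-\pi,-\pi/2)\cup(-\pi/2,0]$. You have $(1-F_\lambda)(0,\theta)=\lambda\sin\theta+\tfrac{\lambda^2}{8}\cos 2\theta$, and since $\sin\theta<0$ on $(-\pi,0)$, the leading $\lambda$-term is strictly negative wherever $\sin\theta$ is bounded away from zero; taking $\lambda$ small makes things worse, not better (e.g.\ $(1-F_\lambda)(0,-\pi/4)=-\lambda/\sqrt{2}<0$). Your split does give positivity in one-sided $O(\lambda)$-neighborhoods of $\theta\in\{0,-\pi\}$, where $\tfrac{\lambda^2}{8}\cos 2\theta$ dominates, but the ``bulk region governed by the leading $\lambda$-term'' is precisely where the sign goes the wrong way, so the ``fixed lower bound extracted from the first region'' is negative and the final step is vacuous. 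The paper's proof asserts the same boundary inequality in one line, and it is equally incorrect as written; this is harmless only because Lemma~\ref{consubsol} is used solely through Lemma~\ref{finalsupsol}, whose supersolution $\varsup=1+\eta-e^{-\lambda(x_2-\delta)}\bigl(1-\lambda\sin\theta-\tfrac{\lambda^2}{8}\cos 2\theta\bigr)$ adds a positive constant $\eta$ so that the weaker (and true) bound $(1-F_\lambda)(0,\theta)\ge-\lambda-\tfrac{\lambda^2}{8}$ already yields $\varsup(\delta,\theta)\ge\eta/2$. So the usable content of the lemma is the differential inequality alone, and any honest proof should replace the boundary claim by that lower bound rather than attempt to prove nonnegativity.
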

	\begin{proof}
		Note that
		$$-\sin\theta \partial_{x_2}F_\lambda = \lambda \sin\theta e^{-\lambda x_2}\left(1-\lambda \sin\theta -\frac{\lambda^2}{8}\cos(2\theta)\right),$$ and
		$$\partial_\theta^2 F_\lambda = e^{-\lambda x_2}\left(\lambda\sin\theta +\frac{\lambda^2}{2} \cos(2\theta)\right)=e^{-\lambda x_2}\left(\lambda\sin\theta +\frac{\lambda^2}{2} (1-2\sin^2\theta)\right).$$ Therefore,
		$$-\sin\theta \partial_{x_2}F_\lambda-\partial_\theta^2 F_\lambda=e^{-\lambda x_2}\left(-\frac{\lambda^2}{2}-\frac{\lambda^3}{8}\sin\theta\cos(2\theta)\right)\le 0,$$ if $\lambda>0$ is sufficiently small. Also, note that 
		$$ (1-F_\lambda)(0,\theta) \ge 0=\varphi(0,\theta),\text{ if }\theta\in [-\pi,-\pi/2)\cup(-\pi/2,0].$$This completes the proof.
	\end{proof}
	
	Modifying this super-solution, we will now construct a generalized supersolution as follows:
	\begin{lemma}\label{finalsupsol}
		For a sufficiently small $\eta$ and $\lambda>0$, define 
		$$\varsup(x_2,\theta)= 1+\eta -e^{-\lambda (x_2-\delta)}\left(1-\lambda \sin\theta -\frac{\lambda^2}{8}\cos(2\theta)\right),$$ for $\delta>0.$ Then $\varsup$ satisfies \eqref{supersol eq} and $\varsup(\delta,\theta)\ge \frac{\eta}{2},$ for a sufficiently small $\lambda>0$.
	\end{lemma}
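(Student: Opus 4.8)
The plan is to obtain both assertions essentially for free from Lemma \ref{consubsol}, using two elementary facts: the operator $-\sin\theta\,\partial_{x_2}-\partial_\theta^2$ is invariant under translations in $x_2$, and it annihilates constants, so that adding $1+\eta$ and shifting $x_2\mapsto x_2-\delta$ in $F_\lambda$ preserves the supersolution inequality.

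First I would set $G(x_2,\theta):=F_\lambda(x_2-\delta,\theta)=e^{-\lambda(x_2-\delta)}\bigl(1-\lambda\sin\theta-\tfrac{\lambda^2}{8}\cos(2\theta)\bigr)$ and note that the computation carried out in the proof of Lemma \ref{consubsol}, with $x_2$ replaced by $x_2-\delta$, yields
\begin{equation*}
-\sin\theta\,\partial_{x_2}G-\partial_\theta^2 G=e^{-\lambda(x_2-\delta)}\Bigl(-\frac{\lambda^2}{2}-\frac{\lambda^3}{8}\sin\theta\cos(2\theta)\Bigr)\le 0
\end{equation*}
for $\lambda>0$ small (indeed $\bigl|\tfrac{\lambda^3}{8}\sin\theta\cos(2\theta)\bigr|\le\tfrac{\lambda^3}{8}<\tfrac{\lambda^2}{2}$ once $\lambda<4$). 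Since $\varsup=(1+\eta)-G$ and $\partial_{x_2}(1+\eta)=\partial_\theta^2(1+\eta)=0$, linearity gives
\begin{equation*}
-\sin\theta\,\partial_{x_2}\varsup-\partial_\theta^2\varsup=-\bigl(-\sin\theta\,\partial_{x_2}G-\partial_\theta^2 G\bigr)=e^{-\lambda(x_2-\delta)}\Bigl(\frac{\lambda^2}{2}+\frac{\lambda^3}{8}\sin\theta\cos(2\theta)\Bigr)\ge 0,
\end{equation*}
which is exactly the required inequality \eqref{supersol eq}. I would also remark that $\varsup\in C^2((0,\infty)\times[-\pi,\pi])$ since $\delta>0$, and that $2\pi$-periodicity of $\sin\theta$ and $\cos(2\theta)$ delivers the periodic condition \eqref{periodic stationary} automatically.

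For the value at $x_2=\delta$, one just evaluates: there $e^{-\lambda(x_2-\delta)}=1$, so $\varsup(\delta,\theta)=(1+\eta)-\bigl(1-\lambda\sin\theta-\tfrac{\lambda^2}{8}\cos(2\theta)\bigr)=\eta+\lambda\sin\theta+\tfrac{\lambda^2}{8}\cos(2\theta)\ge\eta-\lambda-\tfrac{\lambda^2}{8}$, and hence $\varsup(\delta,\theta)\ge\tfrac{\eta}{2}$ as soon as $\lambda+\tfrac{\lambda^2}{8}\le\tfrac{\eta}{2}$. There is no genuine obstacle in this lemma — it is a cosmetic modification of Lemma \ref{consubsol} — and the only point to handle with care is the order of quantifiers: $\lambda$ must be small in an absolute sense for the supersolution inequality to hold, and then possibly smaller still, depending on the already-fixed $\eta>0$, to ensure $\varsup(\delta,\cdot)\ge\eta/2$. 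Both requirements are compatible, so "for a sufficiently small $\lambda>0$" is precisely what the argument produces; smallness of $\eta$ itself is only relevant for the downstream comparison with the target stationary value $\equiv 1$, not for either conclusion of this lemma.
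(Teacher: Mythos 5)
Your proposal is correct, and it fills in precisely the argument the paper leaves implicit (the paper states only that the lemma is "straightforward by Lemma \ref{consubsol}" and omits the proof): translation invariance in $x_2$ and linearity reduce the supersolution inequality to the computation already carried out in Lemma \ref{consubsol}, and the boundary estimate $\varsup(\delta,\theta)\ge\eta-\lambda-\tfrac{\lambda^2}{8}\ge\tfrac{\eta}{2}$ is an immediate evaluation. Your remark on the quantifiers — that $\lambda$ is chosen small depending on the already-fixed $\eta$, and that smallness of $\eta$ is not actually needed for either conclusion of this lemma but only for the downstream comparison — is an accurate reading of how the lemma is used in Section \ref{sec.main.stationaryasymp}.
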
The proof is straightforward by Lemma \ref{consubsol} and we omit it.
	\subsubsection{Regularity}\label{sec:longtimeholder}Our final ingredient is to prove that a stationary solution to \eqref{stationary}-\eqref{stationary periodic} is locally H\"older continuous near $(0,0)$ and $(0,-\pi)$ and is smooth elsewhere. The proof for the smoothness would involve the local hypoellipticity of the stationary operator as in Section \ref{sec:hypoellipticity}. In this section, we provide the proof for the H\"older continuity near the singular points, which would involve the construction of a supersolution using a self-similar type profile.
	\begin{lemma}[Local H\"older continuity near the singular points]\label{holdercont stationary}
		Suppose that $\varinf$ is a solution to \eqref{stationary} with the zero boundary condition \eqref{zeroboundary} and the periodicity \eqref{periodic stationary}. Then $\varinf\in C^{0,\alpha}_{x_2, loc}(\mathbb{R}_+; C^{0,3
	\alpha}_{\theta,loc}[-\pi,\pi]),$  for any $\alpha \in (0,1/6)$.
	\end{lemma}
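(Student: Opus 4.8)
The plan is to transcribe the argument of Section~\ref{sec:holder cont} to the time-independent operator $-\sin\theta\,\partial_{x_2}-\partial_\theta^2$. Observe first that, since $\sin(-\theta)=-\sin\theta$ and $\partial_\theta^2$ is invariant under $\theta\mapsto-\theta$, the stationary operator here is, up to the reflection $\theta\mapsto-\theta$, the steady analogue of the operator $\partial_t+(\cos\theta,\sin\theta)\cdot\nabla_x-\partial_\theta^2$ treated in Theorem~\ref{maintheorem}$_{(\ref{holdercontinuity})}$, with the two singular points again at $(x_2,\theta)=(0,0)$ and $(0,-\pi)$. Away from these two points the stationary operator is hypoelliptic: the H\"ormander bracket computation of Section~\ref{sec:hypoellipticity} applies verbatim with $X_0=\sin\theta\,\partial_{x_2}$ and $X_1=i\partial_\theta$, so that $X_1,X_0,[X_1,X_0],[X_1,[X_1,X_0]]$ span the tangent space at every point of $\{x_2>0\}\times(-\pi,\pi)$, and the extension-across-the-boundary trick of Section~\ref{sec:hypoellipticity} (using the zero datum \eqref{zeroboundary} for $\theta\in(-\pi/2,0)$ and the absence of a restriction for $\theta\in(0,\pi)$) promotes this to smoothness up to $\{x_2=0\}$ off the singular points. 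Hence it suffices to produce a H\"older modulus for $\varinf$ near $(0,0)$; the point $(0,-\pi)$ is handled identically. We also record the a priori bound $0\le\varinf\le1$ coming from the non-negativity lemma of Section~\ref{sec:nonnegativity} (applied to $\varinf$, which vanishes on the boundary by \eqref{zeroboundary}) and from Lemma~\ref{finalsupsol}.

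Next I would establish the pointwise bound $\varinf(x_2,\theta)\le C\big(x_2^{\alpha}+|\theta|^{3\alpha}\big)$ on a fixed small neighbourhood $N=\{0\le x_2\le\sigma,\ |\theta|\le\sigma^{1/3}\}$ of $(0,0)$, for $\alpha\in(0,1/6)$. The barrier is the steady version of Lemma~\ref{Z0*lemma}: starting from $F_0(x_2,\theta)=x_2^{\alpha}\Lambda\big(\theta/(9x_2)^{1/3}\big)$ with $\Lambda=U(-\alpha,2/3,-\cdot^{\,3})$ from Lemma~\ref{claim3.7} — which solves $\partial_\theta^2 F_0=\theta\,\partial_{x_2}F_0$ exactly — one adds a small correction $R_0=x_2^{\alpha+2/3}\varphi\!\big(-\theta^3/(9x_2)\big)$, with $\varphi$ the Kummer/Tricomi solution already exhibited in \cite{MR3237885}, to absorb the residual $(\sin\theta-\theta)\partial_{x_2}F_0=O(x_2^{\alpha})$; the computation is word-for-word that in the proof of Lemma~\ref{Z0*lemma} with the $t$-scaling suppressed, and yields $\hat Z_0^{*}:=F_0+R_0>0$ with $-\sin\theta\,\partial_{x_2}\hat Z_0^{*}-\partial_\theta^2\hat Z_0^{*}\ge0$ and $\hat Z_0^{*}\simeq x_2^{\alpha}+|\theta|^{3\alpha}$ for $x_2+|\theta|^3\ll1$. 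I would then compare $\varinf$ on $N$ with $\Psi^{\epsilon}:=B\,\hat Z_0^{*}+\epsilon f^{*}$, where $f^{*}$ is the singular steady supersolution of Lemma~\ref{f*lemma} (which blows up as $x_2+|\theta|^3\to0$, hence dominates the bounded $\varinf$ at the corner), and $B=B(\sigma)$ is chosen large enough that $B\hat Z_0^{*}\ge1$ on the lateral faces $\{x_2=\sigma\}$ and $\{|\theta|=\sigma^{1/3}\}$ of $\partial N$ (possible because $\hat Z_0^{*}$ is bounded below there by a positive constant depending on $\sigma$); on the remaining arc $\{x_2=0,\ \theta\le0\}$ one has $B\hat Z_0^{*}\ge0=\varinf$ by \eqref{zeroboundary}, while the outgoing arc $\{x_2=0,\ \theta>0\}$ plays no role. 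Applying the non-negativity-of-supersolutions lemma of Section~\ref{sec:nonnegativity} (adapted to the bounded domain $N$, which its local-minimum-plus-$\pm\varepsilon x_2$ argument allows) to $\Psi^{\epsilon}-\varinf$ and letting $\epsilon\to0$ gives $\varinf\le B\hat Z_0^{*}\lesssim x_2^{\alpha}+|\theta|^{3\alpha}$ on $N$.

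With this pointwise control in hand, the conversion to a H\"older modulus is exactly the two-case argument ending the proof of Theorem~\ref{maintheorem}$_{(\ref{holdercontinuity})}$: for $(x_2,\theta),(z_2,\vartheta)$ near $(0,0)$, set $\delta_1=(x_2+|\theta|^3)^{1/3}$, $\delta_2=(z_2+|\vartheta|^3)^{1/3}$, $\rho=(|x_2-z_2|+|\theta-\vartheta|^3)^{1/3}$; when $\rho^3\gtrsim\delta_1^3+\delta_2^3$ one bounds $|\varinf(x_2,\theta)|+|\varinf(z_2,\vartheta)|\lesssim\delta_1^{3\alpha}+\delta_2^{3\alpha}\lesssim\rho^{3\alpha}$, and when $\rho\ll\delta_1\sim\delta_2$ one rescales $x_2=\delta_2^3X_2$, $\theta=\delta_2\Theta$ (no time rescaling: genuinely elliptic), so that the rescaled equation is uniformly-in-$\delta_2$ hypoelliptic on a unit ball sitting a fixed distance from the singular set, whence interior elliptic estimates bound $\partial_{X_2}$ and $\partial_\Theta$ of the rescaled solution by $C\delta_1^{3\alpha}$; undoing the scaling gives $|\varinf(x_2,\theta)-\varinf(z_2,\vartheta)|\lesssim\delta_1^{3\alpha}\big(\rho^3/\delta_2^3+\rho/\delta_2\big)\lesssim\rho^{3\alpha}$. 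Together these yield $|\varinf(x_2,\theta)-\varinf(z_2,\vartheta)|\le C\big(|x_2-z_2|^{\alpha}+|\theta-\vartheta|^{3\alpha}\big)$, i.e. $\varinf\in C^{0,\alpha}_{x_2,loc}\big(\mathbb{R}_+;C^{0,3\alpha}_{\theta,loc}[-\pi,\pi]\big)$.

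The main obstacle, as in Section~\ref{sec:holder cont}, is building the steady barrier $\hat Z_0^{*}$ and, above all, running the comparison on the \emph{bounded} corner neighbourhood $N$: one must confirm that $\Psi^{\epsilon}$ dominates $\varinf$ on \emph{every} relevant part of $\partial N$ — the blow-up of $f^{*}$ at the corner, the positive lower bound of $\hat Z_0^{*}$ on the two lateral faces (which forces the $\sigma$-dependent constant $B$), and the vanishing datum on $\{x_2=0,\theta\le0\}$ — and that the $O(\theta^2)$ and $O(\theta^3)$ errors arising from replacing $\theta$ by $\sin\theta$ stay strictly subordinate to the $O(x_2^{\alpha})$ gain; this is precisely where the special-function asymptotics of $U$, $\Lambda$ and the Kummer function enter, exactly as in \cite{MR3237885} and Lemma~\ref{Z0*lemma}. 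By contrast, the hypoellipticity off the singular points and the final zoom estimate are routine transcriptions of Sections~\ref{sec:hypoellipticity} and~\ref{sec:holder cont}.
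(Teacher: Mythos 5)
Your proposal is correct and follows essentially the same strategy as the paper: the two power-law barriers (a decaying one built from $\Lambda=U(-\alpha,2/3,-\cdot^{\,3})$ plus a small Kummer correction, a blowing-up one from Lemma~\ref{f*lemma}), a comparison, and then the two-case $\rho$ versus $\delta_1,\delta_2$ rescaling argument of Theorem~\ref{maintheorem}$_{(\ref{holdercontinuity})}$. Where the paper's proof is terse---it cites the time-dependent $\hat f_0$ of \eqref{eq fhat0} as the decaying barrier and simply says ``we can do the comparison''---you rightly build a genuinely stationary barrier $F_0(x_2,\theta)=x_2^{\alpha}\Lambda\big(\theta/(9x_2)^{1/3}\big)+R_0$ and run the comparison on a bounded corner neighbourhood with explicit checks on each piece of the boundary, which fills in bookkeeping the paper leaves to the reader.
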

	\begin{proof}Without loss of generality, we prove the H\"older continuity near the singular point $(0,0)$ only. The proof for the other point $(0,-\pi)$ is similar. 
		The proof consists of the constructions of two power-law type profiles, where one decays fast near $(0,0)$ and the other blows up fast near $(0,0)$. We denote the former as $\hat{\varphi}$ and the latter $\tilde{\varphi}$. Note that $\hat{\varphi}$ and  $\tilde{\varphi}$ have already been constructed in \eqref{eq fhat0} and Lemma \ref{f*lemma} as $\hat{f}_0$ and $f^*$, respectively.
		Therefore, the $\varsup$ in the following definition
		$\varsup = \hat{\varphi}+\varepsilon\tilde{\varphi},$ is a supersolution and we can do the comparison and conclude that 
		$\varinf\le \varsup.$ By letting $\varepsilon\rightarrow 0$, we obtain the H\"older continuity and the rest of the proof follows similarly to the proof of Theorem \ref{maintheorem}$_{\eqref{holdercontinuity}}$.
	\end{proof}
	
	\subsubsection{Main lemma}\label{sec.main.stationaryasymp}
	Now we are ready to prove our main lemma:
	\begin{lemma}\label{lemma.eventually zero}
		Suppose that $\varinf$ is a solution to \eqref{stationary} with the zero boundary condition \eqref{zeroboundary} and the periodicity \eqref{periodic stationary}. Then we have
		$$\varinf(x_2,\theta)=0,\text{ for any }x_2>0,\ \theta\in[-\pi,\pi].$$
	\end{lemma}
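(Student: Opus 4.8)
The plan is to trap $\varinf$ between $0$ and a collapsing family of the explicit supersolutions built earlier in Section~\ref{x2 asymptotics section}. Here $\varinf$ is the solution relevant to us, for which $0\le\varinf\le1$ (it is $1-\bar{\varphi}$ for the bounded adjoint solution $\bar{\varphi}$, and $0\le\bar{\varphi}\le1$ by the $L^\infty$ maximum principle, Lemma~\ref{maxprinciple}); in particular the lower bound $\varinf\ge 0$ is for free, and the whole content of the lemma is the upper bound $\varinf\le 0$. Before doing anything I would record the qualitative facts we are entitled to use: by Lemma~\ref{holdercont stationary} together with the interior hypoellipticity of the stationary operator $\sin\theta\,\partial_{x_2}+\partial_\theta^2$ (the H\"ormander bracket condition holds at every $\theta$, exactly as in Section~\ref{sec:hypoellipticity}), $\varinf$ is continuous on $[0,\infty)\times[-\pi,\pi]$, H\"older near the two singular points $(0,0)$ and $(0,-\pi)$, and $C^\infty$ in $\{x_2>0\}$. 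Since the (zero) boundary data are prescribed on \emph{both} sides of $\theta=-\pi/2$, no jump occurs at the pathological point $(0,-\pi/2)$ and $\varinf$ is genuinely continuous there with value $0$.

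The core step is a comparison on the truncated strips $\{x_2\ge\delta\}$. By the H\"older regularity of Lemma~\ref{holdercont stationary} and $\varinf(0,\theta)=0$ for $\theta\le 0$, one gets a uniform-in-$\theta$ bound $\varinf(\delta,\theta)\le K\delta^{\alpha}$ with $\alpha\in(0,1/6)$ and $K$ depending only on $\|f_{in}\|_\infty$; fix $K\ge 1$. Now take $\varsup$ as in Lemma~\ref{finalsupsol} with $\eta=\delta^{\alpha}$ and $\lambda=\lambda(\delta)\to 0$ chosen with $\lambda\ll\delta^{\alpha}$, so that $\varsup$ satisfies \eqref{supersol eq}, $\varsup(\delta,\theta)\ge\eta/2$, and $\varsup(x_2,\theta)=1+\eta-e^{-\lambda(x_2-\delta)}\bigl(1-\lambda\sin\theta-\tfrac{\lambda^2}{8}\cos 2\theta\bigr)\to 1+\eta$ as $x_2\to\infty$. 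Then $w\eqdef 2K\,\varsup-\varinf$ satisfies \eqref{supersol eq}, is $\ge 0$ on $\{x_2=\delta,\ \theta\le 0\}$ (because $2K\,\varsup(\delta,\cdot)\ge K\eta=K\delta^\alpha\ge\varinf(\delta,\cdot)$), and stays $\ge 0$ as $x_2\to\infty$ on $\{\theta>0\}$ (there $2K\,\varsup\to 2K(1+\eta)\ge 2\ge 1\ge\varinf$). Running the non-negativity argument of Section~\ref{sec:nonnegativity} on the shifted strip $x_2\mapsto x_2-\delta$ --- which only consults boundary data on $\{x_2=\delta,\ \theta\le 0\}$, supplemented by the far-field bound $w\ge 0$ at $x_2=\infty$ on $\{\theta>0\}$ --- yields $w\ge 0$, i.e.\ $\varinf\le 2K\,\varsup$ on $\{x_2>\delta\}$. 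Letting $\delta\to 0$ (hence $\lambda\to 0$, $\eta\to 0$), the multiplier $2K$ is fixed while $\varsup(x_2,\theta)\to 0$ at every fixed $(x_2,\theta)$ with $x_2>0$, so $\varinf(x_2,\theta)\le 0$; combined with the free lower bound, $\varinf\equiv 0$.

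The step I expect to be the main obstacle is precisely the far-field behavior in the region $\theta>0$: the non-negativity lemma's case analysis handles the lines $\theta=0,-\pi$ and the region $\theta<0$ by the $\pm\varepsilon x_2$ perturbation, but for $\theta>0$ a negative infimum could a priori sit at $x_2=\infty$, so one genuinely needs the quantitative input that $\varsup$ has a strictly positive limit there --- this is why $F_\lambda$ and $\varsup$ (Lemmas~\ref{consubsol}--\ref{finalsupsol}) are engineered as exponentials in $x_2$ with the $\sin\theta$, $\cos 2\theta$ corrections tuned to dominate $\partial_\theta^2$, and why the H\"older continuity up to $x_2=0$ (Lemma~\ref{holdercont stationary}) is indispensable for seeding the comparison on the slice $\{x_2=\delta\}$. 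A secondary point to be careful about is the bookkeeping in the joint limit $\lambda,\delta,\eta\to 0$: keeping $\eta=\delta^\alpha$ makes the comparison multiplier a fixed constant, which is what lets the conclusion $\varinf\le 0$ survive the passage to the limit.
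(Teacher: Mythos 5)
Your proof is correct and follows essentially the same route as the paper's: use the H\"older continuity of Lemma~\ref{holdercont stationary} to make $\varinf$ small on a slice $\{x_2=\delta\}$, compare against the explicit supersolution $\varsup$ of Lemma~\ref{finalsupsol} on the truncated strip, and send $\delta,\eta,\lambda\to 0$. The only (mild and beneficial) variation is that you work directly on $\{x_2\ge\delta\}$ using the far-field lower bound of $\varsup$ at $x_2=\infty$ and consult boundary data only on $\{x_2=\delta,\ \theta\le 0\}$, whereas the paper runs the comparison on $[\delta,R]$ and then lets $R\to\infty$, invoking $|\varinf(\delta,\theta)|\le\varepsilon$ for all $\theta\in[-\pi,\pi]$; your accounting is slightly more parsimonious about where information is actually needed, but it is the same argument.
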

	\begin{proof}
		By Lemma \ref{holdercont stationary}, we have that $\varinf$ is H\"older continuous near $(x_2,\theta)=(0,0)$ and $(0,-\pi)$. Suppose the zero boundary condition \eqref{zeroboundary}. Then for a fixed $\varepsilon>0$, there exists a sufficiently small $\delta>0$ such that $$|\varinf(\delta,\theta)|\le \varepsilon, \text{ for any} \ \theta \in [-\pi,\pi].$$
		Now observe that the supersolution $\varsup$ defined in Lemma \ref{finalsupsol} as 	$$\varsup(x_2,\theta)= 1+\eta -e^{-\lambda (x_2-\delta)}\left(1-\lambda \sin\theta -\frac{\lambda^2}{8}\cos(2\theta)\right)$$ is a periodic supersolution for \eqref{stationary} in the rectangular domain $x_2\in [\delta,R]$ for a sufficiently large $R>0$ by Lemma \ref{finalsupsol} and that
		$$\varsup(R,\theta)\ge 1+\frac{\eta}{2}\ge 1 \ge  \varinf(R,\theta).$$ Then we can do the comparison in the bounded rectangular region $x_2\in [\delta,R]$ and obtain that
		$$\varinf\le \varsup,\text{ for }x_2\in [\delta,R],\ \theta\in[-\pi,\pi],$$ and $R$ can be arbitrarily large. Since $\lambda$ and $\eta$ can be arbitrarily small, we obtain that as $\lambda, \eta \rightarrow 0^+$, 
		$$\varinf \le  1+\eta -e^{-\lambda (x_2-\delta)}\left(1-\lambda \sin\theta -\frac{\lambda^2}{8}\cos(2\theta)\right)\rightarrow 0.$$ Thus, $\varinf=0$ everywhere.
	\end{proof}
	By reverting back $\bar{\varphi}\mapsto 1-\bar{\varphi}$,
	the discussion with the stationary equation is complete. In the next subsection, we will discuss the $t$-dependent adjoint problems.
	\subsection{Long-chain asymptotics for the \texorpdfstring{$\lowercase{t}$}{}-dependent problems} Equipped with the knowledge on the solutions to the stationary solution $\bar{\varphi}_\infty$ with the zero boundary values \eqref{zeroboundary}, we will now consider the stationary problem with the general boundary condition \eqref{psiminusboundary} and \eqref{psiplusboundary}, whose solutions will then be the asymptotics of the $t$-dependent adjoint problems as $t\to \infty$. 
	
	By integrating the 2-dimensional adjoint equation \eqref{FP adjoint eq} with respect to $x_1$ variable, we obtain the adjoint equation \eqref{FP modified adjoint eq} for the mass density $\rho_1$ which depends only on $t,x_2,\text{and }\theta$ variables. In this subsection, we consider the proof of Theorem \ref{maintheorem}$_{ \eqref{longtimeasymptoticmain}}$ in terms of the mass density $\rho_1$ and its adjoint $\varphi$, as it suffices to show that
	$$\int_0^\infty \int_{-\pi}^{\pi}\rho_1(t,x_2,\theta)\ dx_2d\theta \to 0.$$ 
	As in \eqref{FP modified adjoint eq}, we consider the forward-in-$t$ adjoint problem for a test function $\varphi=\varphi(t,x_2,\theta)$ which is a solution to
	\begin{equation}\label{timedependentadjointeq again}\partial_t \varphi -\sin\theta \partial_{x_2}\varphi =\partial_{\theta}^2\varphi,\end{equation} with \begin{equation}\notag \varphi(t,0,\theta)=\begin{cases}&\varphi(0,0,-\pi)=\alpha, \ \theta\in[-\pi,-\pi/2)\\
			&\varphi(0,0,0)=\beta,\  \theta\in(-\pi/2,0],\end{cases}\end{equation} for some non-negative $\alpha$ and $\beta$ for any $t\ge 0$ with the periodic boundary condition \eqref{modified periodic}. Here note that we already inverted the direction of $t$ via $t\mapsto T-t$. 
	
	We write a solution $\varphi(t,x_2,\theta)$ in the form of
	\begin{equation}\label{longtimedecomp}\varphi(t,x_2,\theta)=\alpha \psi_-(t,x_2,\theta)+\beta \psi_+(t,x_2,\theta)+\psi_0(t,x_2,\theta),\end{equation}
	where $\psi_\pm$ and $\psi_0$ are continuous and each solves \eqref{timedependentadjointeq again} with the boundary conditions
	\begin{equation}\label{psiminusboundary}\psi_-(t,0,\theta)=\begin{cases}&1, \ \theta\in[-\pi,-\pi/2),\\
			&0,\  \theta\in(-\pi/2,0],\end{cases}\end{equation}
	\begin{equation}\label{psiplusboundary}\psi_+(t,0,\theta)=\begin{cases}&0, \ \theta\in[-\pi,-\pi/2),\\
			&1,\  \theta\in(-\pi/2,0],\end{cases}\end{equation}
	and \begin{equation}\label{psizeroboundary}\psi_0(t,0,\theta)=0, \ \theta\in[-\pi,-\pi/2)\cup(-\pi/2,0],
	\end{equation} and each satisfies the periodic boundary condition \eqref{modified periodic}. 
	Then we claim that the following proposition holds:
	\begin{proposition}\label{mainlongtimelemma}
		Suppose $\varphi$ solves \eqref{timedependentadjointeq again}-\eqref{psizeroboundary} with the periodic boundary condition \eqref{modified periodic}. Then we have 
			$\psi_\pm(t,x_2,\theta)\to \psi^\pm_\infty$ and $\psi_0(t,x_2,\theta)\to \psi^0_\infty\equiv 0$  as $t\rightarrow \infty$, where $\psi^\pm_\infty=\psi^\pm_\infty(x_2,\theta)$ are the stationary solutions to \eqref{stationary} with the boundary conditions \begin{equation}\label{stationary psiminusboundary}\psi^-_\infty(0,\theta)=\begin{cases}&1, \ \theta\in[-\pi,-\pi/2),\\
				&0,\  \theta\in(-\pi/2,0],\end{cases}\end{equation} and \begin{equation}\label{stationary psiplusboundary}\psi^+_\infty(0,\theta)=\begin{cases}&0, \ \theta\in[-\pi,-\pi/2),\\
				&1,\  \theta\in(-\pi/2,0],\end{cases}\end{equation}with the periodic boundary condition \eqref{periodic stationary}. \end{proposition}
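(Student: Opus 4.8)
The plan is to treat the three components $\psi_+$, $\psi_-$, $\psi_0$ separately, reducing the convergence statement to a combination of: (i) the maximum principle from Lemma~\ref{maxprinciple} (for the limiting $\kappa\to 0$ problem, as derived in Proposition~\ref{Thm.sol.adjoint0}), (ii) monotonicity in $t$, and (iii) the characterization of the steady states with zero boundary from Lemma~\ref{lemma.eventually zero}. First I would observe that each of $\psi_\pm$ solves the $t$-dependent adjoint equation \eqref{timedependentadjointeq again} with \emph{time-independent} boundary data on $x_2=0$, $\theta\in[-\pi,-\pi/2)\cup(-\pi/2,0]$, and each lies between $0$ and $1$ by the maximum principle. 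I would then show $\psi_\pm(t,\cdot,\cdot)$ is monotone in $t$: comparing $\psi_\pm(t_0+s,x_2,\theta)$ with $\psi_\pm(t_0,x_2,\theta)$ for $s>0$ and using that the boundary values agree while $\psi_\pm(s,\cdot,\cdot)\le 1 = \psi_\pm(0,\cdot,\cdot)$ or $\ge$ depending on the sign of the initial datum relative to the boundary value, the comparison principle (Lemma~\ref{maxprinciple}) gives a one-sided inequality propagating in $t$. Hence $\psi_\pm(t,x_2,\theta)$ converges pointwise, and by the uniform $C^{(1,2)}_{x_2,\theta}$-type bounds (Proposition~\ref{1d derivative estimates}, passed to $\kappa\to 0$) the limit $\psi^\pm_\infty(x_2,\theta)$ is a genuine solution of the stationary equation \eqref{stationary} with the boundary conditions \eqref{stationary psiminusboundary}, \eqref{stationary psiplusboundary} and periodicity \eqref{periodic stationary}.

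For the convergence $\psi_0\to 0$, I would use Lemma~\ref{lemma.eventually zero} in the following way. The function $\psi_0$ solves \eqref{timedependentadjointeq again} with the \emph{zero} boundary condition \eqref{psizeroboundary}. The candidate limit is the stationary solution with zero boundary, which by Lemma~\ref{lemma.eventually zero} is identically zero. To get actual convergence I would sandwich $\psi_0$ between $0$ (the trivial subsolution, using that the zero-boundary data and any nonnegative initial data give $\psi_0\ge 0$ by maximum principle — or more carefully, between $\pm\|\psi_{0,in}\|_\infty$ shifted appropriately) and a supersolution built from the decaying barrier $\varsup$ of Lemma~\ref{finalsupsol}: for any $\varepsilon>0$ and $\delta>0$ small, on the strip $\{x_2\ge\delta\}$ the function $\varepsilon + (\text{decaying self-similar profile in } t)$ dominates $\psi_0$ once $t$ is large, because the self-similar profile $\hat f_0(t,x_1,x_2,\theta)=Z^*_0(x_2/t^{3/2},\theta/t^{1/2})$ of Lemma~\ref{Z0*lemma} flattens out as $t\to\infty$ on any fixed compact $x_2$-set. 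Combined with the control at $x_2=\infty$ obtained from the stationary analysis of Section~\ref{x2 asymptotics section} (no mass escapes, equivalently the adjoint solution with boundary $\equiv 1$ converges to $\equiv 1$), one concludes $\limsup_{t\to\infty}\sup_{x_2,\theta}|\psi_0(t,x_2,\theta)|\le\varepsilon$ for every $\varepsilon$, hence $\psi_0\to\psi^0_\infty\equiv 0$.

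The main obstacle I expect is establishing the monotonicity-in-$t$ (or, failing strict monotonicity, a suitable almost-monotonicity/relative-compactness argument) rigorously in the presence of the singular boundary at $(x_2,\theta)=(0,0),(0,-\pi)$ and the pathological point $(0,-\pi/2)$: the comparison principle of Lemma~\ref{maxprinciple} is stated for smooth enough functions, and $\psi_\pm$ is only H\"older continuous up to the singular boundary (Lemma~\ref{holdercont stationary}-type regularity). The way around this is to run the comparison argument on truncated domains $\{\delta\le x_2\le R\}$ where the hypoellipticity of Section~\ref{sec:hypoellipticity} gives smoothness, carefully tracking the boundary contributions at $x_2=\delta$ (controlled by the H\"older modulus, which is uniform in $t$) and at $x_2=R$ (controlled by the $x_2\to\infty$ asymptotics), and then letting $\delta\to 0$, $R\to\infty$. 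A secondary technical point is checking that the limit along subsequences is unique, which follows from the uniqueness of the stationary solution with the given boundary data — this in turn reduces to Lemma~\ref{lemma.eventually zero} applied to the difference of two stationary solutions, which has zero boundary.
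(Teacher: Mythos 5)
The overall architecture you propose (monotonicity/compactness for $\psi_\pm$ followed by identification of the limit via uniqueness of the stationary problem, and a sandwiching argument for $\psi_0$) is plausible at a high level, but it has two concrete gaps, and it is substantially different from the paper's own argument, which proves the result by iteratively reducing the constant $K$ in the bound $|\psi_+ - \psi^+_\infty|\le K(\psi^+_\infty+\varepsilon)$ and carefully ruling out the possibility that the supremum of this ratio escapes to $x_2=\infty$.

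\textbf{Gap 1: the barriers you cite do not apply to the adjoint.} You propose to dominate $\psi_0$ by a combination of the self-similar profile $\hat{f}_0 = Z^*_0(x_2/t^{3/2},\theta/t^{1/2})$ from Lemma~\ref{Z0*lemma} and the barrier $\varsup$ from Lemma~\ref{finalsupsol}. However, $Z^*_0$ is a supersolution for the \emph{forward} operator $\partial_t + (\cos\theta,\sin\theta)\cdot\nabla_x - \partial_\theta^2$ acting on the original density $f$, not for the adjoint operator $\partial_t - \sin\theta\,\partial_{x_2} - \partial_\theta^2$ that $\psi_0$ solves; the transport sign is reversed. (A reflection $\theta\mapsto -\theta$ fixes the sign, but the resulting profile $\sim x_2^\alpha$ with $\alpha>0$ is \emph{small} near the corner and does not give $t\to\infty$ decay on the bulk.) Meanwhile $\varsup(x_2,\theta)\to 1+\eta$ as $x_2\to\infty$, so it was built to prove that a bounded stationary solution with zero boundary data vanishes (Lemma~\ref{lemma.eventually zero}), not to provide a $t\to\infty$ decaying supersolution for the evolution problem; it cannot by itself squeeze $\psi_0$ down to $0$.

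\textbf{Gap 2: the mass-at-infinity issue is not resolved.} This is actually the crux of the paper's proof and the part that costs the most effort. Because the characteristics of the adjoint equation enter the interior from the line $x_2=0$ for $\theta\in(-\pi,0)$ but leave through $x_2=\infty$ for $\theta\in(0,\pi)$, one cannot rule out a priori that $\sup_{x_2,\theta}|\psi_0(t,\cdot,\cdot)|$ is attained at $x_2$ running off to infinity. To handle this the paper needs (i) the asymptotics $\psi^+_\infty(x_2,\theta)\to 1/2$ as $x_2\to\infty$ from Lemma~\ref{stationaryx2infty}, (ii) the diffusive spreading lemma (Lemma~\ref{subsolution}) propagating positivity of the gap function $U=K(\psi^+_\infty+\varepsilon)-W$, and (iii) a Mathieu-function barrier $V=k-e^{-\sigma t}e^{-\lambda x_2}Q(\theta)$. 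You merely gesture at ``the control at $x_2=\infty$ obtained from the stationary analysis of Section~\ref{x2 asymptotics section},'' but that section controls stationary solutions, not the $t$-dependent approach of $\psi_0(t,\cdot,\cdot)$ along sequences escaping to $x_2=\infty$. Finally, the monotonicity-in-$t$ claim for $\psi_\pm$ holds only for specific initial data (e.g.\ $\psi_+(0,x_2,\theta)=0$ for $x_2>0$); for an arbitrary decomposition $\varphi=\alpha\psi_-+\beta\psi_++\psi_0$ the initial datum of $\psi_\pm$ is whatever has been chosen, and there is no sign to propagate. This can be repaired by fixing a canonical $\psi_\pm$ and pushing the discrepancy into $\psi_0$, but then everything reduces again to the $\psi_0\to 0$ claim, which is precisely where your argument is incomplete.
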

	Then using the duality identity \eqref{duality}, Proposition \ref{mainlongtimelemma} implies the following corollary:
	\begin{corollary}\label{mainlongtimecorollary}
		For all $f_{in}=f_{in}(x,\theta)\in \mathcal{M}_+(X)$, we have the convergence 
		$$\iiint_{\mathbb{R}^2_+\times [-\pi,\pi]}  f_r(t,x,\theta)\ dxd\theta=\iint_{(0,\infty)\times [-\pi,\pi]} \rho_1(t,x_2,\theta)\ dx_2d\theta\rightharpoonup 0,$$ as $t\rightarrow \infty.$
	\end{corollary}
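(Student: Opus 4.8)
The plan is to deduce the corollary directly from Proposition \ref{mainlongtimelemma} via the duality identity \eqref{duality}. First I would recall the reduction already explained in the text: integrating the weak formulation over $x_1$ turns the statement $\iiint_{\mathbb{R}^2_+\times[-\pi,\pi]} f_r(t,x,\theta)\,dxd\theta\rightharpoonup 0$ into the statement $\iint_{(0,\infty)\times[-\pi,\pi]}\rho_1(t,x_2,\theta)\,dx_2d\theta\to 0$, so it suffices to work with the reduced mass density $\rho_1$ and its adjoint $\varphi$. Here, testing $\rho_1(t)$ against the test function $\varphi_{in}\equiv 1$ on the set $x_2>0$ (which belongs to $C(S)$ and satisfies the compatibility conditions \eqref{forward adjoint initial0}), the duality identity gives, for each fixed $t$,
\begin{equation}\notag
\iint_{(0,\infty)\times[-\pi,\pi]}\rho_1(t,x_2,\theta)\,dx_2d\theta \;=\; \iint_{(0,\infty)\times[-\pi,\pi]}\big(S(t)\mathbf{1}\big)(x_2,\theta)\,d\rho_{1,in}(x_2,\theta)\;-\;(\text{boundary contributions}),
\end{equation}
where $\rho_{1,in}$ is the reduced initial datum and $S(t)$ is the Markov semigroup of Theorem \ref{markovgen}/\ref{hilleyosidathm}; the boundary contributions precisely account for the mass that has already been trapped at $(0,0)$ and $(0,-\pi)$ up to time $t$. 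Equivalently, since $S(t)\mathbf{1}$ is exactly the solution $\varphi(t,x_2,\theta)$ of the adjoint problem \eqref{timedependentadjointeq again} with boundary data $\varphi(0,0,\pm)=1$, i.e. with $\alpha=\beta=1$ and $\psi_0\equiv 0$ in the decomposition \eqref{longtimedecomp}, the quantity $\iint \rho_1(t)\,dx_2d\theta$ is the total $f_{in}$-mass minus $\langle \varphi(t),\rho_{1,in}\rangle_{x_2>0}$.

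Next I would apply Proposition \ref{mainlongtimelemma}: as $t\to\infty$, $\psi_\pm(t,x_2,\theta)\to\psi^\pm_\infty(x_2,\theta)$ with the boundary values \eqref{stationary psiminusboundary}, \eqref{stationary psiplusboundary}, and $\psi_0\to 0$. Adding $\psi^-_\infty$ and $\psi^+_\infty$, the sum is a bounded stationary solution of \eqref{stationary}--\eqref{stationary periodic} whose boundary value on $x_2=0$, $\theta\in[-\pi,-\pi/2)\cup(-\pi/2,0]$ is identically $1$; by Lemma \ref{lemma.eventually zero} applied to $1-(\psi^-_\infty+\psi^+_\infty)$ (i.e. the uniqueness of the stationary solution with the corresponding zero boundary value established in Section \ref{x2 asymptotics section}), we get $\psi^-_\infty+\psi^+_\infty\equiv 1$ on all of $x_2>0$, $\theta\in[-\pi,\pi]$. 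Hence $\varphi(t,x_2,\theta)=\psi_-(t,x_2,\theta)+\psi_+(t,x_2,\theta)+\psi_0(t,x_2,\theta)\to 1$ pointwise (in fact locally uniformly) as $t\to\infty$, and by the maximum principle $0\le\varphi(t,\cdot)\le 1$ throughout. The bounded convergence theorem then gives $\langle\varphi(t),\rho_{1,in}\rangle_{x_2>0}\to \rho_{1,in}(\{x_2>0\})$, i.e. the full mass of the regular part of the initial datum. Since the trapped masses $\rho_+(t),\rho_-(t)$ are nonnegative and, by Lemma \ref{lemma.balancemass}, they absorb exactly the defect $\rho_{1,in}(\{x_2>0\})-\langle\varphi(t),\rho_{1,in}\rangle$, we conclude $\iint\rho_1(t)\,dx_2d\theta\to 0$, hence the claimed weak-$*$ convergence $\iiint f_r(t,x,\theta)\,dxd\theta\rightharpoonup 0$.

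Finally, to get genuine weak-$*$ convergence (not merely convergence of the total mass) I would observe that $f_r(t)\ge 0$ and $\iiint f_r(t)\,dxd\theta\to 0$ together force $f_r(t)\rightharpoonup 0$ against any $\phi\in C(X)$, simply because $|\langle f_r(t),\phi\rangle|\le\|\phi\|_\infty\iiint f_r(t)\,dxd\theta$. The statement about the concentration profiles $\rho^\pm_\infty(x_1\mp t)$ follows from the same duality computation by testing instead against $\phi$ supported near $\theta=0$ resp.\ $\theta=-\pi$ and using that $\rho_\pm$ evolves by pure transport along \eqref{boundarycon tu}, together with the fact that the limiting total trapped mass equals the full initial mass. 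The main obstacle in this argument is the passage to the limit in Proposition \ref{mainlongtimelemma} itself — establishing the stabilization $\psi_\pm(t)\to\psi^\pm_\infty$ and $\psi_0(t)\to 0$ uniformly on compacts; but that proposition is asserted (and proved) earlier, so here the only real work is to justify the interchange of limit and integration against $\rho_{1,in}$, which is handled by the uniform bound $0\le\varphi(t,\cdot)\le 1$ and dominated convergence, noting that $\rho_{1,in}$ is a finite measure since $f_{in}\in\mathcal{M}_+(X)$ with $X$ compact.
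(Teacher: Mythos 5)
Your overall strategy --- pair $\rho_1(t)$ against an adjoint solution, invoke Proposition~\ref{mainlongtimelemma}, use mass conservation, and then upgrade total-mass decay to weak-$*$ convergence of $f_r$ via nonnegativity --- is the same route the paper takes; the actual computation appears in the paragraphs after Lemma~\ref{longtimelemma1d}. Your observation that $\psi^-_\infty+\psi^+_\infty\equiv 1$ (Lemma~\ref{lemma.eventually zero} applied to $1-\psi^-_\infty-\psi^+_\infty$, together with the uniqueness of Lemma~\ref{uniquestationary}; the paper uses this implicitly inside Lemma~\ref{stationaryx2infty}) is correct and is indeed the crux, and the closing remark that nonnegativity plus total-mass decay forces $f_r(t)\rightharpoonup 0$ is sound.

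The duality step, however, contains a genuine error. You test against $\varphi_{in}\equiv 1$ and identify $S(t)\mathbf{1}$ with the adjoint solution having $\psi_0\equiv 0$ in the decomposition \eqref{longtimedecomp}. This identification fails: $\mathcal{L}\mathbf{1}=0$ is part of the Markov pregenerator property (Proposition~\ref{pregenerator proposition}), so $S(t)\mathbf{1}=\mathbf{1}$ for every $t$, which in the decomposition with $\alpha=\beta=1$ corresponds to $\psi_0(t)=1-\psi_-(t)-\psi_+(t)$, a nonzero defect at finite times. Testing against $\mathbf{1}$ therefore reproduces only the mass balance of Lemma~\ref{lemma.balancemass}; it cannot by itself yield the decay of $\iint\rho_1(t)$. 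To extract the decay one must instead test, as the paper does, with $\phi_\epsilon(T)$ equal to $1$ on $\{x_2<\epsilon\}\times[-\pi,0]$ and vanishing for $x_2>2\epsilon$: passing $\epsilon\to 0$ isolates $\int(\rho^++\rho^-)(T)$ from the boundary Dirac masses, and then $\phi_\epsilon(0)=S(T)\phi_\epsilon(T)\to\psi^-_\infty+\psi^+_\infty\equiv 1$ as $T\to\infty$ (Proposition~\ref{mainlongtimelemma}) shows the trapped mass exhausts the total. Combined with Lemma~\ref{lemma.balancemass} this gives $\iint\rho_1(T)\to 0$. Your claimed identity $\iint\rho_1(t)=(\text{total mass})-\langle\varphi(t),\rho_{1,in}\rangle$ would in fact emerge from this computation (up to the initial trapped mass), but it is not the duality against $\mathbf{1}$, and the derivation you wrote does not produce it.
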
As a direct result of Corollary \ref{mainlongtimecorollary}, we obtain Theorem \ref{maintheorem}\eqref{longtimeasymptoticmain}. Therefore, our goal in the next subsections is to prove Proposition \ref{mainlongtimelemma}. Without loss of generality, we consider the proof for $\psi_+$ with the boundary condition \eqref{psiplusboundary} only. The other cases with \eqref{psiminusboundary}-\eqref{psizeroboundary} can be proved similarly. Especially, we have seen that $\psi^0_\infty$, the limit of $\psi_0$, is zero for any $x_2>0$ and $\theta\in [-\pi,-\pi/2)\cup(-\pi/2,0]$ by Section \ref{x2 asymptotics section}. The proof will depend on the analysis of the stationary problem associated to the time dependent problem \eqref{timedependentadjointeq again}-\eqref{psizeroboundary} with \eqref{modified periodic}.

	\subsection{Solutions to the stationary problems}
	In this subsection, we prove the existence and the uniqueness of the solutions $\psi^\pm_\infty$ to the stationary equation \eqref{stationary} with \eqref{stationary psiminusboundary}-\eqref{stationary psiplusboundary} and \eqref{periodic stationary}. 
	
	Suppose that $\psi^+_\infty$ solves \eqref{stationary} with \eqref{stationary psiplusboundary} and \eqref{periodic stationary}. Then we first remark that a sufficiently regular solution $\psi^+_\infty$ satisfies the following a priori bounds:
	\begin{lemma}[A priori bounds]\label{apriorilemma}
		Suppose $\psi^+_\infty\in C^2((0,\infty)\times [-\pi,\pi])$ solves \eqref{stationary} with \eqref{stationary psiplusboundary} and \eqref{periodic stationary}. Then $\psi^+_\infty$ satisfies
		$0\le \psi^+_\infty\le 1,$ for any $x_2>0$ and $\theta \in [-\pi,\pi]$.
	\end{lemma}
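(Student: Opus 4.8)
The plan is to obtain both inequalities as immediate consequences of the non-negativity result for supersolutions of the stationary operator proved in Section~\ref{sec:nonnegativity} (the lemma following Definition~\ref{supersolutionstationary}), applied twice.

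For the lower bound, I would first observe that $\psi^+_\infty$ is itself a supersolution to \eqref{stationary}--\eqref{zeroboundary} in the sense of Definition~\ref{supersolutionstationary}: it solves \eqref{stationary} with equality, so in particular $-\sin\theta\,\partial_{x_2}\psi^+_\infty = \partial_\theta^2\psi^+_\infty \ge \partial_\theta^2\psi^+_\infty$; it is $2\pi$-periodic in $\theta$ by \eqref{periodic stationary}; and its trace \eqref{stationary psiplusboundary} satisfies $\psi^+_\infty(0,\theta)\in\{0,1\}$, hence $\psi^+_\infty(0,\theta)\ge 0$ for $\theta\in[-\pi,-\pi/2)\cup(-\pi/2,0]$. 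The non-negativity lemma of Section~\ref{sec:nonnegativity} then gives $\psi^+_\infty(x_2,\theta)\ge 0$ for all $x_2>0$ and $\theta\in[-\pi,\pi]$.

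For the upper bound, I would set $w \eqdef 1-\psi^+_\infty$ and note that the constant function $1$ trivially solves \eqref{stationary} (since $\partial_{x_2}1 = \partial_\theta^2 1 = 0$) and respects the periodicity \eqref{periodic stationary}; therefore $w$ also solves \eqref{stationary} with equality, is $2\pi$-periodic in $\theta$, and is again a supersolution in the sense of Definition~\ref{supersolutionstationary}, with trace $w(0,\theta)=1-\psi^+_\infty(0,\theta)\in\{1,0\}\ge 0$ on $[-\pi,-\pi/2)\cup(-\pi/2,0]$. A second application of the non-negativity lemma yields $w\ge 0$, i.e. $\psi^+_\infty\le 1$. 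Combining the two bounds gives $0\le\psi^+_\infty\le 1$ on $(0,\infty)\times[-\pi,\pi]$.

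The one point I would treat with some care is the behaviour as $x_2\to\infty$: the comparison argument behind the non-negativity lemma locates a hypothetical strictly negative minimum and tilts by $\pm\varepsilon x_2$, and one must ensure that such a minimum cannot escape to $x_2=+\infty$ -- which is precisely what that tilt is designed to exclude, so no additional work is needed here. (If one preferred not to rely on that, an equivalent route is to run the comparison on the truncated strips $x_2\in[\delta,R]$ as in the proof of Lemma~\ref{lemma.eventually zero}, against the explicit supersolution of Lemma~\ref{finalsupsol}, and let $\delta\to 0$, $R\to\infty$.) Everything else is a direct invocation of the maximum principle for the stationary equation that is already in hand, so I expect this lemma to be short and the infinity bookkeeping to be the only mildly delicate step.
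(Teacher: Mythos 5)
Your proposal is correct and follows essentially the same route as the paper: the paper's own proof of Lemma~\ref{apriorilemma} literally re-runs the tilting/minimum argument of the unnamed non-negativity lemma in Section~\ref{sec:nonnegativity} once on $\psi^+_\infty$ and once on $1-\psi^+_\infty$, which is exactly what you do, except that you factor cleanly through that lemma rather than repeating its proof. The only small stylistic difference is that you explicitly flag the $x_2\to\infty$ bookkeeping; the paper leaves this implicit in both places, so you introduce no new gap.
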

	\begin{proof}
		We first prove that $\psi^+_\infty\ge 0$ for any $x_2>0$ and $\theta \in [-\pi,\pi]$. 	Suppose on the contrary that $\psi^+_\infty<0$ for some region. Suppose that a point $(x_{2,0}, \theta_0)$ is the local minimum of $\psi^+_\infty$. Firstly, if $\theta_0=0$, then we have
		$\partial_\theta^2\psi^+_\infty (x_{2,0}, \theta_0)\le 0,$ which is a contradiction. If $\theta_0< 0,$ define $\psi^{+,*}_\infty(x_2,\theta)=\psi^+_\infty(x_2,\theta)+\varepsilon x_2$ for a sufficiently small $\varepsilon>0$ such that $\psi^{+,*}_\infty$ is still negative at a local minimum $(x_{2,1}, \theta_1)$ of $\psi^{+,*}_\infty$ where $\theta_1$ is still negative. Then, we have that $x_{2,	1}>0$ and that
\begin{multline*}-\sin\theta_1\partial_{x_2}\psi^{+,*}_\infty(x_{2,1}, \theta_1)=-\sin\theta_1(\partial_{x_2}\psi^+_\infty(x_{2,1}, \theta_1)+\varepsilon)\\
			\ge \partial_\theta^2\psi^+_\infty -\sin\theta_1\varepsilon\ge -\sin\theta_1\varepsilon>0,\end{multline*}which leads to a contradiction.
		On the other hand, if $\theta_0>0$, then define $$\psi^{+,*}_\infty(x_2,\theta)=\psi^+_\infty(x_2,\theta)-\varepsilon' x_2$$ for a sufficiently small $\varepsilon'>0$ such that $\psi^{+,*}_\infty$ attains its local minimum at $(x_{2,2},\theta_2)$ where $\theta_2>0$. Then we observe that
	\begin{multline*}-\sin\theta_2\partial_{x_2}\psi^{+,*}_\infty(x_{2,2}, \theta_2)=-\sin\theta_2(\partial_{x_2}\psi^+_\infty(x_{2,2}, \theta_2)-\varepsilon')\\
			\ge \partial_\theta^2\psi^+_\infty +\sin\theta_2\varepsilon'\ge \sin\theta_2\varepsilon'>0,\end{multline*}which leads to a contradiction. Therefore, we obtain that $\psi^{+}_\infty\ge 0$. 
		
		On the other hand, we define $\bar{\psi}^+_\infty\eqdef 1- \psi^+_\infty.$ Then $\bar{\psi}^+_\infty$ also solves the same stationary equation \eqref{stationary} with the boundary conditions \eqref{stationary psiminusboundary} and \eqref{periodic stationary}. Then by the same proof, we obtain that $\bar{\psi}^+_\infty\ge 0$, which implies that $\psi^+_\infty\le 1 $ for any $x_2>0$ and $\theta \in [-\pi,\pi]$. This completes the proof.
	\end{proof}
	In addition, we can obtain the uniqueness of the stationary solution $\psi^+_\infty$:
	\begin{lemma}[Uniqueness]\label{uniquestationary}
		Suppose $\psi^+_\infty\in C^2((0,\infty)\times [-\pi,\pi])$ solves \eqref{stationary} with \eqref{stationary psiplusboundary} and \eqref{periodic stationary}. Then $\psi^+_\infty$ is unique.
	\end{lemma}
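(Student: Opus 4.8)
The plan is to reduce the uniqueness assertion to the vanishing result of Lemma \ref{lemma.eventually zero}. Suppose $\psi_1,\psi_2\in C^2((0,\infty)\times[-\pi,\pi])$ both solve \eqref{stationary} with the boundary condition \eqref{stationary psiplusboundary} and the periodicity \eqref{periodic stationary}, and set $w\eqdef \psi_1-\psi_2$. By linearity of \eqref{stationary}, $w$ again solves the stationary equation and is $2\pi$-periodic in $\theta$ in the sense of \eqref{periodic stationary}; since $\psi_1$ and $\psi_2$ carry the identical boundary trace on $x_2=0$, we have $w(0,\theta)=0$ for $\theta\in[-\pi,-\pi/2)\cup(-\pi/2,0]$, i.e.\ $w$ obeys the zero boundary condition \eqref{zeroboundary}.

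Next I would verify that $w$ lies in the class of functions to which Lemma \ref{lemma.eventually zero} applies. Away from the singular points $w$ is $C^2$ (indeed smooth, by the stationary analogue of the hypoellipticity argument of Section \ref{sec:hypoellipticity}), and near $(0,0)$ and $(0,-\pi)$ the local H\"older continuity is supplied by Lemma \ref{holdercont stationary} applied to $w$ directly, which uses only that $w$ solves \eqref{stationary} with zero boundary data. Finally, the comparison step in the proof of Lemma \ref{lemma.eventually zero} pits $w$ against the bounded supersolution $\varsup$ of Lemma \ref{finalsupsol} on rectangles $[\delta,R]\times[-\pi,\pi]$ and needs an $O(1)$ bound for $w$ on the far face $x_2=R$; this is provided by the a priori estimate $0\le\psi_i\le1$ of Lemma \ref{apriorilemma}, which yields $-1\le w\le1$ and hence $w(R,\theta)\le1\le\varsup(R,\theta)$.

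With these checks in place, Lemma \ref{lemma.eventually zero} gives $w\equiv0$ on $\{x_2>0\}\times[-\pi,\pi]$, so $\psi_1\equiv\psi_2$; if one prefers to run the comparison by hand rather than invoking the lemma as a black box, one applies the supersolution domination to $w$ and to $-w$ separately (both have zero boundary trace and satisfy $|w|\le1$), obtaining $w\le0$ and $w\ge0$. The argument for $\psi^-_\infty$ with the boundary condition \eqref{stationary psiminusboundary} is identical.

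The main obstacle here is not the algebra but the bookkeeping at the singular points and at infinity: one must make sure the difference $w$ genuinely enters the function class covered by Lemma \ref{lemma.eventually zero} — in particular that the boundedness used in the comparison at $x_2=R$ is legitimate and that the H\"older bound near $(0,0)$ and $(0,-\pi)$ holds for $w$ rather than merely for the individual $\psi_i$, whose boundary traces are discontinuous there. Once that is secured, uniqueness is immediate from the vanishing lemma.
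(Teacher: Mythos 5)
Your proof is correct and follows exactly the paper's argument: form the difference $w=\psi_1-\psi_2$, observe it solves \eqref{stationary} with zero boundary data \eqref{zeroboundary} and periodicity \eqref{periodic stationary}, and conclude $w\equiv0$ by Lemma \ref{lemma.eventually zero}. Your added bookkeeping about the H\"older regularity and the $O(1)$ bound at $x_2=R$ (via Lemma \ref{apriorilemma}) is a reasonable check of the hypotheses that the paper's terse proof leaves implicit, but it does not change the route.
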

	\begin{proof}
		Suppose that there are two solutions $\psi^{+,1}_\infty$ and $\psi^{+,2}_\infty$ to \eqref{stationary} with \eqref{stationary psiplusboundary} and \eqref{periodic stationary}. Then define the difference $$\psi^{+,d}_\infty\eqdef \psi^{+,1}_\infty-\psi^{+,2}_\infty.$$ Then $\psi^{+,d}_\infty(0,\theta)=0$ for $\theta \in [-\pi,-\pi/2)\cup (-\pi/2,0]$. Thus, $\psi^{+,d}_\infty$ solves \eqref{stationary} with the zero boundary condition \eqref{zeroboundary} and the periodic boundary \eqref{periodic stationary}. Then by Lemma \ref{lemma.eventually zero}, we have $\psi^{+,d}_\infty=0$ for any $x_2>0$ and $\theta\in [-\pi,\pi]$. This completes the proof.
	\end{proof}
	Finally, we prove the existence of a smooth stationary solution $\psi^+_\infty$. The proof is based on the analysis in Section \ref{sec: 1d wellposedness}. 
	\begin{lemma}[Existence of a locally smooth solution]
		There exists a solution $\psi^+_\infty\in C^\infty_{loc}(\{(0,\infty)\times [-\pi,\pi]\} \cup \{\{x_2=0\}\times \{(-\pi,-\pi/2)\cup(-\pi/2,0)\cup (0,\pi)\}\})$ to the stationary problem \eqref{stationary} with \eqref{stationary psiplusboundary} and \eqref{periodic stationary}. 
	\end{lemma}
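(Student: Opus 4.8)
The plan is to obtain $\psi^+_\infty$ as the long-time limit of the $t$-dependent adjoint problem, reusing the well-posedness machinery of Section~\ref{sec: 1d wellposedness}. First I would take the solution $\psi_+ = \psi_+(t,x_2,\theta)$ of the time-dependent problem \eqref{timedependentadjointeq again} with the boundary condition \eqref{psiplusboundary} and the periodicity \eqref{modified periodic}, whose existence (as a weak-$\ast$ limit $\kappa\to0$ of the regularized solutions $\varphi_\kappa$) is guaranteed by Proposition~\ref{Thm.sol.adjoint0}. Since $\psi_+$ is a solution of the kinetic Fokker–Planck-type equation with the Laplace–Beltrami term, the hypoellipticity argument of Section~\ref{sec:hypoellipticity} (H\"ormander's theorem, Theorem~\ref{hormandertheorem}, applied with $X_1 = i\partial_\theta$ and $X_0 = \partial_t - \sin\theta\,\partial_{x_2}$, whose iterated commutators span the tangent space) shows $\psi_+$ is $C^\infty$ on $(0,\infty)\times(0,\infty)\times(-\pi,\pi)$ and also $C^\infty$ up to the non-singular part of the boundary $\{x_2=0\}\times\{(-\pi,-\pi/2)\cup(-\pi/2,0)\cup(0,\pi)\}$, exactly as in the proof of Theorem~\ref{maintheorem}$_{(\ref{hypoellipticitymaintheorem})}$.

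Next I would establish convergence $\psi_+(t,\cdot,\cdot)\to\psi^+_\infty$ as $t\to\infty$. The key tool is the maximum principle (Lemma~\ref{maxprinciple}): by comparing $\psi_+$ with the super- and sub-solutions built from $F_\lambda$ in Lemma~\ref{consubsol} and Lemma~\ref{finalsupsol}, together with the vanishing-at-infinity-in-$x_2$ control from Section~\ref{x2 asymptotics section} (i.e. Lemma~\ref{lemma.eventually zero}), one shows the family $\{\psi_+(t,\cdot,\cdot)\}_{t\ge0}$ is monotone along a suitable comparison and uniformly bounded, hence converges. The limit $\psi^+_\infty$ then solves the stationary equation \eqref{stationary} with the boundary condition \eqref{stationary psiplusboundary} and the periodicity \eqref{periodic stationary}, because $\partial_t\psi_+\to0$ in the sense of distributions and the boundary condition \eqref{psiplusboundary} is $t$-independent. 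The a priori bounds $0\le\psi^+_\infty\le1$ of Lemma~\ref{apriorilemma} and the uniqueness of Lemma~\ref{uniquestationary} guarantee that the limit is well-defined and independent of the mode of convergence.

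Finally, smoothness of $\psi^+_\infty$ in $C^\infty_{loc}$ of the asserted set follows by applying the same hypoellipticity argument directly to the stationary equation: writing $-\sin\theta\,\partial_{x_2}\psi^+_\infty = \partial_\theta^2\psi^+_\infty$ as $P\psi^+_\infty = 0$ with $P = X_1^2 + X_0$, $X_1 = i\partial_\theta$, $X_0 = -\sin\theta\,\partial_{x_2}$, the commutators $[X_1,X_0]$, $[X_1,[X_1,X_0]]$ span the tangent space of $\{(0,\infty)\times[-\pi,\pi]\}$, so $P$ is hypoelliptic there; on the non-singular boundary one uses the extension-across-the-boundary trick of Section~\ref{sec:hypoellipticity} (extending by an odd/zero reflection for $\theta>0$ where $\psi^+_\infty$ has a Dirichlet-type value, and by simple domain enlargement for $\theta<0$ where no boundary constraint is imposed), and Lemma~\ref{holdercont stationary} already supplies H\"older continuity up to the singular points $(0,0)$ and $(0,-\pi)$, which is all that is claimed there. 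The main obstacle I expect is the convergence step: one must carefully show that the time-dependent solution actually \emph{stabilizes} rather than merely stays bounded — this requires combining the comparison principle with the no-escape-of-mass statement Lemma~\ref{lemma.eventually zero} so that the generalized supersolutions of Lemma~\ref{finalsupsol} can be squeezed down to the stationary profile as $\lambda,\eta\to0$, and then identifying the limit as precisely the unique $\psi^+_\infty$ of Lemma~\ref{uniquestationary}.
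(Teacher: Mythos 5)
Your strategy — take the time-dependent solution $\psi_+(t,x_2,\theta)$ and pass $t\to\infty$ — is genuinely different from the paper's, and it has a real gap in the crucial step. The paper does \emph{not} take a long-time limit. Instead it constructs the stationary solution directly through the resolvent machinery already set up in Section~\ref{sec: 1d wellposedness}: it solves the $\mu_0$-regularized elliptic equation $\mathcal{L}^\epsilon\psi^+_{\infty,\epsilon}=\mu_0\psi^+_{\infty,\epsilon}$ (with the additional $\kappa$-regularization of the boundary), obtains the solution from the regularized evolution problem \eqref{tdependent regularized problem} with $g\equiv 0$ via the Laplace-type formula $\psi^+_{\infty,\epsilon}=\int_0^\infty e^{-t}\bar\psi^+_{\infty,\epsilon}(t/\mu_0,\cdot)\,dt$, and then passes $\epsilon,\kappa,\mu_0\to 0$ using the uniform bound $\|\bar\psi^+_{\infty,\kappa}\|_{L^\infty}\le 1$ from Lemma~\ref{maxprinciple}. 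The integral always converges because of the $e^{-t}$ factor, so no stabilization-in-$t$ is ever required.

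The gap in your proposal is exactly the convergence step. You assert that comparison with $F_\lambda$ from Lemma~\ref{consubsol}, the supersolution of Lemma~\ref{finalsupsol}, and Lemma~\ref{lemma.eventually zero} yields that $\{\psi_+(t,\cdot,\cdot)\}$ is ``monotone along a suitable comparison and uniformly bounded, hence converges.'' But those lemmas produce barriers for the \emph{stationary} problem and control its $x_2\to\infty$ behavior; they give uniform $L^\infty$ bounds for $\psi_+(t,\cdot)$ via the parabolic maximum principle, not monotonicity in $t$. Monotonicity in $t$ would only follow from a specific choice of initial data (e.g.\ $\psi_+(0,\cdot,\cdot)\equiv 0$ in the interior, combined with the comparison principle applied to $\psi_+(t+s,\cdot)$ versus $\psi_+(t,\cdot)$), which you never fix. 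Moreover, even with a monotone, bounded family, the pointwise limit is not immediately a stationary solution: $\partial_t\psi_+\to 0$ does not follow from pointwise convergence alone; one needs either compactness from hypoelliptic interior estimates so that the limit is taken in $C^2_{\mathrm{loc}}$, or a time-averaging argument. There is also a latent circularity risk to flag: the actual stabilization statement you want is essentially Lemma~\ref{longtimelemma1d}, and the paper's proof of that lemma \emph{presupposes} the existence and uniqueness of $\psi^+_\infty$ (it studies $W=\psi_+-\psi^+_\infty$). Any rescue of your route must therefore avoid that lemma and argue stabilization from scratch, e.g.\ by fixing zero initial data, proving $t$-monotonicity by comparison, extracting $C^2_{\mathrm{loc}}$ convergence on compacts away from the singular corners via hypoellipticity, and identifying the limit equation by testing over time intervals $[t,t+h]$. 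The resolvent route of the paper sidesteps all of this, which is why it is used.
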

	\begin{proof}
		 For the proof of the existence, we regularize the Laplace-Beltrami operator to a discretized operator $Q^\epsilon$ defined in \eqref{discretized}. Define the operator $\mathcal{L}^\epsilon = \sin\theta \partial_{x_2}+Q^\epsilon.$
		In order to solve the equation $(\sin\theta \partial_{x_2}+\partial^2_\theta) \psi^+_\infty=0,$ we consider the regularized equation
		\begin{equation}\label{stationary regularized problem}\mathcal{L}^\epsilon \psi^+_{\infty,\epsilon} =\mu_0 \psi^+_{\infty,\epsilon},
		\end{equation} for some sufficiently small $\mu_0>0$. 
		We pass to the limit $\mu_0\to 0$ later after showing the existence for each fixed $\mu_0>0$. Then in order to prove the existence of a solution to \eqref{stationary regularized problem} with the boundary condition \eqref{stationary psiplusboundary}, it suffices to prove the existence of a solution $\bar{\psi}^+_{\infty,\epsilon}=\bar{\psi}^+_{\infty,\epsilon}(t,x_2,\theta)$ to the following $t$-dependent problem
		\begin{equation}\label{tdependent regularized problem}\partial_t  \bar{\psi}^+_{\infty,\epsilon} = \mathcal{L}^\epsilon \bar{\psi}^+_{\infty,\epsilon}, \end{equation}
		with the initial-boundary condition 
		\begin{align*}
			\bar{\psi}^+_{\infty,\epsilon}(0,x_2,\theta)&=0
			,\text { for }x_2>0,\ \theta \in [-\pi,\pi],\\
			\bar{\psi}^+_{\infty,\epsilon}(t,0,\theta)&=\bar{\psi}^+_{\infty,\epsilon}(0,0,\theta)=\begin{cases}&0, \ \theta\in[-\pi,-\pi/2),\\
				&1,\  \theta\in(-\pi/2,0],\end{cases}
		\end{align*} with the periodic boundary condition \eqref{modified periodic}.
		This is via the relationship
		$$\psi^+_{\infty,\epsilon}(x_2,\theta)\equiv \int_0^\infty e^{-t}\bar{\psi}^+_{\infty,\epsilon}\left(\frac{t}{\mu_0},x_2,\theta\right)dt.$$
		Now, we further regularize the boundary condition as 
		\begin{multline}\label{eq111}\partial_t \bar{\psi}^+_{\infty,\epsilon,\kappa}(t,0,\theta) =\frac{1}{\kappa}\bigg(\chi_\kappa(\theta)\bar{\psi}^+_{\infty,\epsilon,\kappa}(t,0,0)\\+(1-\chi_\kappa(\theta))\bar{\psi}^+_{\infty,\epsilon,\kappa}(t,0,-\pi)-\bar{\psi}^+_{\infty,\epsilon,\kappa}(t,0,\theta)\bigg),\text { for } \theta \in [-\pi,0],\end{multline} where
		a smooth function $\chi_\kappa$ is defined the same as \eqref{smoothchi}. 
		By plugging in $\theta=0$ and $\theta=-\pi$ to \eqref{eq111}, we obtain that 
		$\partial_t \bar{\psi}^+_{\infty,\epsilon,\kappa}(t,0,\theta)=0$  for $\theta = 0$ and $=-\pi.$
		Thus, we have
		\begin{equation}\label{eq113}\partial_t \bar{\psi}^+_{\infty,\epsilon,\kappa}(t,0,\theta) =\frac{1}{\kappa}\left(1-\chi_\kappa(\theta)-\bar{\psi}^+_{\infty,\epsilon,\kappa}(t,0,\theta)\right),\text { for } \theta \in [-\pi,0].\end{equation} Then the existence of the solution $\bar{\psi}^+_{\infty,\kappa}$ in the limit $\epsilon\to 0$ is given by Section \ref{sec:1d intro} - Section \ref{sec:limit epsilon to zero} with $g\equiv 0$. Remark that the maximum of $\bar{\psi}^+_{\infty,\epsilon,\kappa}$ in Lemma \ref{maxprinciple} now occurs only on the boundary values $x_2=0$ with $\theta\in[-\pi,0]$ since the initial condition $g$ is $\equiv 0$. This gives the uniform bound of 
		$$\|\bar{\psi}^+_{\infty,\kappa}\|_{L^\infty([0,T]\times S)}\le 1.$$ Then in the weak-limit as $\kappa\to 0,$ we obtain a weak solution $\psi^+_\infty$ to the stationary solution. 
		Now the local smoothness of the weak solution away from the singular boundary $\{x_2=0\}\times \{\theta=0,-\pi\}$ and the pathological set $\{x_2=0\}\times \{\theta=-\pi/2\}$ is obtained directly from the hypoellipticity of the operator by the same arguments in Section \ref{sec:hypoellipticity}. This completes the proof.
	\end{proof}
	Similarly, we can prove the existence and the uniqueness of the other limits $\psi^-_{\infty}$ and $\psi^0_\infty=0$ using the other boundary conditions \eqref{stationary psiminusboundary} and \eqref{zeroboundary}. 
	Lastly, we prove that $\psi^+_\infty(x_2,\theta)\to \frac{1}{2}$ as $x_2\to \infty.$
	\begin{lemma}\label{stationaryx2infty}The solution $\psi^+_\infty$ satisfies the limit
		$\psi^+_\infty(x_2,\theta)\to \frac{1}{2}$ as $x_2\to \infty,$ for any $\theta\in [-\pi,\pi]$.
	\end{lemma}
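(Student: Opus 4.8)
The plan is to deduce Lemma~\ref{stationaryx2infty} from three ingredients: the identity $\psi^+_\infty+\psi^-_\infty\equiv 1$ together with a reflection symmetry that swaps the two boundary arcs; an $L^2$-in-$\theta$ energy identity that makes $\psi^+_\infty(x_2,\cdot)$ asymptotically constant in $\theta$ along a sequence $x_2^{(k)}\to\infty$; and a bounded-slab maximum principle that upgrades this to the full limit. For the symmetry, observe that the involution $R\theta:=-\pi-\theta\pmod{2\pi}$ of $[-\pi,\pi]$ satisfies $\sin(R\theta)=\sin\theta$, commutes with $\partial_\theta^2$ and with the periodicity \eqref{periodic stationary}, interchanges the arcs $[-\pi,-\pi/2)$ and $(-\pi/2,0]$, and fixes $\theta=\pm\pi/2$. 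Hence $(x_2,\theta)\mapsto\psi^+_\infty(x_2,R\theta)$ solves \eqref{stationary} and \eqref{periodic stationary} with the boundary data \eqref{stationary psiminusboundary}, so by uniqueness (Lemma~\ref{uniquestationary}) it equals $\psi^-_\infty$. On the other hand $\psi^+_\infty+\psi^-_\infty$ solves \eqref{stationary}, \eqref{periodic stationary} with boundary value $\equiv 1$ on $[-\pi,-\pi/2)\cup(-\pi/2,0]$, so $1-(\psi^+_\infty+\psi^-_\infty)$ is an admissible stationary solution with zero boundary value \eqref{zeroboundary}, and Lemma~\ref{lemma.eventually zero} forces $\psi^+_\infty+\psi^-_\infty\equiv 1$. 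Combining, $\psi^+_\infty(x_2,\theta)+\psi^+_\infty(x_2,R\theta)\equiv 1$; in particular $\psi^+_\infty(x_2,\pm\pi/2)=\tfrac12$ for every $x_2>0$.

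For the energy identity, recall that by hypoellipticity (Section~\ref{sec:hypoellipticity}) $\psi^+_\infty$ is smooth on $\{x_2>0\}\times[-\pi,\pi]$, and by Lemma~\ref{apriorilemma} we have $|\psi^+_\infty-\tfrac12|\le\tfrac12$. Multiplying $-\sin\theta\,\partial_{x_2}\psi^+_\infty=\partial_\theta^2\psi^+_\infty$ by $(\psi^+_\infty-\tfrac12)$ and integrating over $\theta\in[-\pi,\pi]$, using periodicity for the integration by parts, yields
$$\frac{d}{dx_2}\int_{-\pi}^{\pi}\sin\theta\,\Big(\psi^+_\infty(x_2,\theta)-\tfrac12\Big)^2\,d\theta=2\int_{-\pi}^{\pi}\big(\partial_\theta\psi^+_\infty(x_2,\theta)\big)^2\,d\theta\ \ge\ 0.$$
The primitive on the left is bounded in absolute value by $1$, so it is monotone and convergent, whence $\int_1^\infty\!\int_{-\pi}^{\pi}(\partial_\theta\psi^+_\infty)^2\,d\theta\,dx_2<\infty$, and one may select $x_2^{(k)}\uparrow\infty$ with $\delta_k:=\|\partial_\theta\psi^+_\infty(x_2^{(k)},\cdot)\|_{L^2(-\pi,\pi)}\to 0$. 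By the Poincaré–Wirtinger and one-dimensional Sobolev inequalities on the circle, $\|\psi^+_\infty(x_2^{(k)},\cdot)-\bar\psi_k\|_{L^\infty}\le C\delta_k$ with $\bar\psi_k$ the $\theta$-average; evaluating at $\theta=-\pi/2$ and using $\psi^+_\infty(x_2^{(k)},-\pi/2)=\tfrac12$ gives $|\bar\psi_k-\tfrac12|\le C\delta_k$, hence $\varepsilon_k:=\|\psi^+_\infty(x_2^{(k)},\cdot)-\tfrac12\|_{L^\infty(-\pi,\pi)}\to 0$.

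To propagate, I would work on each bounded slab $[x_2^{(k)},x_2^{(k+1)}]\times[-\pi,\pi]$, where $w:=\psi^+_\infty-\tfrac12$ is smooth, solves \eqref{stationary}, and is $2\pi$-periodic in $\theta$. Applying a strong/Hopf-type maximum principle for this hypoelliptic operator (the Hörmander bracket condition having been verified in Section~\ref{sec:hypoellipticity}; cf.\ also Lemma~\ref{maxprinciple2}): if the maximum $M$ of $w$ over the slab were attained only at interior points, then $\{w=M\}$ would be invariant under $\theta$-diffusion and under the drift $\sin\theta\,\partial_{x_2}$, hence would contain the whole slab after first diffusing in $\theta$, then drifting up in $x_2$ from angles with $\sin\theta>0$, then diffusing in $\theta$ again, and in particular $w\equiv M$ on $\{x_2=x_2^{(k+1)}\}$, contradicting $M>\max_{\{x_2=x_2^{(k+1)}\}}w$. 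Thus $M\le\max(\varepsilon_k,\varepsilon_{k+1})$, and symmetrically $\min_{\text{slab}}w\ge-\max(\varepsilon_k,\varepsilon_{k+1})$, so $\sup_{[x_2^{(k)},x_2^{(k+1)}]\times[-\pi,\pi]}|w|\le\max(\varepsilon_k,\varepsilon_{k+1})$. Taking the union of these slabs over $j\ge k$ gives $\sup_{x_2\ge x_2^{(k)}}|w|\le\sup_{j\ge k}\varepsilon_j\to 0$, i.e.\ $\psi^+_\infty(x_2,\theta)\to\tfrac12$ uniformly in $\theta$ as $x_2\to\infty$. The statement for $\psi^-_\infty$ follows by conjugating with $R$, and $\psi^0_\infty\equiv 0$ is already known from Section~\ref{x2 asymptotics section}.

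I expect the propagation step to be the main obstacle: one must make the strong maximum principle (equivalently, the description of the propagation set) rigorous at the degenerate angles $\theta\in\{0,\pm\pi\}$, where $\sin\theta=0$ and the inflow direction of the $x_2$-transport reverses, and one must ensure that the required interior regularity of $\psi^+_\infty$ on the open slabs is in hand (it is, since the singular boundary lies only at $x_2=0$). If a clean strong maximum principle turns out to be awkward, an alternative is to use compactness of the $x_2$-translates $\{\psi^+_\infty(\cdot+s,\cdot)\}_{s\ge 0}$ in $C^\infty_{loc}(\mathbb{R}\times[-\pi,\pi])$ — which follows from hypoellipticity and the uniform bound of Lemma~\ref{apriorilemma} — together with a Liouville theorem characterizing bounded entire solutions of $-\sin\theta\,\partial_{x_2}\Psi=\partial_\theta^2\Psi$ on $\mathbb{R}\times[-\pi,\pi]$ as constants; by the symmetry established in the first step, that constant must be $\tfrac12$, and since every subsequential limit equals $\tfrac12$ the full limit follows.
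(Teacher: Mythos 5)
Your proposal is correct in structure and shares the decisive ingredient with the paper's argument, but the two routes diverge in how the "asymptotically $\theta$-independent" step is obtained, and the comparison is worth spelling out.

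Both proofs hinge on the same reflection symmetry. Your $R\theta=-\pi-\theta$ and the paper's $\theta\mapsto\pi-\theta$ are the same map on $[-\pi,\pi]$ with the periodic identification; both swap the two boundary arcs, fix $\theta=\pm\pi/2$, and preserve $\sin\theta$ and $\partial_\theta^2$. The paper combines this with Lemma~\ref{lemma.eventually zero} to get $\tilde\psi=\psi^+_\infty+\psi^+_\infty(\cdot,\pi-\cdot)\equiv 1$; you draw the same conclusion and then notice the pointwise consequence $\psi^+_\infty(x_2,\pm\pi/2)=\tfrac12$ for every $x_2>0$, which the paper does not exploit. This is a nice extra piece of information.

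Where you genuinely depart from the paper is the mechanism for showing that the $\theta$-oscillation dies out as $x_2\to\infty$. The paper translates in $x_2$, extracts a subsequential entire limit $\bar\psi$, passes to a second translate $v$ whose $\theta$-max and $\theta$-min are allegedly constant in $x_2$, and invokes the (time-dependent) maximum principle of Lemma~\ref{maxprinciple} to force the max to equal the min; this passage is quite terse as written. You instead derive the monotonicity identity
\[
\frac{d}{dx_2}\int_{-\pi}^{\pi}\sin\theta\,\bigl(\psi^+_\infty-\tfrac12\bigr)^2\,d\theta
= 2\int_{-\pi}^{\pi}\bigl(\partial_\theta\psi^+_\infty\bigr)^2\,d\theta\ \ge 0,
\]
which is correct (the integration by parts is justified by hypoellipticity in $\{x_2>0\}$ and $2\pi$-periodicity), and use boundedness of the left-hand side (since $|\psi^+_\infty-\tfrac12|\le\tfrac12$ by Lemma~\ref{apriorilemma}) to extract a sequence $x_2^{(k)}\to\infty$ with $\|\partial_\theta\psi^+_\infty(x_2^{(k)},\cdot)\|_{L^2}\to 0$; together with the pinning $\psi^+_\infty(x_2^{(k)},-\pi/2)=\tfrac12$ this gives uniform smallness of $\psi^+_\infty-\tfrac12$ on the slices $x_2=x_2^{(k)}$. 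This Lyapunov/energy step is quantitative and, to my eye, cleaner than the paper's compactness-and-maximum-principle argument for $m_-=m_+$.

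The delicate part of both proofs is the propagation/Liouville step, and you are honest about it. Your primary route invokes a strong maximum principle on a slab $[x_2^{(k)},x_2^{(k+1)}]\times[-\pi,\pi]$. Note that the \emph{weak} maximum principle does not close the argument here: at an interior maximum the equation gives $\partial_\theta^2 w=0$ rather than a contradiction, because the drift term vanishes there; one genuinely needs Bony-type propagation of maxima for the hypoelliptic stationary operator, which the paper never establishes (the maximum principles proved in the paper, Lemmas~\ref{maxprinciple} and~\ref{maxprinciple2}, are for the $t$-dependent operator $\mathcal M$ on the whole half-line, not for the stationary operator on a bounded $x_2$-slab). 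Your fallback, compactness of the $x_2$-translates plus a Liouville theorem for bounded entire periodic solutions of $-\sin\theta\,\partial_{x_2}\Psi=\partial_\theta^2\Psi$, is essentially the paper's own framework; in fact the paper's source contains a commented-out Fourier argument proving exactly that Liouville theorem ($\widehat\Psi(k,\cdot)=0$ for $k\ne 0$ by a $\theta$-energy estimate), so that fallback is well supported and arguably the right way to make the step airtight.

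In summary: correct, and a genuinely different route on the $\theta$-decay step (energy monotonicity versus translation compactness plus maximum principle), with the reflection symmetry shared. If you pursue the primary route, you should prove or cite a Bony-type strong maximum principle for $-\sin\theta\,\partial_{x_2}-\partial_\theta^2$ on a periodic slab; if you pursue the fallback, write out the Liouville theorem via the Fourier-in-$x_2$ energy identity, which also removes the only opaque step in the paper's own proof.
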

	\begin{proof}
		We define $\psi_n(x_2,\theta) = \psi^+_\infty(x_2+n,\theta).$ Note that $0\le \psi_n \le 1$, by Lemma \ref{apriorilemma}, and we have $\psi_n \to \bar{\psi},$ as $n\to \infty$ where $\bar{\psi}$ solves \eqref{stationary} for $x_2\in \mathbb{R}$ and $\theta\in[-\pi,\pi]$ with the periodicity \eqref{periodic stationary}. By Lemma \ref{apriorilemma}, we also have 
		$0\le \bar{\psi}\le 1.$	Define 
		$$\limsup_{x_2\to \infty}\bar{\psi}(x_2,\theta)=m_+\in [0,1]\text{ and }\liminf_{x_2\to \infty}\bar{\psi}(x_2,\theta)=m_-\in [0,1].$$
		We claim that $m_-=m_+$. To this end, we first observe that there exists a subsequence $\{\lambda_k\}$ for $\lambda_k\to \infty$ as $k\to \infty$ such that 
		$$\bar{\psi}(x_2+\lambda_k,\theta)\to v(x_2,\theta),\text { as }k\to \infty,$$ where $$\min _{\theta \in[-\pi,0]} v(x_2,\theta) = m_- \text{ and }\max _{\theta \in[-\pi,0]} v(x_2,\theta) = m_+, \text{ for any }x_2\in\mathbb{R}.$$ Then by the maximum principle in Lemma \ref{maxprinciple}, we obtain that $m_-=m_+$.

		Now we prove that $m_-=m_+=\frac{1}{2}.$ For this, we use the symmetrization.
		Define $$\tilde{\psi}(x_2,\theta) = \psi^+_\infty(x_2,\theta)+\psi^+_\infty(x_2,\pi-\theta).$$ Note that $\psi^+_\infty(x_2,\theta)$ satisfies the boundary condition \eqref{stationary psiplusboundary} and $\psi^{ref}(x_2,\theta)\eqdef \psi^+_\infty(x_2,\pi-\theta)$ satisfies the boundary condition \eqref{stationary psiminusboundary}. Therefore, $\tilde{\psi}(x_2,\theta)$ satisfies the boundary condition that 
		$\tilde{\psi}(x_2,\theta)=1$ if $\theta \in [-\pi,0].$ Then by Lemma \ref{lemma.eventually zero} on $1-\tilde{\psi}$ we obtain that $\tilde{\psi}\to 1=2m_+$. Therefore, we obtain that 
		$\psi^+_\infty\to \frac{1}{2}$ as $x_2\to \infty$. The same proof also holds for $\psi^-_\infty$ and we get $\psi^-_\infty\to \frac{1}{2},$ as $x_2\to \infty$.
	\end{proof}
	
	Another ingredient for the proof of Proposition \ref{mainlongtimelemma} is on the diffusive property of the Fokker-Planck operator. Namely, we prove the following positivity of a solution:
	\begin{lemma}\label{subsolution}Suppose $V=V(t,x_2,\theta)$ is a solution to \eqref{timedependentadjointeq again} and \eqref{modified periodic} with $V(0,x_2,\theta)\ge 1 $ if $(x_2,\theta)\in B_R(x_{2,0},\theta_0)$, for a sufficiently small $R>0$ and some $x_{2,0}\ge 0$ and $\theta_0\in [-\pi,\pi]$. Then there exist $t_*=t_*(R)>0$ and $C_0=C_0(R)>0$ such that $V(t_*,x_2,\theta)\ge C_0>0$ for any $(x_2,\theta)\in B_{2R}(x_{2,0},\theta_0)$.
	\end{lemma}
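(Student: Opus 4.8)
The plan is to prove this local positivity statement by a barrier/sub-solution argument combined with the strong maximum principle (or rather its constructive analogue via the regularized operator $\mathcal{M}$ of \eqref{M}). Since $V$ solves \eqref{timedependentadjointeq again}, which is the hypoelliptic Fokker–Planck equation $\partial_t V - \sin\theta\,\partial_{x_2}V = \partial_\theta^2 V$, the operator smooths and mixes in all variables (as established in Section \ref{sec:hypoellipticity}), so positivity of the data on a ball $B_R(x_{2,0},\theta_0)$ should propagate to a definite positivity on the doubled ball $B_{2R}$ after a controlled time $t_*$. First I would reduce to the case $V(0,\cdot,\cdot)=\mathbf{1}_{B_R(x_{2,0},\theta_0)}$ by the comparison principle (Lemma \ref{maxprinciple}, in its minimum form), since $V\ge 0$ follows from the nonnegative data and the maximum principle, and it suffices to bound from below the solution with this indicator-type initial datum.

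The key step is the explicit construction of a sub-solution $\underline{V}(t,x_2,\theta)$ with $\underline{V}(0,\cdot,\cdot)\le \mathbf{1}_{B_R}$ and $\underline{V}(t_*,\cdot,\cdot)\ge C_0$ on $B_{2R}$. Here I would use the self-similar scaling intrinsic to the operator: near a point $(x_{2,0},\theta_0)$, the natural parabolic dilation is $x_2\sim \rho^3$, $\theta\sim\rho$, $t\sim\rho^2$, matching the construction of $f^*_0$ in Lemma \ref{f*0lemma} and of $Z^*_0$ in Lemma \ref{Z0*lemma}. I would build a Gaussian-type lower barrier adapted to these scales: since $\theta\mapsto\partial_\theta^2$ generates heat-type spreading in $\theta$ and the transport $-\sin\theta\,\partial_{x_2}$ couples $\theta$ to $x_2$, after time $t_*$ the fundamental-solution mass initially on $B_R$ spreads over a region of size $\gtrsim t_*^{1/2}$ in $\theta$ and $\gtrsim t_*^{3/2}$ in $x_2$ (plus a bounded transport displacement), which covers $B_{2R}$ provided $t_*=t_*(R)$ is chosen comparable to a fixed power of $R$. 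One clean way to make this rigorous without invoking a heat kernel directly is to exhibit a function of the form $\underline{V}(t,x_2,\theta)=c\,e^{-\mu t}\,G\!\left(\frac{x_2-x_{2,0}+\text{(transport)}}{t_*^{3/2}},\frac{\theta-\theta_0}{t_*^{1/2}}\right)$ with $G$ a smooth, compactly-supported-from-below bump, verify $\mathcal{M}\underline{V}\le 0$ on the relevant region by a direct computation analogous to \eqref{eq64} (absorbing the error $\sin\theta-\theta$ exactly as in Lemma \ref{Z0*lemma}), and compare.

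The main obstacle I anticipate is the degeneracy of the operator near $\theta=0$ and $\theta=-\pi$, where $\sin\theta$ vanishes and the coupling between $x_2$ and $\theta$ weakens: if $(x_{2,0},\theta_0)$ is at or near such a singular point (in particular when $x_{2,0}=0$ and $\theta_0\in\{0,-\pi\}$), the propagation of positivity into the $x_2$-direction is driven only by the iterated commutators (cf. the Hörmander-type bracket computation in Section \ref{sec:hypoellipticity}), and the constant $C_0$ and time $t_*$ will depend on $R$ in a nonuniform, higher-order way. I would handle this by splitting into two regimes: away from the singular set, $\sin\theta$ is bounded below and a straightforward parabolic Harnack-type lower bound (or the explicit Gaussian barrier above) applies with $t_*\sim R^2$; near the singular set, I would chain the positivity in two stages — first use the $\theta$-diffusion to push positivity to a slice with $\theta$ bounded away from $\{0,-\pi\}$, then use the nondegenerate transport there to spread in $x_2$, and finally diffuse back in $\theta$ — each stage governed by the maximum principle Lemma \ref{maxprinciple}, accumulating a positive (though $R$-dependent) lower constant $C_0(R)>0$. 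This staged argument, together with the self-similar barrier construction, yields the claimed $t_*=t_*(R)$ and $C_0=C_0(R)$.
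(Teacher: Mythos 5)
Your overall plan---construct an explicit subsolution conjugated by the free-transport flow, multiply by a decaying exponential in $t$, and compare via the maximum principle of Lemma \ref{maxprinciple}---is exactly the route the paper takes. The paper's barrier is
\[
W(t,x_2,\theta)=e^{-\sigma t}\,V(x_2+t\sin\theta,\theta),\qquad
V(x_2,\theta)=\tfrac12\cos\bigl(\sigma^{1/4}(\theta-\theta_0)\bigr)\cos\bigl(\sigma^{1/4}(x_2-x_{2,0})\bigr)+\tfrac12,
\]
with $\sigma=(2\pi/R)^4$; the subsolution condition reduces to an algebraic inequality on $V$ involving $\partial_\theta^2 V$, $t\sin\theta\,\partial_{x_2}V$, $t\cos\theta\,\partial_{x_2}\partial_\theta V$, and $t^2\cos^2\theta\,\partial_{x_2}^2V$, which holds on a short time interval when $\sigma$ is large. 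So the high-level mechanism you propose is the same; the differences are in the concrete choice of barrier and in the scaling.

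Two specific concerns with your version. First, a \emph{compactly-supported} bump $G$ cannot be a subsolution across the boundary of its support: at the edge, $G$ transitions from positive to zero and is therefore concave in $\theta$, which makes $-\partial_\theta^2\underline V>0$ precisely where $-\mu\underline V\approx 0$ gives you nothing to absorb it with; the subsolution inequality then fails on an annulus, and the comparison breaks. The paper's cosine-product barrier sidesteps this by being globally defined, everywhere in $[0,1]$, and with $\partial_\theta^2 V$ changing sign in step with $V$ itself (when $V\to 0$, the cosine product $\to -1$ and $\partial_\theta^2 V>0$, which is exactly the sign you need). Second, your barrier has a \emph{fixed} spatial profile (only the prefactor $e^{-\mu t}$ depends on $t$), translated by the transport. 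Such a barrier does not spread: if at $t=0$ its positivity set is inside $B_R$ (as required for the comparison $\underline V(0)\le V(0)$ when all you know is $V(0)\ge\mathbf 1_{B_R}$), then at $t=t_*$ it is still supported on a translate of $B_R$ and cannot certify positivity of $V$ on $B_{2R}\setminus B_R$. This is not a bug you can patch with the anisotropic $(\rho^3,\rho)$ scaling alone; some device (the cosine barrier's global positivity, or an iteration) is needed. Note that the paper itself only obtains positivity on $\{|x_2-x_{2,0}|\le R/10^6\}\times\{|\theta-\theta_0|\le 2R\}$ from a single barrier and then iterates in the application (Lemma \ref{longtimelemma1d}); your instinct to stage the propagation in $x_2$ is therefore correct and consistent with what the paper actually does, but your proposed single-shot bump barrier would not carry you even that far.

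The extra machinery you invoke---the self-similar profiles $Z_0^*$ and $f^*_0$ from Lemmas \ref{f*0lemma}--\ref{Z0*lemma} and the Kolmogorov heat-kernel scaling---is more than this local positivity lemma requires and is not used in the paper's proof; the paper's barrier uses the \emph{isotropic} scale $\sigma^{1/4}\sim 1/R$ in both $x_2$ and $\theta$, not the anisotropic $(x_2,\theta)\sim(\rho^3,\rho)$ scale, because it is a local barrier on a fixed small neighborhood rather than an approximation to the fundamental solution. Your case split near/far from the singular set $\{\sin\theta=0\}$ is likewise unnecessary here: the cosine barrier works uniformly, and the only loss near $\theta\in\{0,-\pi\}$ is the already-present weaker reach in $x_2$.
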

	\begin{proof}
		For the proof, we construct a subsolution $W=W(t,x_2,\theta)$ which satisfies
		\begin{equation}\label{subsol eq}\partial_t W -\sin\theta \partial_{x_2}W-\partial^2_\theta W \le 0,\end{equation} with $W(0,x_2,\theta)=1$ on $B_R(x_{2,0},\theta_0)$ and $W(t_*,x_2,\theta)\ge C_0$ for any $|x_2-x_{2,0}|\le R/{10^6}$ and $|\theta-\theta_0|\le 2R$. We look for a sub-solution $W$ of the form 
		$$W(t,x_2,\theta)= e^{-\sigma t} V(x_2+t\sin\theta ,\theta),$$ for some sufficiently large $\sigma>0$ and $W\in C^2_{x,\theta}$. Then \eqref{subsol eq} implies 
		$$ -\sigma V \le \partial^2_\theta V -t\sin\theta \partial_{x_2}V +t\cos\theta \partial_{x_2}\partial_\theta V +t^2\cos^2\theta \partial_{x_2}^2 V.$$ Then we define $V$ as
		\begin{equation}\label{eq for V}V(x_2,\theta) \eqdef \frac{1}{2}\cos(\sigma^{1/4} (\theta-\theta_0)) \cos(\sigma^{1/4}(x_2-x_{2,0}))+\frac{1}{2},\end{equation} with $\sigma =\left(\frac{2\pi}{R}\right)^4>0$ sufficiently large (with a sufficiently small $R>0$) so that \eqref{eq for V} holds for any $0\le t \le t_*$ for some $ t_*=t_*(R)>0$. Then by the maximum principle of Lemma \ref{maxprinciple} we have 
		$V(t_*,x_2,\theta)\ge W(t_*,x_2,\theta)\ge C_0,$ for some $C_0=C_0(R)>0$ for any for any $(x_2,\theta)\in B_{2R}(x_{2,0},\theta_0)$.
		This completes the proof.
	\end{proof}
	Finally, we conclude the proof of Proposition \ref{mainlongtimelemma} by introducing the following lemma:
	\begin{lemma}\label{longtimelemma1d}The solution $\psi_+$ to \eqref{timedependentadjointeq again}, \eqref{psiplusboundary}, and \eqref{modified periodic} satisfies the long-chain asymptotic behavior $\psi_+(t,x_2,\theta)\to \psi^+_\infty(x_2,\theta)$ as $t\rightarrow \infty$ where $\psi^+_\infty$ is the solution of the stationary equation \eqref{stationary} with the boundary conditions \eqref{stationary psiplusboundary} and \eqref{periodic stationary}.
	\end{lemma}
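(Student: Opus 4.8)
The plan is to work with the error $w(t,x_2,\theta):=\psi_+(t,x_2,\theta)-\psi^+_\infty(x_2,\theta)$. Since $0\le\psi_+\le1$ (proved exactly as in Lemma \ref{apriorilemma}) and $\psi^+_\infty$ solves \eqref{stationary} with \eqref{stationary psiplusboundary}, the function $w$ is bounded, $|w|\le1$, and solves the homogeneous equation \eqref{timedependentadjointeq again} with the periodic condition \eqref{modified periodic} and \emph{zero} data on the inflow part of the boundary $\{x_2=0,\ \theta\in[-\pi,-\pi/2)\cup(-\pi/2,0]\}$; moreover $w$ vanishes at $\theta=0,-\pi$ on $\{x_2=0\}$ because $\psi_+$ and $\psi^+_\infty$ carry there the same constant values. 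First I would show that $\mathcal{O}_+(t):=\max\{\sup_S w(t,\cdot),0\}$ is non-increasing and $\mathcal{O}_-(t):=\min\{\inf_S w(t,\cdot),0\}$ is non-decreasing: applying the maximum principle (Lemma \ref{maxprinciple}) on any slab $[t_1,t_2]\times S$, the extremum of $w$ is attained either at $\{t=t_1\}$ or on $\{x_2=0,\ \theta\in[-\pi,0]\}$, and on the latter set $w\equiv0$; restarting at arbitrary $t_1$ gives the monotonicity. Hence $\mathcal{O}_\pm(t)\to L_\pm$ as $t\to\infty$, and it remains to prove $L_+=L_-=0$.

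Assume $L_+>0$. Using the uniform-in-time regularity of $w$ — interior smoothness from the hypoellipticity of $\partial_t-\sin\theta\partial_{x_2}-\partial_\theta^2$ (Section \ref{sec:hypoellipticity}, in the $x_1$-independent reduction, where the Hörmander brackets of $i\partial_\theta$ and $\partial_t+\sin\theta\partial_{x_2}$ already span the three directions $\partial_\theta,\partial_{x_2},\partial_t$) together with the local Hölder bound near the singular points $(0,0),(0,-\pi)$ from Lemma \ref{holdercont stationary} — a diagonal extraction along $t_n\to\infty$ produces an \emph{entire} solution $W_\infty(s,x_2,\theta)$, $s\in\mathbb{R}$, of the homogeneous problem with the same boundary behavior, with $|W_\infty|\le L_+$, and, by the monotonicity above, with $\sup_S W_\infty(s,\cdot)=L_+$ for every $s$. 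This I would rule out by a strong-maximum-principle argument: one checks that $L_+$ is not attained ``at $x_2=\infty$'' (i.e. $W_\infty\to0$ as $x_2\to\infty$, see below) and not at the singular points (where $W_\infty=0$ by continuity), so the value $L_+$ is attained at some $(s_0,x_2^0,\theta^0)$ with $x_2^0<\infty$ — interior, or on the outflow boundary $\{x_2=0,\ \theta\in(0,\pi)\}$, where a maximum still forces $\partial_{x_2}W_\infty=\partial_\theta^2W_\infty=0$; the Hörmander bracket condition then propagates the value $L_+$ backward in $s$ and across $\theta,x_2$ to the whole cylinder $\{s<s_0\}$, hence to the inflow boundary where $W_\infty=0$, a contradiction. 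The analogous argument yields $L_-=0$, so $\|w(t)\|_{L^\infty(S)}\to0$, i.e. $\psi_+(t)\to\psi^+_\infty$; the same scheme handles $\psi_-$ and $\psi_0$ (with $\psi^0_\infty\equiv0$, already known from Section \ref{x2 asymptotics section}), which completes Proposition \ref{mainlongtimelemma}.

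I expect the main obstacle to be the decay at $x_2=\infty$, i.e. ruling out escape of the error to large $x_2$ uniformly in $t$. Here the plan is to exploit the reflection $\theta\mapsto\pi-\theta$, under which \eqref{timedependentadjointeq again} is invariant and for which $\psi_+(t,x_2,\theta)+\psi_+(t,x_2,\pi-\theta)$ has boundary data identically $1$ on $\{x_2=0,\ \theta\in[-\pi,0]\}$; comparing this symmetrized solution from above with the family of supersolutions $\varsup$ of Lemma \ref{finalsupsol} (which tend to $1+\eta$ as $x_2\to\infty$) and from below with a matching subsolution built as in Lemma \ref{subsolution}, one obtains $\psi_+(t,x_2,\theta)+\psi_+(t,x_2,\pi-\theta)\to1$ as $x_2\to\infty$ uniformly in $t$. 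Combining this with a translation-and-compactness argument in the $x_2$ variable exactly parallel to the proof of Lemma \ref{stationaryx2infty} — shifting the solutions by $x_2\mapsto x_2+n$, passing to a limiting solution on the full cylinder, and using the maximum principle of Lemma \ref{maxprinciple} to collapse its $\limsup$ and $\liminf$ in $x_2$ — upgrades this to $\psi_+(t,x_2,\theta)\to\tfrac12$ as $x_2\to\infty$ uniformly in $t$, and hence, together with Lemma \ref{stationaryx2infty}, to $W_\infty\to0$ at $x_2=\infty$. (Once this uniform decay at infinity is available, one can in fact bypass the entire-solution construction: Lemma \ref{subsolution} applied to $\mathcal{O}_+(t)-w(t+\cdot)\ge0$, which equals $\mathcal{O}_+(t)$ on the inflow boundary, forces the supremum over any fixed slab $\{x_2\le R\}$ to drop by a fixed factor after a time $t_*(R)$, and the decay at infinity lets this slab carry the full supremum, giving $\mathcal{O}_+(t)\to0$ directly; the entire-solution route is, however, more robust since it needs no decay rate.)
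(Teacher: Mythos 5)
Your route is genuinely different from the paper's, and it is worth contrasting the two. You introduce the monotone envelopes $\mathcal{O}_\pm(t)=\max\{\sup_S w(t,\cdot),0\}$ and $\min\{\inf_S w(t,\cdot),0\}$, extract an entire-in-$s$ limit $W_\infty$ along $t_n\to\infty$ using hypoelliptic compactness and the uniform H\"older barriers near the singular points, and then kill the putative positive limit $L_+$ by a Bony-type strong maximum principle that propagates the extremal value backward along the drift and the $\theta$-diffusion until it reaches the inflow boundary $\{x_2=0,\ \theta\in[-\pi,0]\}$, where $w\equiv 0$. The paper instead fixes $\varepsilon>0$, compares $W=\psi_+-\psi^+_\infty$ against the family $\pm K(\psi^+_\infty+\varepsilon)$, tracks the weighted supremum $\lambda(t)=\sup_{x_2,\theta}W/(\psi^+_\infty+\varepsilon)$, and reduces $K$ iteratively by showing, via Lemma~\ref{subsolution} plus a subsolution $V(t,x_2,\theta)=k-e^{-\sigma t}e^{-\lambda x_2}Q(\theta)$ built from a Mathieu function, that the defect $U=K(\psi^+_\infty+\varepsilon)-W$ is eventually bounded below by a fixed $k>0$, even when the $\limsup$-achieving sequence drifts off to $x_2=\infty$. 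Your version is in principle more robust (no $\varepsilon$-parameter, no decay rate), but it needs a precise statement of the propagation-of-maximum property for the H\"ormander operator $\partial_s-\sin\theta\,\partial_{x_2}-\partial_\theta^2$, which the paper does not use; the paper's box comparison avoids invoking a strong maximum principle.

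There is one genuine gap in the step you correctly flag as the main obstacle, namely the uniform-in-$t$ control as $x_2\to\infty$. The symmetrization $\Phi(t,x_2,\theta)=\psi_+(t,x_2,\theta)+\psi_+(t,x_2,\pi-\theta)$ indeed has boundary data $\equiv 1$ on the inflow boundary, but the proposed comparison of $\Phi$ against the \emph{stationary} supersolution $\varsup$ from Lemma~\ref{finalsupsol} cannot be started: $\varsup\le 1+\eta$ with $\eta$ small, while $\Phi(0,\cdot)$ can be as large as $2$, so the comparison inequality $\Phi\le\varsup$ fails at $t=0$. Establishing $\Phi(T_0,\cdot)\le 1+\eta$ for some later $T_0$ is essentially the convergence statement we are after, so as written the argument is circular. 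The translation-and-compactness step in Lemma~\ref{stationaryx2infty} does not rescue it either, because the $x_2$-shifted limit is now a time-dependent solution on the full $x_2$-line and the maximum principle alone does not collapse $\limsup$ and $\liminf$ without a spectral-gap or propagation input. The paper resolves exactly this point differently: it uses a \emph{time-dependent} subsolution $V(t,x_2,\theta)=k-e^{-\sigma t}e^{-\lambda x_2}Q(\theta)$, which equals $k$ minus a term that decays to zero in $t$, so that $V\le U$ can be enforced at a finite time $\bar t$ (where Lemma~\ref{subsolution} gives $U\ge\delta>0$ near the boundary) and then propagated forward, yielding the uniform positive lower bound $U\ge k$ at large $x_2$ for large $t$. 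If you adopt this kind of decaying-to-constant subsolution in place of the stationary $\varsup$, your scheme closes; otherwise, the $x_2\to\infty$ step is incomplete.
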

	\begin{proof}For the proof of the long-chain asymptotics of $\psi_+(t,x_2,\theta)$, we define a function 
		$$W(t,x_2,\theta)=\psi_+(t,x_2,\theta)-\psi^+_\infty(x_2,\theta),$$ which solves \eqref{timedependentadjointeq again} with the initial condition $W(0,x_2,\theta)=\psi_+(0,x_2,\theta)-\psi^+_\infty(x_2,\theta)$,
		the zero boundary condition \eqref{psizeroboundary}, and the periodic boundary condition \eqref{modified periodic}. Our goal is to prove that $W(t,x_2,\theta) \to 0$ for any $x_2\ge 0$ and  $\theta\in[-\pi,\pi]$ as $t\to \infty$.
		
		We start with observing that $0\le \psi_+\le 1$ by the maximum principle in Lemma \ref{maxprinciple}. 
		Thus we have $|W(0,x_2,\theta)|\le C$ for some $C>0$. 
	Also, note that for any $\varepsilon>0$ there exists $K\ge 0$ and a sufficiently large $R>0$ such that 
	\begin{equation}\label{longtimecomparison}-K(\psi^+_\infty(x_2,\theta)+\varepsilon)\le W(t,x_2,\theta)\le K(\psi^+_\infty(x_2,\theta)+\varepsilon),\text{ for }t\ge R\end{equation}  by the comparison principle, since $\pm K(\psi^+_\infty(x_2,\theta)+\varepsilon)$ is a super- and a sub-solution to the system.  Define $$\lambda(t)\eqdef \sup_{x_2\in [0,\infty),\ \theta \in [-\pi,\pi]}\frac{W(t,x_2,\theta)}{\psi^+_\infty(x_2,\theta)+\varepsilon}.$$ Note that $\psi^+_\infty(x_2,\theta)+\varepsilon\ge \varepsilon>0$ by the non-negativity of $\psi^+_\infty$. Then
	$$\limsup_{t\to \infty}\lambda(t)\le K.$$ If $\limsup_{t\to \infty}\lambda(t)< K$, then we re-define $R$ and $K$ by increasing $R$ and decreasing $K$ such that $\limsup_{t\to \infty}\lambda(t)= K$ and \eqref{longtimecomparison} holds. 
	If $\limsup_{t\to \infty}\lambda(t)= K$, then there exists a sequence $\{t_n\} \to \infty$ such that $\lambda(t_n)\to K$. Then there exists a sequence of tuples $(x_{2,n},\theta_n)\in \bar{\mathbb{R}}_+\times [-\pi,\pi]$ such that 
	$$\frac{W(t_n,x_{2,n},\theta_n)}{\psi^+_\infty(x_{2,n},\theta_n)+\varepsilon} \to K,\text{ as }n\to \infty.$$ Then $\bar{W}$ is also a solution to the same system with the different initial condition $\bar{W}(0,x_2,\theta)=W(t_n,x_2,\theta)$. Define a function $\bar{W}=\bar{W}(t,x_2,\theta)$ as a limit of 
	$W(t_n+t,x_2,\theta)$ as $n\to \infty$. Also, define a tuple $(\bar{x}_2,\bar{\theta})$ as the limit of $(x_{2,n},\theta_n)$ as $n\to \infty$. Then note that
	$$\bar{W}(0,\bar{x}_2,\bar{\theta})=K(\psi^+_\infty(\bar{x}_2,\bar{\theta})+\varepsilon).$$ If $(x_{2,n},\theta_n)$ converges to a tuple of finite values $(\bar{x}_2,\bar{\theta})$ as $n\to \infty$, then by the maximum principle of Lemma \ref{maxprinciple} in the box $B\eqdef (x_2,\theta) \in [t+\epsilon,t+1]\times [\bar{x}_2+\epsilon,\bar{x}_2+1]\times [-\pi,\pi]$ for some $\epsilon\ll 1$ and obtain that $$\bar{W}(t,x_2,\theta) \le (K-\delta)(\psi^+_\infty(x_2,\theta)+\varepsilon)\text{ if } (t,x_2,\theta)\in B,$$ for some $\delta>0$. Therefore, for any $t\gg 1$, $$W(t,x_2,\theta)\le (K-\delta)(\psi^+_\infty(x_2,\theta)+\varepsilon),$$ 
		for any $x_2\ge 0$ and $\theta\in[-\pi,\pi]$. Then we can readjust $K \mapsto K-\delta$ and repeat reducing the size of $K$ until it becomes 0. 
		Therefore, the only difficulty left is in the case when the limit $\lim_{n\to \infty} x_{2,n}=\bar{x}_2=\infty$. 
		We define 
		$$ U(t,x_2,\theta) \eqdef  K\left(\psi^+_\infty(x_2,\theta)+\varepsilon\right)-W(t,x_2,\theta).$$ Note that $\psi^+_\infty(\infty,\theta)=\frac{1}{2},$ by Lemma \ref{stationaryx2infty}. 
		Then we know that $U$ solves the same equation
		$$\partial_t U -\sin\theta \partial_{x_2}U = \partial^2_\theta U,$$  with the condition
		$U(t,0,\theta)\ge \varepsilon >0$ for $t\ge 0$ and $\theta \in [-\pi,-\pi/2)\cup (-\pi/2,0]$,	 and $U(0,x_2,\theta)\ge 0$ for any $x_2\ge 0$ and $\theta \in[-\pi,\pi]$ by \eqref{longtimecomparison}. 
		
		We first note that Lemma \ref{subsolution} implies that for a sufficiently large $\bar{t}>0$, there exists some $\bar{x}_2>0$ such that $U(\bar{t},\bar{x}_2,\theta) \ge \delta>0$ for some $\delta>0$ for any $\theta\in[-\pi,\pi]$. We obtain this by rescaling and repeating the argument in Lemma \ref{subsolution} until we extend the domain to $[-\pi,\pi]$, since $t_*$ and $C_0$ depends only on $R$. Now our goal is to show that $\limsup_{t\to \infty}\sup_{x_2,\theta} U(t,x_2,\theta)$ is strictly positive. Then we can reduce the size of $K$ by the positive gap $\limsup_{t\to \infty}\sup_{x_2,\theta} U(t,x_2,\theta)$, and we iterate the arguments above with a new $K$ and obtain that $W$ is indeed zero for a sufficiently large $t\gg 1$ and $\varepsilon\to 0$.
		Suppose that $U(\bar{t},\bar{x}_2,\theta)\ge 1$ for any $\theta\in[-\pi,\pi]$ and that $U(t,0,\theta)\ge 1$ for $t\ge 0$ and $\theta \in [-\pi,0]$ by rescaling. We claim that there exists $k>0$ such that for any $R>0$ large we have 
		$U(t,R,\theta)\ge k>0$, if $t$ is sufficiently large. We prove this by constructing a subsolution. Define 
		$$V(t,x_2,\theta)\eqdef k -e^{-\sigma t}e^{-\lambda x_2}Q(\theta),$$ for some $\sigma>0$ sufficiently small, $ \lambda>0$, and a $2\pi$-periodic(-in-$\theta$) $C^2$ function $Q$ which solves
		$$ 
		-\sigma Q+\lambda_2 \sin\theta Q-Q''\ge 0.$$ For example, $Q$ is given by the Mathieu function. We assume without loss of generality that $|V|\le 1$ by rescaling. 
		Then we note that $V$ is a subsolution that satisfies $\partial_t V -\sin\theta \partial_{x_2}V  -\partial^2_\theta V \le 0$ with
		$|V|\le 1$. Then since $V-U$ is a subsolution with non-positive values when $t=\bar{t}$ and when $x_2=0$ and $\theta\in[-\pi,0]$, the maximum principle of Lemma \ref{maxprinciple} implies that $V(t,x_2,\theta )\le U(t,x_2,\theta)$ for $t\ge \bar{t}$. By passing $\lambda \to 0$ and then $t\to \infty$ we obtain that $U(t,x_2,\theta) \ge k$ for a sufficiently large $x_2$.  This completes the proof.
	\end{proof}

	\begin{remark}
		In addition, we remark that this asymptotics of the solutions to the adjoint problem indicates that the function is asymptotically going to be supported only on the set of the points $(x_2,\theta)=(0,0)$ and $=(0,-\pi)$.
		Since the dynamics for $\rho_1$ in $t$ and $x_1$ variables are under free-transport equations, we will get the dynamics supported on particular lines and the evolution of the lines are the translations on $x_1=\pm t$. \end{remark}
	
		This completes the proof of Proposition \ref{mainlongtimelemma} and Corollary \ref{mainlongtimecorollary}, which then imply Theorem \ref{maintheorem}$_{(5)}$. In the next subsection, we further discuss the dynamics of the mass that has been accumulated at the singular boundary $(x_2,\theta)=(0,0)$ and $(0,-\pi).$

	\subsection{Dynamics for the mass at \texorpdfstring{$(\lowercase{x}_2,\theta)=(0,0)$}{} or \texorpdfstring{$(0,-\pi)$}{}}
	We also discuss the asymptotics of the mass that has been accumulated on either $(x_2,\theta)=(0,0)$ or $(0,-\pi)$ in the rest of this section. Denote these densities as $\rho^+=\rho^+(t,x_1)$ and $\rho^-=\rho^-(t,x_1)$, respectively.  
	Then we discuss the dynamics for the mass reached $(x_2,\theta)=(0,0)$ or $(0,-\pi)$.

	Consider the backward-in-$t$ adjoint problem \eqref{backwardintimeproblem}-\eqref{backward boundary}. 
	We first consider a special test function $\phi_\epsilon$ in the weak formulation Definition \ref{weaksoldef} that is independent of $x_1$ variable. Then choose the initial profile $\phi_\epsilon(T)$ such that it is $=1$ on $x_2<\epsilon$ and $\theta\in [-\pi,0]$ and is supported only on $x_2<2\epsilon.$ Then by the previous argument, we know that $\phi_{in}(x_2,\theta)\to 1$ as $T\to 1$.
	Then by the dominated convergence theorem, we can pass $\epsilon\rightarrow 0$ and obtain by the weak formulation and the duality identity that
	$$\int_\mathbb{R} \left( \rho^+(T,x_1)+\rho^-(T,x_1)\right)dx_1=\int f_{in}\phi_{in} dxd\theta.$$ Now by letting $T\rightarrow \infty$, we observe that
	$$\lim_{T\rightarrow \infty}\int_\mathbb{R} \left( \rho^+(T,x_1)+\rho^-(T,x_1)\right)dx_1=\int f_{in}\ dxd\theta.$$ 
	\begin{remark}
		We also remark that the total mass density $\rho^+_\infty$ that travels in the direction $\theta=0$
		satisfies $$ \rho^+_\infty=\int \psi^+_\infty(x_2,\theta)f_{in}(x_1,x_2,\theta)dx_2d\theta,$$where $\psi^+_\infty$ is the stationary solution to \eqref{stationary} with \eqref{stationary psiplusboundary} and \eqref{periodic stationary}. We also have a similar argument for $\rho^-_\infty$ which is the total mass density that travels in the direction $\theta=-\pi$ as
		$$ \rho^-_\infty=\iint \psi^-_\infty(x_2,\theta)f_{in}(x_1,x_2,\theta)dx_2d\theta,$$ where $\psi^-_\infty$ is the stationary solution to \eqref{stationary} with \eqref{stationary psiminusboundary} and \eqref{periodic stationary}.
		Thus, we observe that
		$$\varphi(t,x_2,\theta)\rightarrow \alpha \psi^-_\infty (x_2,\theta)+\beta \psi^+_\infty (x_2,\theta),$$ as $t\rightarrow \infty$. Then we have the distribution $f(t,x_1,x_2,\theta)$ converges to 
		$$f\rightharpoonup \rho^+_\infty\delta(x_2)\delta(\theta)+\rho^-_\infty\delta(x_2)\delta(\theta+\pi),$$ as $t\to \infty$ where $\delta(\cdot)$ is the Dirac delta function.
	\end{remark}

	We have observed the dynamics above that in the 1-dimensional case, all the mass density integrated with respect to $x_1$ converges to two particular regions $\theta=0$ or $\theta=-\pi$ on the line $x_2=0$. 
	Then we can easily observe that after the polymer arrives at $x_2=0$ with either $\theta=0$ or $\theta=-\pi$, it is trapped there at $(x_2,\theta)=(0,0)$ or $=(0,-\pi)$ while the dynamics in $t,x_1$ will be just the free transport. In other words, $\rho^\pm(t,x_1)$ would satisfy the following free transport equation,
	$$\partial_t \rho^\pm \pm \partial_{x_1}\rho^\pm = 0,$$ as $\cos\theta = \pm 1$ if $\theta =-\frac{\pi}{2}\pm \frac{\pi}{2}.$ Therefore, we obtain that
	$$\rho^+(t,x)= \rho^+_\infty(x_1-t) \text{ and }\rho^-(t,x)= \rho^-_\infty(x_1+t).$$
	Then we also obtain from the duality argument that
	$$\int_{\mathbb{R}}dx_1(\rho^+_\infty(x_1-t)+\rho^-_\infty(x_1+t))=\iint_{\mathbb{R}^2_+\times [-\pi,\pi]} dxd\theta f_{in}(x,\theta).$$

	\appendix
	
	\section{Derivation of an initial-boundary value problem}

	\label{sec:formalderivation}
	
	Similarly to the whole plane situation in Section \ref{wholeplanederiv}, we define for $j\ge 0$ that
	$$\xi_j\eqdef x_0+\sum_{i=1}^{j}\epsilon n_i$$and introduce the Hamiltonian 
	$$\mathcal{H}_N=\frac{1}{2\epsilon}\sum_{j=1}^{N-1}(n_{j+1}-n_j)^2=-\frac{1}{\epsilon}\sum_{j=1}^{N-1} (n_j\cdot n_{j+1}-1).$$
	Then we can have that the Gibbs measure $\mu_N\in \mathcal{M}_+((\mathbb{S}^1)^{N}\times (\mathbb{R}^2)^N)$ is defined as
	$$\mu_N(x_1,...,x_N;n_1,...,n_{N}) =\frac{1}{Z_N}\prod_{i=1}^N\delta(x_i-\xi_i)\chi_{\{x_i\in\mathbb{R}^2_+\}}\exp \left(\frac{1}{\epsilon}\sum_{j=1}^{N}(n_{j-1}\cdot n_j-1)\right),$$ with $n_0=n_1$ where the characteristic function $\chi$ restricts all the position of monomers $x_j$ in the upper half-plane and the normalization factor $Z_N$ is defined as 
	$$Z_N\eqdef \int_{(\mathbb{R}^2)^{N}}\int_{(\mathbb{S}^1)^{N}}\prod_{j=1}^{N}dx_jdn_j\  \mu_N(x_1,...,x_N;n_1,...,n_{N}).$$ 
	We note that for $e_2\eqdef (0,1)$,
	$x_{k+1}\cdot e_2\ge 0$ is equivalent to $x_k\cdot e_2 +\epsilon n_k\cdot e_2 \ge 0$. This is then equivalent to $n_k\cdot e_2 \ge -\frac{x_k\cdot e_2}{\epsilon}.$
	Here, we emphasize that the polymer dynamic is Markovian and the next state $(x_{k+1},n_{k+1})$ would depend only on the present state $(x_{k},n_{k})$. 
	Thus, we can write the Markov process as
	\begin{multline*}
	\mathbb{P}(x_{k+1},n_{k+1}|x_{k},n_{k})= \\C_k \delta (x_{k+1}-x_k-\epsilon n_{k+1})\chi_{\{x_{k+1}\in \mathbb{R}^2_+\}}\exp\left(-\frac{1}{\epsilon}(n_{k+1}\cdot n_k -1)\right),
	\end{multline*}where $C_k=\frac{Z_k}{Z_{k+1}}.$
	Therefore, the probability distribution satisfies
	\begin{multline*}
		f_{k+1}(x_{k+1},n_{k+1})
		=\prod_{j=1}^{k}\int dx_jdn_j\ \mathbb{P}(x_{j+1},n_{j+1}|x_{j},n_{j})
		\\=\int_{\mathbb{S}^1}dn_k\int_{\mathbb{R}^2}dx_k\ \mathbb{P}(x_{{k+1}},n_{{k+1}}|x_{k},n_{k})f_{k}(x_{k},n_{k})\\
		=\int_{\mathbb{S}^1}dn_k\int_{\mathbb{R}^2}dx_k \ C_k \delta (x_{k+1}-x_k-\epsilon n_{k+1})\\\times \exp\left(-\frac{1}{\epsilon}(n_{k+1}\cdot n_k -1)\right)
		f_{k}(x_{k},n_{k})\chi_{\{n_k\cdot e_2 \ge -\frac{x_k\cdot e_2}{\epsilon}\}}\\
		=\int_{\mathbb{S}^1}dn_k\int_{\mathbb{R}\times [-\epsilon\sin\varphi,\infty)}dx_k\ C_k \delta (x_{k+1}-x_k-\epsilon n_{k+1})\exp\left(-\frac{1}{\epsilon}(n_{k+1}\cdot n_k -1)\right)\\\times
		 f_{k}(x_{k},n_{k}),
	\end{multline*}where $\varphi$ is the angle between the $e_1\eqdef (1,0)$ and $n_k$.
	Now define $F(t,x,n)=f_j(x,n)$ with $j=\frac{t}{\epsilon}.$ Then we have
	\begin{multline*}F(k\epsilon +\epsilon,x_{k+1},n_{k+1})\\
		=C_k\int_{\mathbb{S}^1}dn_k\int_{\mathbb{R}\times [-\epsilon\sin\varphi,\infty)}dx_k\ \delta (x_{k+1}-x_k-\epsilon n_{k+1})  F(k\epsilon, x_k,n_k)\\\times\exp\left(\frac{1}{\epsilon}(n_k\cdot n_{k+1}-1)\right).\end{multline*}
	Now we subdivide the situation into two cases: $x_k \in \mathbb{R}\times (\epsilon\sin\varphi,\infty)$ and $x_k\in \mathbb{R}\times [-\epsilon\sin\varphi,\epsilon\sin\varphi].$ The former case refers to the polymer away from the boundary $\partial \mathbb{R}^2_+$ and the latter one refers to the polymer chain near the boundary.
	
	\noindent\textbf{Case 1: away from the boundary.} If  $x_k \in \mathbb{R}\times (\epsilon\sin\varphi,\infty)$, or equivalently the distance $dist(x_k,\partial \mathbb{R}^2_+)>\epsilon,$ we use the formal ansatz that $F$ is smooth and take the Taylor expansion as
	$$
	F(k\epsilon +\epsilon,x_{k+1},n_{k+1})=	F(k\epsilon ,x_{k+1},n_{k+1})+\epsilon \frac{\partial F}{\partial t}(k\epsilon, x_{k+1},n_{k+1}),
	$$and 
	\begin{multline*}
		C_k\int_{\mathbb{S}^1}dn_k\int_{\mathbb{R}\times (\epsilon\sin\varphi,\infty)}dx_k\ \delta (x_{k+1}-x_k-\epsilon n_{k+1}) F(k\epsilon, x_k,n_k)\\\times
		\exp\left(\frac{1}{\epsilon}(n_k \cdot n_{k+1}-1)\right)\\
		\hspace{-50mm}=C_k\int_{\mathbb{S}^1}dn_k \bigg(F(k\epsilon, x_{k+1},n_{k+1})-\epsilon n_{k+1}\cdot \frac{\partial F}{\partial x}\\
		+(n_k-n_{k+1})\cdot \nabla_k F(k\epsilon,x_{k+1},n_{k+1})\\+\frac{1}{2}(n_k-n_{k+1})\nabla_k^2 F(k\epsilon,x_{k+1},n_{k+1})(n_k-n_{k+1})\bigg)\\\times\exp\left(\frac{1}{\epsilon}(n_k \cdot n_{k+1}-1)\right).
	\end{multline*}
	Note that \begin{equation}\label{normalizedgibbs.gen}C_k\int_{\mathbb{S}^1}dn_k \exp (\epsilon^{-1} (n_k\cdot n_{k+1}))=1,\end{equation} and $$\int_{\mathbb{S}^1}dn_k \ (n_k-n_{k+1})\exp (\epsilon^{-1} (n_k\cdot n_{k+1}))=0.$$
	Then by defining $t=k\epsilon$, we have
	\begin{multline*}
		\epsilon \frac{\partial F}{\partial t}(t, x_{k+1},n_{k+1})+\epsilon n_{k+1}\cdot \frac{\partial F}{\partial x}(t, x_{k+1},n_{k+1})\\
		=\Delta_n F(t,x_{k+1},n_{k+1})C_k\int_{\mathbb{S}^1}dn_k \bigg(\frac{1}{2}(n_k-n_{k+1})\otimes (n_k-n_{k+1})\bigg)\\\times
		 \exp\left(\frac{1}{\epsilon}(n_k \cdot n_{k+1}-1)\right).
	\end{multline*}
	Note that 
\begin{multline}\label{CnOe2}C_k\int_{\mathbb{S}^1}dn_k \bigg(\frac{1}{2}(n_k-n_{k+1})\otimes (n_k-n_{k+1})\bigg)\exp\left(\frac{1}{\epsilon}(n_k \cdot n_{k+1}-1)\right)\\\approx C_k\int_{\mathbb{S}^1}(\xi\otimes \xi) \exp(-|\xi|^2/(2\epsilon))d\xi
\approx O(\epsilon),
\end{multline}by \eqref{normalizedgibbs.gen} where $\xi=n_k-n_{k+1}$.
	
	Thus, in the limit $\epsilon\rightarrow 0$, we obtain the Fokker-Planck equation in the half plane $x\in \mathbb{R}^2_+$ and $n\in \mathbb{S}^1$ as
	$$\frac{\partial F}{\partial t}(t, x ,n )+ n\cdot \frac{\partial F}{\partial x}(t, x ,n )= D\Delta_n F(t,x ,n ),$$ for the diffusion coefficient $D$ which is defined as
	$$D= \lim_{\epsilon \to 0} \frac{C_{\frac{t}{\epsilon}}}{\epsilon} \int_{\mathbb{S}^1}(\xi\otimes \xi) \exp(-|\xi|^2/(2\epsilon))d\xi,$$by \eqref{CnOe2}.

	\noindent\textbf{Case 2: near the boundary.} 
	Suppose that the $k^{th}$ state $x_k$ of the process is in the region nearby the boundary as $x_k\in \mathbb{R}\times [-\epsilon\sin\varphi,\epsilon\sin\varphi].$ Equivalently, we have that the distance $dist(x_k,\partial \mathbb{R}^2_+)\le\epsilon.$
	As a modeling assumption, we suppose that the monomer which collides the boundary will change its incoming angle against the boundary so that it minimizes the energy $$\frac{1}{2\epsilon}(n_{k+1}-n_k)^2=-\frac{1}{\epsilon} (n_k\cdot n_{k+1}-1),$$ among all possible angles that the monomer can have. The possible velocities are the velocities that makes $x_{k+1}\in \overline{\mathbb{R}^2_+}.$
	Equivalently, this means that it maximizes $n_k\cdot n_{k+1}=\cos \phi$, where $\phi$ is the angle between $n_k$ and $n_{k+1}$. 
	\begin{figure}[ht]
		\begin{center}
			\includegraphics[scale=0.385]{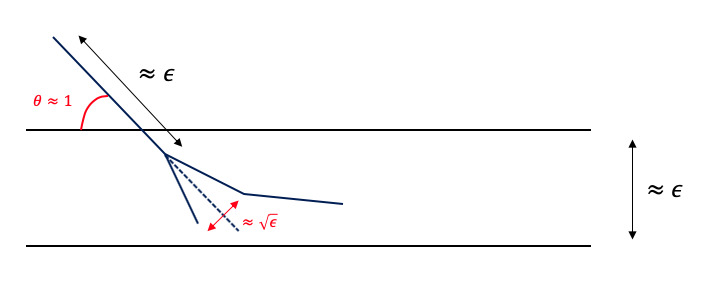}
			\includegraphics[scale=0.445]{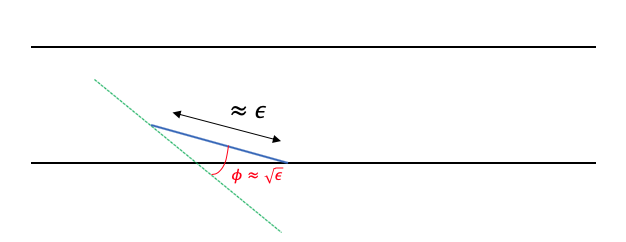}
		\end{center}
		\caption{The energy-minimizing transition}
	\end{figure}
	Here, we note that the change of the angle $\phi$ is in the order of $\sqrt{\epsilon}$ as we have
	$$\frac{1}{\epsilon} (n_k\cdot n_{k+1}-1)= \frac{1}{\epsilon} (\cos\phi-1)\approx -\frac{\theta^2}{\epsilon}\approx 1.$$ 
	After the state, the next velocity $n_{k+2}$ is determined so that $x_{k+2}\in \overline{\mathbb{R}^2_+}$ and it minimizes the energy $\frac{1}{2\epsilon}(n_{k+2}-n_{k+1})^2.$ Once the velocity of a monomer becomes parallel to the boundary, the polymer can now start deviating from the boundary as it can have any velocity following the Gibbs probability distribution $\exp(-\frac{1}{\epsilon} (n_{k+3}\cdot n_{k+2}-1 ))$. 
	\begin{figure}[ht]
		\begin{center}
			\includegraphics[scale=0.5]{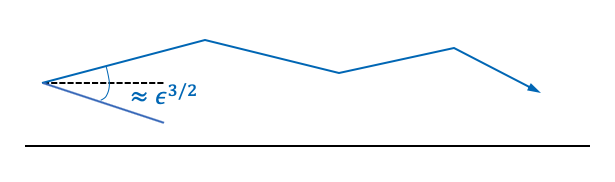}
		\end{center}
		\caption{The diffusion in the direction of each monomer near the boundary}
	\end{figure}
	Since the change of the angle is in the order of $\sim \sqrt{\epsilon}$, we note that the deviation distance from the boundary is of order $$|n_{j+1}-n_j||\phi|\sim \epsilon \cdot \sqrt{\epsilon}\sim \epsilon^{\frac{3}{2}}.$$
	Recall that the scale of the total number of monomers is $N\sim \frac{1}{\epsilon}.$ 
	Therefore, we conclude that the maximum distance of the deviation of the polymer from the boundary is 
	$$N\cdot \epsilon^{\frac{3}{2}}\sim \sqrt{\epsilon}.$$
	Therefore, we can conclude that the polymer near the boundary is trapped on the boundary as $\epsilon \rightarrow 0.$ This completes a formal or heuristic derivation of the \textit{trapping} boundary problem from our modeling assumptions.

	\section*{Acknowledgement} The authors gratefully acknowledge the support of the Hausdorff Research
	Institute for Mathematics
	(Bonn), through the Junior Trimester Program on Kinetic Theory, and the CRC
	1060 ``The Mathematics of Emergent Effects" of the University of Bonn funded
	through the Deutsche Forschungsgemeinschaft (DFG, German Research Foundation). J. W. Jang is supported by Basic Science Research Institute Fund of Korea, whose NRF grant number is 2021R1A6A1A10042944. J. J. L. Vel\'azquez is funded by DFG under Germany's Excellence Strategy-EXC-2047/1-390685813.
	\bibliographystyle{amsplaindoi}
	\bibliography{bibliography}

\providecommand{\bysame}{\leavevmode\hbox to3em{\hrulefill}\thinspace}
\providecommand{\MR}{\relax\ifhmode\unskip\space\fi MR }
\providecommand{\MRhref}[2]{%
  \href{http://www.ams.org/mathscinet-getitem?mr=#1}{#2}
}
\providecommand{\href}[2]{#2}
\begin{thebibliography}{10}

\bibitem{ADLER1997271}
Robert~J. Adler and Srikanth~K. Iyer,
  \href{http://www.sciencedirect.com/science/article/pii/S0304414996001263}{\emph{Attractive
  polymer models for two- and three-dimensional {B}rownian motion}}, Stochastic
  Processes and their Applications \textbf{66} (1997), no.~2, 271 -- 281.

\bibitem{doi:10.1021/ma60010a018}
Chuji Aso and Sanae Tagami,
  \href{https://doi.org/10.1021/ma60010a018}{\emph{Polymerization of aromatic
  {A}ldehydes. {III}. the cyclopolymerization of {P}hthaldehyde and the
  structure of the polymer}}, Macromolecules \textbf{2} (1969), no.~4,
  414--419.

\bibitem{boal_2012}
David Boal, \href{http://dx.doi.org/10.1017/CBO9781139022217}{\emph{Mechanics
  of the cell}}, 2 ed., Cambridge University Press, 2012.

\bibitem{MR1236140}
Theodore~W. Burkhardt and P.~Schlottmann,
  \href{http://stacks.iop.org/0305-4470/26/L501}{\emph{Unbinding transition in
  a many-string system}}, J. Phys. A \textbf{26} (1993), no.~12, L501--L506.
  \MR{1236140}

\bibitem{MR3961294}
Yunbai Cao, Chanwoo Kim, and Donghyun Lee,
  \href{https://doi.org/10.1007/s00205-019-01374-9}{\emph{Global strong
  solutions of the {V}lasov-{P}oisson-{B}oltzmann system in bounded domains}},
  Arch. Ration. Mech. Anal. \textbf{233} (2019), no.~3, 1027--1130.
  \MR{3961294}

\bibitem{caravenna2008pinning}
Francesco Caravenna and Jean-Dominique Deuschel, \emph{Pinning and wetting
  transition for (1+ 1)-dimensional fields with {L}aplacian interaction}, The
  Annals of Probability \textbf{36} (2008), no.~6, 2388--2433.

\bibitem{caravenna2009scaling}
\bysame, \emph{Scaling limits of (1+ 1)-dimensional pinning models with
  {L}aplacian interaction}, The Annals of Probability \textbf{37} (2009),
  no.~3, 903--945.

\bibitem{MR2641772}
Francesco Caravenna, Giambattista Giacomin, and Massimiliano Gubinelli,
  \href{https://doi.org/10.1214/08-AIHP310}{\emph{Large scale behavior of
  semiflexible heteropolymers}}, Ann. Inst. Henri Poincar\'{e} Probab. Stat.
  \textbf{46} (2010), no.~1, 97--118. \MR{2641772}

\bibitem{doi1988theory}
Masao Doi and Samuel~Frederick Edwards, \emph{The theory of polymer dynamics},
  vol.~73, Oxford University Press, 1988.

\bibitem{duminil2012connective}
Hugo Duminil-Copin and Stanislav Smirnov, \emph{The connective constant of the
  honeycomb lattice equals $\sqrt{2}+\sqrt{2}$}, Annals of Mathematics (2012),
  1653--1665.

\bibitem{dynkin1988regularized}
Evgeniĭ~Borisovich Dynkin,
  \href{http://links.jstor.org/sici?sici=0091-1798(198801)16:1<58:RSLTOP>2.0.CO;2-H&origin=MSN}{\emph{Regularized
  self-intersection local times of planar {B}rownian motion}}, Ann. Probab.
  \textbf{16} (1988), no.~1, 58--74. \MR{920255}

\bibitem{geman1984local}
Donald Geman, Joseph Horowitz, and Jay Rosen, \emph{A local time analysis of
  intersections of {B}rownian paths in the plane}, The Annals of Probability
  \textbf{12} (1984), no.~1, 86--107.

\bibitem{gomez2016hypoelliptic}
Christophe Gomez, Olivier Pinaud, and Lenya Ryzhik, \emph{Hypoelliptic
  estimates in radiative transfer}, Communications in Partial Differential
  Equations \textbf{41} (2016), no.~1, 150--184.

\bibitem{gompper1989unbinding}
Gerhard Gompper and Theodore~W. Burkhardt, \emph{Unbinding transition of
  semiflexible membranes in (1+ 1) dimensions}, Physical Review A \textbf{40}
  (1989), no.~10, 6124.

\bibitem{granek1997semi}
Rony Granek, \href{https://doi.org/10.1051/jp2:1997214}{\emph{From
  semi-flexible polymers to membranes: Anomalous diffusion and reptation}}, J.
  Phys. II France \textbf{7} (1997), no.~12, 1761--1788.

\bibitem{MR2679358}
Yan Guo, \href{https://doi.org/10.1007/s00205-009-0285-y}{\emph{Decay and
  continuity of the {B}oltzmann equation in bounded domains}}, Arch. Ration.
  Mech. Anal. \textbf{197} (2010), no.~3, 713--809. \MR{2679358}

\bibitem{ARMAjang}
Yan Guo, Hyung~Ju Hwang, Jin~Woo Jang, and Zhimeng Ouyang,
  \href{https://doi.org/10.1007/s00205-020-01496-5}{\emph{The {L}andau equation
  with the specular reflection boundary condition}}, Archive for Rational
  Mechanics and Analysis \textbf{236} (2020), no.~3, 1389--1454.

\bibitem{MR3592757}
Yan Guo, Chanwoo Kim, Daniela Tonon, and Ariane Trescases,
  \href{https://doi.org/10.1007/s00222-016-0670-8}{\emph{Regularity of the
  {B}oltzmann equation in convex domains}}, Invent. Math. \textbf{207} (2017),
  no.~1, 115--290. \MR{3592757}

\bibitem{MR126071}
Lars H\"{o}rmander,
  \href{http://www.numdam.org/item?id=AIF_1961__11__477_0}{\emph{Hypoelliptic
  differential operators}}, Ann. Inst. Fourier (Grenoble) \textbf{11} (1961),
  477--492, XVI. \MR{126071}

\bibitem{Hormander}
\bysame, \href{https://doi.org/10.1007/BF02392081}{\emph{Hypoelliptic second
  order differential equations}}, Acta Math. \textbf{119} (1967), 147--171.
  \MR{222474}

\bibitem{hoshikawa1975average}
Hiroshi Hoshikawa, Nobuhiko Sait{\^o}, and Kuniaki Nagayama, \emph{Average
  axial ratio of stiff chains}, Polymer Journal \textbf{7} (1975), no.~1,
  79--85.

\bibitem{hryniv2009some}
Ostap Hryniv and Yvan Velenik, \emph{Some rigorous results on semiflexible
  polymers, i: Free and confined polymers}, Stochastic Processes and Their
  Applications \textbf{119} (2009), no.~10, 3081--3100.

\bibitem{MR3436235}
Hyung~Ju Hwang, Juhi Jang, and Jaewoo Jung,
  \href{https://doi.org/10.1512/iumj.2015.64.5679}{\emph{On the kinetic
  {F}okker-{P}lanck equation in a half-space with absorbing barriers}}, Indiana
  Univ. Math. J. \textbf{64} (2015), no.~6, 1767--1804. \MR{3436235}

\bibitem{MR3788197}
\bysame, \href{https://doi.org/10.1137/16M1109928}{\emph{The {F}okker-{P}lanck
  equation with absorbing boundary conditions in bounded domains}}, SIAM J.
  Math. Anal. \textbf{50} (2018), no.~2, 2194--2232. \MR{3788197}

\bibitem{MR3237885}
Hyung~Ju Hwang, Juhi Jang, and Juan J.~L. Vel\'{a}zquez,
  \href{https://doi.org/10.1007/s00205-014-0758-5}{\emph{The {F}okker-{P}lanck
  equation with absorbing boundary conditions}}, Arch. Ration. Mech. Anal.
  \textbf{214} (2014), no.~1, 183--233. \MR{3237885}

\bibitem{hwang2019nonuniqueness}
\bysame, \emph{Nonuniqueness for the kinetic {F}okker--{P}lanck equation with
  inelastic boundary conditions}, Archive for Rational Mechanics and Analysis
  \textbf{231} (2019), no.~3, 1309--1400.

\bibitem{MR3897919}
\bysame, \href{https://doi.org/10.1090/qam/1507}{\emph{On the structure of the
  singular set for the kinetic {F}okker-{P}lanck equations in domains with
  boundaries}}, Quart. Appl. Math. \textbf{77} (2019), no.~1, 19--70.
  \MR{3897919}

\bibitem{kierfeld2003unbundling}
Jan Kierfeld and Reinhard Lipowsky, \emph{Unbundling and desorption of
  semiflexible polymers}, Europhysics Letters \textbf{62} (2003), no.~2, 285.

\bibitem{MR2124366}
\bysame, \href{https://doi.org/10.1088/0305-4470/38/9/L01}{\emph{Duality
  mapping and unbinding transitions of semiflexible and directed polymers}}, J.
  Phys. A \textbf{38} (2005), no.~9, L155--L161. \MR{2124366}

\bibitem{MR2855537}
Chanwoo Kim, \href{https://doi.org/10.1007/s00220-011-1355-1}{\emph{Formation
  and propagation of discontinuity for {B}oltzmann equation in non-convex
  domains}}, Comm. Math. Phys. \textbf{308} (2011), no.~3, 641--701.
  \MR{2855537}

\bibitem{KL-2018-CPAM}
Chanwoo Kim and Donghyun Lee,
  \href{https://doi.org/10.1002/cpa.21705}{\emph{The {B}oltzmann equation with
  specular boundary condition in convex domains}}, Comm. Pure Appl. Math.
  \textbf{71} (2018), no.~3, 411--504. \MR{3762275}

\bibitem{MR3023388}
Chanwoo Kim and Seok-Bae Yun,
  \href{https://doi.org/10.1137/11084981X}{\emph{The {B}oltzmann equation near
  a rotational local {M}axwellian}}, SIAM J. Math. Anal. \textbf{44} (2012),
  no.~4, 2560--2598. \MR{3023388}

\bibitem{kratky1949rontgenuntersuchung}
Otto Kratky and G{\"u}nther Porod, \emph{R{\"o}ntgenuntersuchung gel{\"o}ster
  fadenmolek{\"u}le}, Recueil des Travaux Chimiques des Pays-Bas \textbf{68}
  (1949), no.~12, 1106--1122.

\bibitem{le1994exponential}
Jean-Fran{\c{c}}ois Le~Gall, \emph{Exponential moments for the renormalized
  self-intersection local time of planar {B}rownian motion}, S{\'e}minaire de
  probabilit{\'e}s de Strasbourg \textbf{28} (1994), 172--180.

\bibitem{Liggett}
Thomas~M. Liggett,
  \href{https://doi.org/10.1007/978-1-4613-8542-4}{\emph{Interacting particle
  systems}}, Grundlehren der Mathematischen Wissenschaften [Fundamental
  Principles of Mathematical Sciences], vol. 276, Springer-Verlag, New York,
  1985. \MR{776231}

\bibitem{lipowsky1986unbinding}
Reinhard Lipowsky and Stanislas Leibler, \emph{Unbinding transitions of
  interacting membranes}, Physical Review Letters \textbf{56} (1986), no.~23,
  2541.

\bibitem{maggs1989unbinding}
Anthony~C. Maggs, David~A. Huse, and Stanislas Leibler, \emph{Unbinding
  transitions of semi-flexible polymers}, EPL (Europhysics Letters) \textbf{8}
  (1989), no.~7, 615.

\bibitem{Pascucci}
Andrea Pascucci,
  \href{https://doi.org/10.1090/S0002-9947-02-03151-3}{\emph{H\"{o}lder
  regularity for a {K}olmogorov equation}}, Trans. Amer. Math. Soc.
  \textbf{355} (2003), no.~3, 901--924. \MR{1938738}

\bibitem{MR701921}
Jay Rosen, \href{http://projecteuclid.org/euclid.cmp/1103922380}{\emph{A local
  time approach to the self-intersections of {B}rownian paths in space}}, Comm.
  Math. Phys. \textbf{88} (1983), no.~3, 327--338. \MR{701921}

\bibitem{Rothschild}
Linda~Preiss Rothschild and Elias~M. Stein,
  \href{https://doi.org/10.1007/BF02392419}{\emph{Hypoelliptic differential
  operators and nilpotent groups}}, Acta Math. \textbf{137} (1976), no.~3-4,
  247--320. \MR{436223}

\bibitem{van1992stochastic}
Nicolaas~Godfried Van~Kampen, \emph{Stochastic processes in physics and
  chemistry}, vol.~1, Elsevier, 1992.

\bibitem{werlich2014wang}
B~Werlich, MP~Taylor, and W~Paul, \emph{Wang-{L}andau and stochastic
  approximation {M}onte {C}arlo for semi-flexible polymer chains}, Physics
  Procedia \textbf{57} (2014), 82--86.

\bibitem{yang2019study}
Xiao Yang, Qing-Hui Yang, Yu~Fu, Fan Wu, Jian-Hua Huang, and Meng-Bo Luo,
  \emph{Study on the adsorption process of a semi-flexible polymer onto
  homogeneous attractive surfaces}, Polymer \textbf{172} (2019), 83--90.

\end{thebibliography}
	
\end{document}